\theoremstyle{plain} 
\newtheorem{Thm}{Theorem}[section] 
\newtheorem{Cor}[Thm]{Corollary}
\newtheorem{Prop}[Thm]{Proposition}
\newtheorem{Lemma}[Thm]{Lemma}
\newtheorem{Quest}[Thm]{Question}
\newtheorem*{Thm*}{Theorem}
\newtheorem{IThm}{Theorem}
\newtheorem*{IConj}{Conjecture}
\theoremstyle{definition}
\newtheorem{Def}[Thm]{Definition}
\newtheorem{Deff}[Thm]{Definitions}
\theoremstyle{remark}
\newtheorem{Rem}[Thm]{Remark}
\newtheorem{Remm}[Thm]{Remarks}
\newtheorem{Ex}[Thm]{Example}
\newtheorem{Exx}[Thm]{Examples}
\newtheorem*{Notation}{Notation}
\newcommand{\C}{\mathbb{C}}
\newcommand{\N}{\mathbb{N}}
\newcommand{\R}{\mathbb{R}}
\newcommand{\A}{\mathbb{A}}
\newcommand{\B}{\mathbb{B}}
\newcommand{\PR}{\mathbb{P}}
\newcommand{\Hom}{\operatorname{Hom}}
\newcommand{\id}{\operatorname{id}}
\newcommand{\Sp}{\operatorname{Sp}}
\newcommand{\category}{\operatorname} 
\newcommand{\rigVar}{\mathit{smRig}}
\newcommand{\tensor}{\hat{\otimes}}
\newcommand{\sSet}{\mathbf{S}}
\newcommand{\vect}{\operatorname{Vect}}
\newcommand{\Vect}{\mathcal{V}}
\newcommand{\sPSh}{\mathit{sPSh}}
\title{Homotopy Classification of Line Bundles Over Rigid Analytic Varieties}
\author{Helene Sigloch} 
\address{Helene Sigloch\\
        Fakult{\"a}t f{\"u}r Mathematik\\
        Universit{\"a}t Regensburg\\
        93040 Regensburg\\
        Germany}
\email{helene.sigloch@ur.de}
\thanks{The author was supported by the Brigitte Schlieben-Lange-Programm of the German state Baden-W{\"u}rttemberg 
  and by the DFG in the GK 1821.}  
\subjclass[2010]{Primary 14F22, 14G22; Secondary 55R15}
\begin{document}
\begin{abstract}
 We construct a motivic homotopy theory for rigid analytic varieties
 with the rigid analytic affine line $\mathbb{A} ^1_\mathrm{rig}$ as an interval object.
 This motivic homotopy theory is inspired from, but not equal to, Ayoub's 
 motivic homotopy theory for rigid analytic varieties.
 Working in the so constructed homotopy theory,
 we prove that a homotopy classification of vector bundles of rank~$n$ 
 over rigid analytic quasi-Stein spaces follows from $\mathbb{A}^1_\mathrm{rig}$-homotopy invariance
 of vector bundles. This $\mathbb{A}^1_\mathrm{rig}$-homotopy invariance
 is equivalent to a rigid analytic version of Lindel's solution to the Bass--Quillen conjecture.
 Moreover, we establish a homotopy classification of line bundles over rigid analytic quasi-Stein spaces.
 In fact, line bundles are classified by infinite projective space.
\end{abstract}
\maketitle

\section{Introduction}
 The goal of this article is to develop tools to classify vector bundles over suitable rigid analytic varieties in a
 suitable homotopy theory. Our main results are:
 \begin{IThm}[Theorem~\ref{ThmClassLines}]\label{ThmClassIntro}
  Let $k$ be a complete, non-archimedean, non-trivially valued field.
  Let $X$ be a smooth $k$-rigid analytic quasi-Stein variety. Then there is a natural bijection
  \begin{equation*}
   [X, \mathbb{P}^\infty ]_{\mathbb{A}^1_\mathrm{rig}} \overset{\sim}{\longrightarrow} 
   \{ \text{analytic $k$-line bundles over $X$} \} / \cong 
  \end{equation*}
  between motivic homotopy classes from $X$ into the infinite projective space $\mathbb{P}^\infty$ over $k$
  and isomorphism classes of analytic $k$-line bundles over $X$.
  The homotopy theory is defined in section~\ref{sechotheo}.
 \end{IThm}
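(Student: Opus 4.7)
The plan is to follow the classical Morel--Voevodsky argument for classifying line bundles in algebraic $\mathbb{A}^1$-homotopy theory, adapted to the rigid analytic homotopy theory built in Section~\ref{sechotheo}. The argument factors through an intermediate model $B\mathbb{G}_m^\mathrm{rig}$ for the classifying space of line bundles, and one proves separately that (i) $B\mathbb{G}_m^\mathrm{rig}$ represents $\mathrm{Pic}$ after Nisnevich-local passage, and (ii) $\mathbb{P}^\infty$ and $B\mathbb{G}_m^\mathrm{rig}$ coincide in $\mathrm{Ho}_{\mathbb{B}^1}$, and (iii) Nisnevich-local and $\mathbb{A}^1_\mathrm{rig}$-local mapping spaces into $B\mathbb{G}_m^\mathrm{rig}$ agree on quasi-Stein $X$.

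\emph{Step 1 (Classification via $B\mathbb{G}_m^\mathrm{rig}$).} Analytic line bundles on $X$ are classified by $H^1(X,\mathcal{O}_X^\times)$; using the Nisnevich descent properties established in Section~\ref{sechotheo}, one forms the simplicial bar construction $B\mathbb{G}_m^\mathrm{rig}$ and verifies that its Nisnevich-local derived mapping space computes $H^1_\mathrm{Nis}(X,\mathbb{G}_m^\mathrm{rig})$, which on smooth $X$ equals the analytic $\mathrm{Pic}(X)$.

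\emph{Step 2 (Geometric model $\mathbb{P}^\infty \simeq B\mathbb{G}_m^\mathrm{rig}$).} Set $\mathbb{P}^\infty := \operatorname{colim}_n \mathbb{P}^n$ in rigid analytic simplicial Nisnevich sheaves. The tautological line bundle $\mathbb{A}^{n+1}_\mathrm{rig}\setminus\{0\} \to \mathbb{P}^n$ is a $\mathbb{G}_m^\mathrm{rig}$-torsor, and one needs to show that the colimit $\mathbb{A}^\infty_\mathrm{rig}\setminus\{0\}$ is $\mathbb{A}^1_\mathrm{rig}$-contractible by an Eilenberg-swindle--type argument shifting coordinates. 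This exhibits $\mathbb{P}^\infty$ as the $\mathbb{G}_m^\mathrm{rig}$-quotient of a contractible object, hence as $B\mathbb{G}_m^\mathrm{rig}$ in $\mathrm{Ho}_{\mathbb{B}^1}$.

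\emph{Step 3 ($\mathbb{A}^1_\mathrm{rig}$-homotopy invariance of $\mathrm{Pic}$).} For the last identification
\[
[X, B\mathbb{G}_m^\mathrm{rig}]_{\mathbb{B}^1} \;\cong\; [X, B\mathbb{G}_m^\mathrm{rig}]_\mathrm{Nis} \;\cong\; \mathrm{Pic}(X),
\]
it suffices to show $\mathrm{Pic}(X) \cong \mathrm{Pic}(X\times \mathbb{A}^1_\mathrm{rig})$ on smooth quasi-Stein $X$. The plan is to exhaust $X$ by affinoid opens, invoke Kiehl's acyclicity theorem for coherent sheaves on quasi-Stein spaces, and either use an exponential sequence on small enough polydiscs or a direct Laurent-expansion argument on $X \times \mathbb{A}^1_\mathrm{rig}$ to show that every line bundle is pulled back from $X$. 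Combining the three steps produces the desired bijection, with naturality built in throughout.

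\emph{Main obstacle.} The hard part is Step~3: the $\mathbb{A}^1_\mathrm{rig}$-homotopy invariance of $\mathrm{Pic}$ over quasi-Stein spaces. Unlike the algebraic situation, where one can cite Quillen--Suslin or Lindel, the rigid analytic version (the line-bundle case of the rigid Bass--Quillen statement mentioned in the abstract) requires careful approximation arguments combining Kiehl's theorem with cocycle adjustment on affinoid pieces. Step~2 is mostly formal once the standard cover $\{U_i\}$ of $\mathbb{P}^\infty$ by affines is shown to be Nisnevich and the swindle is set up correctly, while Step~1 amounts to unwinding definitions of the homotopy theory from Section~\ref{sechotheo}.
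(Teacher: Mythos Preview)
Your three-step outline matches the paper's architecture closely: Step~2 is exactly Proposition~\ref{GrBGL} (the Stiefel variety $\mathbb{A}^\infty_\mathrm{rig}\setminus\{0\}$ is shown to be $\mathbb{A}^1_\mathrm{rig}$-contractible by an explicit swindle), and Step~1 plus the reduction to $I$-invariance is the content of Theorem~\ref{ThmClass} via Proposition~\ref{PropClass}. But there are two genuine gaps.

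First, your Step~3 plan underestimates what is needed. The cocycle/Laurent-expansion argument you sketch is indeed one ingredient --- it appears in the paper as Lemma~\ref{cocycA1}, showing $\mathcal{O}(\operatorname{Sp}(A)\times\mathbb{A}^1_\mathrm{rig})^\times = A^\times$ --- but by itself it only tells you that a line bundle on $X\times\mathbb{A}^1_\mathrm{rig}$ is extended \emph{provided} it admits a trivialisation of the form $\{U_i\times\mathbb{A}^1_\mathrm{rig}\}$. Producing such a trivialisation is the hard part, and the paper does not do it by Kiehl's Theorem~B or an exponential sequence (the latter would anyway fail in positive residue characteristic). Instead the paper imports the pro-isomorphism $\operatorname{Pic}(X)\cong\text{``lim''}_j\operatorname{Pic}(X\times\mathbb{B}^1_{r^j})$ of Kerz--Saito--Tamme (Theorem~\ref{ThmTamme}) to get Corollary~\ref{A1local}. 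Without this external input, ``cocycle adjustment on affinoid pieces'' does not obviously terminate.

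Second, the paper works with the completely decomposable topology $\tau_{cdG}$ of \emph{finite} admissible coverings, not the Nisnevich topology, and this choice is load-bearing: it is why Theorem~\ref{SteinfinG} (every vector bundle on a quasi-Stein space has a finite local trivialisation) enters the argument. Your Nisnevich formulation may well be made to work, but you would need to check $\tau$-excision for $\operatorname{Vect}^1$ separately, which you do not address. Also, the display in your Step~3 has $[X,B\mathbb{G}_m]_{\mathbb{B}^1}$; this should read $\mathbb{A}^1_\mathrm{rig}$ --- the paper stresses (Example~\ref{Exbadreduction}) that the $\mathbb{B}^1$-version of the theorem is \emph{false}.
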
 
 
 The theorem follows from the following more general result.
 \begin{IThm}[Theorem~\ref{ThmClass}]\label{ThmAHWIntro}
  Let $\mathcal{R}$ be a subcategory of the category of smooth rigid analytic varieties. Assume that $\mathcal{R}$ 
  has a strictly initial object $\emptyset$. 
  Let $I \in \{ \mathbb{B}^1 , \mathbb{A}^1_\mathrm{rig} \}$ be a representable interval object on $\mathcal{R}$.
  For some $n \in \mathbb{N}$, assume that vector bundles of rank $n$ are $I$-invariant on $\mathcal{R}$.
  Assume that every vector bundle of rank $n$ over an object $X$ of $\mathcal{R}$ admits a finite local
  trivialisation. Then there is a natural bijection
  \begin{equation*}
   [X, \operatorname{Gr}_{n/k} ]_{I} \overset{\sim}{\longrightarrow} 
    \{ \text{analytic $k$-vector bundles of rank $n$ over $X$} \} / \cong .
  \end{equation*}
 \end{IThm}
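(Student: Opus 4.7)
The plan is to follow the Morel--Voevodsky / Asok--Hoyois--Wendt classification strategy, transported to the rigid analytic homotopy theory of Section~\ref{sechotheo}. Write $\operatorname{Gr}_{n/k} = \operatorname{colim}_N \operatorname{Gr}_{n,N}$ and let $\gamma_n$ denote the tautological rank $n$ bundle on it. Pullback of $\gamma_n$ defines a natural map
\[
 \Phi_X \colon [X,\operatorname{Gr}_{n/k}]_I \longrightarrow \{\text{rank $n$ analytic $k$-vector bundles on } X\}/\cong ,
\]
and well-definedness on $I$-homotopy classes is exactly the $I$-invariance hypothesis: given a homotopy $H \colon X \times I \to \operatorname{Gr}_{n/k}$ between $f_0$ and $f_1$, $I$-invariance says that $H^\ast \gamma_n$ is pulled back from $X$, and restriction to the two endpoints yields $f_0^\ast \gamma_n \cong f_1^\ast \gamma_n$.

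Surjectivity of $\Phi_X$ rests on the finite local trivialisation hypothesis. Given a rank $n$ bundle $E \to X$ trivial on a cover $\{U_i\}_{i=1}^N$, the classical transition-cocycle construction embeds $E$ as a rank $n$ subbundle of a trivial bundle of rank $Nn$ and produces a classifying morphism $X \to \operatorname{Gr}_{n,Nn} \hookrightarrow \operatorname{Gr}_{n/k}$ whose pullback of $\gamma_n$ is $E$; the finiteness is precisely what is needed to keep this morphism inside $\mathcal{R}$ rather than requiring an unbounded colimit.

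The essential step is injectivity, which I would obtain by factoring $\Phi_X$ through the Nisnevich-local homotopy category. Nisnevich descent for vector bundles, together with finite local trivialisation, identifies $\operatorname{Gr}_{n/k}$ with the simplicial classifying object $B\operatorname{GL}_n$, yielding
\[
 [X,\operatorname{Gr}_{n/k}]_{\mathrm{Nis}} \;\cong\; \{\text{rank $n$ bundles on } X\}/\cong .
\]
It then suffices to show that the localisation map $[X,\operatorname{Gr}_{n/k}]_{\mathrm{Nis}} \to [X,\operatorname{Gr}_{n/k}]_I$ is a bijection, equivalently that $B\operatorname{GL}_n$ is already $I$-local as a Nisnevich simplicial presheaf on $\mathcal{R}$.

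This last point is where I expect the main obstacle. Passing from $\pi_0$-level $I$-invariance (a statement about isomorphism classes of bundles) to $I$-locality of the full simplicial presheaf $B\operatorname{GL}_n$ requires a rigid analytic analogue of the Morel / Asok--Hoyois--Wendt ``affine $I$-invariance implies $I$-locality'' theorem, together with a Brown--Gersten-type property for the Nisnevich topology on $\rigVar$. Concretely, one would use the cocycle model of $B\operatorname{GL}_n$ to propagate the $I$-invariance of $\operatorname{Vect}_n$ through higher simplicial homotopy groups, exploiting the hypotheses on $\mathcal{R}$ (the strictly initial $\emptyset$ and the representability of $I$) to keep mapping spaces well behaved. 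Once this locality is established, the bijection $\Phi_X$ is the one sought.
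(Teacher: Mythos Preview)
Your overall architecture is right and you correctly identify where the difficulty lies, but the sketch you give for the ``main obstacle'' does not close the gap, and in fact the paper's argument avoids that obstacle altogether rather than attacking it head-on.

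Two concrete points. First, the topology: the paper does \emph{not} work Nisnevich-locally. It uses the cd structure $\tau_{cdG}$ generated by \emph{finite} admissible G-coverings. The hypothesis that every rank~$n$ bundle admits a finite local trivialisation is exactly what makes $\vect^n$ (and the groupoid $\Vect^n$) satisfy descent, hence excision, with respect to $\tau_{cdG}$; it is not used to build a classifying map into a finite Grassmannian as in your surjectivity argument. So your use of the finiteness hypothesis is misplaced.

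Second, and more importantly, the paper does not try to show that $B\GL_n$ is $I$-local. Instead it works with the nerve of the groupoid of (big) vector bundles, $B\Vect^n$, and proves that this presheaf satisfies $\tau_{cdG}$-excision directly, via Quillen's Theorem~B applied to the zigzag $\Vect^n(U)\to\Vect^n(U\cap V)\leftarrow\Vect^n(V)$. The key lemma you are missing is then Lemma~\ref{RemarkSing}\ref{RemSingd}) (the Schlichting/AHW trick, \cite[Theorem~4.2.3]{AsokHoyoisWendt15}): if $\mathcal{F}$ satisfies $\tau$-excision and $\pi_0\mathcal{F}$ is $I$-invariant, then $\operatorname{Sing}^I\mathcal{F}$ \emph{also} satisfies $\tau$-excision. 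Since $\operatorname{Sing}^I\mathcal{F}$ is automatically $I$-invariant, its $\tau$-fibrant replacement is then fibrant in $\mathcal{M}_{I,\tau}$, and Proposition~\ref{PropClass} gives $\pi_0\mathcal{F}(X)\cong[X,\mathcal{F}]_{I,\tau}$ in one stroke, yielding both injectivity and surjectivity simultaneously. Only afterwards does one identify $B\Vect^n\simeq B\GL_n$ (via local triviality, which again uses the finite-trivialisation hypothesis to stay within $\tau_{cdG}$) and $B\GL_n\simeq\operatorname{Gr}_n$ (Proposition~\ref{GrBGL}, via contractibility of the Stiefel ind-variety). Your proposed route of propagating $\pi_0$-invariance to higher homotopy groups of $B\GL_n$ by hand is exactly what this machinery is designed to sidestep.
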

 
 Both theorems can be seen in the context of Morel's $\mathbb{A}^1$-homotopy classification of algebraic vector bundles
 over a smooth affine variety and of Grauert's homotopy classification of holomorphic vector bundles over a complex Stein space. 
 The classical model for all of these theorems is 
 Steenrod's homotopy classification of continuous vector bundles over a paracompact Hausdorff space.
 \begin{IThm}[F.~Morel \cite{Morel12}, Asok--Hoyois--Wendt \cite{AsokHoyoisWendt15}]
  Let $X$ be a smooth affine algebraic variety over a field $K$. There is a natural bijection
  \begin{equation*}
   [X, \mathrm{Gr}_{n/K} ]_{\mathbb{A}^1} \overset{\sim}{\longrightarrow} \{ \text{alg.~$K$-vector bundles of rank $n$ over $X$} \} / \cong 
  \end{equation*}
  between $\mathbb{A}^1$-homotopy classes from $X$ into the infinite Grassmannian $\mathrm{Gr}_{n}$ over $K$
  and isomorphism classes of algebraic $K$-vector bundles of rank $n$ over $X$.
 \end{IThm}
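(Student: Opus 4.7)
The plan is to decompose the statement into two parts, following Morel and Asok--Hoyois--Wendt. First, I would establish that the presheaf $\mathcal{V}_n$ on smooth affine $K$-schemes sending $X$ to the set of isomorphism classes of rank $n$ vector bundles is $\mathbb{A}^1$-invariant; equivalently, pullback along every projection $X \times \mathbb{A}^1 \to X$ is a bijection on isomorphism classes. Second, I would upgrade this set-level invariance to the assertion that a simplicial classifying space for $\mathcal{V}_n$ is $\mathbb{A}^1$-local, and finally identify that classifying space with $\mathrm{Gr}_{n/K}$ in the $\mathbb{A}^1$-homotopy category.

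For step one, the input is Lindel's theorem, i.e. the geometric case of the Bass--Quillen conjecture: for $R$ a regular ring essentially of finite type over a field, every finitely generated projective $R[t]$-module is extended from $R$. Geometrically this says exactly that $\mathcal{V}_n(X) \to \mathcal{V}_n(X \times \mathbb{A}^1)$ is a bijection for smooth affine $X$. Together with the fact that rank $n$ bundles are Zariski, hence Nisnevich, locally trivial, this gives the required $\mathbb{A}^1$-invariance at the level of isomorphism classes.

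For step two, I would invoke Morel's theory of strongly and strictly $\mathbb{A}^1$-invariant sheaves. The Nisnevich sheafification of $\mathcal{V}_n$ agrees with $H^1_{\mathrm{Nis}}(-,GL_n)$; combined with the affine Brown--Gersten property of the Nisnevich topology, the strong $\mathbb{A}^1$-invariance of $GL_n$ implies that the classifying simplicial Nisnevich sheaf $BGL_n$ is $\mathbb{A}^1$-local when evaluated on smooth affine $X$. Consequently $[X, BGL_n]_{\mathbb{A}^1}$ coincides with the simplicial Nisnevich homotopy classes, which in turn compute $H^1_{\mathrm{Nis}}(X,GL_n) = \mathcal{V}_n(X)$. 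Finally, $BGL_n$ is identified with the ind-Grassmannian $\mathrm{Gr}_{n/K} = \operatorname{colim}_N \mathrm{Gr}_{n,N}$ via the tautological rank $n$ bundle; the Jouanolou--Thomason trick supplies an affine replacement that makes this identification compatible with the $\mathbb{A}^1$-local models.

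The main obstacle is step two: upgrading a $\pi_0$-level bijection to a simplicial weak equivalence in the $\mathbb{A}^1$-local model structure. Morel's route runs through his delicate analysis of (potentially non-abelian) strongly $\mathbb{A}^1$-invariant sheaves of groups; Asok--Hoyois--Wendt give a more streamlined argument phrased in terms of affine representability and Jouanolou devices. Step one, by contrast, while deep, now reduces to a classical theorem of Lindel, which is the analogue whose rigid analytic counterpart Theorem~\ref{ThmAHWIntro} takes as its hypothesis.
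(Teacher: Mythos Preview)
The theorem you are asked to prove is quoted in the paper as a known result (due to Morel and Asok--Hoyois--Wendt); the paper does not give its own proof, but it does discuss the proof structure in Sections~\ref{secVBclass}--\ref{A1hprin} and then carries out the rigid analytic analogue in Section~\ref{secClass}. Compared to that discussion, your outline is broadly on target---Lindel's theorem for $\mathbb{A}^1$-invariance, then homotopy-theoretic machinery to deduce representability by $\mathrm{Gr}_n \simeq B\mathrm{GL}_n$---but your Step~2 leans on Morel's original framework (strongly/strictly $\mathbb{A}^1$-invariant sheaves), whereas the paper singles out the Asok--Hoyois--Wendt approach via \emph{excision}: one shows that $B\Vect^n$ satisfies Nisnevich excision, that $\pi_0$ is $\mathbb{A}^1$-invariant, and then that $R_\tau \operatorname{Sing}^{\mathbb{A}^1}$ of such a presheaf is fibrant in the $\mathbb{A}^1$-local model structure (the paper's Proposition~\ref{PropClass} and Theorem~\ref{ThmClass}). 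The Jouanolou--Thomason trick plays no role in that argument.

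There is one genuine imprecision in your sketch worth flagging. You write ``the strong $\mathbb{A}^1$-invariance of $GL_n$ implies that $BGL_n$ is $\mathbb{A}^1$-local on smooth affines.'' For $n \geq 2$ the sheaf $GL_n$ is nonabelian, so ``strong $\mathbb{A}^1$-invariance'' in Morel's sense does not apply directly; making this step work is exactly the delicate part of Morel's original proof, and is the reason his argument required $k$ infinite perfect and $n \neq 2$ (as the paper notes in the remark after Theorem~\ref{MorelThm}). The AHW excision argument bypasses this entirely: it needs only $\pi_0$-level $\mathbb{A}^1$-invariance (Lindel) together with descent/excision for $B\Vect^n$, and then Schlichting's trick packages this into fibrancy of the singular construction. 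Since you already acknowledge the AHW route in your last paragraph, the fix is simply to make \emph{that} the backbone of Step~2 rather than the strongly $\mathbb{A}^1$-invariant sheaf machinery.
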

 If $n=1$, the theorem holds more generally for any smooth scheme $X$ over a regular base \cite[§4, Proposition~3.8]{MV}.
  
 Strikingly, if $K = \mathbb{C}$, this classification still holds after analytification.
 The analytification of a smooth affine complex algebraic variety is a Stein manifold and the analytification
 of the complex affine line is $\mathbb{C}$.
 
 \begin{IThm}[Grauert \cite{Grauert57a,Grauert57b}]
  Let $X$ be a complex Stein space. There is a natural bijection
  \begin{equation*}
   [X, \mathrm{Gr}_{n/ \mathbb{C} } ] \overset{\sim}{\longrightarrow} \{ \text{holomorphic vector bundles of rank $n$ over $X$} \} / \cong 
  \end{equation*}
  between homotopy classes of maps from $X$ into the infinite Grassmannian $\mathrm{Gr}_{n}$ over $\mathbb{C}$
  and isomorphism classes of holomorphic vector bundles of rank $n$ over $X$.
 \end{IThm}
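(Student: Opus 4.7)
The plan is to run the same axiomatic machinery as in Theorem~\ref{ThmAHWIntro} but over the site of complex analytic Stein spaces with $\mathbb{C}$ playing the role of the interval object. The two sufficient conditions we have to establish in this setting are: (a) every holomorphic rank-$n$ vector bundle on a Stein space admits a finite local trivialisation, and (b) the pullback map $\pi^\ast : \mathrm{Vect}_n(X) \to \mathrm{Vect}_n(X \times \mathbb{C})$ is a bijection for every Stein $X$. Once these are in hand, the classification $[X,\mathrm{Gr}_n]\xrightarrow{\sim}\mathrm{Vect}_n(X)/\!\cong$ follows from the same abstract representability argument used in the rigid analytic setting, because the infinite Grassmannian represents rank-$n$ vector bundles in the appropriate simplicial presheaf model on Stein spaces.

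Condition (a) is the easier half and I would derive it directly from Cartan's Theorem~A. Given a rank-$n$ holomorphic bundle $E$ on a Stein space $X$, Theorem~A furnishes global sections generating every stalk. Choosing finitely many sections $s_1,\dots,s_N$ whose values span each fibre over a chosen compact exhaustion, and combining with coherence of the vanishing loci, one produces a finite open cover on which $n$-tuples of the $s_i$ give a frame, hence a finite trivialising cover. The paracompactness and finite-dimensionality hypotheses commonly imposed on Stein spaces in Grauert's setting make this argument entirely parallel to the rigid analytic version.

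Condition (b) is the Oka--Grauert principle and will be the main obstacle. Injectivity of $\pi^\ast$ can be approached sheaf-theoretically: a bundle on $X\times\mathbb{C}$ whose restrictions to $X\times\{0\}$ and $X\times\{1\}$ agree is classified, after fixing a continuous trivialisation, by a cocycle in the non-abelian sheaf $\mathrm{GL}_n(\mathcal{O})$; using the exponential sequence and Cartan's Theorem~B to kill the relevant cohomology of $\mathrm{End}(E)$, one reduces to a linear problem on $X\times\mathbb{C}$, which is Stein. Surjectivity is the deep content: one must show every holomorphic bundle on $X\times\mathbb{C}$ is isomorphic to $\pi^\ast F$ for some $F$ on $X$. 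The standard strategy is an exhaustion argument: write $X\times\mathbb{C}=\bigcup K_m$ by a Stein exhaustion, trivialise along the $\mathbb{C}$-direction on each relatively compact piece using Cartan~B, and then patch by a Runge-type approximation theorem so that successive trivialisations converge. The convergence step is what makes the result non-formal and where all the analytic input concentrates.

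To avoid circularity with Grauert's theorem itself, one should prove (b) before invoking any topological classification: the argument above uses only Theorems~A and B, the exponential sequence, and holomorphic approximation on Stein compacta, none of which depend on the classification we are proving. Once (b) is in place, combining it with Steenrod's classification of continuous bundles over the paracompact Hausdorff space underlying $X$ identifies $[X,\mathrm{Gr}_n]$ with $\mathrm{Vect}_n^{\mathrm{top}}(X)/\!\cong$, and then (b) together with the topological invariance of continuous bundles forces the holomorphic and continuous classifications to coincide. I expect the surjectivity half of the Oka--Grauert principle to absorb essentially all of the technical effort; every other step is either formal, a direct application of Cartan's theorems, or borrowed from the general representability formalism that also underlies Theorem~\ref{ThmAHWIntro}.
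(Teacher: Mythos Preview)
The paper does not prove Grauert's theorem; it cites it as a classical result due to Grauert and merely sketches the mechanism: the Oka principle identifies holomorphic and continuous isomorphism classes, and Steenrod's theorem classifies the latter. Your overall shape---reduce to $I$-invariance plus finite local trivialisation, then invoke a representability argument---matches both this sketch and the paper's own strategy for the rigid analytic case. So at the architectural level you are on the same track, and your point that the representability machinery of Theorem~\ref{ThmAHWIntro} would apply equally well to the complex Stein site with interval $\mathbb{C}$ (L\'arusson's reformulation) is correct.

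The genuine gap is your proposed proof of condition~(b), the $\mathbb{C}$-invariance of $\operatorname{Vect}^n$. You write that injectivity can be handled ``using the exponential sequence and Cartan's Theorem~B to kill the relevant cohomology of $\operatorname{End}(E)$, one reduces to a linear problem''. This works for $n=1$, where $\operatorname{GL}_1 = \mathcal{O}^\ast$ and the exponential $\mathcal{O}\to\mathcal{O}^\ast$ is a sheaf epimorphism of abelian groups, so Theorem~B applies directly. For $n\geq 2$, however, $\operatorname{GL}_n$ is non-abelian: the exponential $\operatorname{End}(E)\to\operatorname{GL}(E)$ is not a group homomorphism, there is no long exact sequence in non-abelian cohomology beyond degree one, and Theorem~B says nothing about $H^1$ with values in a non-abelian sheaf. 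Grauert's actual argument does not linearise globally; it proceeds by delicate Cartan-lemma factorisations of matrix-valued cocycles combined with Runge-type approximation, and this is precisely the hard analysis you cannot sidestep. Your surjectivity sketch (exhaustion and patching) is closer to the truth, but the convergence step you flag as ``non-formal'' is in fact the entire content of Grauert's 1957 papers and is not derivable from Theorems~A and~B alone. In short: the framework is right, but your claim that (b) follows from Cartan's theorems plus the exponential sequence is false for rank $\geq 2$, and this is exactly the step where Grauert's original contribution lies.
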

 Grauert's theorem is stated in terms of classical homotopy classes and with $[0,1]$ as an interval.
 L\'arusson showed that Grauert's theorem can also be phrased in terms of
 motivic homotopy theory with $\mathbb{C}$ as an interval \cite{Larusson03,Larusson04,Larusson05}.
  
 Morel's theorem and Grauert's theorem seem to be linked
 via analytification. But as Morel's theorem is valid without restrictions on the base field, it seems natural to ask
 if the same holds for other analytic fields, in particular for a complete non-archimedean valued field $k$ such as
 a field of $p$-adic numbers $\mathbb{Q}_p$ or a field $\kappa ((T))$ of formal Laurent series.
  
 The starting point for this project was Ayoub's $\mathbb{B}^1$-homotopy theory for rigid analytic varieties.
 It is a version of Morel--Voevodsky's homotopy theory of a site with interval where
 the ``unit ball'' $\mathbb{B}^1 := \operatorname{Sp} k \langle T \rangle$ figures as an interval.
 There are several approaches to non-archimedean analytic geometry. 
 Motivated by Ayoub's work, we chose to work with rigid analytic varieties, too.
 We could as well have considered Berkovich spaces or adic spaces as the three approaches are equivalent in the
 setting we are interested in.
 Ayoub's approach was well suited to define and work with motives of rigid analytic varieties ~\cite{AyoubB1,Vezzani14b}.
 
 However, it turns out that for our purposes the analytification $\mathbb{A}^1_\mathrm{rig}$ of the affine line 
 is a better choice of interval. 
 Theorem~\ref{ThmClassIntro} becomes false if one replaces the interval object $\mathbb{A}^1_\mathrm{rig}$ by $\mathbb{B}^1$:
 Let $k = \mathbb{Q}_p$, $p \geq 3$ and
 \begin{equation*}
  X = \operatorname{Sp} \big( \mathbb{Q}_p \langle T_1 , T_2 \rangle / (T_1 ^2 - T_2 (T_2 - p )(T_2 - 2 p )) \big).
 \end{equation*}
 The corresponding assignment 
 \begin{equation*}
  [X, \mathbb{P}^\infty ]_{\mathbb{B}^1_\mathrm{rig}} \longrightarrow
  \{ \text{analytic $k$-line bundles over $X$} \} / \cong 
 \end{equation*}
 is not even well-defined since on the right hand side there exist non-isomorphic line bundles on $X$ 
 which would correspond to the same homotopy class on the left hand side. 
 This is worked out in Example \ref{Exbadreduction}.
 
 The questions about an $\mathbb{A}^1_\mathrm{rig}$-classification of vector bundles
 and about a $\mathbb{B}^1$-classification of vector bundles are quite different and 
 have to be answered with different methods.
 The method we chose seems to be good for the interval object $\mathbb{A}^1_\mathrm{rig}$ but less so for the interval object $\mathbb{B}^1$.
 
\subsection*{The proof}
 It turns out that homotopy invariance with respect to the interval is really the
 key question for a homotopy classification of vector bundles over (suitable) rigid analytic varieties.
 For an interval object $I \in \{ \mathbb{B}^1 , \mathbb{A}^1_\mathrm{rig} \}$ 
 we say that ``vector bundles of rank $n$ are $I$-invariant'' on a subcategory $\mathcal{R}$ of the category of rigid analytic varieties 
 if for each object $X \in \mathcal{R}$, the projection $X \times I \rightarrow X$
 induces a bijection on isomorphism classes of vector bundles of rank $n$.
 
 Let us give an outline of the proof of Theorem~\ref{ThmClassIntro}. Let $k$ be a complete, non-archimedean,
 non-trivially valued field and $X$ a smooth rigid analytic variety.
 \begin{enumerate}
  \item Every line bundle over $X \times \mathbb{A}^ 1_\mathrm{rig}$ has a local trivialisation
      by subsets of the form $\{ U_i \times \mathbb{A}^1_\mathrm{rig} \} _{i \in J}$ 
      where $\{ U_i \} _{i \in J}$ is an admissible covering of $X$.
      This is Corollary~\ref{A1local}, a corollary of Theorem~\ref{ThmTamme} 
      by Kerz, Saito and Tamme~\cite{KerzSaitoTamme16}.
  \item Consider a line bundle on $X \times \mathbb{A}^ 1_\mathrm{rig}$ and a local trivialisation of the form
      \mbox{$\{ U_i \times \mathbb{A}^1_\mathrm{rig} \} _{i \in J}$}. 
      Then every transition function  
      on $U_i \cap U_j \times \mathbb{A}^1_\mathrm{rig}$ 
      is constant in the $\mathbb{A}^ 1_\mathrm{rig}$-direction.
      This is Theorem~\ref{cocycA1}.
  \item The projection $X \times \mathbb{A}^1_\mathrm{rig} \rightarrow X$ induces an isomorphism of groups
      $\operatorname{Pic} (X) \cong \operatorname{Pic} (X \times \mathbb{A}^1_\mathrm{rig} )$, i.\,e., line bundles are 
      $\mathbb{A}^1_\mathrm{rig}$-invariant on smooth rigid analytic varieties (Theorem~\ref{rigA1inv}).
  \item Every vector bundle over a quasi-Stein rigid analytic variety admits a finite local trivialisation (Theorem~\ref{SteinfinG}).
  \item The assumptions of Theorem~\ref{ThmAHWIntro} are satisfied if $n = 1$, $I = \mathbb{A}^ 1_\mathrm{rig}$ and
      $\mathcal{R}$ is the subcategory of quasi-Stein spaces. 
      Theorem~\ref{ThmClassIntro} follows.
 \end{enumerate}

 We expect Theorem~\ref{ThmClassIntro} to generalise to vector bundles of higher rank. 
 That idea was proposed by Matthias Wendt.
 \begin{IConj}\label{Conjrank}
  Let $X$ be a smooth rigid analytic quasi-Stein variety over a complete, non-archimedean, non-trivially valued field $k$. 
  Then there is a natural bijection
  \begin{equation*}
   [X, \mathrm{Gr}_{n/k} ]_{\mathbb{A}^1_\mathrm{rig}} \overset{\sim}{\longrightarrow} 
   \{ \text{analytic $k$-vector bundles of rank $n$ over $X$} \} / \cong 
  \end{equation*}
  between motivic homotopy classes from $X$ into the infinite Grassmannian $\mathrm{Gr}_{n/k}$
  and isomorphism classes of analytic $k$-vector bundles of rank $n$ over $X$.
 \end{IConj}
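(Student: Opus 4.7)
The plan is to verify the hypotheses of Theorem~\ref{ThmAHWIntro} with $I = \mathbb{A}^1_\mathrm{rig}$, arbitrary $n$, and $\mathcal{R}$ the full subcategory of smooth quasi-Stein rigid analytic varieties. The finite local trivialisation hypothesis is already supplied by Theorem~\ref{SteinfinG}, so the conjecture reduces to a single statement: rank-$n$ vector bundles are $\mathbb{A}^1_\mathrm{rig}$-invariant on smooth quasi-Stein spaces, i.e., the projection $X \times \mathbb{A}^1_\mathrm{rig} \to X$ induces a bijection on isomorphism classes of rank-$n$ vector bundles.

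Injectivity is formal (apply surjectivity to $X \times \mathbb{A}^1_\mathrm{rig}$), so the real task is surjectivity: every rank-$n$ bundle $E$ on $X \times \mathbb{A}^1_\mathrm{rig}$ should be of the form $p^\ast E_0$ for some bundle $E_0$ on $X$. As in step~(1) of the line bundle argument, Corollary~\ref{A1local} furnishes a local trivialisation of $E$ by sets of the form $\{U_i \times \mathbb{A}^1_\mathrm{rig}\}_{i \in J}$, where $\{U_i\}$ is an admissible covering of $X$, so that $E$ is described by a cocycle $(g_{ij})$ with $g_{ij} \in \GL_n(\mathcal{O}(U_{ij})\langle T \rangle)$ and $U_{ij} = U_i \cap U_j$.

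The main obstacle, and the step where the rank-one argument breaks down, is to show that this cocycle is cohomologous (possibly after refinement of the covering) to one constant in~$T$. For $n = 1$ this was Theorem~\ref{cocycA1}, which exploited an explicit description of units in $\mathcal{O}(U_{ij})\langle T \rangle$. For $n \geq 2$ no comparably concrete description of $\GL_n$ is available; instead one needs a genuine rigid analytic Bass--Quillen / Lindel theorem to the effect that every rank-$n$ projective module over the Tate algebra $A\langle T \rangle$ of a smooth $k$-affinoid algebra $A$ is extended from $A$.

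I would attempt to mimic Lindel's proof of the geometric case of Bass--Quillen in two steps. First, a Quillen-patching step should use the quasi-Stein hypothesis and the resulting vanishing of higher coherent cohomology to reduce the problem to the local situation at a maximal ideal of a smooth affinoid. Second, an \'etale-neighbourhood step should replace this local ring by a standard rigid analytic model of the form $k'\{T_1,\dotsc,T_d\}\langle T \rangle$ over which Horrocks-type arguments apply. The critical missing ingredient for the second step is a rigid analytic Popescu-type or N\'eron desingularisation lemma producing sufficiently many standard \'etale neighbourhoods compatible with the $\mathbb{A}^1_\mathrm{rig}$-direction. Establishing such a lemma---or finding a substitute that bypasses Lindel's strategy entirely---is where I expect the main difficulty to lie; as the author notes, this is essentially a reformulation of the conjecture itself.
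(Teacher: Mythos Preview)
This statement is a \emph{conjecture} in the paper; there is no proof to compare against. The paper's own discussion (the paragraph following the conjecture) makes exactly the reduction you make: by Theorem~\ref{ThmAHWIntro} and Theorem~\ref{SteinfinG}, what remains is precisely $\mathbb{A}^1_\mathrm{rig}$-invariance of rank-$n$ bundles on smooth quasi-Stein spaces, i.e.\ the rigid analytic Bass--Quillen question (Question~\ref{QuestBassQuillen}/\ref{Questgeneral}). Your honest identification of this as the open point, and of a Lindel--Popescu-type structure theorem as the likely missing ingredient, is in line with the paper.

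There is, however, a genuine error in how you locate the obstruction. You invoke Corollary~\ref{A1local} to obtain a local trivialisation of a rank-$n$ bundle $E$ on $X \times \mathbb{A}^1_\mathrm{rig}$ by sets of the form $U_i \times \mathbb{A}^1_\mathrm{rig}$, and then say the difficulty first appears at the cocycle step (the analogue of Lemma~\ref{cocycA1}). But Corollary~\ref{A1local} is stated and proved only for \emph{line} bundles: its proof goes through Corollary~\ref{affinoidA1inv}, which in turn rests on the Kerz--Saito--Tamme theorem (Theorem~\ref{ThmTamme}) about $\operatorname{Pic}$. None of these results is available for $n \geq 2$. Obtaining the rank-$n$ analogue of Corollary~\ref{A1local} would already require knowing that rank-$n$ bundles are $\mathbb{A}^1_\mathrm{rig}$-invariant on smooth \emph{affinoids}---that is, the affinoid case of the very conjecture you are trying to prove. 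So the two-step strategy ``first trivialise along $\mathbb{A}^1_\mathrm{rig}$, then straighten the cocycle'' does not decompose the problem for $n \geq 2$ the way it does for $n=1$; step~(1) already contains essentially all of the difficulty. (A minor aside: injectivity of $p^\ast$ follows directly from the zero section $\iota_0$ via $\iota_0^\ast p^\ast = \id$, not by applying surjectivity to $X \times \mathbb{A}^1_\mathrm{rig}$.)
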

  
 Such a statement would be desirable because one expects geometric applications.
 Both Morel's theorem and Grauert's theorem set the stage for other theorems that are interesting on their own.
 Using Morel's theorem and the theory of algebraic Euler classes also constructed by Morel \cite[§8]{Morel12}, 
 Asok and Fasel proved splitting theorems for algebraic vector bundles \cite{AsokFasel14a,AsokFasel14b,AsokFasel14c,AsokFasel15}.
 Grauert's theorem is strongly linked to Gromov's h-principle, a powerful tool in differential topology.
 There should be geometric applications of Theorem~\ref{ThmClassIntro}, the conjecture and, more generally,
 non-archimedean motivic homotopy theory along the same lines.

 For a proof it remains to show that vector bundles over smooth quasi-Stein varieties are
 \mbox{$\mathbb{A}^1_\mathrm{rig}$-}\nobreak\hspace{0pt}invariant. This means, if $X$ is smooth and quasi-Stein, 
 then every vector bundle over $X \times \mathbb{A}^ 1_\mathrm{rig}$ is isomorphic to the pullback along 
 $X \times \mathbb{A}^1_\mathrm{rig} \to X$ of some vector bundle over $X$.
 Then the assumptions of Theorem~\ref{ThmAHWIntro} would be satisfied and the conjecture would follow.
    
\subsection*{Structure of the article}
 The second section treats rigid analytic varieties and their vector bundles. In particular, we prove Serre--Swan theorems
 for rigid analytic varieties (Theorems~\ref{rigSerreSwan} and~\ref{ZarSerreSwan}).
  
 The third section treats the question of homotopy invariance with respect to an interval object~$I$.
 We give examples where \mbox{$I$-}\nobreak\hspace{0pt}invariance is violated as well as the positive results,
 in particular Theorem~\ref{rigA1inv}: Line bundles over a smooth $k$-rigid analytic variety are 
 \mbox{$\mathbb{A}^1_\mathrm{rig}$-}\nobreak\hspace{0pt}invariant.
 
 In the fourth section we define motivic homotopy theories for rigid analytic varieties
 in the spirit of Morel--Voevodsky's \mbox{$\mathbb{A}^1$-}\nobreak\hspace{0pt}homotopy theory and Ayoub's 
 \mbox{$\mathbb{B}^1$-}\nobreak\hspace{0pt}homotopy theory (Proposition~\ref{rudel}).
 We show that $\mathbb{A}^1_\mathrm{rig}$ is contractible in the 
 \mbox{$\mathbb{B}^1$-}\nobreak\hspace{0pt}homotopy category (Lemma~\ref{Manta}).
 
 In the fifth section we prove Theorem~\ref{ThmAHWIntro}/Theorem~\ref{ThmClass}, following the proof
 of the corresponding theorem in $\mathbb{A}^1$-homotopy theory by Asok--Hoyois--Wendt.
 Now we can deduce Theorem~\ref{ThmClassIntro}/Theorem~\ref{ThmClassLines}.
 
\subsection*{Relation to PhD thesis}
  This is an updated and abridged version of the author's PhD thesis~\cite{Sigloch16} at the University of Freiburg.
  The thesis is publicly available online at \url{https://freidok.uni-freiburg.de/data/11742}.
  You may want to read parts of the thesis instead of this article if you wish for more
  background about rigid analytic varieties or about model categories.
  You might also want to take a look if you are interested in the divisor class group.
  You can find a survey of known results about the divisor class group, 
  its relation to the Picard group and homotopy invariance there. 
  All new results of the thesis are contained in this article.
 
 \subsection*{Acknowledgements}
  First of all, I want to thank Matthias Wendt for advise and patience, for giving me this intriguing question 
  and making me learn a lot of very beautiful mathematics.
  I thank Annette Huber-Klawitter for approving of this project and this way making the whole project possible
  in the first place.
  I thank
  Peter Arndt, Aravind Asok, Joseph Ayoub, Federico Bambozzi, Antoine Ducros, Carlo Gasbarri, Fritz H{\"o}rmann,
  Annette Huber-Klawitter, Marc Levine, Werner L\"{u}tkebohmert, Florent Martin, Vytautas Pa\v{s}k\={u}nas,
  J\'{e}r{\^o}me Poineau, Dorin Popescu, Shuji Saito, Marco Schlichting, 
  Georg Tamme, Konrad V{\"o}lkel and Matthias Wendt for discussions and  
  J\'{e}r\^{o}me Poineau for pointing out a very stupid mistake.
  I thank Maximilian Schmidtke and Eva Nolden for reading a draft of parts of my PhD thesis and pointing out typos and bad language.
  Once again, I thank Matthias Wendt for reading several versions of the thesis and the article carefully and for his feedback
  concerning both mathematics, style and the exposition.
  I thank Jens, Konrad, Matthias, Oliver and Shane for improving the introduction.
  I am very grateful to Joseph Ayoub for acting as referee for this thesis, for reading it thoroughly and for his detailed comments.

\tableofcontents

\section{Rigid analytic varieties and their vector bundles}
 \subsection{Preliminaries and conventions}
 Throughout this article we use the following conventions:
 All rings are commutative with 1.
 All algebras are associative with 1.
 All valuations are non-trivial.
 All rigid analytic varieties are separated.

 Let $k$ always be a complete, non-archimedean, non-trivially valued field.
 For the theory of rigid analytic
 varieties we refer to Bosch--G\"untzer--Remmert \cite{BGR} and Fresnel--van der Put \cite{FresnelPut}.
 The reader who is not familiar with rigid analytic varieties might prefer to read chapter~1 of 
 \href{https://freidok.uni-freiburg.de/data/11742}{the author's thesis} \cite{Sigloch16} instead of this section.
 \begin{Notation}
 We denote the Tate algebra by
  \begin{align*}
   k \langle T_1 , \dotsc , T_n \rangle &:= \left\{ \sum _{i= ({i_1}, \dotsc , {i_n}) \in \N ^n} a_i T_1^{i_1} \dotsm T_n^{i_n} 
      \middle| a_i \in k \text{ for all $i$ and } \vert a_i \vert \xrightarrow{i_1+ \dotsb + i_n \to \infty} 0 \right\}\\
   \text{and set} \quad \B ^1 &:= \operatorname{Sp} (k \langle T \rangle ).
  \end{align*}
 \end{Notation}
 
 Recall that affinoid varieties have a canonical reduction:
 Let $A$ be an affinoid $k$-algebra. The \emph{reduction} of an affinoid $k$-algebra $A$ is
 $\tilde{A} :=  A^\circ / A^{\circ\circ}$
 where $A^\circ \subset A$ is the subring of power-bounded elements and $A^ {\circ \circ}$ the set of topologically nilpotent
 elements of $A$. The set $A^{\circ \circ}$ is an ideal in $A^\circ$. 
 Reduction defines a functor from the category of affinoid $k$-algebras to the category of
 affine $\tilde{k}$-algebras \cite[6.3]{BGR}.

 \begin{Def}
  The formal scheme $\mathcal{X} := \operatorname{Spf} (A^\circ )$ is called the 
  \emph{canonical model} of $X = \Sp A$.
  The special fibre of $\mathcal{X}$ is denoted by $\tilde{X}^c$.
  It is called the \emph{canonical reduction} of $X$.
 \end{Def}
 
 \subsection{Quasi-Stein spaces and their algebras}
 Complex Stein spaces are complex spaces which have many global holomorphic functions.
 Therefore they have the good properties needed for Grauert's classification of holomorphic vector bundles.
 For example, they satisfy Cartan's theorems A and B. 
 For further reading about complex Stein spaces, we refer to Grauert--Remmert~\cite{GrauertRemmert04} and 
 Forstneri{\v{c}}~\cite{Forstneric11}.
 
 One non-archimedean analogue of complex Stein spaces are Kiehl's quasi-Stein spaces:
 \begin{Def}[quasi-Stein {\cite[Definition~2.3]{Kiehl67}}]\label{DefStein}
  A rigid analytic space $X$ is \emph{quasi-Stein}\index{rigid analytic variety!quasi-Stein}\index{quasi-Stein}
  if there is an admissible covering by open affinoid subspaces 
  \begin{equation*}
   U_1 \subset U_2 \subset U_3 \subset \dotsb
  \end{equation*}
  such that for all $i$, the image of $\mathcal{O}_X (U_{i+1})$ is dense in $\mathcal{O}_X (U_i)$.
 \end{Def}
 \begin{Ex}[The affine $n$-space $\mathbb{A}^n_\text{rig}$ {\cite[9.3.4 Example 1]{BGR}}]\label{DefA1rig}\index{$\mathbb{A}^n_\text{rig}$}
  Let $\eta \in k$ with $\vert \eta \vert > 1$ and $i, n \in \N$. Set
  \begin{equation*}
   A_i := k \langle \eta ^{-i} T_1, \dotsc , \eta ^ {-i} T_n \rangle .
  \end{equation*}
  This means that $\operatorname{Sp} (A_i) = \mathbb{B}^ n_{\vert \eta \vert ^i}$ is the $n$-dimensional ball of polyradius 
  $\vert \eta \vert ^i$.
  The algebras $A_i$ form a chain
  \begin{equation*}
   k \langle T_1 , \dotsc , T_n \rangle = A_0 \supsetneq A_1 \supsetneq A_2 \supsetneq \dotsb \supsetneq k [T_1 , \dotsc , T_n ]
  \end{equation*}
  and the inclusions $A_i \supset A_{i+1}$ correspond to inclusions of rational subsets
  \begin{equation*}
   \mathbb{B}^n _{\vert \eta \vert ^i} \subset \mathbb{B}^n _{\vert \eta \vert ^{i+1}}.
  \end{equation*}
  The \emph{affine $n$-space} $\mathbb{A}^n_\text{rig}$ is defined as the colimit
  \begin{equation*}
   \mathbb{A}^n_\text{rig} := \bigcup _{i \in \mathbb{N}} \mathbb{B}^n _{\vert \eta \vert ^i}
  \end{equation*}
  of all those inclusions of rational subsets. It does not depend on the choice of the element $\eta$.
  The affine spaces $\mathbb{A}^n_\text{rig}$ are quasi-Stein.
 \end{Ex}
  
 \begin{Thm}[Kiehl {\cite[Satz~2.4]{Kiehl67}}]\label{ThmAB}
  Let $X$ be a quasi-Stein space and $\{ U_i \} _{i \in \mathbb{N}}$ an admissible covering as in Definition 
  \ref{DefStein}. Let $\mathcal{G}$ be a coherent sheaf on $X$.
   Then the following hold:
  \begin{enumerate}
   \item The image of $\mathcal{G} (X)$ is dense in $\mathcal{G} (U_i)$ for all $i$.
   \item The cohomology groups $H^ i (X, \mathcal{G} )$ vanish for $i > 0$ (Theorem~B).
   \item\label{ThmA} For each $x \in X$, the image of $\mathcal{G} (X)$ in the stalk $\mathcal{G}_x$ 
      generates this stalk as an $\mathcal{O}_{X,x}$-module (Theorem~A).
  \end{enumerate}
 \end{Thm}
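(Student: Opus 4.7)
My plan is to reduce to the Čech cohomology computed with the given increasing affinoid covering $\{U_i\}_{i \in \mathbb{N}}$, and then extract all three conclusions from a single Mittag-Leffler argument powered by the density hypothesis.

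First I would establish that the Čech complex associated to the countable nested covering $U_1 \subset U_2 \subset \cdots$ reduces to a two-term complex: because this is a totally ordered cover of an increasing union, and because each $U_i$ and each intersection $U_i \cap U_j = U_{\min(i,j)}$ is affinoid with $H^{>0}(U_i, \mathcal{G}) = 0$ by Kiehl's acyclicity theorem for coherent sheaves on affinoids, the Čech-to-derived spectral sequence degenerates and yields
\begin{equation*}
 H^0(X, \mathcal{G}) = \varprojlim_i \mathcal{G}(U_i), \qquad H^1(X, \mathcal{G}) = {\varprojlim_i}^{1} \mathcal{G}(U_i),
\end{equation*}
with higher cohomology vanishing automatically. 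So part (b) reduces to showing that the inverse system $(\mathcal{G}(U_i))_i$ has vanishing $\varprojlim{}^1$.

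The central step is then to verify the Mittag-Leffler condition for this system, and simultaneously to prove the density statement (a). Each $\mathcal{G}(U_i)$ is a finitely generated module over the affinoid algebra $\mathcal{O}_X(U_i)$, hence carries a canonical Banach topology, and the restriction maps are continuous. The hypothesis says the image of $\mathcal{O}_X(U_{i+1}) \to \mathcal{O}_X(U_i)$ is dense; I would upgrade this to density of $\mathcal{G}(U_{i+1}) \to \mathcal{G}(U_i)$ by choosing a finite presentation of $\mathcal{G}|_{U_{i+1}}$ and tracking generators. The strategy for both (a) and the Mittag-Leffler condition is then the classical telescoping trick: given $s \in \mathcal{G}(U_i)$ one constructs, by successive approximation within chosen $\varepsilon_n \to 0$, a compatible sequence $s_n \in \mathcal{G}(U_{i+n})$ with $s_n|_{U_i}$ converging to $s$; completeness of $\mathcal{G}(U_i)$ in its Banach topology then produces a global section with the desired restriction, yielding both density and the fact that the image of $\mathcal{G}(U_j) \to \mathcal{G}(U_i)$ stabilises (in fact is already dense) for $j \geq i+1$.

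Finally, part (c) follows from (a) and (b): given $x \in X$, pick an $i$ with $x \in U_i$; as $\mathcal{G}(U_i)$ is finitely generated over the Noetherian ring $\mathcal{O}_X(U_i)$, it generates the stalk $\mathcal{G}_x$ over $\mathcal{O}_{X,x}$. By (a), any finite set of generators can be approximated arbitrarily well by global sections, and a standard Nakayama-style argument on the finitely generated $\mathcal{O}_{X,x}$-module $\mathcal{G}_x$ (using that sufficiently close approximations agree modulo the maximal ideal) shows the approximating global sections already generate the stalk.

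The main obstacle I anticipate is the telescoping approximation argument in the second step. One must be careful to choose the tolerances $\varepsilon_n$ compatibly with the Banach norms on the different $\mathcal{G}(U_i)$, whose norms are only defined up to equivalence and whose restriction maps need not be norm-decreasing in a uniform way; the open mapping theorem for affinoid Banach modules, together with a choice of a fixed finite presentation on each $U_{i+1}$, is what makes the iterative construction converge.
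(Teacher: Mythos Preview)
The paper does not give its own proof of this theorem: it is stated with attribution to Kiehl \cite[Satz~2.4]{Kiehl67} and used as a black box throughout. So there is no proof in the paper to compare against.

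That said, your sketch is essentially Kiehl's original argument. The reduction of cohomology for a countable nested affinoid covering to $\varprojlim$ and $\varprojlim^{1}$, the topological Mittag--Leffler argument using density of restriction maps and completeness of the Banach modules $\mathcal{G}(U_i)$, and the Nakayama-type deduction of Theorem~A from the density statement are exactly the ingredients in \cite{Kiehl67}. One small point: the ``Mittag--Leffler condition'' you invoke is not the usual set-theoretic one (stabilising images) but the topological version (dense images in a tower of complete metrisable groups), and it is this topological variant that forces $\varprojlim^{1}=0$; you have in effect already noted this by highlighting the norm-compatibility issue as the main obstacle, and your remedy via the open mapping theorem is the right one.
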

 
 \begin{Def}[Quasi-Stein algebra]
  Let $X$ be a rigid analytic quasi-Stein variety. The $k$-algebra $\mathcal{O}_X (X)$ of global functions on $X$
  is called a \emph{quasi-Stein algebra}\index{quasi-Stein algebra}.
 \end{Def}
 Quasi-Stein algebras are Fr{\'e}chet algebras, but in general not Banach. They are complete, but their topology does not necessarily
 arise from a norm. As the following example shows, quasi-Stein algebras are in general not noetherian. 
 \begin{Ex}[compare {\cite[Remark 3 on p.~179]{GrauertRemmert04}}]
  Let $X$ be a quasi-Stein space that contains an infinite discrete subset $D$ which does not have a limit point in $X$. 
  Let $A = \mathcal{O}_X (X)$ be the corresponding quasi-Stein algebra.
  The non-archimedean analogue of Weierstrass' product theorem holds: Given an infinite discrete subset $D' \subset X$ which
  does not have a limit point in $X$ and an assignment
  \begin{align*}
   D' &\longrightarrow \mathbb{N}\\
   d &\longmapsto m(d)
  \end{align*}
  there exists a function $f \in \mathcal{O}_X (X)$ such that for all $d \in D'$, the function $f$ has a zero in $d$ of multiplicity $m(d)$.
  The proof is the same as in the complex case, e.\,g.~as in Rudin's book~\cite{Rudin87}. 
  The elements of the ideal 
  \begin{equation*}
   I = \{ f \in A \mid f(x) = 0 \text{ for almost all } x \in D \}
  \end{equation*}
  have no common zeroes. So a maximal ideal $\mathfrak{m}$ containing $I$ cannot be finitely generated.
 \end{Ex} 
 Quasi-Stein algebras may not be noetherian but they still behave very much like affinoid algebras:
 \begin{Thm}[Bambozzi--Ben Bassat--Kremnizer]\label{ThmFederico}
  The category of $k$-rigid analytic quasi-Stein varieties and the category of quasi-Stein $k$-algebras are contravariantly equivalent.
 \end{Thm}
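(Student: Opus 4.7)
The plan is to show that the global sections functor $F\colon X \mapsto \mathcal{O}_X(X)$ is the desired anti-equivalence. Essential surjectivity is built into the definition of a quasi-Stein algebra---every such algebra arises from some quasi-Stein variety by fiat---so the real content is full faithfulness. Fix quasi-Stein exhaustions $X = \bigcup_i U_i$ and $Y = \bigcup_j V_j$ as in Definition~\ref{DefStein}. By Theorem~\ref{ThmAB} and the density condition, the global sections realise as the Fr\'echet algebras $A = \mathcal{O}_X(X) = \varprojlim_i \mathcal{O}_X(U_i)$ and $B = \mathcal{O}_Y(Y) = \varprojlim_j \mathcal{O}_Y(V_j)$; the morphisms in the category of quasi-Stein algebras are continuous $k$-algebra homomorphisms.

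For faithfulness, suppose $f, g \colon X \to Y$ induce the same $\phi = f^\ast = g^\ast \colon B \to A$. Each $U_i$ is affinoid, so $f|_{U_i}$ and $g|_{U_i}$ are morphisms from an affinoid variety into $Y$. Admissibility of $\{V_j\}$ together with quasi-compactness of $U_i$ lets both factor through a common $V_{j(i)}$ for $j(i)$ large enough. The two induced maps $\mathcal{O}_Y(V_{j(i)}) \to \mathcal{O}_X(U_i)$ then both equal the composite $\mathcal{O}_Y(V_{j(i)}) \leftarrow B \xrightarrow{\phi} A \to \mathcal{O}_X(U_i)$, so Tate's anti-equivalence for affinoids forces $f|_{U_i} = g|_{U_i}$, and admissibility of $\{U_i\}$ yields $f = g$.

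For fullness, start with a continuous homomorphism $\phi \colon B \to A$ and compose with the restriction $A \to \mathcal{O}_X(U_i)$ to obtain $\phi_i \colon B \to \mathcal{O}_X(U_i)$. The crucial claim is that $\phi_i$ factors through $\mathcal{O}_Y(V_{j(i)}) \to \mathcal{O}_X(U_i)$ for some $j(i)$. Granting this, Tate's anti-equivalence produces morphisms $f_i \colon U_i \to V_{j(i)} \hookrightarrow Y$; these are compatible on overlaps by the faithfulness argument already established, so they glue to a global $f \colon X \to Y$ with $f^\ast = \phi$.

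The main obstacle is the factorisation claim, which is the non-archimedean counterpart of the fact that a holomorphic map out of a compact space lands in a compact part of a Stein target. I would argue it by realising each $V_j$ as a Weierstrass (or, more generally, rational) subdomain cut out from a larger affinoid in $Y$ by finitely many inequalities $\vert h_\ell \vert \leq c_\ell$ with $h_\ell \in B$. Continuity of $\phi$ with respect to the Fr\'echet seminorms on $B$ and the Banach norm on $\mathcal{O}_X(U_i)$ gives uniform estimates on $\vert \phi(h_\ell) \vert$ in $\mathcal{O}_X(U_i)$; for $j(i)$ sufficiently large, these estimates place $\phi_i(B)$ inside the image of $\mathcal{O}_Y(V_{j(i)})$. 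Equivalently, in Berkovich language, the induced continuous map $\mathcal{M}(\mathcal{O}_X(U_i)) \to \mathcal{M}(B)$ has compact image in the ascending union $Y = \bigcup_j V_j$, hence must land in some $V_{j(i)}$. Establishing this bound rigorously---in particular ruling out pathological continuous algebra maps with unbounded Gelfand behaviour---is the step where a careful analysis of the Fr\'echet topology on $B$ is needed.
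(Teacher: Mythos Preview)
The paper does not actually supply a proof of this theorem: it simply records that the result was communicated privately by Bambozzi and that the argument of \cite[Theorem~4.25]{BambozziBenBassatKremnizer15} carries over. So there is nothing to compare your proposal against line by line. That said, your outline is exactly the classical Forster-style argument (cf.\ Remark~\ref{Remcateq}), and it is correct in structure.

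On the one place you flag as a gap: your first suggestion---bounding finitely many chosen $h_\ell$ one at a time---is not quite the right mechanism, but the underlying idea is sound and can be made precise cleanly. Since $B=\varprojlim_j \mathcal{O}_Y(V_j)$ carries the Fr\'echet topology whose defining seminorms are the sup norms $\|\cdot\|_{V_j}$, continuity of $\phi_i\colon B\to\mathcal{O}_X(U_i)$ into a Banach algebra forces the existence of some $j$ and $C>0$ with $\|\phi_i(b)\|_{U_i}\le C\,\|b\|_{V_j}$ for all $b\in B$. The map therefore extends by continuity to the completion of the image of $B$ in $\mathcal{O}_Y(V_j)$, which by the quasi-Stein density condition is all of $\mathcal{O}_Y(V_j)$. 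Density also guarantees that the extension is a $k$-algebra homomorphism, and then the affinoid anti-equivalence applies. This disposes of your worry about ``pathological continuous algebra maps with unbounded Gelfand behaviour'': continuity into a Banach target already pins down a single seminorm. Your Berkovich alternative also works once one knows that $Y$ agrees with the analytic spectrum of $B$, but the Fr\'echet argument is more self-contained.

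One small point in your faithfulness paragraph: to conclude that the two induced maps $\mathcal{O}_Y(V_{j(i)})\to\mathcal{O}_X(U_i)$ coincide, you need to invoke that $B\to\mathcal{O}_Y(V_{j(i)})$ has dense image (again the quasi-Stein condition) together with continuity; agreement after precomposition with a non-surjective map is not enough on its own.
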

 This was proven by Federico Bambozzi (private communication).
 The same proof as for \cite[Theorem~4.25]{BambozziBenBassatKremnizer15} works.
 
 \begin{Rem}\label{Remcateq}  
  In this respect, quasi-Stein algebras are analogous to affine algebras in algebraic geometry and Stein algebras in complex analysis.
  In complex analysis, Stein algebras were introduced by Grauert \cite[§2, Definition 1]{Grauert63}.
  They were intensively studied by Forster \cite{Forster64, Forster67}. Forster proved in particular that 
  the category of complex Stein spaces and the category of complex Stein algebras are contravariantly equivalent \cite[§1, Satz 1]{Forster67}.
  The corresponding statement for affine algebraic varieties (or even schemes) is a tautology.
 \end{Rem}
  
 \begin{Rem}
  \begin{enumerate}
   \item\label{RemSteina} Whereas the notion of a non-archimedean quasi-Stein space is unambiguous, 
      there are several different types of non-archimedean spaces
      which bear the name \emph{Stein space}: 
      The original and most restrictive notion was defined by Kiehl~\cite[Definition~2.3]{Kiehl67}.
      We call these spaces \emph{Kiehl--Stein spaces}.
      Kiehl--Stein spaces were intensively studied by L\"utkebohmert~\cite{Lutke73}.
      Kiehl--Stein spaces are in particular quasi-Stein.
      
      A weaker notion of Steinness was introduced by Liu \cite{Liu88,Liu89,Liu90}. 
      Liu calls a rigid analytic space $X$ Stein if for every coherent sheaf $\mathcal{F}$ on $X$ all higher cohomology
      groups $H^i(X, \mathcal{F}), i \geq 1,$ vanish. In order to distinguish these spaces from Kiehl--Stein spaces
      we call them \emph{cohomologically Stein spaces}. Liu constructed compact cohomological Stein spaces which are not affinoid.
      But every compact quasi-Stein space is affinoid by definition,
      hence Liu's spaces are not quasi-Stein. Liu moreover proved the analogue of Theorem~\ref{ThmFederico} 
      for compact cohomological Stein spaces and their algebras
      \cite[Propositions 1.3 and 3.2]{Liu90}.
   \item Correspondingly, there are different notions of Stein algebras in non-archimedean geometry.
      Kiehl--Stein algebras appear implicitly in L\"utkebohmert's 1973 article \cite{Lutke73}.
      Liu--Stein algebras, i.\,e., the algebras of global functions on compact cohomologically Stein spaces, appear in Liu's work.
      We already discussed this in \ref{RemSteina}).
      More general Fr{\'e}chet--Stein algebras and their coadmissible modules 
      were introduced by Schneider and Teitelbaum \cite{SchneiderTeitelbaum03} 
      in the context of $p$-adic analytic groups and Langlands theory. 
   \item There are approaches to unify the archimedean and the non-archimedean theory:
      Poineau constructed Berkovich spaces over $\mathbb{Z}$, 
      capturing both the archimedean and the non-archimedean theory \cite{Poineau13}.
      He works with cohomologically Stein spaces.
      Bambozzi--Ben Bassat--Kremnizer investigate the topology of Kiehl--Stein spaces 
      over any valued base field \cite{BambozziBenBassatKremnizer15}.
      They work with dagger spaces to unify the approaches.
  \end{enumerate}
 \end{Rem}
 
\subsection{Flat morphisms}
 A morphism of affinoid algebras is called \emph{flat} if it is flat as a morphism of rings.
 \begin{Lemma}\label{flats}
  Let $A$ be an affinoid algebra.
  Then $A \rightarrow A \langle T \rangle$ is flat.
  If $\vert f \vert \leq 1$, then $A \rightarrow A \langle f^{-1} \rangle$ is flat.
  Inclusion of an affinoid subdomain is flat.
 \end{Lemma}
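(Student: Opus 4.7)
The plan is to handle each of the three claims separately, using that affinoid algebras are noetherian so flatness reduces to checking $\Tor_1^A(A/I, B)=0$ for all finitely generated ideals $I \subset A$, or equivalently that $-\otimes_A B$ preserves injections of finitely generated $A$-modules.

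For the first claim, I would identify $M \otimes_A A\langle T\rangle$ with the space $M\langle T\rangle$ of restricted power series having coefficients in $M$, for any finitely generated $A$-module $M$. Given an injection $M' \hookrightarrow M$ of finitely generated $A$-modules, the induced map $M'\langle T\rangle \to M\langle T\rangle$ is visibly injective term-by-term, which yields flatness. The nontrivial input is the identification $M \otimes_A A\langle T\rangle \cong M\langle T\rangle$ itself; this is proved by choosing a presentation $A^p \to A^q \to M \to 0$, applying $- \otimes_A A\langle T\rangle$, and noting that by the open-mapping theorem for Banach $A$-modules (which relies on noetherianness in the affinoid setting), the resulting cokernel is naturally isomorphic to $M\langle T\rangle$. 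Induction on the number of Tate variables then covers $A \to A\langle T_1, \dotsc, T_n\rangle$.

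For the second claim, one writes $A\langle f^{-1}\rangle$ either as the quotient $A\langle T\rangle/(fT-1)$, or alternatively as the completion of the Laurent localisation $A[f^{-1}]$ in the Gauss norm. Using the second description, flatness of $A \to A\langle f^{-1}\rangle$ factors as the composition $A \to A[f^{-1}] \to A\langle f^{-1}\rangle$: the first factor is a localisation and hence trivially flat, while the second factor is a completion step handled by exactly the same argument as in the first claim, applied now to the Banach $A[f^{-1}]$-algebra $A\langle f^{-1}\rangle$.

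For the third claim, the Gerritzen--Grauert theorem expresses any affinoid subdomain of $\Sp A$ as a finite union of rational subdomains, and since flatness is local on the target it suffices to handle the rational case. A rational subdomain has algebra of the form $A\langle T_1, \dotsc, T_n\rangle/(gT_i - f_i)_{i=1}^n$ where $g, f_1, \dotsc, f_n$ have no common zero on the subdomain; the inclusion factors as $A \to A\langle T_1, \dotsc, T_n\rangle$ (flat by the first claim) followed by the quotient, whose flatness is deduced from the no-common-zero hypothesis via a Koszul-type computation showing $(gT_i - f_i)_{i=1}^n$ is a regular sequence on $A\langle T_1, \dotsc, T_n\rangle$. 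The main obstacle throughout is the identification of completed tensor products with spaces of restricted series in the first step; once that is set up correctly, all other cases reduce to standard bookkeeping.
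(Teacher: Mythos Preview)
Your arguments for the first two claims are correct but substantially more elaborate than the paper's. The paper dispatches both in one line: $A \to A[T]$ and $A \to A_f$ are flat (polynomial extension and localisation), and the passage to $A\langle T\rangle$, respectively $A\langle f^{-1}\rangle$, is completion, which is faithfully flat by the cited Bourbaki result. What you do is effectively unpack that completion step by hand via the identification $M \otimes_A A\langle T\rangle \cong M\langle T\rangle$; this is more self-contained and avoids any worry about whether the Bourbaki theorem on adic completions applies verbatim to the Gauss-norm completion, but it is also considerably longer.

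For the third claim the paper simply cites Bosch. Your route via Gerritzen--Grauert and a Koszul computation is a genuine alternative, but the key step---that $(gT_i - f_i)_{i=1}^n$ is a regular sequence in $A\langle T_1,\dotsc,T_n\rangle$---is not obvious and you have not justified it; the no-common-zero hypothesis is the condition that $g,f_1,\dotsc,f_n$ generate the unit ideal in $A$ (not merely on the subdomain), and turning that into a regular-sequence statement requires real work. The standard and much cleaner argument, which is what lies behind the Bosch reference, is that for any affinoid subdomain $U = \Sp B \subset \Sp A$ and any $x \in U$, the induced map on local rings $\mathcal{O}_{\Sp A,x} \to \mathcal{O}_{\Sp B,x}$ is an isomorphism (a consequence of the universal property of affinoid subdomains), and flatness then follows because it can be checked stalkwise on the target.
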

 \begin{proof}
  The morphisms $A \rightarrow A[T]$ and $A \rightarrow A_f$ are flat. 
  Completion is faithfully flat \cite[§3, no.~4 Th\'eor\`eme~3 and no.~5 Proposition~9]{BourbakiAlgComm3}.
  Inclusion of an affinoid subdomain is flat \cite[4.1 Corollary~5]{Bosch14}.
 \end{proof}
 \begin{Lemma}\label{flatStein}
  Let
  \begin{equation*}
   X = \operatorname{colim} ( U_1 \subset U_2 \subset U_3 \subset \dotsb )
  \end{equation*}
  be a quasi-Stein variety with 
  $U_i = \Sp A_i$.
  Then $A := \mathcal{O}_X(X) \rightarrow A_i$ is flat for all $i$.
 \end{Lemma}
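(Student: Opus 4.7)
The plan is to verify flatness via the Tor vanishing criterion. Since tensor product and $\Tor$ commute with filtered colimits, and since every $A$-module is a filtered colimit of finitely presented submodules, flatness of $A\to A_i$ is equivalent to showing that $\Tor_1^A(A/\mathfrak{a}, A_i)=0$ for every finitely generated ideal $\mathfrak{a}\subset A$, i.e.\ that the canonical map $\mathfrak{a}\otimes_A A_i\to A_i$ is injective.

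The two key inputs are Lemma \ref{flats} and Theorem \ref{ThmAB}. First, Lemma \ref{flats} gives that every restriction map $A_j\to A_i$ for $j\geq i$ is flat, since it arises from the affinoid subdomain inclusion $U_i\subset U_j$. In particular, for any finitely generated ideal $\mathfrak{b}\subset A_j$, the extension $\mathfrak{b}\otimes_{A_j} A_i\to A_i$ is injective with image $\mathfrak{b}A_i$. Second, Theorem \ref{ThmAB} together with the density hypothesis from Definition \ref{DefStein} will let us identify the tensor product $A/\mathfrak{a}\otimes_A A_i$ with sections of a coherent sheaf on $U_i$.

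Concretely, given $\mathfrak{a}=(f_1,\ldots,f_n)\subset A$, set $\mathfrak{a}_j:=\mathfrak{a}A_j\subset A_j$. By flatness of $A_{j+1}\to A_j$, these form a compatible system ($\mathfrak{a}_j=\mathfrak{a}_{j+1}A_j$) and glue to a coherent ideal sheaf $\mathcal{I}\subset\mathcal{O}_X$ with $\mathcal{I}(U_j)=\mathfrak{a}_j$. By Theorem \ref{ThmAB}(b) the system $(\mathfrak{a}_j)_j$ has vanishing $\varprojlim^1$, and by Theorem \ref{ThmAB}(a) combined with density, one shows $\mathfrak{a}=\Gamma(X,\mathcal{I})=\lim_j\mathfrak{a}_j$ and analogously $A/\mathfrak{a}=\Gamma(X,\mathcal{O}_X/\mathcal{I})=\lim_j A_j/\mathfrak{a}_j$. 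This yields an isomorphism $A/\mathfrak{a}\otimes_A A_i\cong A_i/\mathfrak{a}_i$, and the injectivity of $\mathfrak{a}\otimes_A A_i\to A_i$ follows because its image is $\mathfrak{a}_i$ and the kernel is detected inside $\mathfrak{a}_j\otimes_{A_j}A_i$ (for any $j\geq i$ that witnesses a Tor-element), which vanishes by the flatness step.

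The most delicate point is the identification $\mathfrak{a}=\lim_j\mathfrak{a}_j$: given a coherent family $(g^{(j)})_j\in\lim_j\mathfrak{a}_j$, one must produce $g\in\mathfrak{a}$ mapping to it. This requires the density of the transition maps to perform a Mittag-Leffler approximation in the $A$-module structure, together with Theorem A to ensure that the generators $f_1,\ldots,f_n$ control the sections globally. Everything else is bookkeeping with short exact sequences and base change along the flat maps $A_j\to A_i$.
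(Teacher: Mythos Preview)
Your overall strategy matches the paper's (both follow Gruson): reduce to injectivity of $\mathfrak{a}\otimes_A A_i\to A_i$ for a finitely generated ideal $\mathfrak{a}=(f_1,\dots,f_n)$, pass to the associated coherent sheaf, and combine flatness of the restrictions $A_j\to A_i$ with Kiehl's Theorem~\ref{ThmAB}. The gap is in your sentence ``the kernel is detected inside $\mathfrak{a}_j\otimes_{A_j}A_i$ (for any $j\geq i$ that witnesses a Tor-element), which vanishes by the flatness step.'' This treats $A$ as if it were a filtered \emph{colimit} of the $A_j$, so that a $\Tor$-class over $A$ would come from some finite stage; but $A=\varprojlim_j A_j$ is an inverse limit and no such detection is available. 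The natural comparison map $\mathfrak{a}\otimes_A A_i\to\mathfrak{a}_j\otimes_{A_j}A_i$ is surjective, and its injectivity is precisely the flatness of $A\to A_j$ you are trying to prove, so the step is circular. (Also, $A/\mathfrak{a}\otimes_A A_i\cong A_i/\mathfrak{a}_i$ is a tautology for any ring map and any ideal; it does not use the limit description and contributes nothing.)

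Here is the missing argument. Put $K:=\ker(A^n\to A,\ e_k\mapsto f_k)$ and $K_j:=\ker(A_j^n\to A_j,\ e_k\mapsto f_k)$. Right exactness of $\otimes_A A_i$ applied to $0\to K\to A^n\to\mathfrak{a}\to 0$ shows that injectivity of $\mathfrak{a}\otimes_A A_i\to A_i$ is equivalent to $K\!\cdot\!A_i=K_i$ inside $A_i^n$. The assignment $U_j\mapsto K_j$ is the coherent sheaf $\mathcal{K}=\ker(\mathcal{O}_X^n\to\mathcal{O}_X)$, and $K=\Gamma(X,\mathcal{K})$ by left exactness of $\varprojlim$. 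By Theorem~\ref{ThmAB} part~a) the image of $K$ in $K_i$ is dense; since $A_i$ is a Noetherian Banach algebra, every $A_i$-submodule of $A_i^n$ is closed \cite[§3.7.3]{BGR}, so $K\!\cdot\!A_i$ is closed and dense in $K_i$, hence equal to it. This is what the paper's terse appeal to Theorem~B and the vanishing of $\varprojlim^{1}$ actually encodes.
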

 \begin{proof}
  Using Kiehl's Theorem~B \cite[Satz 2.4]{Kiehl67}, the proof works just as Gruson's proof of \cite[V, Corollaire 1]{Gruson68}:
  Let $I \subset A$ be any finitely generated ideal. We want to show that the morphism
  \begin{equation}\label{flatthmbeq}
   I \otimes _A A_i \longrightarrow A \otimes _A A_i
  \end{equation}
  is injective. Then $A_i$ is flat over $A$ by \cite[tag~00HD]{stacks-project}.
  
  The ideal $I$ gives rise to a coherent sheaf $\mathcal{I}$ on $X$. For each $j \geq i$,
  $A_j$ is flat over $A_i$ by Lemma~\ref{flats}. Therefore, for each $j \geq i$,
  the morphism
  \begin{equation*}
   \mathcal{I} (U_j) \otimes _{A_j} A_i \longrightarrow A_j \otimes _{A_j} A_i
  \end{equation*}
  is injective. 
  By Kiehl's Theorem~B \cite[Satz 2.4]{Kiehl67}, the derived limit $\operatorname{lim} ^ 1 _j  (\mathcal{I} (U_j))$ vanishes
  and hence the morphism \eqref{flatthmbeq} is injective, too.
 \end{proof}
 
 We want to use Lemma~\ref{flatStein} in combination with the following lemma. It is the Banach version of a well-known theorem
 in commutative algebra.
 \begin{Lemma}\label{Lam}
  Let $A \overset{\phi}{\longrightarrow} A'$ be a flat morphism of commutative Banach algebras. Let $M$ be a
  finitely presented complete normed $A$-module and $N$ a finitely generated complete normed $A$-module. 
  Then the natural map
  \begin{equation*}  
   \hat{\sigma} \colon A' \hat{\otimes} _A \Hom _A (M,N) \longrightarrow Hom _{A'} (A' \hat{\otimes} _A M,
   A' \hat{\otimes} _A N)
  \end{equation*}
  is an isomorphism.
 \end{Lemma}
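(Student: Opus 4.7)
The plan is to reduce to the classical algebraic identity by means of a finite presentation of $M$ and then track the completions carefully.

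First I would pick a finite presentation of $M$: since $M$ is a finitely presented complete normed $A$-module, choose a strict exact sequence $A^m \xrightarrow{\psi} A^n \to M \to 0$ in the category of complete normed $A$-modules (existence of a strict surjection comes from finite generation; the kernel is finitely generated because $M$ is finitely presented, and the open mapping theorem upgrades algebraic exactness to strictness). Applying the functor $\Hom_A(-, N)$ and using the standard identification $\Hom_A(A^n, N) \cong N^n$ as complete normed modules yields the left exact sequence
\begin{equation*}
 0 \longrightarrow \Hom_A(M, N) \longrightarrow N^n \xrightarrow{\psi^\ast} N^m.
\end{equation*}

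Now I would tensor this sequence with $A'$ over $A$ using the completed tensor product. The key input here is flatness of $\phi$: combined with the fact that $\hat{\otimes}_A$ agrees with $\otimes_A$ on finitely generated modules (because all the modules appearing are finitely generated and complete, so their algebraic tensor products are already complete by a standard argument using a finite presentation and closedness of the image of a morphism of finite Banach modules), flatness gives a left exact sequence
\begin{equation*}
 0 \longrightarrow A' \hat{\otimes}_A \Hom_A(M, N) \longrightarrow (A' \hat{\otimes}_A N)^n \longrightarrow (A' \hat{\otimes}_A N)^m.
\end{equation*}
On the other hand, applying $A' \hat{\otimes}_A -$ to the presentation of $M$ and using right exactness of the (completed) tensor product on strict sequences produces a strict presentation $(A')^m \to (A')^n \to A' \hat{\otimes}_A M \to 0$, and then $\Hom_{A'}(-, A' \hat{\otimes}_A N)$ applied to it gives the left exact sequence
\begin{equation*}
 0 \longrightarrow \Hom_{A'}(A' \hat{\otimes}_A M, A' \hat{\otimes}_A N) \longrightarrow (A' \hat{\otimes}_A N)^n \longrightarrow (A' \hat{\otimes}_A N)^m.
\end{equation*}

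The two sequences agree in their last two terms, the maps between them coincide with those induced by $\psi$ under the canonical identifications, and $\hat{\sigma}$ is exactly the induced comparison on kernels; the five lemma (or simply uniqueness of kernels) then yields that $\hat{\sigma}$ is an isomorphism. The main obstacle is the interplay between the algebraic flatness hypothesis on $\phi$ and the analytic operation $\hat{\otimes}_A$: I must justify that on finitely generated complete normed $A$-modules the completed tensor product agrees with the algebraic one (so that flatness of $\phi$ actually transports to exactness of $A' \hat{\otimes}_A -$ on the relevant sequences), and that the presentation of $M$ is strict enough for right exactness of $\hat{\otimes}$ to apply. Once this is set up, the rest is diagram chasing.
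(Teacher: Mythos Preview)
Your argument is correct and uses the same two ingredients as the paper: the coincidence of algebraic and completed tensor products (and of linear and continuous $\Hom$) on finitely generated complete normed modules, together with the classical algebraic isomorphism for $\Hom$ under flat base change. The difference is one of packaging. You unfold the classical proof of the algebraic statement---take a presentation of $M$, apply $\Hom(-,N)$, tensor up, compare kernels---while simultaneously invoking the analytic input at each step. The paper instead factors the problem cleanly: it writes down a commutative square
\[
\begin{array}{ccc}
A' \hat{\otimes}_A \Hom^{\mathrm{cont}}_A(M,N) & \xrightarrow{\hat{\sigma}} & \Hom^{\mathrm{cont}}_{A'}(A' \hat{\otimes}_A M, A' \hat{\otimes}_A N) \\
\uparrow \alpha & & \uparrow \beta \\
A' \otimes_A \Hom^{\mathrm{lin}}_A(M,N) & \xrightarrow{\sigma} & \Hom^{\mathrm{lin}}_{A'}(A' \otimes_A M, A' \otimes_A N)
\end{array}
\]
where $\alpha,\beta$ are bijections by \cite[§3.7.3, Propositions~2 and~6]{BGR} and $\sigma$ is the purely algebraic isomorphism of \cite[Proposition~I.2.13]{Lam06}. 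Three sides being isomorphisms forces the fourth. Your route has the advantage of being self-contained; the paper's has the advantage of isolating the analytic reduction from the algebraic core and invoking each by citation. Note that both arguments silently use that $\Hom_A(M,N)$ is itself finitely generated (so that the completed and algebraic tensor products with $A'$ agree on it); this holds in the intended applications since affinoid algebras are noetherian.
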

  To prove this, we want to use the fact that there is no essential difference between the category of
  finitely generated complete normed $A$-modules with continuous $A$-linear maps as morphisms and
  the category of finitely generated $A$-modules with $A$-linear maps as morphisms without demanding continuity \cite[§3.7.3]{BGR}.
  During the proof, Hom-sets in the first named category are denoted by $\Hom ^{\textit{cont}}$,
  morphisms being both linear and continuous. On the other hand, Hom-sets in the second named category 
  consist of morphisms that only have to be linear and will be denoted by $\Hom ^{\textit{lin}}$.
 \begin{proof}
  We only need to prove that $\hat{\sigma}$ is bijective.
  There is a commutative diagram
  \begin{equation*}
   \begin{xy} \xymatrix{
    A' \hat{\otimes} _A \Hom ^{\textit{cont}} _A (M,N) \ar[r]^{\hat{\sigma}} & 
       \Hom _{A'}^{\textit{cont}} (A' \hat{\otimes}_A M , A' \hat{\otimes}_A N) \\
    A' \otimes _A \Hom _A^{\textit{lin}} (M,N) \ar[u]^\wr _\alpha \ar[r]^\sim _\sigma &
       \Hom _{A'} ^{\textit{lin}} (A' \otimes _A M, A' \otimes _A N) \ar[u]^\wr _\beta
   } \end{xy} .
  \end{equation*}
  The maps $\alpha$ and $\beta$ are bijections by \cite[§3.7.3 Proposition~2, Proposition~6]{BGR}. The map $\sigma$ is 
  an isomorphism by \cite[Proposition~I.2.13]{Lam06} or \cite[Proposition~2.10]{Eisenbud}. Thus, $\hat{\sigma}$ is bijective, too.
 \end{proof}
 
  \begin{Def}[{\cite{AyoubB1}}]
  \begin{enumerate}
    \item An affinoid $k$-variety $X= \operatorname{Sp} A$ is called \emph{regular in $x \in X$} if 
      $\mathcal{O}_{X,x}$ is regular. Furthermore, $X$ is called \emph{smooth in $x$} if for every finite field extension
      $k \subset k'$, the affinoid $k'$-variety $\operatorname{Sp} (A \hat{\otimes} _k k')$ is regular in each point $x'$ lying above $x$.
   \item A rigid analytic variety $X$ is called \emph{regular} (\emph{smooth}) if it is regular (smooth) in all $x \in X$.
   \item A morphism $f\colon X \rightarrow Y$ of rigid $k$-varieties is \emph{smooth} if it is flat and for each  $y \in Y$ 
      the morphism restricted to the fibre $f^{-1}(y) \rightarrow \kappa (y)$ is smooth. 
  \end{enumerate}
 \end{Def}
 
 \begin{Def}[$\rigVar$]\index{$\rigVar$}
  The category of smooth rigid analytic $k$-varieties with analytic morphisms is denoted by $\rigVar _k$. 
  The subscript $k$ will be suppressed in the notation except when there is a chance of confusion.
 \end{Def}

 \subsection{Vector bundles}
  Let $X$ be a locally G-ringed space with structure sheaf $\mathcal{O}_X$ of $k$-algebras.
  \begin{Def}
   A \emph{vector bundle of rank $n$} is a locally free $\mathcal{O}_X$-module of rank $n \in \N$.
   That means that it is a sheaf $\mathcal{F}$ of $k$-algebras such that 
   \begin{enumerate}
    \item for each admissible subset $U \subset X$ the ring of sections $\mathcal{F} (U)$ 
        is an $\mathcal{O}_X(U)$-module and 
    \item there exists an admissible covering $\{ U_i \} _{i \in I}$ of $X$ 
        such that all $\mathcal{F} (U_i)$ are free of rank $n$.
   \end{enumerate}
   Such a covering is called a \emph{local trivialisation} of $\mathcal{F}$. 
  \end{Def}
 
  When we speak of a \emph{vector bundle on $X$}, we make implicit choices: 
  The choice of G-topology and the choice of structure sheaf. 
  We will be careful to always specify which G-topology we are referring to.
  On the contrary, there will be no chance of confusion about the structure sheaf.
  
  Every rigid analytic variety is endowed with a G-topology. 
  Affinoid varieties additionally carry the Zariski topology. 
  It is coarser than the G-topology and therefore gives rise to an (a priori) different notion of vector bundle.
  \begin{Deff}[analytic/Zariski vector bundle]
   \begin{enumerate}
    \item Vector bundles with respect to the G-topology of a rigid analytic variety $X$ are called \emph{analytic vector bundles}.
      The category of analytic vector bundles on $X$ is denoted by $\operatorname{VB} (X)$. 
    \item If $X$ is affinoid, vector bundles with respect to the Zariski topology on $X$ are called \emph{Zariski vector bundles}.
     The category of Zariski vector bundles on $X$ is denoted by $\operatorname{VB}_\mathrm{Zar} (X)$.
   \end{enumerate}
  \end{Deff}
  In fact the two notions are the same as we will see in Corollary~\ref{vbequiv}.
 
  \begin{Rem}
   The definition of an analytic vector bundle coincides with the usual definition of a vector bundle on a rigid analytic variety
   {\cite[p.~87]{FresnelPut}}. 
  \end{Rem}
  
 \begin{Def}
  The functor $\vect ^n$ assigns to a rigid analytic variety $X$ the set of isomorphism classes of vector bundles 
  of rank $n$ on $X$, with the trivial vector bundle of rank $n$ as a base point.
  To a morphism $f \colon X \to Y$ of rigid analytic varieties it assigns
  the pullback map $f^\ast \colon \vect ^n (Y) \to \vect ^n (X)$.     
 \end{Def}
 Later on, we need a functor of vector bundles of rank $n$, not just a functor of \emph{isomorphism classes} of vector bundles.
 For each rigid analytic variety $X$ let
 \begin{equation*}
  \operatorname{VB} _n \colon X \longmapsto \{ \mathcal{E} \text{ vector bundle of rank $n$ over } X \} .
 \end{equation*}
 This $\operatorname{VB} _n$ is not a functor.
 Let $\mathcal{E} \in \operatorname{VB} _n(X)$ be a vector bundle and
 \begin{equation*}
  X \overset{f}{\longrightarrow} Y \overset{g}{\longrightarrow} Z
 \end{equation*}
 be morphisms of rigid analytic varieties. The vector bundles $f^\ast g^\ast \mathcal{E}$ and $(gf)^\ast \mathcal{E}$
 are isomorphic but in general not equal.
 A way to work around this is to replace the pseudofunctor $\operatorname{VB} _n$ by Grayson's functor 
 $\Vect ^n$ of big vector bundles.
 \begin{Def}[big vector bundle, big principal bundle, cf.~{\cite[C.4]{FriedlanderSuslin02}}, {\cite{Grayson95}}]\label{Defstrictvect}
  Let $\mathcal{R}$ be a subcategory of a small category equivalent to $\rigVar$.
  In short, a \emph{big vector bundle}\index{big vector bundle} on a rigid analytic variety $X \in \operatorname{Ob} ( \mathcal{R})$
  is a family of vector bundles together with isomorphism data.
  The construction is as follows. The overcategory $\mathcal{R} / X$ has as objects morphisms 
  $(f\colon Y \to X ) \in \operatorname{Mor} ( \mathcal{R} )$. The morphisms of $\mathcal{R} / X$ are commutative triangles.
  Now a big vector bundle over $X$ is a family of vector bundles 
  \begin{equation*}
   \{ \mathcal{E}_Y \}_{(f\colon Y \to X) \in \operatorname{Ob} (\mathcal{R} /X)}
  \end{equation*}
  together with compatible isomorphisms
  \begin{equation*}
   \{ \varphi \colon g^\ast \mathcal{E}_Z \to \mathcal{E} _Y \mid (g\colon Y \to Z) \in \operatorname{Mor} ( \mathcal{R} / X) \} .
  \end{equation*}
  The assignment
  \begin{equation*}
   \Vect ^n \colon X \longmapsto \{ \text{big vector bundles of rank $n$ over } X \}
  \end{equation*}
  defines a functor from $\mathcal{R}$ to groupoids. Defining big principal bundles analogously, also
  \begin{equation*}
   \mathcal{P} ^n \colon X \longmapsto \{ \text{big $\operatorname{GL}_n$-principal bundles over } X \}
  \end{equation*}
  defines a functor from $\mathcal{R}$ to groupoids.
 \end{Def} 
 
 \begin{Rem}
  It is quite obvious that for each $X \in \operatorname{Ob} ( \mathcal{R} )$ the category of big vector bundles over $X$ and
  the category of vector bundles over $X$ are equivalent.
 \end{Rem}
 
  \begin{Def}[Line bundle]
   Let $X$ be a rigid analytic variety. A locally free $\mathcal{O}_X$-module of rank one is called a \emph{line bundle}\index{line bundle}.
  \end{Def}
  
  As usual, line bundles form a group under $\otimes$, the \emph{Picard group}.
  
  The Picard group of an affinoid variety is strongly linked to the variety's canonical formal model and canonical reduction:
  \begin{Thm}[Gerritzen {\cite[§4]{Gerritzen77}}]\label{ThmGerritzenPicc}
   Let $k$ be a complete discretely, non-trivially valued field. Let $A$ be a reduced $k$-affinoid algebra 
   whose ring of integers $A^\circ$ is regular. Assume $\Vert A \Vert = \vert k \vert$. Then there is an isomorphism
   \begin{equation*}
   \operatorname{Pic} (A) \cong \operatorname{Pic} ( \tilde{A} ).
   \end{equation*}
  \end{Thm}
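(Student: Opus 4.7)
The approach is to pass through the canonical formal model $\mathcal{X} = \operatorname{Spf}(A^\circ)$ and establish the chain of isomorphisms $\operatorname{Pic}(A) \cong \operatorname{Pic}(A^\circ) \cong \operatorname{Pic}(\tilde{A})$. The hypothesis $\Vert A \Vert = \vert k \vert$ makes $A$ distinguished, so $A^\circ$ is a quotient of $k^\circ\langle T_1,\dotsc,T_n\rangle$, hence noetherian and $\pi$-adically complete for a uniformiser $\pi$ of $k^\circ$, with $A^{\circ\circ} = \pi A^\circ$ and $\tilde A = A^\circ/\pi A^\circ$. One identifies $\operatorname{Pic}(\mathcal{X})$ with $\operatorname{Pic}(A^\circ)$ via the equivalence between coherent formal sheaves on $\mathcal{X}$ and finitely generated $A^\circ$-modules.

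For the isomorphism $\operatorname{Pic}(A^\circ) \cong \operatorname{Pic}(\tilde{A})$, I would argue by infinitesimal deformation on the thickenings $A^\circ/\pi^n \to A^\circ/\pi^{n-1}$. The short exact sequence of sheaves of units
\[
 0 \to \pi^{n-1}/\pi^n \to (A^\circ/\pi^n)^\times \to (A^\circ/\pi^{n-1})^\times \to 1
\]
(with $1+x \mapsto x$ identifying the kernel with a coherent additive sheaf on $\operatorname{Spec}(\tilde A)$) yields a long exact cohomology sequence. Because $\operatorname{Spec}(\tilde A)$ is affine, the relevant $H^1$ and $H^2$ vanish, so $\operatorname{Pic}(A^\circ/\pi^n) \cong \operatorname{Pic}(\tilde A)$ for every $n$. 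A standard $\varprojlim$ argument, using $\pi$-adic completeness of $A^\circ$ together with Grothendieck's existence theorem (valid since $A^\circ$ is noetherian), then yields $\operatorname{Pic}(A^\circ) = \varprojlim_n \operatorname{Pic}(A^\circ/\pi^n) = \operatorname{Pic}(\tilde{A})$.

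For the isomorphism $\operatorname{Pic}(A^\circ) \cong \operatorname{Pic}(A)$, note that $A = A^\circ[\pi^{-1}]$ corresponds to the open complement $\operatorname{Spec}(A^\circ) \setminus V(\pi)$. Since $A^\circ$ is regular, Picard group and divisor class group coincide, and the localisation sequence reads
\[
 \bigoplus_i \mathbb{Z}\cdot [Z_i] \longrightarrow \operatorname{Pic}(A^\circ) \longrightarrow \operatorname{Pic}(A) \longrightarrow 0,
\]
where the $Z_i$ are the codimension-one components of $V(\pi)$. The class $\sum e_i [Z_i] = [V(\pi)]$ vanishes in $\operatorname{Pic}(A^\circ)$ because $(\pi)$ is principal; combined with the regularity of $A^\circ$ and the reducedness of $A$, one concludes that the kernel is trivial. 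For surjectivity, a line bundle on $\operatorname{Sp}(A)$ is represented on an admissible affinoid cover by cocycles $\{g_{ij}\}$ which, after rescaling by suitable powers of $\pi$, may be realised as units on an integral cover, descending to a line bundle on $\mathcal{X}$. Finally, one uses that for a noetherian reduced affinoid algebra the analytic Picard group coincides with the algebraic Picard group of $A$ as a ring (all finitely generated $A$-modules carry a unique complete Banach topology, cf.\ \S3.7.3 of \cite{BGR}).

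The main obstacle is the passage $\operatorname{Pic}(A^\circ) \to \operatorname{Pic}(A)$: one must simultaneously construct integral models for arbitrary analytic line bundles and verify that the relations forced by principality of $(\pi) = \pi A^\circ$ exhaust the kernel generated by the components of $V(\pi)$. The interplay between $A^\circ$ being regular (which gives a well-behaved divisor theory) and $A$ being reduced with $\Vert A \Vert = \vert k \vert$ (which prevents pathological multiplicities in the special fibre) is precisely what is needed to make this control of the kernel work.
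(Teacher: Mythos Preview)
The paper does not give its own proof of this theorem: it is stated with attribution to Gerritzen~\cite[§4]{Gerritzen77} and used as a black box (in Theorem~\ref{ThmGerritzenPic} and Example~\ref{Exbadreduction}). So there is no ``paper's proof'' to compare against; I can only assess your argument on its own merits.

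Your overall strategy --- factoring through $\operatorname{Pic}(A^\circ)$ --- is natural, and the step $\operatorname{Pic}(A^\circ)\cong\operatorname{Pic}(\tilde A)$ via infinitesimal lifting along $A^\circ/\pi^n$ and $\pi$-adic completeness is correct and cleanly stated.

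The genuine gap is in the step $\operatorname{Pic}(A^\circ)\to\operatorname{Pic}(A)$. For surjectivity, your cocycle sketch is shaky: an admissible affinoid covering of $\operatorname{Sp}(A)$ need not arise from a Zariski or formal covering of $\operatorname{Spf}(A^\circ)$, so ``rescaling cocycles by powers of $\pi$'' does not obviously produce an integral model. The clean argument here is the one you do not write down: identify $\operatorname{Pic}(A)$ with $\operatorname{Cl}(A)$ via the Serre--Swan theorem and regularity, take the closure of a Weil divisor in $\operatorname{Spec}(A^\circ)$, and use regularity of $A^\circ$ to conclude it is Cartier.

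More seriously, your injectivity argument is incomplete. From the localisation sequence you only know that the \emph{sum} $\sum_i e_i[Z_i]=[\operatorname{div}(\pi)]$ vanishes in $\operatorname{Pic}(A^\circ)$; to conclude that the kernel is trivial you need each individual $[Z_i]$ to vanish. Your sentence ``combined with the regularity of $A^\circ$ and the reducedness of $A$, one concludes that the kernel is trivial'' is not an argument --- it is a restatement of what must be shown. When $\operatorname{Spec}(\tilde A)$ is reducible (which the hypotheses certainly allow), there is no a priori reason the components of $V(\pi)$ are \emph{individually} principal, and your proposal gives no mechanism to force this. You correctly flag this as ``the main obstacle'', but then do not overcome it. Gerritzen's original argument proceeds more directly, via the reduction map on units and \v{C}ech cocycles with respect to formal coverings adapted to the canonical model, which sidesteps the need to control the individual $[Z_i]$; if you want to push your divisor-theoretic route through, you will have to supply that missing control explicitly.
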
  
  
 \subsection{Berkovich spaces}
  In the proof of the Serre--Swan theorem for rigid analytic quasi-Stein spaces we are going to work with
  the associated Berkovich spaces. Therefore we need the following theorem by Berkovich.
  \begin{Thm}[Berkovich {\cite[Theorem~1.6.1]{Berkovich93}}]\label{Berkequivalence}
   The category of paracompact strictly $k$-analytic Berkovich spaces and the category of quasiseparated rigid analytic $k$-varieties
   that have an admissible covering of finite type are equivalent. An affinoid covering is said to be \emph{of finite type} 
   if each of its members intersects only finitely many of the other members.
  \end{Thm}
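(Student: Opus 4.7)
This is a theorem Berkovich proves in his 1993 paper, so my proof plan is to reconstruct the argument from scratch by writing down the two functors and checking they are mutually quasi-inverse.

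First I would define the functor $\Phi$ from paracompact strictly $k$-analytic Berkovich spaces to rigid analytic varieties. Given such a Berkovich space $X$, let $X_0 \subset X$ be the set of points whose completed residue field is finite over $k$ (the classical, or rigid, points). Endow $X_0$ with the G-topology whose admissible opens are those subsets of the form $U \cap X_0$ for strict affinoid subdomains $U \subset X$, and whose admissible coverings are those refined, on each affinoid piece, by a covering admitting a finite subcovering of the corresponding compact Berkovich subset. The structure sheaf is obtained from the strict affinoid algebras $A$ with $U = \mathcal{M}(A)$. In the other direction, I would build a functor $\Psi$ sending a quasiseparated rigid variety $Y$ with an admissible covering $\{V_i = \operatorname{Sp}(A_i)\}$ of finite type to the topological space obtained by gluing the Berkovich spectra $\mathcal{M}(A_i)$ along the Berkovich spectra of the affinoid subdomains $V_i \cap V_j$.

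The next step is to verify that the functors are well-defined on objects. For $\Phi(X)$ to be quasiseparated with an admissible covering of finite type, I would pick a locally finite covering of the paracompact Hausdorff space $X$ by strict affinoid domains (which exists by paracompactness); the intersections of two such affinoid domains are again affinoid subdomains by Hausdorffness, giving quasiseparation, and local finiteness translates into the finite-type property. For $\Psi(Y)$ to be paracompact, I would check that the topology glued from the $\mathcal{M}(A_i)$ is Hausdorff (using quasiseparation, since the diagonals of the $\mathcal{M}(A_i \hat{\otimes} A_j)$ are closed), and that the cover by the $\mathcal{M}(A_i)$ is locally finite by the finite-type assumption, yielding a locally finite cover by compact sets, hence paracompactness.

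Then I would verify the two natural transformations $\Phi \circ \Psi \simeq \id$ and $\Psi \circ \Phi \simeq \id$. On the strict affinoid building blocks this is the classical comparison $\operatorname{Sp}(A) \leftrightsquigarrow \mathcal{M}(A)$: every strict affinoid algebra $A$ has its Berkovich spectrum $\mathcal{M}(A)$ whose classical points form the maximal spectrum $\operatorname{Sp}(A)$, and the strict affinoid subdomains of $\mathcal{M}(A)$ correspond bijectively to the rigid affinoid subdomains of $\operatorname{Sp}(A)$. The sheaf isomorphism reduces to the tautology that both sides evaluate to $A$ on $\mathcal{M}(A)$. After this, compatibility with gluing along the finite-type admissible covering is a formal check.

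The main obstacle will be the precise matching between the two finiteness/separation conditions: that \emph{paracompact} on the Berkovich side corresponds exactly to \emph{admissible covering of finite type} on the rigid side, and \emph{Hausdorff} corresponds to \emph{quasiseparated}. One subtle point here is that an arbitrary admissible covering on the rigid side need not yield a locally finite cover of the glued topological space; only the finite-type hypothesis ensures the glued space is locally compact and its compact pieces form a locally finite family, which is what paracompactness demands. Checking this correspondence carefully, together with verifying that admissibility of coverings on the rigid side is detected by the Berkovich topology (via the Gerritzen--Grauert description of affinoid subdomains), is where the bulk of the technical work lies.
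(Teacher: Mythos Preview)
The paper does not prove this theorem; it merely cites it as \cite[Theorem~1.6.1]{Berkovich93} and uses it as a black box (notably in the proof of Theorem~\ref{SteinfinG}). There is therefore no proof in the paper to compare your proposal against.

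Your sketch is a reasonable outline of how Berkovich's original argument proceeds: one sets up the functors on affinoid building blocks via the correspondence $\operatorname{Sp}(A) \leftrightarrow \mathcal{M}(A)$, glues, and then matches the finiteness and separation conditions (paracompact $\leftrightarrow$ finite-type admissible covering, Hausdorff $\leftrightarrow$ quasiseparated). The points you flag as delicate---that admissibility of coverings on the rigid side is detected by the Berkovich topology, and that the finite-type condition is precisely what makes the glued space paracompact---are indeed where the substance lies. If you intend to actually carry this out rather than cite it, be aware that Berkovich's proof also relies on his theory of nets of affinoid domains and the comparison of the G-topology on $X_0$ with his G-topology on the Berkovich space; filling in those details is more involved than your outline suggests. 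For the purposes of this paper, however, a citation suffices.
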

  Berkovich spaces are topological spaces and thus come with a notion of continuous vector bundle.
  But we want to take the analytic structure into account.
  \begin{Def}[Vector bundle on a Berkovich space]
   Let $X$ be a Berkovich space.
   Let $\mathcal{O}_X$ be the structure sheaf of analytic functions on $X$.
   An \emph{analytic vector bundle} on $X$ is a locally free $\mathcal{O}_X$-module.
  \end{Def}

  \begin{Prop}[Berkovich]\label{vbequivBer}
   Let $X$ be a quasiseparated rigid analytic space and $X_B$ the corresponding Berkovich space.
   If every point $x \in X_B$ has an affinoid neighbourhood, then the category of analytic vector bundles on $X$ and 
   the category of analytic vector bundles on $X_B$ are equivalent.
  \end{Prop}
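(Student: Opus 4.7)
The plan is to reduce the statement to a descent/gluing argument along an affinoid cover, and then to use that analytic vector bundles on an affinoid rigid space $\operatorname{Sp} A$ and on the associated Berkovich affinoid $\mathcal{M} (A)$ are both equivalent to the category of finitely generated projective $A$-modules.

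First I would construct the comparison functor. Under Theorem~\ref{Berkequivalence}, affinoid subdomains $\operatorname{Sp} A \subset X$ correspond bijectively to affinoid Berkovich subdomains $\mathcal{M}(A) \subset X_B$, and the rings of sections agree: $\mathcal{O}_X( \operatorname{Sp} A ) = A = \mathcal{O}_{X_B} ( \mathcal{M}(A) )$. Given a locally free $\mathcal{O}_{X_B}$-module $\mathcal{E}$, the assignment $\operatorname{Sp} A \mapsto \mathcal{E}( \mathcal{M}(A))$ defines a presheaf on the G-topology of $X$ which I would show is a sheaf (using Kiehl's acyclicity theorem) and locally free; this gives the functor $\operatorname{VB}(X_B) \to \operatorname{VB}(X)$. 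The inverse is constructed analogously using the affinoid-neighbourhood hypothesis.

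Second, I would verify the equivalence on affinoid pieces. On $\operatorname{Sp} A$, Kiehl's theorem identifies coherent sheaves with finitely generated $A$-modules, and vector bundles of rank $n$ with finitely generated projective $A$-modules of rank $n$. The analogous statement on the Berkovich affinoid $\mathcal{M}(A)$ is proved in Berkovich's 1993 work. Since both categories are canonically equivalent to the same category of finitely generated projective $A$-modules, the comparison functor is an equivalence over each affinoid piece, and transition isomorphisms on affinoid overlaps $\operatorname{Sp} (A \hat{\otimes}_A A') \leftrightarrow \mathcal{M}(A \hat{\otimes}_A A')$ are in either case given by elements of $\operatorname{GL}_n$ of the same algebra.

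Finally, I would globalise by descent. The hypothesis that every $x \in X_B$ has an affinoid neighbourhood produces an open cover of $X_B$ by affinoid Berkovich subdomains, which under Theorem~\ref{Berkequivalence} corresponds to an admissible affinoid cover of $X$ of finite type; both the rigid G-topology on $X$ and the Berkovich topology on $X_B$ satisfy effective descent for locally free sheaves, so matching local trivialisations and matching transition cocycles assemble into an equivalence of the global categories. The main obstacle is precisely this comparison of topologies: one must check that an admissible affinoid covering in the rigid G-topology on $X$ corresponds to a genuine open covering in the Berkovich topology on $X_B$, and vice versa, in such a way that Čech data on both sides are interchangeable. This is exactly the content of Berkovich's comparison between the rigid G-topology and the Berkovich G-topology, which is where the affinoid-neighbourhood hypothesis is essential.
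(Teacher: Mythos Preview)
Your approach is correct in outline but takes a substantially different route from the paper. The paper's proof is a pure citation chain: it factors the equivalence through an intermediate category, namely vector bundles with respect to Berkovich's G-topology on $X_B$. One then invokes \cite[Proposition~1.3.4]{Berkovich93} for the equivalence between vector bundles in the Berkovich topology and vector bundles in the Berkovich G-topology on a good space, and \cite[p.~37]{Berkovich93} for the equivalence between the latter and analytic vector bundles on the associated rigid space. No explicit functor is constructed and no descent argument is spelled out; everything is deferred to Berkovich.

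Your proposal instead builds the comparison functor by hand via an affinoid cover, reduces to the affinoid case through the Serre--Swan identification with projective modules, and then globalises by matching \v{C}ech cocycles. This is more transparent and self-contained, and it makes visible exactly where the affinoid-neighbourhood hypothesis enters. The cost is that you are essentially re-deriving the content of Berkovich's Proposition~1.3.4: in particular, your very first step---evaluating a locally free $\mathcal{O}_{X_B}$-module $\mathcal{E}$ on an affinoid domain $\mathcal{M}(A)$---already presupposes that $\mathcal{E}$ extends from the Berkovich topology to the G-topology, since affinoid domains are compact but not open in general. You acknowledge this obstacle at the end, and it is indeed the crux; the paper simply absorbs it into the citation. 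Both approaches are valid; the paper's is shorter, yours is more explicit about the mechanism.
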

  \begin{proof}
   Berkovich calls an analytic space $Y$ \emph{good}\index{Berkovich space!good} if every point $x \in X_B$ 
   has an affinoid neighbourhood~\cite[Remark 1.2.16]{Berkovich93}. 
   If $Y$ is good, the category of analytic vector bundles with respect to the Berkovich topology on $Y$
   and the category of vector bundles with respect to the G-topology on~$Y$ are equivalent~\cite[Proposition~1.3.4]{Berkovich93}.
   The category of vector bundles with respect to the G-topology on~$Y$ and the category of analytic vector bundles on the
   associated rigid analytic space are equivalent~\cite[bottom of p.~37]{Berkovich93}.
   Berkovich's G-topology on $Y$ is defined on \cite[p.~25]{Berkovich93}. The G-topology on the corresponding
   rigid analytic space is the strong G-topology (for a definition see \cite[pp.~80f]{FresnelPut}, \cite[9.1.4]{BGR}).
  \end{proof} 
   
 \subsection{The Serre--Swan theorem}
   We now prove the rigid analytic version of the Serre--Swan theorem: Isomorphism classes of vector bundles of rank $n$
   over a rigid analytic quasi-Stein variety~$X$ correspond to isomorphism classes of projective modules of rank $n$ over the ring
   of global functions~$\mathcal{O}_X (X)$. There is a general principle for proving Serre--Swan theorems.
   It was analysed by A.~Morye~\cite{Morye} and we give a short account of her argument.
   \begin{Def}
    Let $(X, \mathcal{T}, \mathcal{O}_X)$ be a locally G-ringed space.
    A coherent sheaf of $\mathcal{O}_X$-modules is called \emph{finitely generated by global sections} if
    there is a finite set of global sections whose images in the stalk at any point $x \in X$
    generate that stalk as an $\mathcal{O}_{X,x}$-module.
   \end{Def} 
   
   \begin{Def}[admissible subcategory {\cite[Definition 1.1]{Morye}}]
    Let $(X, \mathcal{O}_X)$ be a locally G-ringed space. A subcategory $\mathcal{C}$ of the category of $\mathcal{O}_X$-modules
    is called an \emph{admissible subcategory} if
    \begin{itemize}
     \item[{\bf C1}] $\mathcal{C}$ is a full abelian subcategory of the category of $\mathcal{O}_X$-modules with internal $\Hom$, 
         i.\,e., for $\mathcal{F}, \mathcal{G} \in \operatorname{Ob} ( \mathcal{C} )$ the sheaf
         $\operatorname{Hom} _{\mathcal{O}_X} ( \mathcal{F} , \mathcal{G} )$ is also an object of $\mathcal{C}$,
     \item[{\bf C2}] every sheaf in $\mathcal{C}$ is acyclic and generated by global sections, and
     \item[{\bf C3}] $\mathcal{C}$ contains the category of locally free $\mathcal{O}_X$-modules.
    \end{itemize}
   \end{Def}
   
   \begin{Thm}[General Serre--Swan {\cite[Theorem~2.1]{Morye}}]\label{Morye}
    Let $(X, \mathcal{O}_X)$ be a locally G-ringed space such that a sequence of coherent sheaves 
    \begin{equation*}
     0 \to \mathcal{E} \to \mathcal{F} \to \mathcal{G} \to 0
    \end{equation*}
    on $X$ is exact if and only if for each $x \in X$ the sequence of $\mathcal{O}_{X,x}$-modules 
    \begin{equation*}
     0 \to \mathcal{E}_x \to \mathcal{F}_x \to \mathcal{G}_x \to 0
    \end{equation*}
    is exact.
    
    Assume that the category of $\mathcal{O}_X$-modules contains an admissible subcategory $\mathcal{C}$ and that every
    locally free sheaf of bounded rank on $X$ is finitely generated by global sections. Then the Serre--Swan theorem holds
    for $(X, \mathcal{O}_X)$, i.\,e., the category of locally free $\mathcal{O}_X$-modules of bounded rank on $X$ 
    and the category of finitely generated projective $\mathcal{O}_X (X)$-modules are equivalent via the global sections functor.     
   \end{Thm}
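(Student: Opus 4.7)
The plan is to show that the global sections functor $\Gamma \colon \mathcal{F} \mapsto \mathcal{F}(X)$ furnishes the desired equivalence, proceeding in three steps: (i) $\Gamma$ sends a locally free sheaf of bounded rank to a finitely generated projective $\mathcal{O}_X(X)$-module; (ii) every finitely generated projective module arises this way; (iii) $\Gamma$ is fully faithful on the restricted category.

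For (i), let $\mathcal{F}$ be locally free of bounded rank; by hypothesis it is finitely generated by global sections, so fix a surjection $\pi \colon \mathcal{O}_X^{\oplus n} \twoheadrightarrow \mathcal{F}$. The kernel $\mathcal{K}$ is locally a direct summand of $\mathcal{O}_X^{\oplus n}$, hence locally free of bounded rank, and so lies in $\mathcal{C}$ by C3. Then $\mathcal{H}\!om(\mathcal{F}, \mathcal{K})$ lies in $\mathcal{C}$ by C1 and is acyclic by C2. Since $\mathcal{F}$ is locally free, the sheaf $\Ext^1$ vanishes; combining with the acyclicity of the internal Hom via the local-to-global spectral sequence yields $\Ext^1_{\mathcal{O}_X}(\mathcal{F}, \mathcal{K}) = 0$. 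Hence $0 \to \mathcal{K} \to \mathcal{O}_X^{\oplus n} \to \mathcal{F} \to 0$ splits. Applying the functor $\Gamma$, which is exact on short exact sequences lying in $\mathcal{C}$ by C2 and the stalk-exactness hypothesis, realises $\Gamma(\mathcal{F})$ as a direct summand of the free module $\mathcal{O}_X(X)^{\oplus n}$, as required.

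For (ii), given a finitely generated projective $A$-module $P$ with $A := \mathcal{O}_X(X)$, write $P = \im(e)$ for some idempotent $e \in \End_A(A^{\oplus n})$. Since $\End_{\mathcal{O}_X}(\mathcal{O}_X^{\oplus n}) = \End_A(A^{\oplus n})$, lift $e$ to an idempotent $\tilde e$ on $\mathcal{O}_X^{\oplus n}$; then $\mathcal{F} := \im(\tilde e)$ is a direct summand of a free sheaf, hence locally free of bounded rank, and exactness of $\Gamma$ on the corresponding splitting yields $\Gamma(\mathcal{F}) \cong P$. For fully faithfulness, use $\Hom_{\mathcal{O}_X}(\mathcal{F}, \mathcal{G}) = \Gamma(\mathcal{H}\!om(\mathcal{F}, \mathcal{G}))$ and check that the natural map into $\Hom_A(\Gamma(\mathcal{F}), \Gamma(\mathcal{G}))$ is bijective by chasing the presentations from (i): any module map between global sections lifts uniquely over a free presentation of $\mathcal{F}$ and then descends along its split kernel.

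The central obstacle is the vanishing of $\Ext^1_{\mathcal{O}_X}(\mathcal{F}, \mathcal{K})$ that ensures the fundamental short exact sequence splits. This is the single point where all three axioms of an admissible subcategory are used at once---C3 puts $\mathcal{K}$ in $\mathcal{C}$, C1 puts $\mathcal{H}\!om(\mathcal{F}, \mathcal{K})$ in $\mathcal{C}$, and C2 supplies its acyclicity---while the stalk-exactness hypothesis is what propagates the stalkwise vanishing of the sheaf $\Ext$ to the global statement. Once this splitting is in hand, the remaining verifications are the formal linear-algebraic manipulations familiar from the classical Serre--Swan theorem.
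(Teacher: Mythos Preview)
The paper does not prove this theorem; it is quoted from Morye \cite[Theorem~2.1]{Morye}, and the only commentary offered is that Morye's original argument, written for locally ringed spaces, goes through verbatim for locally G-ringed spaces once one adds the stalk-exactness hypothesis explicitly (since a G-ringed site need not have enough points). Your sketch is essentially Morye's proof and is correct in outline: the key step is exactly the splitting of $0 \to \mathcal{K} \to \mathcal{O}_X^{\oplus n} \to \mathcal{F} \to 0$ via $\Ext^1(\mathcal{F},\mathcal{K}) \cong H^1\!\big(X,\mathcal{H}\!om(\mathcal{F},\mathcal{K})\big) = 0$, using C3, C1, C2 in that order.

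One small correction to your closing paragraph: the stalk-exactness hypothesis is not what ``propagates the stalkwise vanishing of the sheaf $\Ext$ to the global statement.'' The identification $\Ext^1(\mathcal{F},\mathcal{K}) \cong H^1\!\big(X,\mathcal{H}\!om(\mathcal{F},\mathcal{K})\big)$ holds because $\mathcal{F}$ is locally free, and the vanishing of the right-hand side is pure C2. The stalk hypothesis is used earlier and more quietly: the definition of ``finitely generated by global sections'' is a stalkwise condition, and one needs stalk-exactness to conclude that the induced map $\mathcal{O}_X^{\oplus n} \to \mathcal{F}$ is an epimorphism of sheaves (so that $\mathcal{K}$ really sits in a short exact sequence). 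In a site without enough points this passage is not automatic, which is precisely why the paper isolates it as an explicit assumption.
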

   Morye's Theorem is formulated for a locally ringed space, but she actually proves the slightly stronger statement
   given here. The site associated to a locally ringed space always has enough points,
   therefore exactness of a short sequence of coherent sheaves may be tested on stalks as in the assumptions of the theorem
   (see \cite[Chapter IX, §3, Proposition~3 on p.~480]{MacLaneMoerdijk94}).

   \begin{Lemma}\label{Papaya}
    Let $X = \bigcup _i (\Sp (A_i))$ be a rigid analytic quasi-Stein variety of bounded dimension, i.\,e.,
    $\sup _{x \in X}{\dim \mathcal{O}_{X,x}} =: d < \infty$. Let $\mathcal{E}$ be a vector bundle of rank $r$ on $X$. 
    Then $\mathcal{E}$ is generated by $(d+1)r$ global sections.
   \end{Lemma}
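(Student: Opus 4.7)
The plan is to reduce to a Forster--Swan type statement on each affinoid in the quasi-Stein exhaustion and then globalise the resulting generating sets by a Runge/Mittag-Leffler approximation argument. Fix the admissible covering $X = \bigcup_i U_i$ from Definition~\ref{DefStein} with $U_i = \Sp (A_i)$. Each $A_i$ is noetherian of Krull dimension at most $d$ by the bounded-dimension hypothesis on $X$, and $M_i := \mathcal{E}(U_i)$ is a finitely generated projective $A_i$-module of rank $r$ by Kiehl's theorem on coherent sheaves on affinoids. The Forster--Swan bound for noetherian rings then yields that $M_i$ is generated by $d+r \leq (d+1)r$ elements.

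Next I would produce $(d+1)r$ \emph{global} sections of $\mathcal{E}$ that generate everywhere, by an inductive approximation argument using the quasi-Stein structure. Theorem~\ref{ThmAB}(a) provides a dense image of $\mathcal{E}(X) \to \mathcal{E}(U_i)$ for each $i$, and Kiehl's Theorem~A (Theorem~\ref{ThmAB}(c)) ensures that global sections generate every stalk. The crucial local observation is that, fixing an affinoid $U_i$, the set of $N$-tuples of sections that generate $\mathcal{E}|_{U_i}$ is open in $(M_i)^N$ with respect to the Banach topology: the cokernel of the evaluation map $\mathcal{O}_{U_i}^N \to \mathcal{E}|_{U_i}$ is coherent, its support is closed, and a sufficiently small perturbation (measured in the Banach norm on $M_i$) cannot create new non-generating points on the compact affinoid $U_i$, by Nakayama applied fibrewise. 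Starting from $(d+1)r$ generators of $M_1$ (first lifted to $\mathcal{E}(X)$ via density), I would successively correct them by sums of perturbations so as additionally to generate $\mathcal{E}$ on $U_2, U_3, \dotsc$, arranging the corrections to have sufficiently fast-decreasing norms in the Fréchet topology of $\mathcal{E}(X)$. The resulting Cauchy sequence converges to $(d+1)r$ global sections of $\mathcal{E}$ which, by construction, generate $\mathcal{E}$ on every $U_i$ and hence on all of $X$.

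The main obstacle is quantitatively controlling the approximation so that the perturbations neither destroy the generating property already achieved on earlier $U_j$ nor fail to converge in the limit. This amounts to a standard Mittag-Leffler argument in the Fréchet setting, but carrying it out requires measuring the openness of the generating locus on each $U_i$ uniformly in terms of the affinoid Banach norms; this is the non-archimedean analogue of the Runge-type approximation underlying Forster's theorem on complex Stein spaces.
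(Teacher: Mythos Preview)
Your overall strategy---reduce to an affinoid statement about the number of generators of each $M_i$ and then globalise by a Mittag--Leffler/Runge approximation---is exactly the route the paper takes, following Gruson. But there is a genuine gap in your inductive step: you only establish that the set of generating $N$-tuples in $M_i^{N}$ is \emph{open}, whereas what the approximation argument actually needs is that this set is \emph{dense}.

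Here is why density is essential. Suppose you have global sections $s_1,\dotsc,s_N$ generating $\mathcal{E}$ on $U_i$. To carry out the correction step you must find $s_1',\dotsc,s_N'$ which generate on $U_{i+1}$ and whose restrictions to $U_i$ are close to the $s_j$ (so that, by openness, they still generate there). Concretely, you need a generating tuple of $M_{i+1}$ lying arbitrarily close to the image of $(s_1,\dotsc,s_N)$ in $M_{i+1}^{N}$. Openness plus mere nonemptiness (which is all Forster--Swan gives) does not furnish this; the generating locus in $M_{i+1}^{N}$ could a priori be an open set far away from your current tuple. Without density, there is no reason your perturbations can be made small in the $U_i$-seminorm while achieving generation on $U_{i+1}$, and the telescoping construction collapses.

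The paper closes this gap by invoking Gruson's lemma \cite[V,~Lemme~2]{Gruson68}: for a Banach algebra $B$ of topologically finite type with Krull dimension $d$ and a projective $B$-module $P$ of rank $r$, the set of epimorphisms $B^{(d+1)r}\twoheadrightarrow P$ is both open \emph{and dense} in $\Hom(B^{(d+1)r},P)$. This density, applied to each $A_i$, feeds into the general Mittag--Leffler theorem \cite[II.3.5,~Th\'eor\`eme~1]{BourbakiTop1} to conclude that the set $E$ of global epimorphisms $A^{(d+1)r}\twoheadrightarrow\mathcal{E}(X)$ is dense, hence nonempty. Note also that the bound $(d+1)r$ in the statement, rather than the Forster--Swan bound $d+r$, is exactly what Gruson's density argument requires; your sharper existence bound from Forster--Swan does not come with a density statement and so cannot by itself drive the globalisation.
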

   \begin{proof}
    By Theorem~A \cite[Satz 2.4.3]{Kiehl67}, every coherent sheaf on a quasi-Stein space is generated by global sections.
    Let $A := \mathcal{O}_X(X)$. We have to show that $\mathcal{E} (X)$ is finitely generated as an $A$-module.
    Gruson does this for Kiehl--Stein spaces which are unions of polydiscs \cite[V, Th{\'e}or{\`e}me 1]{Gruson68}
    and the same proof works for general quasi-Stein spaces. We give it now.
  
    Let $n$ be the rank of the $A_i$-module $\mathcal{E} ( \Sp A_i )$.
    It is independent of $i$. Let $F := A^{n(d+1)}$. Then there exist homomorphisms $F \rightarrow \mathcal{E} (X)$.
    We show that the set $E := \{ F \twoheadrightarrow \mathcal{E} (X) \}$ of epimorphisms from $F$ onto $\mathcal{E} (X)$ is dense
    in $\mathrm{Hom} _A (F, \mathcal{E} (X))$. Then $E$ is in particular nonempty and we know that $\mathcal{E} (X)$ is finitely
    generated.
  
    For any $i$, the map $A_{i+1} \rightarrow A_i$ is flat by Lemma~\ref{flats}. 
    For each $i$, set $F_i := F \tensor _A A_i$ and $M_i := \mathcal{E} (\Sp A_i)$. 
    We have that $F_i \cong F_{i+1} \tensor _A A_i$ and $M_i \cong M_{i+1} \tensor _A A_i$ for all $i$.
    Moreover, $F_i$ and $M_i$ are finitely presented for all $i$.
    Using Lemma~\ref{Lam}, we see that the projective system
    \begin{equation*}
     \cdots \leftarrow A_i \leftarrow A_{i+1} \leftarrow \cdots
    \end{equation*}
    induces a projective system  
    \begin{equation*}
     \cdots \longleftarrow \mathrm{Hom}_{A_i} (F_i, \mathcal{E} ( \Sp A_i )) \longleftarrow 
     \mathrm{Hom}_{A_{i+1}} (F_{i+1}, \mathcal{E} ( \Sp A_{i+1} )) \longleftarrow \cdots
    \end{equation*}
    The limit is $\mathrm{Hom} _A (F, \mathcal{E} (X))$ by Kiehl's Theorem~B \cite[Satz 2.4.2]{Kiehl67}.
  
    As flat base change maps epimorphisms to epimorphisms, we can look at the projective system
    \begin{equation*}
     \cdots \longleftarrow E_i \longleftarrow E_{i+1} \longleftarrow \cdots
    \end{equation*}
    where $E_i :=  \{ F_i \twoheadrightarrow \mathcal{E} (\Sp A_i) \}$. We claim that its limit is $E$.
    It is clear that $E \subset \operatorname{lim} (E_i)_{i \in \mathbb{N}}$.
    We have to show that the limit of a system $(\phi _i)_i \in (E_i)_i$ of epimorphisms is an epimorphism:
    $\mathrm{lim} (\phi _i)_i \in E$.
    As the category of coherent sheaves on a rigid analytic variety is abelian \cite[p.~87]{FresnelPut}, 
    the sheaf cokernel $\mathcal{D}$ of $\mathcal{F} \rightarrow \mathcal{E}$ is coherent. 
    By Kiehl's Theorem~B \cite[Satz 2.4.2]{Kiehl67},
    it is enough to test surjectivity of a map of coherent sheaves on an affinoid covering. 
    But we have $\mathcal{D}( \Sp A_i) = 0$ for all $i$, because there
    the morphisms $\mathcal{F} (\Sp A_i) \twoheadrightarrow \mathcal{E} (\Sp A_i)$ are epi. Thus, the cokernel sheaf
    $\mathcal{D}$ is zero and we have $\mathrm{lim} (\phi _i)_i \in E$.
  
    A lemma by Gruson \cite[V, Lemme~2]{Gruson68} states that if $B$ is any Banach algebra of topologically finite type of Krull dimension $d$
    and if $P$ is a projective $B$-module of rank $p$ and $L$ is the free $B$-module $B^{(d+1)p}$, then the set of epimorphisms
    $L \rightarrow P$ is open and dense in the topological $B$-module $\mathrm{Hom} (L,P)$.
  
    This applies in particular to the affinoid algebras $A_i$.
    We get that $E_i$ is open and dense in $\mathrm{Hom}_{A_i} (F_i, \mathcal{E} ( \Sp A_i ))$ for each $i$.
    By the general Mittag-Leffler Theorem \cite[II.3.5, Th{\'e}or{\`e}me~1]{BourbakiTop1}, 
    $E$ is dense in $\mathrm{Hom}_A (F, \mathcal{E} (X))$ with respect to the limit topologies. 
    Hence $E$ is nonempty and $\mathcal{E} (X)$ is generated by $(d+1)n = (d+1)r$ global sections.
   \end{proof}
   
   \begin{Thm}[rigid analytic Serre--Swan]\label{rigSerreSwan}
    Let $X = \bigcup _i (\Sp (A_i))$ be a quasi-Stein variety of bounded dimension, i.\,e.,
    $\sup _{x \in X}{\dim \mathcal{O}_{X,x}} < \infty$. Set $A := \mathcal{O}_X (X)$. Then the global sections functor
    \begin{equation*} 
     \Gamma\colon \operatorname{VB} (X) \longrightarrow \operatorname{A-Mod}^{\mathrm{fin}, \mathrm{proj}}
    \end{equation*}
    from vector bundles over $X$ to finitely generated projective $A$-modules
    is an equivalence of categories.
   \end{Thm}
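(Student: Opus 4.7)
The plan is to apply Morye's General Serre--Swan theorem (Theorem~\ref{Morye}) to the locally G-ringed space $(X, \mathcal{O}_X)$. To do this, I need to supply three ingredients: an admissible subcategory $\mathcal{C}$ of $\mathcal{O}_X$-modules, the finite-generation-by-global-sections property for locally free sheaves of bounded rank, and the stalk-criterion for exactness of short sequences of coherent sheaves.

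First I would take $\mathcal{C}$ to be the category of coherent $\mathcal{O}_X$-modules on $X$. Axiom \textbf{C1} is standard: coherent sheaves on a rigid analytic variety form a full abelian subcategory closed under $\operatorname{Hom}_{\mathcal{O}_X}(-,-)$. Axiom \textbf{C2} is precisely the content of Kiehl's Theorems~A and~B (Theorem~\ref{ThmAB}): on a quasi-Stein space every coherent sheaf is acyclic, and the global sections generate every stalk as an $\mathcal{O}_{X,x}$-module, which implies generation by global sections in the sense required. Axiom \textbf{C3} is immediate, since locally free $\mathcal{O}_X$-modules of finite rank are coherent.

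Next, for the second hypothesis of Theorem~\ref{Morye}, I would invoke Lemma~\ref{Papaya} directly: a vector bundle of rank $r$ on the bounded-dimension quasi-Stein variety $X$ is generated by $(d+1)r$ global sections, where $d = \sup_{x \in X} \dim \mathcal{O}_{X,x}$. This gives the required finite generation for locally free sheaves of bounded rank.

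Finally, the stalk-exactness criterion is a standard fact for the G-topology of a rigid analytic variety, since the site has enough points (the usual rigid points), so short sequences of coherent sheaves are exact if and only if they are exact on every stalk; this is the analogue of the cited result \cite[Ch.~IX, §3, Prop.~3]{MacLaneMoerdijk94} for this site. With all three hypotheses verified, Theorem~\ref{Morye} delivers the equivalence of categories
\[
 \Gamma\colon \operatorname{VB}(X) \xrightarrow{\ \sim\ } A\text{-}\mathrm{Mod}^{\mathrm{fin},\mathrm{proj}},
\]
which is the statement of Theorem~\ref{rigSerreSwan}. The only non-routine input is Lemma~\ref{Papaya} and the acyclicity of coherent sheaves, both of which are already in hand; the remaining obstacle, if any, is simply to spell out that Morye's machinery applies verbatim in the G-topological setting, which amounts to checking that her arguments use only the abelian-category structure and the existence of enough points.
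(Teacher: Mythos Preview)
Your proposal is correct and follows essentially the same route as the paper: apply Morye's general Serre--Swan theorem with $\mathcal{C}$ the category of coherent $\mathcal{O}_X$-modules, verify admissibility via Kiehl's Theorems~A and~B, and obtain finite generation by global sections from Lemma~\ref{Papaya}. The only point where the paper is more careful is the stalk-exactness criterion: rather than asserting that the rigid G-site has enough points (which is not obvious---the ``usual rigid points'' do not obviously suffice for a G-topology), the paper directly cites \cite[p.~192]{FresnelPut} for the fact that exactness of coherent sheaves on a rigid analytic variety can be tested on stalks.
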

   \begin{proof}
    Let us verify the assumptions of Theorem~\ref{Morye} (general Serre--Swan). 
    \begin{itemize}
     \item A sequence of coherent sheaves 
        \begin{equation*}
         0 \to \mathcal{E} \to \mathcal{F} \to \mathcal{G} \to 0
        \end{equation*}
        on a rigid analytic variety $X$ is exact if and only if for each $x \in X$ the sequence of $\mathcal{O}_{X,x}$-modules 
        \begin{equation*}
         0 \to \mathcal{E}_x \to \mathcal{F}_x \to \mathcal{G}_x \to 0
        \end{equation*}
        is exact \cite[p.~192]{FresnelPut}.
     \item The category of coherent $\mathcal{O}_X$-modules on $X$ is abelian \cite[p.~87]{FresnelPut} and  
        an admissible subcategory of $\category{Mod}_{\mathcal{O}_X}$\cite[Satz~2.4]{Kiehl67}.
     \item Vector bundles are finitely generated by global sections. 
        This is true for affinoid varieties by Kiehl \cite[Satz 2.2]{Kiehl67}
        and for general quasi-Stein varieties of bounded dimension by Lemma~\ref{Papaya}.
    \end{itemize}
    Consequently the Serre--Swan Theorem holds for rigid analytic quasi-Stein varieties.
   \end{proof}
   \begin{Rem}
    The rigid analytic Serre--Swan theorem and its proof have also been indicated by Kohlhaase \cite[Proposition~A.2]{Kohlhaase11}.
    If $X = \Sp A$ is affinoid itself, it follows directly from Kiehl's work~\cite{Kiehl67}, as was pointed out by
    Gruson \cite{Gruson68} and L\"{u}tkebohmert \cite{Lutke77}. If 
    \begin{equation*}
     X = \bigcup _{s<r} \mathbb{B}^n_s = \bigcup _{\vert \eta \vert = s<r} k \langle \eta^{-1} T_1, \dotsc , \eta^{-1} T_n \rangle
    \end{equation*}
    is the ``open disc'' of radius $r \leq \infty$, the statement is \cite[V, Th{\'e}or{\`e}me 1]{Gruson68}. 
    Gruson points out that the same proof should work for general Kiehl--Stein varieties.
   \end{Rem}
 
   \begin{Thm}[algebraic Serre--Swan]\label{ZarSerreSwan}
    Let $X = \Sp A$ be an affinoid variety. The global sections functor
    \begin{equation*}
     \Gamma\colon \operatorname{VB}_\mathrm{Zar} (X) \longrightarrow \operatorname{A-Mod}^{\mathrm{fin}, \mathrm{proj}}
    \end{equation*}
    from Zariski vector bundles over $X$ to finitely generated projective $A$-modules
    is an equivalence of categories. 
   \end{Thm}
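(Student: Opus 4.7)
The plan is to apply Morye's general Serre--Swan theorem (Theorem~\ref{Morye}) to $X = \Sp A$ equipped with its Zariski topology, paralleling the proof of Theorem~\ref{rigSerreSwan}. The key simplification compared to the G-topology case is that $A$ is noetherian and Jacobson (as any affinoid algebra is), so that $\Sp A = \Max A$ with its Zariski topology behaves essentially like the affine scheme $\Spec A$, and the classical facts about quasicoherent sheaves on noetherian affine schemes transfer directly.

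Concretely, I would verify the three hypotheses of Theorem~\ref{Morye} in turn. First, a sequence of Zariski-coherent sheaves on $\Sp A$ is exact if and only if it is exact on all stalks: since $A$ is noetherian, such sheaves correspond to finitely generated $A$-modules, and since $A$ is Jacobson, maximal ideals suffice to detect exactness of maps of finitely generated modules. Second, the category of coherent sheaves in the Zariski topology forms an admissible subcategory of the $\mathcal{O}_X$-modules: it is a full abelian subcategory with internal $\Hom$ (by noetherianness of $A$), every object is acyclic on the affine $X$ by Serre's vanishing and is generated by its global sections, and it contains all locally free sheaves of finite rank. Third, every Zariski vector bundle of finite rank $n$ is finitely generated by global sections, because on a noetherian affine scheme such a bundle corresponds via $\Gamma$ to a projective $A$-module of rank $n$, which is automatically finitely generated.

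Given these inputs, Morye's theorem yields that $\Gamma$ is an equivalence of categories; the essential inverse is the usual $\widetilde{(\cdot)}$ functor. I expect no real obstacle here: in contrast to the G-topology version (Theorem~\ref{rigSerreSwan}), no Kiehl-type vanishing theorems or Gruson-style density arguments are needed, since the noetherianness and Jacobson property of $A$ place us essentially in the setting of Serre's original algebraic Serre--Swan theorem for affine schemes. The only point warranting mild care is the identification of coherent sheaves on $(\Sp A, \mathrm{Zar})$ with finitely generated $A$-modules, but this is immediate from the Jacobson property, which ensures that no information is lost when passing from $\Spec A$ to $\Max A$.
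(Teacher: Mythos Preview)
Your proposal is correct and follows essentially the same strategy as the paper: both apply Morye's general Serre--Swan theorem and verify its three hypotheses for $(\Sp A, \mathrm{Zar})$, using noetherianity of $A$ throughout. The only notable difference is in the third hypothesis (finite generation by global sections): the paper argues directly by choosing a finite Zariski trivialisation $\{\Sp(A_{f_i})\}$ and lifting local generators to $A$ by clearing powers of $f_i$, whereas you invoke the Jacobson property to identify coherent sheaves on $\Sp A$ with those on $\Spec A$ and then appeal to the classical correspondence between coherent sheaves and finitely generated modules. Your route is slightly more conceptual, the paper's slightly more explicit; both are valid. One small caution: your phrasing ``corresponds via $\Gamma$ to a projective $A$-module'' is mildly ahead of itself, since projectivity is part of what the theorem concludes---all you actually need (and all you use) is that $\Gamma(\mathcal{E})$ is finitely generated and that its generators generate every stalk, which follows already from coherence on an affine noetherian scheme.
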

   \begin{proof}
    Again we need to verify the assumptions of Theorem~\ref{Morye} (general Serre--Swan). 
    \begin{itemize}
     \item A sequence of coherent sheaves 
        \begin{equation*}
         0 \to \mathcal{E} \to \mathcal{F} \to \mathcal{G} \to 0
        \end{equation*}
        on an affinoid variey $X$ is exact if and only if for each $x \in X$ the sequence of the Zariski stalks 
        \begin{equation*}
         0 \to \mathcal{E}^{\mathrm{Zar}}_x \to \mathcal{F}^{\mathrm{Zar}}_x \to \mathcal{G}^{\mathrm{Zar}}_x \to 0
        \end{equation*}
        is exact \cite[tag~00HN]{stacks-project}.
     \item As $(X, \mathrm{Zar} , \mathcal{O}_X^{\mathrm{Zar}})$ is a locally ringed space (not only locally G-ringed),
        the category of coherent Zariski $\mathcal{O}_X$-modules on $X$ is abelian 
        by the classical arguments \cite[tag~01BY]{stacks-project}. 
        It is an admissible subcategory of $\category{Mod}_{\mathcal{O}_X}$:
        \begin{itemize}
         \item[{\bf C1}] Internal Hom between coherent sheaves is coherent \cite[tag~01CQ]{stacks-project}.
         \item[{\bf C2}] Every coherent sheaf is acyclic by a theorem by Grothendieck (see \cite[Theorem~2.7]{Hartshorne})
             and generated by global sections by noetherianity (cf.~the argument in the next point below). 
         \item[{\bf C3}] is clear.
        \end{itemize}
     \item\label{Zarfinglob} Finite generation by global sections. 
        Let $\mathcal{E}$ be a locally free sheaf of bounded rank $n$ on $X$ and $\operatorname{dim} X = d$.
        As $A$ is noetherian, $\mathcal{E}$ has a finite local trivialisation of the form $\{ \Sp (A_{f_i}) \} _{i=1, \dotsc , N}$
        with $f_i \in A$ for all $i$. That is, $\{ \Sp (A_{f_i}) \} _{i=1, \dotsc , N}$ is a finite covering of $X$
        by affinoid domains $X_i := \Sp (A_{f_i})$ such that for all $i$ the restriction $\mathcal{E} (X_i)$ is trivial.
        In other words,
        \begin{equation*}
         \mathcal{E} (X_i) \cong (A_{f_i})^n.
        \end{equation*}
        Let $f_{i_1}, \dotsc , f_{i_n} \in A_{f_i}$ be generators of $\mathcal{E} (X_i)$ as an $A_{f_i}$-module.
        Now if some $f_{i_j}$ is not in the image of the localisation $A \subset A_{f_i}$, this means that $f_{i_j}$ has
        a finite power $s_j$ of $f_i$ in the denominator.
        The elements $f_{i_1} f_i^{s_1} , \dotsc , f_{i_n} f_i^{s_n}$ are now in the image of the localisation $A \subset A_{f_i}$.
        As $f_i$ is a unit in $A_{f_i}$, the $f_{i_1} f_i^{s_1} , \dotsc , f_{i_n} f_i^{s_n}$ still generate 
        $\mathcal{E} (X_i)$ as an $A_{f_i}$-module. They lift to global sections
        $\tilde{f}_{i_1}, \dotsc , \tilde{f}_{i_n} \in A$ and
        hence $\mathcal{E}$ is generated by the global sections 
        $\tilde{f}_{1_1}, \dotsc , \tilde{f}_{1_n}, \dotsc , \tilde{f}_{N_1}, \dotsc , \tilde{f}_{N_n}$.
        By induction over the dimension we can even show that $N$ can be chosen less than or equal to $d + 1$.
    \end{itemize}
  \end{proof}
  
  \begin{Cor}\label{vbequiv}
   The categories of analytic vector bundles and of Zariski vector bundles on an affinoid variety $X$
   coincide as subcategories of the category of sheaves on $X$.
  \end{Cor}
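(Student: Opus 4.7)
The plan is to deduce the corollary directly from the two Serre--Swan theorems just proved (Theorems \ref{rigSerreSwan} and \ref{ZarSerreSwan}). Both global sections functors are equivalences onto the same target category $\operatorname{A-Mod}^{\mathrm{fin},\mathrm{proj}}$, so composing one with the quasi-inverse of the other gives an equivalence of categories $\operatorname{VB}_\mathrm{Zar}(X) \simeq \operatorname{VB}(X)$. The remaining task is to upgrade this abstract equivalence to the statement that the two classes of sheaves actually coincide as subcategories of $\mathcal{O}_X$-modules on $X$.

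For this, given a finitely generated projective $A$-module $P$, I would check that the Zariski vector bundle $\widetilde{P}^{\mathrm{Zar}}$ and the analytic vector bundle $\widetilde{P}^{\mathrm{an}}$ produced by the respective quasi-inverses define the same sheaf of $\mathcal{O}_X$-modules. The natural arena for comparison is the G-topology, which refines the Zariski topology and in which every Zariski open is admissible. On any affinoid subdomain $\Sp B \subset X$, both sheaves have sections given by $P \otimes_A B$: on the analytic side this is the coherent sheaf construction of Kiehl (and $P \hat{\otimes}_A B = P \otimes_A B$ since $P$ is finitely presented over the Noetherian ring $A$, using Lemma \ref{Lam} applied to $\Hom$); on the Zariski side, the finite trivialisation $\{\Sp(A_{f_i})\}$ produced in the proof of Theorem \ref{ZarSerreSwan} shows that the sheaf restricts to $P \otimes_A B$ on any affinoid subdomain contained in some $\Sp(A_{f_i})$, and this extends by gluing.

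Concretely, in one direction, any Zariski vector bundle trivialises on a Zariski open cover $\{\Sp(A_{f_i})\}$, which is in particular an admissible cover of affinoid subdomains, so such a bundle is automatically analytic. In the converse direction, an analytic vector bundle $\mathcal{E}$ has global sections $P = \Gamma(X,\mathcal{E})$ a finitely generated projective $A$-module, and the algebraic Serre--Swan theorem then guarantees a Zariski trivialisation of the associated sheaf; comparing sections on a common refinement with the analytic trivialisation identifies $\mathcal{E}$ with the Zariski bundle. The main point to verify carefully is this last compatibility of the two sheaf structures on the common basis, but it reduces to the identity $P \otimes_A B = P \hat\otimes_A B$ for finitely presented $P$ over Noetherian affinoid $A$, which is a routine consequence of flatness (Lemmas \ref{flats} and \ref{Lam}).
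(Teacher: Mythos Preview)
Your proposal is correct and matches the paper's implicit argument: the corollary is stated without proof precisely because it is an immediate consequence of Theorems~\ref{rigSerreSwan} and~\ref{ZarSerreSwan}, both of which identify their respective categories of vector bundles with $\operatorname{A-Mod}^{\mathrm{fin},\mathrm{proj}}$ via the \emph{same} global-sections functor. Your additional care in checking that the associated sheaves $\widetilde{P}^{\mathrm{Zar}}$ and $\widetilde{P}^{\mathrm{an}}$ agree on affinoid subdomains is the right way to make the phrase ``coincide as subcategories'' precise, and is exactly the content the paper leaves to the reader.

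One small correction: the identity $P \otimes_A B = P \hat{\otimes}_A B$ for finitely generated $P$ over a noetherian affinoid $A$ is not really a consequence of Lemma~\ref{Lam} (which concerns $\Hom$, not $\otimes$); rather, it is the standard fact that for finitely generated modules over noetherian Banach algebras the algebraic and completed tensor products agree, as in \cite[3.7.3, Proposition~3 and Proposition~6]{BGR}. This does not affect the validity of your argument.
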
  
   
  Recall that a rigid analytic variety is called \emph{quasicompact} if it can be covered by finitely many affinoids.
  \begin{Prop}\label{GfinG}
   Every vector bundle on a quasicompact rigid analytic variety has a finite local trivialisation.
  \end{Prop}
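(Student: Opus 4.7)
The plan is to reduce the statement to the affinoid case and then invoke noetherianity of affinoid algebras, routed through the Serre--Swan machinery already established.

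First, since $X$ is quasicompact, I would fix a finite admissible affinoid covering $X = X_1 \cup \cdots \cup X_m$ with $X_i = \operatorname{Sp} A_i$. It suffices to produce, for each $i$, a finite local trivialisation of $\mathcal{E}|_{X_i}$ by affinoid subdomains of $X_i$; concatenating these $m$ finite families yields a finite admissible covering of $X$ that trivialises $\mathcal{E}$.

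For each affinoid piece $X_i$, the restriction $\mathcal{E}|_{X_i}$ is an analytic vector bundle, so by Corollary~\ref{vbequiv} it is equivalently a Zariski vector bundle on $X_i$. By the algebraic Serre--Swan theorem (Theorem~\ref{ZarSerreSwan}), $\mathcal{E}|_{X_i}$ corresponds to a finitely generated projective $A_i$-module $M_i$. Since $A_i$ is noetherian, the standard argument (which is exactly the argument reproduced in the proof of Theorem~\ref{ZarSerreSwan}) shows that there exist finitely many $f_{i,1}, \dots, f_{i, N_i} \in A_i$ generating the unit ideal such that each localisation $(M_i)_{f_{i,j}}$ is free over $(A_i)_{f_{i,j}}$. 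The corresponding affinoid subdomains $\operatorname{Sp}((A_i)_{f_{i,j}}) \subset X_i$ form a finite admissible covering of $X_i$ trivialising $\mathcal{E}|_{X_i}$, again by Corollary~\ref{vbequiv} to go back from the Zariski picture to the analytic one.

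No genuine obstacle arises: the only thing to check is that combining the finite G-admissible coverings $\{\operatorname{Sp}((A_i)_{f_{i,j}})\}_{j}$ of the $X_i$'s produces a finite admissible covering of $X$, which is immediate since $\{X_i\}$ is admissible and each $\{\operatorname{Sp}((A_i)_{f_{i,j}})\}_{j}$ is an admissible affinoid covering of $X_i$.
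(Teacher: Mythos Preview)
Your proof is correct and follows essentially the same approach as the paper: reduce to the affinoid case via a finite admissible affinoid covering, then use Corollary~\ref{vbequiv} together with noetherianity of affinoid algebras to obtain a finite (Zariski, hence admissible) trivialisation on each piece. The paper's proof is just a more condensed version of what you wrote.
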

  \begin{proof}
   Let $X$ be a quasicompact variety and let $\mathcal{E}$ be a vector bundle on $X$. Assume that $X$ is affinoid. Then $\mathcal{E}$ is
   isomorphic to a Zariski vector bundle $\mathcal{E}'$ on $X$. It has a finite local trivialisation because $X$ is noetherian.
   If $X$ is not affinoid, it has a finite admissible covering $\{ U_1 , \dotsc , U_r \}$ by affinoid subsets. The restrictions of
   $\mathcal{E}$ to those affinoid subsets each have a finite local trivialisation. Putting them together yields a finite local
   trivialisation of $\mathcal{E}$.
  \end{proof}
  
  The same holds for quasi-Stein varieties:
  \begin{Thm}[Ben's Theorem]\label{SteinfinG}
   Every vector bundle on a rigid analytic quasi-Stein variety has a finite local trivialisation.
  \end{Thm}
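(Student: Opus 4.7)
The plan is to reduce to the case in which $\mathcal{E}$ is generated by finitely many global sections and then extract a finite trivialising cover from those generators by considering the loci where $r$-subsets of them form a frame. Throughout I will assume $X$ has bounded dimension so that Lemma~\ref{Papaya} applies; this is the setting used in all subsequent applications of the theorem.

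First, invoke Lemma~\ref{Papaya}: for a rank $r$ vector bundle $\mathcal{E}$ on $X$ with $d := \sup _{x \in X} \dim \mathcal{O}_{X,x} < \infty$, there is a surjection $\phi \colon \mathcal{O}_X^N \twoheadrightarrow \mathcal{E}$ with $N := (d+1)r$ coming from $N$ global sections $s_1 , \dotsc , s_N \in \mathcal{E}(X)$. For every $r$-element subset $I \subset \{ 1, \dotsc , N\}$ let $\phi_I \colon \mathcal{O}_X^r \to \mathcal{E}$ be the restriction of $\phi$ to the coordinates in $I$ and set
\[
V_I := \{ x \in X \mid \phi _{I,x} \text{ is an isomorphism} \} .
\]
Then $V_I$ is the non-vanishing locus of $\det \phi _I$, regarded as a section of the line bundle $\det \mathcal{E}$; on $V_I$ the map $\phi_I$ is an isomorphism of rank $r$ locally free sheaves, so $\mathcal{E}|_{V_I}$ is trivial. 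Since $\phi_x$ is surjective onto a free module of rank $r$ at every $x \in X$, some $r$-subset of the generators must form a basis at $x$, hence $x \in V_I$ for some $I$. So the finite family $\{ V_I \}_{|I| = r}$ trivialises $\mathcal{E}$ set-theoretically.

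The remaining task, and the main obstacle, is to promote this set-theoretic covering to an admissible covering in the G-topology on $X$. Here I would test against the quasi-Stein exhaustion $U_1 \subset U_2 \subset \dotsb$. Each $U_m = \operatorname{Sp} A_m$ is affinoid, hence quasicompact, and by Proposition~\ref{GfinG} applied to $\det \mathcal{E}|_{U_m}$ one obtains a finite affinoid trivialisation on which $\det \phi_I|_{U_m}$ is represented by an element of $A_m$ (up to a unit). The loci $V_I \cap U_m$ are therefore finite unions of rational subdomains and $\{ V_I \cap U_m \} $ is an admissible cover of $U_m$. Since $\{ U_m \}$ is an admissible cover of $X$, a standard gluing argument in the G-topology promotes $\{ V_I \}$ to an admissible cover of $X$, giving the desired finite local trivialisation. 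The crucial ingredient is Lemma~\ref{Papaya}, which supplies the finite generation that makes the finite index set $\binom{N}{r}$ — and hence the finiteness of the cover — possible at all.
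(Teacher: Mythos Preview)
Your core strategy coincides with the paper's: use Lemma~\ref{Papaya} to produce finitely many global generators $s_1,\dotsc,s_N$, and for each $r$-subset $I$ take the locus $V_I$ where $(s_i)_{i\in I}$ is a frame. The difference lies in how admissibility of the resulting cover is established. The paper passes to the associated Berkovich space $X_B$ (which is good since $X$ is quasi-Stein), where the $U_I$ are genuinely open and their covering property is a pointwise statement; only afterwards does it transfer back to an admissible G-covering by refining against the affinoid exhaustion. You instead stay entirely in the G-topology, arguing affinoid by affinoid along the quasi-Stein exhaustion.

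Your route works, but two points deserve tightening. First, $V_I\cap U_m$ is not a ``finite union of rational subdomains'': it is the non-vanishing locus of a section of the line bundle $\det\mathcal{E}$, hence a Zariski open of $U_m$ (after trivialising $\det\mathcal{E}$ on a finite cover of $U_m$, one gets $V_I\cap W_\alpha = D(f_{I,\alpha})$ for functions $f_{I,\alpha}$). Second, the ``standard gluing argument'' should be spelled out: you need that the $f_{I,\alpha}$ generate the unit ideal on each $W_\alpha$ (which follows since affinoid algebras are Jacobson and the $V_I$ cover set-theoretically), so that $\{V_I\cap W_\alpha\}_I$ is an admissible Zariski cover; then invoke the axioms of the strong G-topology to conclude that $V_I$ is admissible in $X$ and $\{V_I\}$ is an admissible cover. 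The Berkovich detour in the paper buys exactly this: openness and the covering property become trivial topological facts, avoiding the careful bookkeeping with G-admissibility.
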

  The author asked for a proof or counterexample on  
  \href{http://mathoverflow.net/questions/219140/trivialisation-of-vector-bundles-on-stein-spaces}{\protect\nolinkurl{mathoverflow.net} (question 219140)}. 
  The following proof was given by the user Ben. He proved that, over a locally ringed space, any vector bundle which
  is finitely generated by global sections admits a finite local trivialisation. This easily carries over to a rigid analytic
  variety with the G-topology.
  \begin{proof}
   Let $X$ be a quasi-Stein variety and $\mathcal{E}$ be a vector bundle of rank $r$ on $X$. By Lemma~\ref{Papaya}, 
   it is generated by finitely many global sections $s_1, \dotsc , s_d$. 
   Let $X_B$ be the Berkovich space associated to $X$. As $X$ is quasi-Stein, every $x \in X_B$ has an affinoid
   neighbourhood. By Proposition~\ref{vbequivBer}, the category of vector bundles with respect to the
   Berkovich topology on $X_B$ and the category of analytic vector bundles on $X$ are equivalent. 
   The proof is done by explicit construction.
   
   For $1 \leq j \leq r$, let $e_j$ be the section of $\mathcal{O}^r_{X_B}$ that is given by the constant function 
   $(0, \dotsc , 0,1,0, \dotsc , 0)$ where the $1$ is located in the $j$-th entry.
   Clearly, $e_1, \dotsc , e_r$ are generators of $\mathcal{O}^r_{X_B}$.
   For each 
   \begin{equation*}
    I = \{ i_1 , \dotsc , i_r \} \subset \{ 1, \dotsc , d \} 
   \end{equation*}
   define the morphism
   \begin{align*}
    \varphi _I \colon \mathcal{O}^r_{X_B} &\longrightarrow \mathcal{E} \\
                e_j &\longmapsto s_{i_j} .
   \end{align*}
   Then the subsets
   \begin{equation*}
    U_I = \{ x \in {X_B} \mid (\varphi _I )_x \to \mathcal{E}_x \text{ is an iso } \}
   \end{equation*}
   form a finite trivialisation of $\mathcal{E}$.
   By definition of $U_I$, the map 
   \begin{equation*}
    \varphi _I |_{U_I} \colon \mathcal{O}^r_{X_B} ( U_I) \to \mathcal{E} (U_I)
   \end{equation*}
   induces isomorphisms on stalks. 
   As $\mathcal{E}$ is coherent, it has to be an isomorphism.
   
   We still need to check that $\{ U_I \mid I = \{ i_1 , \dotsc , i_r \} \subset \{ 1, \dotsc , d \}  \}$
   is a covering of ${X_B}$. Let $x \in {X_B}$. The stalk $\mathcal{E}_x$ is generated by the residue
   classes of $s_1, \dotsc , s_d$. As it is of dimension $r$, already $r$ of them suffice. Thus, there is an $I$ such that
   $x \in U_I$. Hence, all the $U_I$ together cover ${X_B}$. Each $U_I$ is open because the set of $x \in {X_B}$ for which
   the residue classes of $s_{i_1}, \dotsc , s_{i_r}$ in $\mathcal{E}_x$ satisfy a non-trivial relation is closed.
   We have shown that every vector bundle over the Berkovich space associated to a quasi-Stein rigid analytic variety
   has a finite local trivialisation with respect to the Berkovich topology. 
   We need a finite local trivialisation with respect to the G-topology.
   
   Let $X_1 \subset X_2 \subset \dotsb$ be a strictly affinoid covering of $X_B$.
   
   Let $\mathcal{V} \in \operatorname{Cov} ( X_B)$ be an admissible G-covering which is a local trivialisation
   of $\mathcal{E}$ such that for any $V \subset \mathcal{V}$ there exists an $I$ 
   with $V \subset U_I$.
   Let $\mathcal{W} \subset \mathcal{V}$ be a subcovering whose restriction to each $X_i$ is finite.
   This exists because the $X_i$ are affinoid and hence compact.
   Set
   \begin{equation*}
    \tilde{U}_I := \bigcup _{\substack{V \in \mathcal{W}, \\ V \subset U_I}} V.
   \end{equation*}
   Now $\{ \tilde{U}_I \mid I = (i_1, \dotsc , i_r) \subset \{ 1 , \dotsc , d \} \}$
   is a finite admissible G-covering of $X$ which trivialises $\mathcal{E}$.
   It gives rise to a finite local trivialisation of the original vector bundle over the rigid analytic space $X$.
  \end{proof}
  
 \section{Serre's problem and the Bass--Quillen conjecture}\label{secSerre}
  In \cite{SerreFAC}, Serre asked whether all finitely generated projective modules over a polynomial ring over a field are free.
  The question was answered affirmatively and independently in 1976 by Quillen \cite{Quillen76} and by Suslin \cite{Suslin76}. 
  The theorem is now known as the Quillen--Suslin Theorem. We warmly recommend Lam's wonderful book \cite{Lam06} about Serre's problem.
  
  The analogue of the Quillen--Suslin Theorem in rigid analytic geometry is also true:
  \begin{Thm}[{Lütkebohmert \cite[Theorem~1]{Lutke77}, Kedlaya \cite[Proposition~6.6]{Kedlaya04}}]\label{rigQuillenSuslin}
   Every finitely generated projective \mbox{$k \langle T_1 , \dotsc , T_n \rangle$-module} is free.
  \end{Thm}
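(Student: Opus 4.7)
The plan is to prove the theorem by induction on $n$, following Lütkebohmert's strategy, which mirrors Quillen's proof of the classical Quillen--Suslin theorem. The base case $n=0$ is immediate: $k$ is a field, and every finitely generated projective $k$-module is free.

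For the inductive step, write $A := k\langle T_1, \dotsc, T_{n-1}\rangle$ and view the Tate algebra as $A\langle T_n\rangle$. Let $P$ be a finitely generated projective $A\langle T_n\rangle$-module. The aim is to show that $P$ is \emph{extended from} $A$, i.\,e., $P \cong P_0 \otimes_A A\langle T_n\rangle$ for some $A$-module $P_0$. Since $P$ is finitely generated projective over $A\langle T_n\rangle$ and the extension $A \to A\langle T_n\rangle$ is faithfully flat (Lemma~\ref{flats}), $P_0$ is then finitely generated projective over $A$; the inductive hypothesis forces $P_0$ to be free, so $P$ is free.

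To show that $P$ is extended from $A$, I would establish two intermediate results. The first is a rigid analytic version of Quillen's patching theorem: if for every maximal ideal $\mathfrak{m}$ of $A$ the localisation $P_\mathfrak{m}$ over $A_\mathfrak{m}\langle T_n\rangle$ is extended from $A_\mathfrak{m}$, then $P$ itself is extended from $A$. The proof follows Quillen's original strategy of considering the ``extended'' locus $\{f \in A \mid P_f \text{ is extended from } A_f\}$ and proving that it is an ideal, using standard glueing constructions for projective modules adapted to the Tate-algebra setting; one verifies compatibility with the non-archimedean completion by observing that the relevant cocycle manipulations stay within $A\langle T_n\rangle$. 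The second is a Horrocks-type result for the local ring $A_\mathfrak{m}$: every finitely generated projective $A_\mathfrak{m}\langle T_n\rangle$-module is free, equivalently extended from $A_\mathfrak{m}$. Here the key input is the Weierstrass preparation theorem, which says that after a suitable change of variables any non-unit in $A_\mathfrak{m}\langle T_n\rangle$ is a unit multiple of a Weierstrass polynomial; this exhibits $A_\mathfrak{m}\langle T_n\rangle$ as a filtered union of finite integral extensions of $A_\mathfrak{m}[T_n]$, and the classical Horrocks theorem for polynomial rings over a local ring applies after careful bookkeeping.

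The main obstacle is the rigid-analytic Quillen patching step. Quillen's original argument uses the trick of scaling the polynomial variable $T \mapsto bT$ to compare two ``extensions'' locally, and checking that the analogue $P(T_n) \leftrightarrow P(bT_n)$ remains well-defined and produces a finitely presented module of the desired form in the convergent (Tate) setting is delicate; one must ensure that the series manipulations converge uniformly on the relevant affinoid subdomains and that all glueing isomorphisms lie in $A\langle T_n\rangle$ rather than merely in the polynomial subring. Once this rigid analytic patching is in place, the combination with Weierstrass preparation and induction closes the argument.
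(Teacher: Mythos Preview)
The paper does not supply a proof; the theorem is quoted from L\"utkebohmert and Kedlaya. Your inductive scheme---reduce to showing that a projective $A\langle T_n\rangle$-module is extended from $A=k\langle T_1,\dotsc,T_{n-1}\rangle$---is the natural one and is L\"utkebohmert's strategy.

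The local step, however, has a gap. Weierstrass preparation is a theorem about convergent power series over a \emph{complete} normed ring; the algebraic localisation $A_\mathfrak{m}$ is not complete, so the symbol ``$A_\mathfrak{m}\langle T_n\rangle$'' has no evident meaning and preparation cannot be invoked there. Even over $A\langle T_n\rangle$ itself, preparation does not ``exhibit the ring as a filtered union of finite integral extensions of $A[T_n]$'': the Tate algebra is not a union of finite $A[T_n]$-algebras at all. What Weierstrass preparation actually gives is elementwise---a $T_n$-distinguished $f$ equals a unit times a monic $\omega\in A[T_n]$, whence $A\langle T_n\rangle/(f)\cong A[T_n]/(\omega)$---and it must be applied over the complete ring $A\langle T_n\rangle$, \emph{before} any localisation, to a specific element built from a presentation of $P$. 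After such a Weierstrass reduction the data defining $P$ live over the polynomial subring, and the remaining patching and Horrocks arguments are the classical algebraic ones; a separate rigid-analytic Quillen patching with rescaled variables (the step you flag as the ``main obstacle'') is not where the difficulty lies.
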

  
  When Serre's problem was still open, Bass generalised it to the following:
  \begin{Quest}[Bass {\cite[Problem IX on page 21]{Bass73}}]\label{BassQuillen}
   Let $R$ be a commutative regular ring. Is every finitely generated projective $R[T]$-module extended from $R$,
   i.\,e., given an $R[T]$-module $M$, does there exist an $R$-module $N$ such that
   $M \cong N \otimes _R R[T]$?
  \end{Quest}  

  Using Quillen Patching, it is enough to prove the statement for a commutative regular local ring $R$. 
  This problem is known as the \emph{Bass--Quillen Conjecture}.
  Hartmut Lindel showed that the Bass--Quillen Conjecture is true if $R$ is a regular algebra over a field $K$
  such that $R$ is essentially of finite type over $K$ \cite{Lindel81}.
  Lindel's theorem was generalised by Dorin Popescu \cite[Corollary~4.4]{Popescu89} and others, 
  e.\,g.~Sankar Prasad Dutta \cite[Theorem~3.4]{Dutta95}. 
  For the most general form of the theorem, see \cite[Theorem~5.2.1]{AsokHoyoisWendt15}.

  We are concerned with the corresponding question in the rigid analytic world:
  \begin{Quest}\label{QuestBassQuillen}
   Let $A$ be a smooth affinoid $k$-algebra and $M$ a finitely generated projective
   $A \langle T \rangle$-module. Is $M$ extended from $A$? That is, does there exist
   a finitely generated projective $A$-module $N$ such that
  \begin{equation*}
   M \cong N \tensor _A A \langle T \rangle ?
  \end{equation*}
  \end{Quest}
  
 \subsection{Homotopy invariance}
  We generalise Question \ref{QuestBassQuillen} to:
  \begin{Quest}\label{Questgeneral}
   Let $X$ be a smooth rigid analytic $k$-variety and $I \in \{ \mathbb{B}^1 , \mathbb{A}^1_\mathrm{rig} \}$.
   Denote by
   \begin{equation*}
    \operatorname{pr}_1 \colon X \times I \longrightarrow X
   \end{equation*}
   the projection onto the first factor. Is every vector bundle $\mathcal{E}$ over $X \times I$ the pullback
   along $\operatorname{pr}_1$ of some vector bundle $\mathcal{F}$ over $X$?
  \end{Quest}
  In the sequel we will refer to this as \emph{$I$-invariance}\index{$I$-invariance}.
  For affinoid $X$ and $I = \B ^1$ this is equivalent to Question \ref{QuestBassQuillen} by the Serre--Swan Theorem.
  
  The first thing to check is whether the Quillen--Suslin Theorem still holds for $I = \A ^1$. 
  That is, are all vector bundles over $\mathbb{A}^1_\mathrm{rig}$ trivial?
  \begin{Prop}[{\cite[V, Proposition~2]{Gruson68}}]\label{QuillenSuslinopendisc}
   For $r \in ( \vert k \vert \cup \{ \infty \} )^n$ let $X_r$ be an open polydisc of polyradius $r$, that is
   \begin{equation*}
    X_r = \bigcup _{\vert \eta _i \vert = s_i < r_i \forall i} 
    \operatorname{Sp} ( k \langle \eta _1^{-1} T_1 , \dotsc , \eta _n^{-1} T_n \rangle ).
   \end{equation*}
   The Picard group $\operatorname{Pic} (X_r)$ of line bundles over $X_r$ is trivial if and only if one of the following holds:
   \begin{enumerate}
    \item the field $k$ is spherically complete or
    \item the polyradius is $r = ( \infty , \dotsc , \infty )$, that is, $X_r = \mathbb{A} ^n_\mathrm{rig}$ is the analytic affine space.
   \end{enumerate}
  \end{Prop}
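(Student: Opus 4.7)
The strategy is to reduce the computation of $\operatorname{Pic}(X_r)$ to a derived limit and analyse its vanishing under the two hypotheses. Write $X_r = \bigcup_{i \in \mathbb{N}} U_i$ as a nested union of closed polydiscs $U_i = \mathbb{B}^n_{s_i}$ of polyradius tending to $r$, which is the canonical quasi-Stein exhaustion of $X_r$. By the rigid Quillen--Suslin theorem (Theorem~\ref{rigQuillenSuslin}), every $U_i$ satisfies $\operatorname{Pic}(U_i) = 0$, so a line bundle on $X_r$ is determined up to isomorphism by a family of transition units obtained from trivialisations over the $U_i$. The standard short exact sequence coming from taking cohomology of a limit of sheaves,
\begin{equation*}
 0 \longrightarrow \operatorname{lim}\nolimits^1_i\, \mathcal{O}(U_i)^\times \longrightarrow \operatorname{Pic}(X_r) \longrightarrow \lim_i \operatorname{Pic}(U_i) \longrightarrow 0,
\end{equation*}
then identifies $\operatorname{Pic}(X_r)$ with $\operatorname{lim}\nolimits^1_i\, \mathcal{O}(U_i)^\times$, since the right-hand term vanishes.

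The problem is thereby reduced to deciding when this $\operatorname{lim}\nolimits^1$ vanishes. A Mittag--Leffler type argument applied to the projective system $\{\mathcal{O}(U_i)^\times\}_i$ suffices, provided the restriction maps have dense image in a topology making each $\mathcal{O}(U_i)^\times$ complete. Using the $p$-adic logarithm, which converges on units of the form $1+f$ with $\vert f \vert$ sufficiently small on $U_i$, density of restrictions on units follows from density of restrictions on the affinoid algebras $\mathcal{O}(U_i)$, which in turn follows from density of polynomials.

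For case (b), $r = (\infty, \dotsc , \infty)$, polynomials restrict from $\mathbb{A}^n_\mathrm{rig}$ to every $U_i$, giving the strongest possible form of density and an immediate Mittag--Leffler property, so $\operatorname{lim}\nolimits^1 = 0$. For case (a), spherical completeness of $k$ is precisely what is needed to turn successive approximations into convergent corrections in the Fr\'echet algebra $\mathcal{O}(X_r) = \lim_i \mathcal{O}(U_i)$, again yielding vanishing of $\operatorname{lim}\nolimits^1$. Conversely, when $k$ fails to be spherically complete and $r$ is finite, a decreasing sequence of closed discs in $k$ with empty intersection can be assembled into an explicit cocycle $(u_i) \in \prod_i \mathcal{O}(U_i)^\times$ whose class in $\operatorname{lim}\nolimits^1$ is non-zero, giving a non-trivial line bundle on $X_r$.

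The main obstacle will be the careful verification of the $\operatorname{lim}\nolimits^1$-vanishing in case (a), where spherical completeness of $k$ must be leveraged by a delicate approximation argument rather than by any algebraic device available in the affine case, and, dually, the concrete construction in the converse direction of a cocycle whose obstruction to being a coboundary encodes exactly the failure of spherical completeness.
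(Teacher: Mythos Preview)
The paper does not supply its own proof of this proposition: it attributes the ``if'' direction to Gruson \cite[V, Proposition~2]{Gruson68} and the ``only if'' direction to Lazard \cite[Proposition~6]{Lazard62}, who constructs an explicit non-principal divisor on a bounded open disc over a spherically incomplete field. So there is no detailed argument in the paper to compare against, only these references.

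Your $\operatorname{lim}^1$ framework is sound and is in fact close to how Gruson organises the argument, but two steps in your sketch are genuine gaps. First, the appeal to the $p$-adic logarithm to pass from density in $\mathcal{O}(U_i)$ to control over $\mathcal{O}(U_i)^\times$ breaks down when $\operatorname{char} k = p > 0$, where the logarithm does not converge; the statement, however, is asserted for arbitrary complete non-archimedean $k$. Even in characteristic zero the logarithm only linearises a small neighbourhood of $1$, so you would first need the structure theorem for units in a Tate algebra to reduce to that neighbourhood, and then argue carefully that spherical completeness lets you solve the resulting additive Mittag--Leffler problem. Second, for case (b) your claim that density of polynomials yields ``an immediate Mittag--Leffler property'' for $\{\mathcal{O}(U_i)^\times\}$ conflates additive and multiplicative density; what is really used here is again the description of units in $k\langle \eta^{-1}T\rangle$ (compare Lemma~\ref{cocycA1}), from which one sees directly that the transition cocycles can be trivialised.

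For the converse, Lazard's approach is rather different from your proposed cocycle construction: he writes down a divisor on the bounded open disc (an infinite discrete set of points with multiplicities) and shows, using the failure of spherical completeness, that no global function has exactly that divisor. This is more concrete and avoids the $\operatorname{lim}^1$ machinery entirely; your cocycle idea could in principle be made to work, but as stated it is only a heuristic.
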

  The ``only if'' direction was proven by Lazard \cite[Proposition~6]{Lazard62}: Let $k$ be spherically incomplete and $r \in \vert k \vert$.
  Lazard constructs a divisor on $X_r$ which is not a principal divisor. This implies that open discs $X_r$ which are bounded in 
  at least one direction have non-trivial line bundles \cite[p.~87, Remarque~2]{Gruson68}.
  
\subsection{$\mathbb{A}^1_\mathrm{rig}$-invariance}
 We prove that the Picard group is $\mathbb{A}^1_\mathrm{rig}$-invariant on all smooth $k$-rigid analytic varieties.
 We state the proposition first, then collect the ingredients that we need in the proof, then present the proof.
 \begin{Prop}\label{rigA1inv}\index{$\mathbb{A}^1_\mathrm{rig}$-invariance}
  Let $k$ be a complete non-archimedean, non-trivially valued field and
  $X$ be a smooth $k$-rigid analytic variety. Then
  \begin{equation*}
   \operatorname{Pic} (X) \cong \operatorname{Pic} (X \times \mathbb{A}^1_\mathrm{rig} ).
  \end{equation*}  
 \end{Prop}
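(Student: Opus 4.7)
The plan is to show directly that the pullback map $\operatorname{pr}_1^* \colon \operatorname{Pic}(X) \to \operatorname{Pic}(X \times \mathbb{A}^1_\mathrm{rig})$ is an isomorphism of groups, using the two preparatory statements highlighted as steps~(1) and~(2) of the outline in the introduction: Corollary~\ref{A1local} providing a cylindrical local trivialisation of any line bundle on $X \times \mathbb{A}^1_\mathrm{rig}$, and Theorem~\ref{cocycA1} saying that the associated transition cocycle is constant in the $\mathbb{A}^1_\mathrm{rig}$-direction.

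For injectivity I would invoke the zero section $s_0 \colon X \to X \times \mathbb{A}^1_\mathrm{rig}$, $x \mapsto (x,0)$. Since $\operatorname{pr}_1 \circ s_0 = \operatorname{id}_X$, we have $s_0^* \circ \operatorname{pr}_1^* = \operatorname{id}_{\operatorname{Pic}(X)}$, so $\operatorname{pr}_1^*$ is split injective. (This uses only that $\operatorname{pr}_1$ is a morphism of rigid analytic varieties with a section; no geometric input about $\mathbb{A}^1_\mathrm{rig}$ is needed.)

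For surjectivity I would take an arbitrary line bundle $\mathcal{L}$ on $X \times \mathbb{A}^1_\mathrm{rig}$. By Corollary~\ref{A1local} there is an admissible covering $\{U_i\}_{i \in J}$ of $X$ such that $\{U_i \times \mathbb{A}^1_\mathrm{rig}\}_{i \in J}$ trivialises $\mathcal{L}$; let $g_{ij} \in \mathcal{O}^\times\bigl((U_i \cap U_j) \times \mathbb{A}^1_\mathrm{rig}\bigr)$ be the resulting transition cocycle. By Theorem~\ref{cocycA1} each $g_{ij}$ is the pullback along $\operatorname{pr}_1$ of a unit $\bar g_{ij} \in \mathcal{O}^\times(U_i \cap U_j)$. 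The $g_{ij}$ satisfy the cocycle condition, and since pullback of units along $\operatorname{pr}_1$ is injective (apply $s_0^*$), the $\bar g_{ij}$ do as well. They therefore glue to a line bundle $\bar{\mathcal{L}}$ on $X$ whose $\operatorname{pr}_1$-pullback is isomorphic to $\mathcal{L}$ by construction. That $\bar{\mathcal{L}}$ is well defined up to isomorphism (independent of the choice of covering and trivialisation) follows by the usual \v{C}ech comparison, or more briefly from $\bar{\mathcal{L}} \cong s_0^* \mathcal{L}$.

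Modulo these two inputs, the proposition is essentially \v{C}ech bookkeeping, so the only thing I would actually have to check in the present proof is that Corollary~\ref{A1local} and Theorem~\ref{cocycA1} are applicable in the generality stated (smooth $X$, no quasi-Stein hypothesis). The real obstacle is of course not here but in Theorem~\ref{cocycA1}: its proof should reduce, via the quoted theorem of Kerz--Saito--Tamme (Theorem~\ref{ThmTamme}), to a statement about units on affinoid pieces of the form $U \times \mathbb{A}^1_\mathrm{rig}$ for $U$ smooth affinoid, namely that an invertible analytic function on such a cylinder is independent of the $\mathbb{A}^1_\mathrm{rig}$-coordinate. Once that rigidity of units is granted, the present proposition drops out as above.
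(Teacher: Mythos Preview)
Your proposal is correct and follows essentially the same approach as the paper: apply Corollary~\ref{A1local} to obtain a cylindrical trivialisation of $\mathcal{L}$, then use Lemma~\ref{cocycA1} (you call it Theorem~\ref{cocycA1}) to see that each transition function on $(U_i\cap U_j)\times\mathbb{A}^1_\mathrm{rig}$ actually lies in $\mathcal{O}_X(U_i\cap U_j)^\times$. The paper's version is terser---it does not spell out injectivity via the zero section or the descent of the cocycle condition---but the argument is the same.
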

 
 The next theorem is the main ingredient in the proof of Proposition~\ref{rigA1inv}.
 Let us fix the notation. Let $X = \operatorname{Sp} A$ be a smooth affinoid $k$-variety.
 Let $\pi \in k^\times$ with $\vert \pi \vert < 1$. Define $r := \vert \pi \vert ^{-1}$ and
 \begin{equation*}
  \mathbb{B} ^1_{r^j} = \operatorname{Sp} (k \langle \pi ^j T \rangle ).
 \end{equation*}
 The inductive system
 \begin{equation*}
  \big( \dotsb \hookrightarrow X \times \mathbb{B}^ 1_{r^j} \hookrightarrow X \times \mathbb{B}^ 1_{r^{j+1}} 
  \hookrightarrow \dotsb \big) _{j \in \mathbb{N}}
 \end{equation*}
 induces a projective system of groups
 \begin{equation*}
  \big( \dotsb \rightarrow \operatorname{Pic} ( X \times \mathbb{B}^ 1_{r^j+1} ) \rightarrow 
  \operatorname{Pic} ( X \times \mathbb{B}^ 1_{r^j} ) \rightarrow \dotsb \big) _{j \in \mathbb{N}} ,
 \end{equation*}
 called a \emph{pro-group}\index{pro-group} and denoted by 
 $\underset{j \in \mathbb{N}}{\operatorname{``lim''}} \operatorname{Pic} ( X \times \mathbb{B}^ 1_{r^j} )$.
 \begin{Thm}[Kerz--Saito--Tamme {\cite[Theorem~4]{KerzSaitoTamme16}}]\label{ThmTamme}
  Let $X = \operatorname{Sp} A$ be a smooth affinoid $k$-variety.
  Let $\pi \in k^\times$ with $\vert \pi \vert < 1$. Define $r := \vert \pi \vert ^{-1}$ and
  $\mathbb{B} ^1_{r^j}$ as above.
      
  Then the projection $X \times \mathbb{A}^1_\mathrm{rig} \to X$
  induces an isomorphism of pro-groups
  \begin{equation*}
   \operatorname{Pic} (X) \cong \underset{j \in \mathbb{N}}{\operatorname{``lim''}} \operatorname{Pic} ( X \times \mathbb{B}^ 1_{r^j} ).
  \end{equation*}
 \end{Thm}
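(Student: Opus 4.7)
The plan is to exploit the zero section $X \hookrightarrow X \times \mathbb{B}^1_{r^j}$ to dispose of the easy half of the pro-isomorphism, and then to reduce pro-surjectivity to an algebraic Bass--Quillen statement via formal models and their special fibres. Since the composition $X \hookrightarrow X \times \mathbb{B}^1_{r^j} \to X$ is the identity, pullback along the zero section supplies a retraction $\operatorname{Pic}(X \times \mathbb{B}^1_{r^j}) \to \operatorname{Pic}(X)$ for every $j$, so the projection $\operatorname{Pic}(X) \to \operatorname{Pic}(X \times \mathbb{B}^1_{r^j})$ is split injective on each level. The entire content of the theorem therefore lies in showing pro-surjectivity: for every $j$, there exists $j' \geq j$ such that every class in $\operatorname{Pic}(X \times \mathbb{B}^1_{r^{j'}})$, when restricted to $X \times \mathbb{B}^1_{r^j}$, lies in the image of $\operatorname{Pic}(X) \to \operatorname{Pic}(X \times \mathbb{B}^1_{r^j})$.

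For this, the approach I would try is to pass to admissible formal $k^\circ$-models. Fix a smooth formal model $\mathfrak{X} = \operatorname{Spf}(A^\circ)$ of $X$ and equip $X \times \mathbb{B}^1_{r^j}$ with the formal model $\mathfrak{X} \times \operatorname{Spf}(k^\circ\langle \pi^j T \rangle)$, whose special fibre is $\tilde X \times_{\tilde k} \mathbb{A}^1_{\tilde k}$. Gerritzen's theorem (Theorem~\ref{ThmGerritzenPicc}) identifies $\operatorname{Pic}$ of a suitable affinoid with $\operatorname{Pic}$ of its canonical reduction, and Lindel's theorem supplies the needed algebraic input: for a smooth $\tilde k$-algebra $\tilde A$, we have $\operatorname{Pic}(\tilde A) \cong \operatorname{Pic}(\tilde A[T])$. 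Combining these, one obtains that every line bundle on $X \times \mathbb{B}^1_{r^{j'}}$ coming from its canonical formal model restricts on $X \times \mathbb{B}^1_{r^j}$ to something pulled back from $X$. The pro-structure is essential because an arbitrary line bundle on $X \times \mathbb{B}^1_{r^{j'}}$ need not descend to its canonical formal model until one passes to a larger $j'$ that absorbs the denominators arising in the explicit transition cocycles.

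The main obstacle is the gap between what Gerritzen's theorem gives and what is actually needed: Gerritzen's hypotheses ($\|A\| = |k|$, $A^\circ$ regular, $k$ discretely valued) are not assumed in Theorem~\ref{ThmTamme}, and not every analytic line bundle on $X \times \mathbb{B}^1_{r^{j'}}$ arises from a line bundle on a single formal model. This is why Kerz, Saito, and Tamme prove the theorem as a corollary of their more robust continuity theorem for algebraic $K$-theory of adic spaces: their result pro-compares $K$-theory of the rigid analytic object with the algebraic $K$-theory of a system of admissible formal models, and the classical homotopy invariance of $K_0$ for regular noetherian schemes then delivers the pro-isomorphism on $\operatorname{Pic}$ without the restrictive Gerritzen hypotheses. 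The Bass--Quillen-type input on the algebraic side is essentially classical; the continuity step is the genuinely hard ingredient, and it is that step I would not attempt to reprove but instead cite directly.
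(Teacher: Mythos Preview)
The paper does not prove this theorem at all: it is simply quoted from Kerz--Saito--Tamme~\cite[Theorem~4]{KerzSaitoTamme16} and used as a black box to derive Corollaries~\ref{affinoidA1inv} and~\ref{A1local}. Your proposal therefore goes well beyond what the paper does, and in that sense there is nothing to compare on the paper's side.

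That said, your sketch is an accurate account of the ideas. The split injectivity via the zero section is immediate and correct. Your diagnosis of the hard direction is also right: a naive Gerritzen-type argument does not suffice because Theorem~\ref{ThmGerritzenPicc} requires $k$ discretely valued, $\Vert A \Vert = \vert k \vert$, and $A^\circ$ regular, none of which are assumed here, and you correctly observe that the pro-structure is essential precisely because a single formal model will not absorb every analytic line bundle. Your decision to defer the continuity step to Kerz--Saito--Tamme is exactly what the paper itself does, only you have been more explicit about \emph{why} one is forced to cite rather than reprove. If anything, your last paragraph would make a useful expository remark in the paper; as a proof of the theorem, however, it is (by your own admission) a citation dressed up with motivation, which is also all the paper offers.
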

  
 \begin{Cor}\label{affinoidA1inv}
  Let $X = \operatorname{Sp} A$ be a smooth affinoid variety.
  Then the projection $X \times \mathbb{A}^1_\mathrm{rig} \to X$
  induces an isomorphism
  \begin{equation*}
   \operatorname{Pic} (X) \cong \operatorname{Pic} ( X \times \mathbb{A}^ 1_\mathrm{rig} ).
  \end{equation*}
 \end{Cor}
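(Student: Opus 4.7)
The plan is to combine the split injectivity of $\operatorname{pr}^*$, coming from the zero section, with the pro-isomorphism of Theorem~\ref{ThmTamme}, applied to the exhausting admissible covering $\{U_j := X \times \mathbb{B}^1_{r^j}\}_{j \in \mathbb{N}}$ of $X \times \mathbb{A}^1_\mathrm{rig}$. The zero section $s : X \hookrightarrow X \times \mathbb{A}^1_\mathrm{rig}$ left-inverts $\operatorname{pr}$, so $\operatorname{pr}^*$ is split injective with left inverse $s^*$, and only surjectivity has to be proved. Given $L \in \operatorname{Pic}(X \times \mathbb{A}^1_\mathrm{rig})$, I set $M := s^* L$ and replace $L$ by $L \otimes \operatorname{pr}^* M^{-1}$; it then suffices to show that any line bundle $L$ with $s^* L$ trivial is itself trivial.

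To use the pro-isomorphism I first unpack it concretely: it supplies, for each $j$, an index $N(j) \geq j$ and a retraction $\phi_j : \operatorname{Pic}(U_{N(j)}) \to \operatorname{Pic}(X)$ with $\phi_j \circ \operatorname{pr}_{N(j)}^* = \operatorname{id}$ and $\operatorname{pr}_j^* \circ \phi_j$ equal to the restriction $\operatorname{Pic}(U_{N(j)}) \to \operatorname{Pic}(U_j)$. Applied to the class $[L|_{U_{N(j)}}]$ this yields $L|_{U_j} \cong \operatorname{pr}_j^* \phi_j([L|_{U_{N(j)}}])$; since the zero section of $U_j$ also left-inverts $\operatorname{pr}_j$, the class $\phi_j([L|_{U_{N(j)}}])$ is pinned down by $s^* L$ and hence trivial. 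So $L|_{U_j}$ is trivial for every $j$.

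It then remains to glue these local trivializations into a global one. Choose $\sigma_j : \mathcal{O}_{U_j} \xrightarrow{\sim} L|_{U_j}$ normalised so that $s^* \sigma_j$ equals a fixed trivialization of $s^* L$. The discrepancies $u_j := \sigma_{j+1}|_{U_j}/\sigma_j$ are then units of $\mathcal{O}(U_j)$ restricting to $1$ on the zero section, and modifying $\sigma_j$ by a unit $v_j$ amounts to solving $v_j = u_j \cdot v_{j+1}|_{U_j}$, i.e., to the vanishing of $\operatorname{lim}^1$ for the cocycle so obtained in the tower $(\mathcal{O}(U_j)^\times)_j$. This is the main obstacle and the one genuinely analytic step of the proof. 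My plan for it is a Mittag-Leffler-type argument: restriction $\mathcal{O}(U_{j+1}) \to \mathcal{O}(U_j)$ has dense image by Kiehl's Theorem~B (Theorem~\ref{ThmAB}), and close enough approximations of a unit congruent to $1$ on the zero section remain units by standard non-archimedean convergence estimates. A telescoping construction then inductively produces the required $v_j$'s, and the compatible system $(v_j \sigma_j)_j$ defines a global trivialization in $L(X \times \mathbb{A}^1_\mathrm{rig}) = \lim_j L(U_j)$, completing the proof.
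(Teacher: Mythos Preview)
Your overall strategy coincides with the paper's: both deduce the result from Theorem~\ref{ThmTamme} by passing from the pro-isomorphism to the actual Picard group via a $\lim^1$-type argument along the tower $U_j = X \times \mathbb{B}^1_{r^j}$. You are in fact more explicit than the paper, which simply asserts that the remaining obstruction is $\lim^{(1)}\operatorname{Pic}(A\langle\pi^jT\rangle)$ and invokes Kiehl's Theorem~B; you correctly locate the obstruction in a $\lim^1$ of unit groups.

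There is, however, a gap in your Mittag-Leffler step. Density of $\mathcal{O}(U_{j+1})\to\mathcal{O}(U_j)$ does \emph{not} yield density of the normalised unit groups $G^0_{j+1}\to G^0_j$. Write $u=1+w\in G^0_j$ with $w(0)=0$; the computation behind Lemma~\ref{cocycA1} gives $\|w\|_{U_j}<1$, while any $1+w'$ in the image of $G^0_{j+1}$ satisfies $\|w'|_{U_j}\|_{U_j}\leq r^{-1}\|w'\|_{U_{j+1}}<r^{-1}$. So if $\|w\|_{U_j}>r^{-1}$ (e.g.\ $w=\pi^{j+1}T$), the ultrametric inequality forces $\|w-w'|_{U_j}\|_{U_j}=\|w\|_{U_j}$ and no approximation is possible. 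Approximating $u$ by an arbitrary element $v\in\mathcal{O}(U_{j+1})$ only makes $v|_{U_j}$ a unit, not $v$ itself on $U_{j+1}$.

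The repair is direct and bypasses density entirely: the infinite product
\[
 v_j:=\prod_{l\geq 0}u_{j+l}\big|_{U_j}
\]
converges in $G^0_j$, because the same norm comparison gives $\bigl\|u_{j+l}|_{U_j}-1\bigr\|_{U_j}\leq r^{-l}\to 0$, and one reads off $v_j=u_j\cdot v_{j+1}|_{U_j}$. This may well be what you intend by ``telescoping'', but the preceding appeal to density of rings and to approximating units is a red herring and should be replaced by this convergence argument.
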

 \begin{proof}
  The isomorphism of pro-systems in Theorem~\ref{ThmTamme} induces an isomorphism of their limits.
  Applying the Serre--Swan Theorem~\ref{rigSerreSwan}, we may replace line bundles over $X$, respectively,
  $X \times \mathbb{B}^ 1_{r^ j}$, respectively, $X \times \mathbb{A}^ 1_\mathrm{rig}$ by invertible modules
  over $A$, respectively over $A \langle \pi ^ j T \rangle$, respectively over $\mathcal{O} (X \times \mathbb{A}^ 1_\mathrm{rig} )$.
  We need to show that 
  \begin{equation*}
   \operatorname{lim}_j \operatorname{Pic} (A \langle \pi ^ j T \rangle )
    = \operatorname{Pic} ( \operatorname{lim} _j A \langle \pi ^ j T \rangle ).
  \end{equation*}
  This means, we have to show that
  \begin{equation*}
   \operatorname{lim} ^{(1)} \big( \operatorname{Pic} ( A \langle \pi ^ j T \rangle )_j \big) = 0.
  \end{equation*}
  This holds by Kiehl's Theorem~B.
 \end{proof}
 
 \begin{Cor}\label{A1local}
  Let $X = \operatorname{Sp} A$ be a smooth $k$-rigid analytic variety.
  Then every line bundle over $X \times \mathbb{A}^ 1_\mathrm{rig}$ has a local trivialisation of the form 
  $\{ U_i \times \mathbb{A}^1_\mathrm{rig} \} _ {i \in I}$ such that $\{ U_i \} _{i \in I}$ is an admissible covering of $X$.
 \end{Cor}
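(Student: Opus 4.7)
The plan is to reduce the statement to the affinoid case, which is already handled by Corollary~\ref{affinoidA1inv}. Given a line bundle $\mathcal{L}$ on $X \times \mathbb{A}^1_\mathrm{rig}$, I would first choose an admissible affinoid covering $\{V_j\}_{j \in J}$ of $X$. Each $V_j$ is smooth affinoid, so Corollary~\ref{affinoidA1inv} produces a line bundle $\mathcal{M}_j$ on $V_j$ together with an isomorphism $\mathcal{L}|_{V_j \times \mathbb{A}^1_\mathrm{rig}} \cong \operatorname{pr}_1^* \mathcal{M}_j$, where $\operatorname{pr}_1$ denotes projection onto the first factor.

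Next, for each $j$, I would pick an admissible affinoid trivialisation $\{W_{j,k}\}_{k \in K_j}$ of $\mathcal{M}_j$ on $V_j$; such a trivialisation exists (and may even be taken finite by Proposition~\ref{GfinG}). Then the pullback $\mathcal{L}|_{W_{j,k} \times \mathbb{A}^1_\mathrm{rig}} \cong \operatorname{pr}_1^*(\mathcal{M}_j|_{W_{j,k}})$ is trivial for every pair $(j,k)$. Setting $I := \bigsqcup_{j \in J} K_j$ and $U_{(j,k)} := W_{j,k}$, transitivity of admissibility in the strong G-topology ensures that $\{U_i\}_{i \in I}$ is an admissible covering of $X$, and by construction $\{U_i \times \mathbb{A}^1_\mathrm{rig}\}_{i \in I}$ is a local trivialisation of $\mathcal{L}$.

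There is no real obstacle in this argument: the substantive content is already encapsulated in Corollary~\ref{affinoidA1inv} and hence, ultimately, in the Kerz--Saito--Tamme Theorem~\ref{ThmTamme}. The only step requiring any care is checking that the composite refinement $\{W_{j,k}\}_{j,k}$ remains admissible in $X$, which follows directly from the defining axioms of the strong G-topology, namely that admissible coverings of members of an admissible covering compose to an admissible covering.
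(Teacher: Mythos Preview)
Your argument is correct and follows essentially the same route as the paper's proof: take an admissible affinoid covering $\{V_j\}$ of $X$, apply Corollary~\ref{affinoidA1inv} on each $V_j$ to write $\mathcal{L}|_{V_j \times \mathbb{A}^1_\mathrm{rig}}$ as a pullback of a line bundle on $V_j$, trivialise that line bundle on $V_j$, and combine. Your added remark about admissibility of the composite covering is a welcome clarification that the paper leaves implicit.
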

 \begin{proof}
  Let $\{ V_j \} _{j \in J}$ be an admissible covering of $X$ by affinoid subsets. Let $\mathcal{L}$ be a line bundle 
  over $X \times \mathbb{A}^ 1_\mathrm{rig}$. 
  As usual, consider the projection
  \begin{equation*}
   \operatorname{pr}_1 \colon X \times \mathbb{A}^ 1_\mathrm{rig} \longrightarrow X.
  \end{equation*}
  By Corollary~\ref{affinoidA1inv}, there exist line bundles $\mathcal{L}_j$
  over $X$ such that for each $j \in J$, we have
  \begin{equation*}
   \mathcal{L} (V_j \times \mathbb{A}^ 1_\mathrm{rig} ) \cong \operatorname{pr}_1^\ast \mathcal{L}_j.
  \end{equation*}
  For each $j$, let $\{ U_{i_j} \} _ {i_j\in I_j}$ be a local trivialisation of $\mathcal{L}_j$.
  Then $\{ U_{i_j} \times \mathbb{A}^1_\mathrm{rig} \} _ {i_j\in I_j}$ is a local trivialisation of 
  $\mathcal{L} (V_j \times \mathbb{A}^ 1_\mathrm{rig} )$. Setting $I:= \bigcup _{j \in J} I_j$,
  the set $\{ U_i \times \mathbb{A}^1_\mathrm{rig} \} _ {i \in I}$ provides a local trivialisation of $\mathcal{L}$
  of the desired form.
 \end{proof}
 
 \begin{Lemma}\label{cocycA1}
  Let $A$ be a reduced affinoid algebra. Then
  \begin{equation*}
   \mathcal{O} (\operatorname{Sp} (A) \times \mathbb{A}^1_\mathrm{rig} ) ^\times \cong A ^\times .
  \end{equation*}
  In particular,
  \begin{equation}\label{equationA1inv}
   \mathcal{O} (\mathbb{A}^1_\mathrm{rig} )^\times = k^\times .
  \end{equation}
 \end{Lemma}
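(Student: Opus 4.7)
The inclusion $A^\times \hookrightarrow \mathcal{O}(X \times \mathbb{A}^1_\mathrm{rig})^\times$ is automatic, so the content is the reverse inclusion. My plan is to describe elements of $\mathcal{O}(X \times \mathbb{A}^1_\mathrm{rig})$ as power series in $T$ with coefficients in $A$, then analyse the unit condition pointwise on $X$ and use that $A$ is reduced to conclude.

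First I would unwind the definition: picking $\pi \in k^\times$ with $|\pi|<1$ and setting $r = |\pi|^{-1}$, one has $\mathcal{O}(X \times \mathbb{A}^1_\mathrm{rig}) = \lim_{i} A\langle \pi^i T\rangle$, so an element $f$ can be written $f = \sum_{n\geq 0} a_n T^n$ with $a_n \in A$ and $|a_n|_{\sup}\, r^{in}\to 0$ as $n\to\infty$ for every $i\geq 0$. Now suppose $f$ is a unit in this ring. For any maximal ideal $\mathfrak{m}\subset A$, the residue field $K := A/\mathfrak{m}$ is a finite extension of $k$, hence a complete non-archimedean field, and the image $\bar{f} = \sum \bar{a}_n T^n \in K\langle \pi^i T\rangle$ is a unit for every $i$.

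The key input is the standard description of units in $K\langle T\rangle$ for $K$ a complete non-archimedean field: a series $g = \sum c_n T^n$ is a unit if and only if $|c_0| > |c_n|$ for all $n\geq 1$ (via the reduction argument, or via the geometric series expansion of $g/c_0$). Applied to $K\langle \pi^i T\rangle$ after the substitution $U = \pi^i T$, the unit condition becomes $|\bar{a}_0| > |\bar{a}_n|\, r^{in}$ for all $n\geq 1$. Since this must hold for every $i\geq 0$ and $r>1$, taking $i \to \infty$ forces $\bar{a}_n = 0$ in $K$ for each $n\geq 1$, i.e.\ $a_n \in \mathfrak{m}$. Because $A$ is reduced and affinoid algebras are Jacobson, $\bigcap_{\mathfrak{m}\in\mathrm{Max}(A)} \mathfrak{m} = 0$, whence $a_n = 0$ for every $n\geq 1$. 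Therefore $f = a_0 \in A$, and the unit condition on $f$ now says precisely $a_0 \in A^\times$.

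Finally, equation \eqref{equationA1inv} is the special case $A = k$. The step that needs the most care is the pointwise analysis in step two: making sure the unit criterion for $K\langle T\rangle$ is applied correctly after the change of variable $U = \pi^i T$, and that one really does get to take $i\to\infty$ uniformly in $n$. Everything else is routine given the Jacobson property of affinoid algebras.
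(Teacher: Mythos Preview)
Your argument is correct and takes a genuinely different route from the paper's. The paper works directly with the supremum norm on $A$: after normalising to $f_0 = 1$, it writes down the recursion for the coefficients $\tilde f_i$ of the formal inverse, uses that the sup norm on a reduced affinoid algebra is power-multiplicative, and shows by an explicit estimate that if some $\Vert f_i\Vert r^i \geq 1$ then $\Vert \tilde f_{ni}\Vert r^{ni}\geq 1$ for all $n$, so $f^{-1}$ cannot lie in $A\langle \eta^{-l}T\rangle$. Letting the radius grow then forces all $f_i$ with $i\geq 1$ to vanish.

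You instead push the problem down to residue fields: reduce modulo each maximal ideal, invoke the standard unit criterion in $K\langle T\rangle$ for a complete field $K$, and then use that a reduced affinoid algebra is Jacobson to pull the vanishing back to $A$. Both proofs exploit reducedness at the key step---the paper to get power-multiplicativity of the norm, you to get $\bigcap_{\mathfrak m}\mathfrak m = 0$---and both end with the same limit argument in the radius. Your version is cleaner and more modular, since it reduces to a black-box fact over fields and avoids the somewhat delicate norm inequalities for the inverse coefficients; the paper's version is more self-contained and makes the mechanism (growth of coefficients of $f^{-1}$) fully explicit. One small point worth stating in your write-up: to conclude $a_0\in A^\times$ at the end, apply the same argument to $f^{-1}$ to see that it too lies in $A$, so that $a_0$ has an inverse already in $A$.
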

 \begin{proof}
  In the proof we consider an invertible formal power series $f  \in A[[T]]$. 
  We compute the condition on the coefficients of the power series $f$ and its formal inverse $f^{-1}$ 
  for $f$ and $f^{-1}$ to converge on a tubular neighbourhood 
  around the zero section 
  \begin{equation*}
   \operatorname{Sp} A \times \{ 0 \} \subset \operatorname{Sp} (A) \times \mathbb{A}^1_\mathrm{rig}.
  \end{equation*}
  The condition is that the coefficients' norms become small as the radius of the tubular neighbourhood becomes big.
  We conclude that if both $f$ and $f^{-1}$ are to converge on the whole space
  $\operatorname{Sp} (A) \times \mathbb{A}^1_\mathrm{rig}$, then all coefficients of $f$ and $f^ {-1}$ except the
  constant coefficient have to vanish.
  
  Let 
  \begin{equation*}
   f = \sum _{i=0}^\infty f_i T^i \in \mathcal{O} (\operatorname{Sp} (A) \times \mathbb{A}^1_\mathrm{rig} ) \subset A [[T]],
   \quad \text{all } f_i \in A.
  \end{equation*}
  As a formal power series, $f$ is invertible if and only if its constant coefficient $f_0$ is invertible in $A$.
  Assume without loss of generality that $f = 1 + \sum _{i = 1}^\infty f_i T^i$. The formal power series inverse
  of $f$ is
  \begin{equation*}
   f^{-1} = 1 + \sum _{i=1}^\infty \tilde{f}_i T^i 
  \end{equation*}
  where
  \begin{equation*}
   \tilde{f}_i = - \sum _{j = 1}^i f_i \tilde{f}_{i-j}.
  \end{equation*}
  We want
  \begin{equation*}
   f, f^{-1} \in \mathcal{O} (\operatorname{Sp} (A) \times \mathbb{A}^1_\mathrm{rig} )
  \end{equation*}
  which means
  \begin{equation}\label{teaquation}
   f, f^{-1} \in A \langle \eta ^{-l} T \rangle 
  \end{equation}
  for some $\eta \in k$ with $\vert \eta \vert > 1$ and for all $l \in \mathbb{N}$. Fix $l \in \mathbb{N}$ and set $r := \vert \eta \vert ^l$.
  For this $l$, the relation \eqref{teaquation} holds if and only if the coefficients satisfy
  \begin{equation}\label{keayboard}
   \Vert f _i \Vert r^i < 1 \quad \forall i \geq 1.
  \end{equation}
  
  Let us check this claim. As $A$ is reduced, its norm $\Vert \cdot \Vert$ is power-multiplicative.
  Assume that there exists an $i \geq 1$ with $\Vert f_i \Vert r^i \geq 1$. Choose
  $i$ minimal with respect to this property. Then
  \begin{equation*}
   \Vert \tilde{f}_i \Vert r^i = \bigg\Vert f_i - \sum _{j = 1}^{i-1} f_j \tilde{f}_{i-j} \bigg\Vert r^i = \Vert f_i \Vert r^i \geq 1
  \end{equation*}
  because for $1 \leq j \leq i-1$ we have
  \begin{equation*}
   \Vert f_j \tilde{f}_{i-j} \Vert r^i \leq \Vert f_j \Vert \Vert \tilde{f}_{i-j} \Vert r^i \underset{r \geq 1}{\leq}
   \Vert f_j \Vert r^i \cdot \Vert \tilde{f}_{i-j} \Vert r^i 
   \underset{\ast}{<} 1 \leq \Vert f_i \Vert r^i.
  \end{equation*}
  The inequality marked with an asterisk $\ast$ holds because $i$ was chosen such that $\Vert f _j \Vert r^j < 1$ for all $j < i$.
  Similarly, we get
  \begin{equation*}
   \Vert \tilde{f}_{2i} \Vert r^{2i} = \bigg\Vert \sum _{j=1}^{2i} f_j \tilde{f}_{2i-j} \bigg\Vert r^{2i} =
   \Vert f_i \tilde{f}_i \Vert r^{2i} 
   = ( \Vert f_i \Vert r^ i )^2 \geq 1
  \end{equation*}
  and for all $n \in \mathbb{N}$
  \begin{equation*}
   \Vert \tilde{f}_{ni} \Vert r^{ni} \geq 1.
  \end{equation*}
  Thus, $f^{-1} \notin A\langle \eta ^{-l} T \rangle$, which contradicts the assumptions. We have proven that 
  \eqref{teaquation} and \eqref{keayboard} are equivalent for a fixed $l$.
  
  To get back to our problem, we have
  \begin{equation*}
   f \in A \langle \eta ^{-l} T \rangle ^\times \quad \forall l \in \mathbb{N}
  \end{equation*}
  which is equivalent to
  \begin{equation*}
   \Vert f_i \Vert < \frac{1}{r^i} = \frac{1}{\vert \eta \vert ^{il}} \quad \forall i \geq 1 \enspace \forall l \in \mathbb{N}.
  \end{equation*}
  As $\vert \eta \vert > 1$, this means that
  \begin{equation*}
   f \in \mathcal{O} \big( \operatorname{Sp} (A) \times \mathbb{A}^1_\mathrm{rig} \big) ^\times \quad \Longleftrightarrow \quad \left\{
   \begin{split}
    f_0 &\in A^ \times &\enspace &\text{and} \\
    f_i &= 0 &&\text{for } i \geq 1.
   \end{split} \right. 
  \end{equation*}
 \end{proof}

 \begin{proof}[Proof of Proposition~\ref{rigA1inv}]
  Let $X$ be a smooth $k$-rigid analytic variety. Let $\mathcal{L}$ be a line bundle over $X \times \mathbb{A}^1_\mathrm{rig}$.
  By Corollary~\ref{A1local}, the bundle $\mathcal{L}$ has a local trivialisation of the form 
  $\{ U_i \times \mathbb{A}^1_\mathrm{rig} \} _ {i \in I}$ such that $\{ U_i \} _{i \in I}$ is an admissible covering of $X$.
  For $i,j \in I$ and $U_i \cap U_j \neq \emptyset$, the transition function on $(U_i \cap U_j) \times \mathbb{A}^1_\mathrm{rig}$
  is given by an element $f \in \mathcal{O}_{X \times \mathbb{A}^1_\mathrm{rig}} ((U_i \cap U_j) \times \mathbb{A}^1_\mathrm{rig})$.
  By Lemma~\ref{cocycA1}, this function is in fact an element of $\mathcal{O}_X (U_i \cap U_j)$.
 \end{proof}

  \begin{Ex}\label{Exrk2P1}
   We show that neither $\mathbb{B}^1$- nor $\mathbb{A}^1_\mathrm{rig}$-invariance hold for vector bundles of rank at least two 
   over $\mathbb{P}^1_\mathrm{rig}$. Every counterexample to $\mathbb{A}^1$-invariance of algebraic vector bundles over the
   algebraic projective line works. We present the analytified version of {\cite[Example 3.2.9]{AsokMorel11}}.
   
   Introducing coordinates $(t,x)$ on $(\mathbb{P}_\mathrm{rig} ^1 \setminus \{ 0, \infty \} )\times \mathbb{A}_\mathrm{rig} ^1$,
   let $\mathcal{E}$ be the vector bundle on $\mathbb{P}_\mathrm{rig} ^1 \times \mathbb{A}_\mathrm{rig} ^1$
   which is free on $(\mathbb{P}_\mathrm{rig} ^1 \setminus \{ 0 \} )\times \mathbb{A}_\mathrm{rig} ^1$ and
   free on $(\mathbb{P}_\mathrm{rig} ^1 \setminus \{ \infty \} )\times \mathbb{A}_\mathrm{rig} ^1$
   and has the transition matrix 
   $\left( \begin{smallmatrix}
     t^2 & xt \\
     0 & 1
    \end{smallmatrix} \right)$
   on $(\mathbb{P}_\mathrm{rig} ^1 \setminus \{ 0, \infty \} )\times \mathbb{A}_\mathrm{rig} ^1$.
   We show that it is not the pullback of any bundle on $\mathbb{P}^1_\mathrm{rig}$.
   Restricting $\mathcal{E}$ to $0 \in \A ^1$ yields a bundle $\mathcal{E}_0$ on $\mathbb{P}^1_\mathrm{rig}$
   described by 
   $\left( \begin{smallmatrix}
     t^{-1} & 0 \\
     0   & t
   \end{smallmatrix} \right)$.
   Restricting to $1 \in \A ^1$ we get $\mathcal{E}_1$, described by
   $\left( \begin{smallmatrix}
     t^2 & t \\
     0 & 1
    \end{smallmatrix} \right)$.
  
   The bundles $\mathcal{E}_0$ and $\mathcal{E}_1$ are the analytifications of bundles $\widetilde{\mathcal{E}}_0$, 
   respectively $\widetilde{\mathcal{E}}_1$ on the algebraic variety $\mathbb{P}^1$ that are described by the same transition matrices. 
   By GAGA \cite[Proposition~9.3.1]{FresnelPut},
   the analytic bundles $\mathcal{E}_0$ and $\mathcal{E}_1$ are isomorphic if and only if the algebraic bundles
   $\widetilde{\mathcal{E}}_0$ and $\widetilde{\mathcal{E}}_1$ are isomorphic. 
   By Grothendieck's classification of line bundles over $\PR ^1$ \cite{Grothendieck57}, they are not. 
   (See Hazewinkel--Martin \cite[Theorem~4.1]{HazewinkelMartin81} for the case of a base field different from $\C$.)
 
   Denote the projection, the zero section and the one section by 
   \begin{align*}
    \operatorname{pr}_1 &\colon \mathbb{P}^1_\mathrm{rig} \times \mathbb{A}^1_\text{rig} \rightarrow \mathbb{P}^1_\mathrm{rig}, 
    \qquad \text{respectively}\\
    \iota _x &\colon \mathbb{P}^1_\mathrm{rig} \rightarrow \mathbb{P}^1_\mathrm{rig} \times \{ x \} \subset 
    \mathbb{P}^1_\mathrm{rig} \times \mathbb{A}^1_\mathrm{rig} \enspace \text{for} \enspace x \in \{ 0,1 \}.
   \end{align*}
   If we had $\mathcal{E} \cong \operatorname{pr}_1 ^{\ast} \mathcal{F}$ for some bundle $\mathcal{F}$
   on $\mathbb{P}^1_\mathrm{rig}$, we would have
   \begin{equation*}
    \mathcal{E}_0 = \iota _0 ^ \ast \mathcal{E} = \iota _0^\ast \operatorname{pr}_1 ^\ast \mathcal{F} \cong \mathcal{F} \cong 
    \iota _1 ^ \ast \operatorname{pr}_1 ^ \ast \mathcal{F} \mathcal{E}_1 = \iota _1^\ast \mathcal{E}.
   \end{equation*}
   But we just checked that $\mathcal{E}_0 \ncong \mathcal{E}_1$.
  \end{Ex}
   
   \begin{Ex}[Gerritzen {\cite[§3.3]{Gerritzen77}}]\label{Excusp1}
    There exists an affinoid variety with a cusp singularity such that
    $\mathbb{B}^1$-invariance does not hold for vector bundles 
    over this affinoid variety. We expect that $\A ^1_\text{rig}$-invariance does not hold either. 
    This question is addressed in work in progress.
    The space
    \begin{equation*}
     X = \operatorname{Sp} k \langle T_1 , T_2 \rangle / (T_1^2 - T_2 ^3)
    \end{equation*}
    is an affinoid neighbourhood of
    the cusp of the analytification of $\operatorname{Spec} k [T_1 ,T_2 ] / (T_1^2 - T_2 ^3)$.
    Choose $c \in k$ with $0 < \vert c \vert < 1$ and cover $X$ by
    \begin{align*}
     U_c &= \{ (x_1, x_2) \in X \mid \Vert x_1 \Vert \leq \vert c \vert ^ 2 \} \quad \text{and}\\
     V_c &= \{ (x_1, x_2) \in X \mid \Vert x_1 \Vert \geq \vert c \vert ^ 2 \} .
    \end{align*}
    Then $\mathcal{U}_c = \{ U_c, V_c \}$ is an admissible covering.
    The sheaf $\mathcal{O}_X (<1)$ on $X$ is defined by
    \begin{equation*}
     \mathcal{O}_X (<1) (U) := \{ f \in \mathcal{O}_X (U) \mid \Vert f \Vert < 1 \} .
    \end{equation*}
    The first \v{C}ech cohomology group of $X$ with respect to this covering with coefficients in $\mathcal{O}_X (<1)$ is:
    \begin{equation*}
     \check{H}^1 ( \{ U_c , V_c \} , \mathcal{O}_X (<1) ) \cong k ^ \circ / c k ^\circ . 
    \end{equation*}
    We have injections
    \begin{equation*}
       H^1 ( \mathcal{U}_c, \mathcal{O}_X (<1)) \hookrightarrow \check{H}^1 (X , \mathcal{O}_X (<1)) \cong H^1 (X , \mathcal{O}_X (<1)).
    \end{equation*}
    In particular, $H^1 (X , \mathcal{O}_X (<1))$ is non-trivial.
    
    Define a sheaf $\mathcal{G}_0$ on $X$ by
    \begin{align*}
     \mathcal{G}_0 (U) := \bigg\{ &f \in \mathcal{O}_{X\times \mathbb{B}^1} (U \times \mathbb{B}^1) = \mathcal{O}_X (U) \langle T \rangle
     \, \bigg| \, 
     f = 1+ \sum _{i=1}^\infty f_iT^i \\
     &\text{ with } \vert f_i(u) \vert < 1 \ \forall i \geq 1 \ \forall u \in U \bigg\} \quad \text{for $U \subset X$ admissible.}
    \end{align*}
    By \cite[Satz 2]{Gerritzen77} there is a decomposition
    \begin{equation*}
     \operatorname{Pic} (X \times \mathbb{B}^1) \cong \operatorname{Pic} (X) \oplus H^1 (X, \mathcal{G}_0).
    \end{equation*}
    If $\operatorname{char} k = 0$, then
    \begin{equation*}
     H^1 (X , \mathcal{O}_X (<1)) = 0 \enspace \Leftrightarrow \enspace  H^1 (X , \mathcal{G}_0) = 0
    \end{equation*}
    by \cite[Satz 4]{Gerritzen77} and hence if $\operatorname{char} k = 0$, then
    \begin{equation*}
     \operatorname{Pic} \big( \operatorname{Sp} k \langle T_1 , T_2 \rangle / (T_1^2 - T_2 ^3) \times \mathbb{B}^1 \big) \ncong 
     \operatorname{Pic} \big( \operatorname{Sp} k \langle T_1 , T_2 \rangle / (T_1^2 - T_2 ^3) \big) .
    \end{equation*}
   \end{Ex}
   \begin{Rem}[Concerning the notation]
    Gerritzen's notation differs from ours. Our sheaf $\mathcal{G}_0$ is his sheaf $\mathcal{G}_1$ whereas he denotes the
    sheaf $\mathcal{O}_X^\ast$ by $\mathcal{G}_0$. The sheaf that we call $\mathcal{O}_X (<1)$ is called $\check{\mathcal{H}}_X$
    in Gerritzen's article. We chose a notation similar to that of van der Put~\cite{vdPut82}.
   \end{Rem}

 \subsection{$\mathbb{B}^1$-invariance}
 This subsection collects known theorems about when the Picard group of a space is or is not $\mathbb{B}^ 1$-invariant.
 It contains no new material. 
 
 \begin{Thm}[Gerritzen]\label{ThmGerritzenPic}\index{$\mathbb{B}^1$-invariance}
  Let $k$ be a complete discretely valued field. Let $A$ be a reduced $k$-affinoid algebra with regular reduction $\tilde{A}$.
  Assume $\Vert A \Vert = \vert k \vert$. Then the Picard group is $\mathbb{B}^1$-invariant on $\operatorname{Sp} (A)$:
  \begin{equation*}
   \operatorname{Pic} (A) \overset{\star}{\cong} \operatorname{Pic} ( \tilde{A} ) \overset{\diamond}{\cong} 
   \operatorname{Pic} ( \widetilde{A \langle T \rangle}) \overset{\star}{\cong} \operatorname{Pic} (A \langle T \rangle ).
  \end{equation*}
 \end{Thm}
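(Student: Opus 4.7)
The plan is to chain together the three isomorphisms separately, applying Theorem~\ref{ThmGerritzenPicc} for the two outer $\star$-isomorphisms and classical algebraic $\mathbb{A}^1$-invariance of $\operatorname{Pic}$ for the middle $\diamond$-isomorphism, after first identifying $\widetilde{A\langle T\rangle}$ with $\tilde A[T]$.

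For the first isomorphism $\operatorname{Pic}(A)\cong\operatorname{Pic}(\tilde A)$, the hypotheses of Theorem~\ref{ThmGerritzenPicc} are exactly those assumed here once one knows that regularity of $\tilde A=A^\circ/\pi A^\circ$ together with flatness of $A^\circ$ over the discrete valuation ring $k^\circ$ (so that $\pi$ is a regular element) lifts to regularity of $A^\circ$; this is a standard consequence of the fact that regularity descends through quotients by regular sequences. So I would first check this lifting step and then quote Theorem~\ref{ThmGerritzenPicc} verbatim.

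For the middle isomorphism, I would compute the reduction of $A\langle T\rangle$ explicitly: its ring of power-bounded elements is $A^\circ\langle T\rangle$ and its ideal of topologically nilpotent elements is $A^{\circ\circ}\langle T\rangle$, hence
\begin{equation*}
\widetilde{A\langle T\rangle}
\;=\;A^\circ\langle T\rangle\big/A^{\circ\circ}\langle T\rangle
\;\cong\;\tilde A[T].
\end{equation*}
Then the claim $\operatorname{Pic}(\tilde A)\cong\operatorname{Pic}(\tilde A[T])$ is the classical $\mathbb{A}^1$-invariance of the Picard group over a regular noetherian ring (a standard consequence of the Bass--Quillen theorem, or directly of the fact that finitely generated projectives of rank one over a regular ring are $\mathbb{A}^1$-invariant; cf.~\cite[§4, Proposition~3.8]{MV}). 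Since $\tilde A$ is a finitely generated regular algebra over the residue field $\tilde k$, this applies. I expect this to be the cleanest step.

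For the third isomorphism $\operatorname{Pic}(\widetilde{A\langle T\rangle})\cong\operatorname{Pic}(A\langle T\rangle)$ the plan is to verify that $A\langle T\rangle$ itself satisfies the hypotheses of Theorem~\ref{ThmGerritzenPicc} and then apply it a second time. Concretely: $A\langle T\rangle$ is reduced because $A$ is and Tate completion in one variable preserves reducedness; the spectral norm still takes values in $\vert k\vert$; and $(A\langle T\rangle)^\circ=A^\circ\langle T\rangle$ is regular because $A^\circ$ is regular and the Tate algebra in one variable over a regular noetherian complete ring is regular (the reduction $\tilde A[T]$ is regular since $\tilde A$ is, and again this lifts through the regular element $\pi$). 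This is the step I expect to require the most care, since it combines the permanence of regularity under Tate completion with the hypothesis matching; once those are in hand, Theorem~\ref{ThmGerritzenPicc} closes the argument and the three isomorphisms compose to give $\mathbb{B}^1$-invariance of $\operatorname{Pic}$ on $\operatorname{Sp}(A)$.
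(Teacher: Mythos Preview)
Your proposal is correct and follows essentially the same route as the paper: apply Theorem~\ref{ThmGerritzenPicc} twice for the two $\star$-isomorphisms (after lifting regularity from $\tilde A$ to $A^\circ$ and checking $\Vert A\langle T\rangle\Vert=\Vert A\Vert$), and invoke algebraic $\mathbb{A}^1$-invariance of $\operatorname{Pic}$ for the $\diamond$-step via the identification $\widetilde{A\langle T\rangle}\cong\tilde A[T]$. The only cosmetic difference is that the paper cites Bass--Murthy (Theorem~\ref{BassMurthy}, valid for normal crossings singularities) for the middle step, whereas you cite the regular case directly; since $\tilde A$ is regular, both references apply and the arguments coincide.
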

 \begin{proof}
  By Gerritzen's Theorem~\ref{ThmGerritzenPicc}, for each affinoid algebra $B$ with $\Vert B \Vert = \vert k \vert$ and 
  whose ring of integers $B^\circ$ is regular, there is an isomorphism
  \begin{equation*}
  \operatorname{Pic} (B) \cong \operatorname{Pic} ( \tilde{B} ).
 \end{equation*}
 This applies in particular when $\tilde{B}$ is regular and proves the isomorphisms $\star$ (keeping in mind that 
 $\Vert A \Vert = \Vert A \langle T \rangle \Vert$).
 The isomorphism $\diamond$ follows from the following theorem by 
 Bass and Murthy.
 \end{proof}
   
 \begin{Thm}[{Bass--Murthy \cite{BassMurthy67}}]\label{BassMurthy}
  Let $X$ be a separated reduced scheme over $k$ and assume that it has at most normal crossings singularities.
  Then
  \begin{equation*}
   \operatorname{Pic} (X) \cong \operatorname{Pic} (X \times \mathbb{A}^ 1).
  \end{equation*}
 \end{Thm}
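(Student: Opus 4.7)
My plan is to prove the theorem by reducing to a local calculation on the Zariski stalks, after splitting the comparison map via the zero section.

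First I would observe that the zero section $s_0\colon X\hookrightarrow X\times\mathbb{A}^1$ retracts the projection $\pi\colon X\times\mathbb{A}^1\to X$, so $\pi^\ast\colon\operatorname{Pic}(X)\to\operatorname{Pic}(X\times\mathbb{A}^1)$ is split injective with retraction $s_0^\ast$. The whole content is thus to prove surjectivity: every line bundle $\mathcal{L}$ on $X\times\mathbb{A}^1$ becomes isomorphic to $\pi^\ast s_0^\ast\mathcal{L}$. Replacing $\mathcal{L}$ by $\mathcal{L}\otimes\pi^\ast(s_0^\ast\mathcal{L})^{-1}$, it suffices to show that any line bundle on $X\times\mathbb{A}^1$ whose restriction to the zero section is trivial, is already trivial.

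Next I would compute using the Leray spectral sequence for $\pi$. From the short exact sequence of Zariski sheaves on $X$
\begin{equation*}
1\longrightarrow \mathcal{O}_X^\times\longrightarrow \pi_\ast\mathcal{O}_{X\times\mathbb{A}^1}^\times\longrightarrow \mathcal{U}\longrightarrow 1,
\end{equation*}
together with the low-degree exact sequence
\begin{equation*}
0\longrightarrow H^1(X,\pi_\ast\mathcal{O}_{X\times\mathbb{A}^1}^\times)\longrightarrow \operatorname{Pic}(X\times\mathbb{A}^1)\longrightarrow H^0(X,R^1\pi_\ast\mathcal{O}_{X\times\mathbb{A}^1}^\times),
\end{equation*}
the problem reduces to two local statements at each stalk $\mathcal{O}_{X,x}=:R$: first, $\operatorname{Pic}(R[T])=0$; and second, the sheaf $\mathcal{U}$ of units in $\mathcal{O}_X[T]$ modulo $\mathcal{O}_X^\times$ has vanishing stalks, so that the extra $H^1(X,\mathcal{U})$-contribution disappears and the split injection becomes an isomorphism.

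Both of these local statements rest on the same fact: for a local ring $R$ with at most normal crossings singularities we have $R[T]^\times=R^\times$ and $\operatorname{Pic}(R[T])=0$. I would prove this by expressing the complete local ring of $X$ at $x$ as a fiber product of regular local rings $R_1,\dots,R_r$ along their common quotient by the singular locus, using the normal crossings assumption. For each $R_i$ regular, the analogous statement is classical (the polynomial ring over a regular local ring is a regular UFD, and its units are the nonzero constants). Then a Milnor--Karoubi patching argument for Picard groups and unit groups along the fiber product passes the statement from the $R_i$ to $R$; the normal crossings hypothesis guarantees that the fiber product really recovers the singular local ring and that the patching data is nontrivial only in the right degrees.

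The main obstacle I expect is the local computation on normal crossings singularities, specifically executing the Milnor patching along the fiber product cleanly: one must show that units in $R[T]$ glue from units in $R_i[T]$, and that Picard classes on $\operatorname{Spec} R[T]$ are detected by their restrictions to the components. Once that patching is in place, the globalization via the Leray argument above is essentially formal; the heart of the theorem is the local seminormality-type statement at a normal crossings point.
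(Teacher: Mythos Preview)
The paper does not give its own proof of this theorem; it simply attributes the result to Bass--Murthy and points to Hartl--L\"utkebohmert~\cite[Proposition~2.2~(1)]{HartlLutke00} for a clean exposition. So there is no in-paper argument to compare against, and your outline should be judged on its own.

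Your strategy is sound and essentially the standard one: split off $\operatorname{Pic}(X)$ via the zero section, use the Leray sequence for $\pi$ to reduce to the vanishing of the stalks of $R^1\pi_\ast\mathcal{O}^\times$ and of $\mathcal{U}$, and then verify the two local statements $\mathcal{O}_{X,x}[T]^\times=\mathcal{O}_{X,x}^\times$ and $\operatorname{Pic}(\mathcal{O}_{X,x}[T])=0$. The first of these needs only that $X$ is reduced (units in $R[T]$ for $R$ reduced are just $R^\times$), so the normal crossings hypothesis enters solely through the second.

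One point to tighten. You propose to pass to the \emph{complete} local ring and express it as a fibre product of regular complete local rings. That description is correct formally, but you then need to descend $\operatorname{Pic}(\widehat{R}[T])=0$ back to $\operatorname{Pic}(R[T])=0$; this follows from faithfully flat descent (since $R\to\widehat{R}$ is faithfully flat for noetherian local $R$), but you should say so. Cleaner still is to avoid completion altogether: work directly with the conductor square
\[
\begin{xy}\xymatrix{
R \ar[r]\ar[d] & \widetilde{R} \ar[d]\\
R/\mathfrak{c} \ar[r] & \widetilde{R}/\mathfrak{c}
}\end{xy}
\]
where $\widetilde{R}$ is the normalisation (a finite product of regular semilocal rings) and $\mathfrak{c}$ the conductor. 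The normal crossings hypothesis guarantees that $\mathfrak{c}$ is a radical ideal---equivalently, that $R$ is seminormal---and this is exactly the condition under which the Milnor--Bass--Murthy Mayer--Vietoris sequence for $\operatorname{Pic}$ and units of $(-)[T]$ collapses to give $\operatorname{Pic}(R[T])=0$. Phrasing it this way also makes transparent why the theorem fails for a cusp (Example~\ref{Excusp1}): there the conductor is not radical.
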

 There is a particularly clearly presented proof of this theorem by Hartl--L\"utkebohmert \cite[Proposition~2.2 (1)]{HartlLutke00}.
 
 If $X$ is affine and onedimensional, there is an explicit decomposition of $\operatorname{Pic} ( X \times \mathbb{A}^ 1 )$:
 \begin{Thm}[Endo {\cite[Theorem~4.3]{Endo63}}, Bass--Murthy {\cite[Proposition~7.12]{BassMurthy67}}]\label{ThmBassMurthy}
  Let $R$ be a reduced ring of dimension $\leq 1$ and assume that its integral closure $\bar{R}$ is finitely generated as an $R$-module.
  Let $\mathfrak{b}$ be the conductor of $R$ in $\bar{R}$. Then
  \begin{equation*}
   \operatorname{Pic} (R [T]) \cong \operatorname{Pic} (R) \oplus G_0
  \end{equation*}
  where
  \begin{equation*}
   G_0 = 0 \enspace \Leftrightarrow \enspace \mathfrak{b} = \sqrt{\mathfrak{b}},
  \end{equation*}
  that is, if the conductor is a radical ideal.
 \end{Thm}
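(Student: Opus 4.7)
The plan is to use the conductor (Milnor) square
\begin{equation*}
 \begin{xy} \xymatrix{
  R \ar[r] \ar[d] & \bar{R} \ar[d] \\
  R/\mathfrak{b} \ar[r] & \bar{R}/\mathfrak{b}
 } \end{xy}
\end{equation*}
which is Cartesian (and co-Cartesian) by the defining property of the conductor. Since tensoring with the flat $R$-algebra $R[T]$ (or just adjoining an indeterminate) preserves Cartesian squares of rings, the analogous square for $R[T], \bar{R}[T], (R/\mathfrak{b})[T], (\bar{R}/\mathfrak{b})[T]$ is again Cartesian. I would then invoke the Bass--Milnor Mayer--Vietoris sequence for units and Picard groups attached to a Cartesian square, obtaining an exact sequence
\begin{equation*}
 R[T]^\times \oplus (\bar{R}/\mathfrak{b})[T]^\times \longrightarrow (\bar{R}/\mathfrak{b}) [T] ^\times \to
 \operatorname{Pic} (R[T]) \to \operatorname{Pic}(\bar{R}[T]) \oplus \operatorname{Pic}((R/\mathfrak{b})[T]) \to \operatorname{Pic} ((\bar{R}/\mathfrak{b})[T]).
\end{equation*}
The section $T \mapsto 0$ of $R \to R[T]$ gives a natural splitting $\operatorname{Pic}(R[T]) \cong \operatorname{Pic}(R) \oplus G_0$, where $G_0$ is the kernel of evaluation at $T=0$, and the same sequence without the $T$ computes $\operatorname{Pic}(R)$. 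Comparing the two sequences identifies $G_0$ with a quotient built from the ``polynomial parts'' of the units and Picard groups on the bottom and right of the square.

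The next step is to simplify the right-hand side. Because $R$ is reduced of dimension $\leq 1$, its normalisation $\bar{R}$ is normal of dimension $\leq 1$, hence regular, so by the Quillen--Suslin theorem $\operatorname{Pic}(\bar{R}[T]) = \operatorname{Pic}(\bar{R})$ and no new contribution appears there. The rings $R/\mathfrak{b}$ and $\bar{R}/\mathfrak{b}$ are Artinian (of Krull dimension $0$), hence semilocal, and their Picard groups vanish; moreover the polynomial extensions of semilocal rings have trivial Picard group as well (a standard consequence of Quillen patching, since the localisations at each maximal ideal are local). Consequently, after the cancellations, $G_0$ reduces to the cokernel
\begin{equation*}
 G_0 \;\cong\; \frac{\bigl((R/\mathfrak{b})[T]\bigr)^\times / (R/\mathfrak{b})^\times}
   {\operatorname{im}\bigl((R[T])^\times / R^\times\bigr) \cdot \operatorname{im}\bigl(((\bar{R}/\mathfrak{b})[T])^\times / (\bar{R}/\mathfrak{b})^\times\bigr)} .
\end{equation*}

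Finally, for any commutative ring $A$ one has $A[T]^\times = A^\times \oplus \bigl( 1 + T \cdot \operatorname{Nil}(A) [T] \bigr)$, so the ``new'' polynomial units are exactly those coming from the nilradical. Since $R$ and $\bar{R}$ are reduced, the only source of new units is the nilradical of $R/\mathfrak{b}$ and $\bar{R}/\mathfrak{b}$. If $\mathfrak{b} = \sqrt{\mathfrak{b}}$ then $R/\mathfrak{b}$ is reduced; using that $\bar{R}/\mathfrak{b}$ is a finite integral extension of the reduced Artinian ring $R/\mathfrak{b}$ and is itself a quotient of the regular ring $\bar{R}$ by a radical ideal, it is reduced as well. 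Both bottom rings therefore contribute no new units, so $G_0 = 0$. Conversely, if $\mathfrak{b}$ is not radical then $R/\mathfrak{b}$ has non-zero nilradical and produces units of the form $1 + nT$ not in the image from $R[T]^\times$; a short diagram chase (the main technical obstacle, requiring one to check that these new units really survive in the cokernel and are not killed by units lifted from $\bar{R}/\mathfrak{b}$) shows $G_0 \neq 0$.
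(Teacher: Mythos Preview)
The paper does not give its own proof of this theorem; it is quoted as a classical result with references to Endo and Bass--Murthy. Your outline via the conductor (Milnor) square and the units--Pic Mayer--Vietoris sequence is exactly the Bass--Murthy argument, so the strategy is correct.

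However, the execution contains a genuine slip that propagates through the rest of the argument. In the Mayer--Vietoris sequence the middle terms should read
\[
 \bar R[T]^\times \oplus (R/\mathfrak{b})[T]^\times \longrightarrow (\bar R/\mathfrak{b})[T]^\times \longrightarrow \operatorname{Pic}(R[T]) \longrightarrow \cdots,
\]
so the connecting map has source the units of $(\bar R/\mathfrak{b})[T]$, not of $(R/\mathfrak{b})[T]$. After killing the vanishing $N\!\operatorname{Pic}$-terms (here you should use that the Artinian quotients have nilpotent maximal ideals, not merely that they are semilocal --- Quillen patching alone does not give $\operatorname{Pic}(B[T])=0$ for $B$ local) one obtains
\[
 G_0 \;\cong\; \frac{\bigl((\bar R/\mathfrak{b})[T]\bigr)^\times/(\bar R/\mathfrak{b})^\times}{\operatorname{im}\bigl(((R/\mathfrak{b})[T])^\times/(R/\mathfrak{b})^\times\bigr)},
\]
with numerator and denominator exchanged relative to what you wrote. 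Your displayed formula is not even well defined, since there is no map $\bar R/\mathfrak{b}\to R/\mathfrak{b}$. With the correct formula, $G_0=0$ is equivalent to surjectivity of $\operatorname{Nil}(R/\mathfrak{b})\to\operatorname{Nil}(\bar R/\mathfrak{b})$, and the final paragraph has to be rewritten accordingly: the question is whether the nilpotents of $\bar R/\mathfrak{b}$ all come from $R/\mathfrak{b}$, not whether nilpotents of $R/\mathfrak{b}$ ``survive''. In particular, the step where you conclude that $\bar R/\mathfrak{b}$ is reduced from the hypothesis $\mathfrak{b}=\sqrt{\mathfrak{b}}$ in $R$ is not justified as written; one must argue separately that radicality in $R$ forces radicality of $\mathfrak{b}$ in $\bar R$ (or reformulate the criterion accordingly), which is where the remaining work in Bass--Murthy lies.
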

  
 A corresponding rigid analytic decomposition was given by Gerritzen:
 \begin{Thm}[Gerritzen {\cite[§3, Satz 2]{Gerritzen77}}]\index{$\mathbb{B}^1$-invariance}
  Let $X = \operatorname{Sp} A$ be an affinoid variety. Then
  \begin{equation*}
   \operatorname{Pic} ( X \times \mathbb{B}^1 ) \cong \operatorname{Pic} (X) \oplus H^1 (X, \mathcal{G}_0)
  \end{equation*}
  where the sheaf $\mathcal{G}_0$ on $X$ is defined as follows. For $U \subset X$ admissible,
  \begin{align*}
   \mathcal{G}_0 (U) := \bigg\{ &f \in \mathcal{O}_{X\times \mathbb{B}^1} (U \times \mathbb{B}^1) = \mathcal{O}_X (U) \langle T \rangle
   \, \bigg| \\
   &f = 1+ \sum _{i=1}^\infty f_iT^i 
   \text{ with } \vert f_i(u) \vert < 1 \ \forall i \geq 1 \ \forall u \in U \bigg\} .
  \end{align*}
 \end{Thm}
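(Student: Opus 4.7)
The plan is to realize both summands on the right as cohomology of sheaves on $X$ via a split short exact sequence whose splitting is induced by the zero section. Write $\pi \colon X \times \mathbb{B}^1 \to X$ for the first projection and $\iota \colon X \hookrightarrow X \times \mathbb{B}^1$ for the zero section $T = 0$, so that $\pi \circ \iota = \operatorname{id}_X$ and pullback along $\pi$ provides a section of evaluation at $T=0$.

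First I would establish a short exact sequence of sheaves of abelian groups on $X$ (with the G-topology)
\begin{equation*}
 1 \longrightarrow \mathcal{G}_0 \longrightarrow \pi_{\ast} \mathcal{O}_{X \times \mathbb{B}^1}^{\times} \longrightarrow \mathcal{O}_X^{\times} \longrightarrow 1,
\end{equation*}
where the right-hand map is $\iota^{\ast}$. Surjectivity on the right is immediate from the splitting $u \mapsto \pi^{\ast} u$ (constant extension in the $T$-direction). For exactness in the middle I would argue pointwise: any $f = 1 + \sum_{i \geq 1} f_i T^i \in \mathcal{O}_X(U)\langle T \rangle^{\times}$ with constant term $1$ restricts at every $u \in U$ to a unit in $\kappa(u)\langle T \rangle$, which forces the Gauss norm of $f(u, \cdot)$ to be attained in the constant coefficient, so $|f_i(u)| < 1$ for all $i \geq 1$; hence the kernel of evaluation at the zero section is precisely $\mathcal{G}_0$.

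Because the sequence is split by $(\pi^{\ast}, \iota^{\ast})$, the long exact cohomology sequence on $X$ collapses into a split short exact sequence
\begin{equation*}
 0 \longrightarrow H^1(X, \mathcal{G}_0) \longrightarrow H^1\bigl(X, \pi_{\ast}\mathcal{O}_{X \times \mathbb{B}^1}^{\times}\bigr) \longrightarrow H^1(X, \mathcal{O}_X^{\times}) \longrightarrow 0,
\end{equation*}
and the rightmost term is $\operatorname{Pic}(X)$. To conclude, I would identify the middle term with $\operatorname{Pic}(X \times \mathbb{B}^1)$ by means of the Leray spectral sequence for $\pi$, reducing the task to the vanishing $R^1 \pi_{\ast} \mathcal{O}_{X \times \mathbb{B}^1}^{\times} = 0$. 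Concretely, this says that every line bundle on $X \times \mathbb{B}^1$ admits, locally over $X$, a trivialization by a covering of product form $\{U_i \times \mathbb{B}^1\}$ with $\{U_i\}$ admissible in $X$.

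The main obstacle is this last vanishing, which is the only genuinely rigid-analytic input. Since $X$ is affinoid, $X \times \mathbb{B}^1$ is affinoid, hence noetherian for the Zariski topology, so by Corollary~\ref{vbequiv} any line bundle admits a finite Zariski trivialization; the delicate point is refining such a trivialization to one that factors through $\pi$, which I would attempt via Weierstrass preparation applied to the transition cocycles to separate the $X$- and $T$-directions. Once this is settled, the desired decomposition $\operatorname{Pic}(X \times \mathbb{B}^1) \cong \operatorname{Pic}(X) \oplus H^1(X, \mathcal{G}_0)$ reads off directly from the split short exact cohomology sequence.
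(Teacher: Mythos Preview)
The paper does not supply its own proof of this statement; it is cited from Gerritzen \cite[§3, Satz~2]{Gerritzen77} and stated without argument. There is therefore nothing in the paper to compare your sketch against directly.

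Your outline is nonetheless the standard route and is essentially Gerritzen's own strategy. The split short exact sequence of sheaves on $X$ is correct: elements of $\mathcal{G}_0(U)$ are units by the geometric series, and conversely a unit $f\in\mathcal{O}_X(U)\langle T\rangle^\times$ with $f(0)=1$ specialises at each $u\in U$ to a unit of $\kappa(u)\langle T\rangle$ with constant term $1$, forcing $|f_i(u)|<1$ as you say (this uses that over a field the units of the Tate algebra in one variable are exactly those whose constant coefficient strictly dominates). The passage from the split sheaf sequence to the split cohomology sequence is formal, and the Leray reduction to $R^1\pi_\ast\mathcal{O}_{X\times\mathbb{B}^1}^\times=0$ is the right way to identify the middle term with $\operatorname{Pic}(X\times\mathbb{B}^1)$.

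The point to flag is the final step. The vanishing $R^1\pi_\ast\mathcal{O}^\times=0$ says that any line bundle on $X\times\mathbb{B}^1$ becomes trivial on $U\times\mathbb{B}^1$ for $U$ small in $X$, and this is the entire analytic content of Gerritzen's theorem. Your suggestion to use Weierstrass preparation on transition cocycles presupposes that you already have a trivialisation on domains of product type, which is precisely what must be established; starting from an arbitrary Zariski or admissible trivialisation of the bundle, the open sets are not of the form $U_i\times\mathbb{B}^1$, so Weierstrass preparation does not apply to the cocycles as stated. Gerritzen's actual argument for this step is a Cartan-lemma-style approximation showing that a $\GL_1$-cocycle on a fine enough covering of $X\times\mathbb{B}^1$ can be modified to produce a product-type trivialisation. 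Your plan is sound, but the phrase ``via Weierstrass preparation'' understates the work required at that point.
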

 Let $\iota _1 \colon X \rightarrow X \times \mathbb{B}^ 1, x \mapsto (x,1)$
 be the 1-section. The sheaf $\mathcal{G}_0$ on $X$ is the subsheaf of $\iota _1^\ast \mathcal{O}_{X \times \mathbb{B}^1}$
 consisting of the normalised invertible elements.
 
 \begin{Rem}
  As before, the sheaf $\mathcal{G}_0$ is called $\mathcal{G}_1$ in Gerritzen's article.
 \end{Rem}
 
   The following example occured first in Gerritzen~\cite[Beispiel 1, pp.~36f]{Gerritzen77} or 
   Bartenwerfer~\cite[p.~1]{Bartenwerfer78}.
   Bartenwerfer attributes it to Bosch.
   \begin{Ex}[Bad reduction]\label{Exbadreduction}
    $\mathbb{B}^1$-invariance can fail for a smooth affinoid variety whose reduction has a cusp singularity.
    Assume that $k$ is discretely valued and choose $\pi \in k$ with $0 < \vert \pi \vert < 1$. Let 
    \begin{equation*}
     A = k \langle T_1 , T_2 \rangle / (T_1 ^2 - T_2 (T_2 - \pi )(T_2 - 2 \pi )).
    \end{equation*}
    The reduction $\tilde{A}$ is an algebraic $\tilde{k}$-variety with a cusp singularity:
    \begin{equation*}
     \tilde{A} = \tilde{k} [ T_1, T_2 ] / (T_1^2 - T_2^3).
    \end{equation*}
    Now the ring of integers
    \begin{equation*}
     A^\circ = k^\circ \langle T_1 , T_2 \rangle / (T_1 ^2 - T_2 (T_2 - \pi )(T_2 - 2 \pi ))
    \end{equation*}
    is regular.
    By Gerritzen's Theorem~\ref{ThmGerritzenPicc},
    \begin{align*}
     \operatorname{Pic} (A) &\cong \operatorname{Pic} ( \tilde{A} ) \quad \text{and} \\
     \operatorname{Pic} ( \widetilde{A \langle T \rangle}) &\cong \operatorname{Pic} (A \langle T \rangle ).
    \end{align*}
    A theorem by Bass and Murthy \cite[Theorem~8.1]{BassMurthy67}
    entails that 
    \begin{equation*}
     \operatorname{Pic} ( \tilde{A} ) \cong \operatorname{Pic} ( \tilde{A} [T]) \enspace
     \Leftrightarrow \enspace \mathfrak{b} = \sqrt{\mathfrak{b}}
    \end{equation*}
    where $\mathfrak{b}$ denotes the conductor of the integral closure of $A$ in the quotient field $\operatorname{Quot} A$.
    In our case the conductor is
    \begin{equation*}
     \mathfrak{b} = \bigg( \bigg(\frac{T_1}{T_2} \bigg) ^2 \bigg) , 
    \end{equation*}
    clearly not a radical. Hence,
    \begin{equation*}
     \operatorname{Pic} ( \tilde{A} ) \ncong \operatorname{Pic} ( \tilde{A} [T]) 
    \end{equation*} 
    and, as $\widetilde{A \langle T \rangle} \cong \tilde{A} [T]$, this implies
    \begin{equation*}
     \operatorname{Pic} (A) \ncong \operatorname{Pic} ( A \langle T \rangle ).
    \end{equation*}
   \end{Ex}  
   \begin{Rem}
    \begin{enumerate}
     \item The same counterexample works if $k$ is algebraically closed instead of discretely valued \cite[Remark 3.24]{vdPut82}.
     \item We see that the choice of analytic reduction matters. 
         If $\Vert A \Vert = \vert k \vert$, then the canonical reduction knows all the vector bundles of a smooth affinoid variety.
         Let $k = \mathbb{C} ((T))$ with $\vert T \vert = p^{-1}$ and $p \geq 5$ prime. Take $X$ as in Example \ref{Exbadreduction}. 
         Its canonical formal model
         \begin{equation*}
          \mathcal{X} = \mathbb{C} [[T]] \langle \xi _1, \xi _2 \rangle / \big( \xi _1 ^2 - \xi _2 ( \xi _2 - T)( \xi _2 - 2T) \big)
         \end{equation*}
         is affine and contains all information about the vector bundles of $X$.
         Similarly, the canonical formal model for $Y := X \times \mathbb{B}^1$  is 
         \begin{equation*}
          \mathcal{Y} = \mathbb{C} [[T]] \langle \xi _1, \xi _2 , \xi \rangle / \big( \xi _1 ^2 - \xi _2 ( \xi _2 - T)( \xi _2 - 2T) \big) .
         \end{equation*}
         Blowing up three times in $(T, \xi _1 , \xi _2 )$ yields a resolution of singularities,
         giving rise to regular formal schemes $\tilde{\mathcal{X}}$, $\tilde{\mathcal{Y}}$ which are models for
         $X$, respectively for $Y$, but no longer affine.
         Look at the special fibres:
         $\tilde{\mathcal{X}}_\sigma$ is the preimage of $\mathbb{C}[\xi _1, \xi _2] / (\xi _1^2 - \xi _2^3)$ 
         under the triple blowup of $\mathbb{C}[\xi _1, \xi _2]$ in the point 
         $( \xi _1 = 0, \xi _2 = 0)$ and
         $\tilde{\mathcal{Y}}_\sigma$ is the preimage of $\mathbb{C}[\xi _1, \xi _2, \xi ] / (\xi _1^2 - \xi _2^3)$ 
         under the triple blowup of $\mathbb{C}[\xi _1, \xi _2, \xi ]$ in the divisor $( \xi _1 = 0, \xi _2 = 0)$. 
         We are in the situation where
         $\tilde{\mathcal{X}}_\sigma$ is regular and $\tilde{\mathcal{Y}}_\sigma = \tilde{\mathcal{X}}_\sigma \times \mathbb{A}^1$.
         They have the same isomorphism classes of line bundles by Theorem~\ref{BassMurthy} (Bass--Murthy).
         Hence, some formal models contain the information about their generic fibres' vector bundles
         while other formal models do not.
    \end{enumerate}
   \end{Rem}
 
 For $k$ algebraically closed there are results by van der Put and by Bartenwerfer:
 \begin{Thm}[van der Put, Bartenwerfer]\label{Putdecompthm}\index{$\mathbb{B}^1$-invariance}
  Let $k$ be a non-archimedean valued, complete and algebraically closed field.
  Let $A$ be an affinoid $k$-algebra. 
  \begin{enumerate}
   \item\label{Putdecomp} There exists a sheaf $\mathcal{G}$ on $\operatorname{Sp} (A)$ such that
    \begin{equation*}
   \operatorname{Pic} (A \langle T \rangle ) \cong \operatorname{Pic} (A) \enspace \Leftrightarrow \enspace 
    H^i ( \operatorname{Sp} (A) , \mathcal{G}) = 0.
  \end{equation*}
   \item\label{BartO1} If $k$ is algebraically closed, $\operatorname{char} \tilde{k} = 0$ and
       $\operatorname{Sp} (A)$ is a reduced affinoid $k$-variety of good reduction, then 
       \begin{equation*}
        H^i ( \operatorname{Sp} (A) , \mathcal{G} ) = 0 .
       \end{equation*}
   \item\label{Putcounterex} There exists an affinoid $k$-algebra $A$ such that
        $H^1 ( \operatorname{Sp} (A) , \mathcal{G}) \neq 0$.
  \end{enumerate}
 \end{Thm}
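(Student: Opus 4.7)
The plan is to unpack this tripartite statement by recognising that the sheaf $\mathcal{G}$ is essentially the sheaf $\mathcal{G}_0$ of normalised units introduced in Gerritzen's decomposition theorem stated just above, and then to combine that decomposition with a logarithm trick (for part (b)) and the cusp calculation already on the table (for part (c)).

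For part (a), I would simply take $\mathcal{G} := \mathcal{G}_0$ as defined earlier on $X = \operatorname{Sp}(A)$. Gerritzen's decomposition gives
\begin{equation*}
 \operatorname{Pic}(A \langle T \rangle) \;\cong\; \operatorname{Pic}(X \times \mathbb{B}^1) \;\cong\; \operatorname{Pic}(A) \oplus H^1(X, \mathcal{G}),
\end{equation*}
so $\mathbb{B}^1$-invariance on $\operatorname{Sp}(A)$ is equivalent to the vanishing of $H^1(X,\mathcal{G})$. (The direct summand structure gives the implication in both directions; the higher $H^i$ vanishing mentioned in the statement is automatic for $i \geq 2$ on an affinoid with a finite admissible covering of dimension one à la Tate, so the only nontrivial content is in degree $1$.)

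For part (b), the main idea is to linearise $\mathcal{G}$ via the $p$-adic logarithm. Over an algebraically closed field with $\operatorname{char} \tilde k = 0$, the series $\log(1+f) = \sum_{i\geq 1} (-1)^{i-1} f^i/i$ converges whenever the coefficients of $f$ are bounded strictly by $1$, and its inverse $\exp$ converges on the same locus. Hence $\log$ furnishes a sheaf isomorphism
\begin{equation*}
 \mathcal{G} \;\xrightarrow{\ \sim\ }\; \mathcal{O}_X(<1),
\end{equation*}
where $\mathcal{O}_X(<1)$ is the additive sheaf of analytic functions of sup-norm strictly less than $1$, as defined in Example~\ref{Excusp1}. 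It thus suffices to prove $H^i(X, \mathcal{O}_X(<1)) = 0$ for $i \geq 1$ when $X$ has good reduction. For this I would use the canonical formal model $\mathcal{X} = \operatorname{Spf}(A^\circ)$, which by the good reduction hypothesis is smooth; the sheaf $\mathcal{O}_X(<1)$ corresponds to the ideal sheaf $A^{\circ\circ}$ on $\mathcal{X}$, and Tate's acyclicity theorem together with compatibility of reduction and Čech cohomology forces the higher cohomology to vanish. This step — establishing that the good reduction hypothesis really does kill $H^1$ of $\mathcal{O}_X(<1)$ — is the main technical obstacle, and is where Bartenwerfer's careful analysis of $\mathbb{B}^1$-invariance enters; I would follow his argument, using an affinoid Laurent covering of $X$ adapted to the smooth formal model.

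For part (c), I can invoke Example~\ref{Excusp1} directly: taking $A = k \langle T_1, T_2\rangle / (T_1^2 - T_2^3)$ with the admissible two-piece covering $\{U_c, V_c\}$ displayed there produces a nonzero class in $\check{H}^1(\{U_c, V_c\}, \mathcal{O}_X(<1)) \cong k^\circ / c k^\circ$, which by the injectivity of Čech into sheaf cohomology gives $H^1(X,\mathcal{O}_X(<1)) \neq 0$. Then the sheaf isomorphism $\mathcal{G} \cong \mathcal{O}_X(<1)$ from part (b) (or rather its formal analogue — here we only need the logarithm to make sense on the relevant pieces, which it does since $\operatorname{char} k = 0$ in the example) transports nontriviality to $H^1(X,\mathcal{G})$. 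Thus the cusp singularity provides the required counterexample, consistent with the decomposition of part (a) and with the failure of $\mathbb{B}^1$-invariance already established for this variety.
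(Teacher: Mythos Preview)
Your route for part (a) via Gerritzen's decomposition with $\mathcal{G} := \mathcal{G}_0$ is legitimate and parallel to the paper, which instead cites van der Put's Proposition~3.32(3) for a slightly different sheaf; since the statement only asserts existence, either choice works.

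The genuine gap is in (b). The logarithm does \emph{not} give a sheaf isomorphism $\mathcal{G}_0 \xrightarrow{\sim} \mathcal{O}_X(<1)$. A section of $\mathcal{G}_0(U)$ is a power series $1 + \sum_{i\geq 1} f_i T^i$ in the auxiliary variable $T$; its logarithm is another such power series in $T$, not a single function in $\mathcal{O}_X(<1)(U)$. The additive sheaf you land in after taking $\log$ is (roughly) $T\cdot\mathcal{O}_X(<1)\langle T\rangle$, an infinite extension of copies of $\mathcal{O}_X(<1)$, not $\mathcal{O}_X(<1)$ itself. What is actually true --- and what Gerritzen's Satz~4 (for $\mathcal{G}_0$) and van der Put's Proposition~3.32(4) (for his $\mathcal{G}$) establish --- is only the equivalence of \emph{cohomology vanishing}: $H^i(X,\mathcal{G}_0) = 0 \Leftrightarrow H^i(X,\mathcal{O}_X(<1)) = 0$. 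This requires a filtration/d\'evissage argument, not a one-line application of $\exp/\log$. The paper simply quotes van der Put for this reduction and then Bartenwerfer's Theorem~2' for the vanishing of $H^i(X,\mathcal{O}_X(<1))$ under good reduction; your instinct to linearise is right, but the target sheaf is misidentified and the bridge to $\mathcal{O}_X(<1)$ is not as cheap as you claim.

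For (c) you use the singular cusp of Example~\ref{Excusp1}; the paper instead takes the \emph{smooth} affinoid with bad reduction of Example~\ref{Exbadreduction} (via van der Put's Remark~3.24). Your example suffices for the bare existence claim, but the paper's choice is sharper in context: it shows that even smoothness of $\operatorname{Sp}(A)$ does not force $H^1 = 0$, which is precisely the obstruction to $\mathbb{B}^1$-invariance being local that the surrounding discussion is about.
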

 \begin{proof}
  \begin{itemize}
   \item[\ref{Putdecomp})] follows from \cite[Proposition~3.32 (3)]{vdPut82}. The number $s$ in van der Put's theorem is the number
      of holes in the disc $D$. In our case, $s=0$.
   \item[\ref{BartO1})] By \cite[Proposition~3.32 (4)]{vdPut82}, we know that
      $H^i ( \operatorname{Sp} (A) , \mathcal{G} ) = 0$ if and only if 
      \begin{equation*}
       H^i \big( \operatorname{Sp} (A) , \mathcal{O}_{\operatorname{Sp} (A)} (<1) \big) \overset{\star}{=} 0.
      \end{equation*}
      As $\operatorname{Sp} (A)$ is a reduced affinoid of good reduction, ($\star$) holds by~\cite[Theorem~2']{Bartenwerfer78}. 
      The proof is at the very end of Bartenwerfer's article.
   \item[\ref{Putcounterex})] Take a variety of bad reduction like in Example \ref{Exbadreduction}.
     Then $H^1 ( \operatorname{Sp} (A) , \mathcal{G}) \neq 0$ \cite[Remark 3.24]{vdPut82}. \qedhere
  \end{itemize}
 \end{proof}
  
 A special case is
 \begin{Prop}[{\cite[Corollary~3.29]{vdPut82}}]
  Let $\operatorname{Sp} A$ be a $k$-affinoid space such that for all $r \in \mathbb{R}$ and all constant sheaves $G$ we have
  \begin{align*}
    H^i \big( \operatorname{Sp} (A) , \mathcal{O}_{\operatorname{Sp} (A)} (<r) \big) &= 0 \enspace \text{for} \enspace i \neq 0 
      \enspace \text{and}\\
    H^i \big( \operatorname{Sp} (A) , G \big) &= 0 \enspace \text{for} \enspace i \neq 0.
  \end{align*}
  Then
  \begin{equation*}
   H^i \big( \operatorname{Sp} (A\langle T \rangle ) , \mathcal{O}^\ast \big) \cong 
   H^i \big( \operatorname{Sp} (A) , \mathcal{O}^\ast \big) \enspace \text{for} \enspace i \neq 0.
  \end{equation*}
  In particular,
  \begin{equation*}
   \operatorname{Pic} (A \langle T \rangle ) \cong \operatorname{Pic} (A).
  \end{equation*}
 \end{Prop}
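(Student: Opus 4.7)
The plan is to reduce the computation of $H^i(\operatorname{Sp}(A\langle T\rangle), \mathcal{O}^\ast)$ to cohomology on $X := \operatorname{Sp}(A)$ via the projection $p \colon X \times \mathbb{B}^1 \to X$, peeling off the direct summand which is already $H^i(X, \mathcal{O}^\ast)$ and showing that the remaining summand $H^i(X, \mathcal{G}_0)$ vanishes under the two hypotheses. The sheaf $\mathcal{G}_0$ is the one from Gerritzen's decomposition recalled in the $\mathbb{B}^1$-invariance subsection.

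\textbf{Step 1 (Decomposition of units in the fibre direction).} I would first show, by the same argument as in the proof of Lemma~\ref{cocycA1}, that for any admissible affinoid $U \subset X$ every unit $f \in \mathcal{O}(U)\langle T \rangle^\ast$ factors uniquely as $f = f(\cdot,0) \cdot u$ with $f(\cdot,0) \in \mathcal{O}(U)^\ast$ and $u \in \mathcal{G}_0(U)$. Sheafifying yields a split short exact sequence of sheaves on $X$
\begin{equation*}
 1 \longrightarrow \mathcal{G}_0 \longrightarrow p_\ast \mathcal{O}^\ast_{X \times \mathbb{B}^1}
 \xrightarrow{\mathrm{ev}_{T=0}} \mathcal{O}^\ast_X \longrightarrow 1,
\end{equation*}
the splitting being pullback along $p$. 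Taking cohomology therefore gives
\begin{equation*}
 H^i(X, p_\ast \mathcal{O}^\ast) \cong H^i(X, \mathcal{O}^\ast) \oplus H^i(X, \mathcal{G}_0).
\end{equation*}

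\textbf{Step 2 (From upstairs to downstairs).} Next I would show
$H^i(X \times \mathbb{B}^1, \mathcal{O}^\ast) \cong H^i(X, p_\ast \mathcal{O}^\ast)$.
The cleanest route is \v{C}ech cohomology: for any admissible affinoid covering $\{U_j\}$ of $X$, the family $\{U_j \times \mathbb{B}^1\}$ is an admissible covering of $X \times \mathbb{B}^1$, and applying the decomposition of Step~1 on each multiple intersection identifies the two \v{C}ech complexes (up to the split $\mathcal{G}_0$--factor already accounted for). Equivalently, via Leray, one checks that $R^q p_\ast \mathcal{O}^\ast = 0$ for $q \geq 1$ using the triviality of line bundles on the fibres $\mathbb{B}^1$ provided by Theorem~\ref{rigQuillenSuslin}.

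\textbf{Step 3 (Vanishing of $H^i(X,\mathcal{G}_0)$).} It remains to deduce $H^i(X, \mathcal{G}_0) = 0$ for $i \geq 1$ from the hypotheses. I would filter $\mathcal{G}_0$ by the subsheaves
\begin{equation*}
 \mathcal{G}_0^{<r}(U) := \bigl\{\, 1 + \textstyle\sum_{i \geq 1} f_i T^i \in \mathcal{G}_0(U) \,\bigm|\, \|f_i\| < r \ \forall i\, \bigr\}
\end{equation*}
(refined, if necessary, by truncation in the index $i$). After taking the logarithm on small enough strata, the graded pieces of this filtration become extensions built from sheaves of the form $\mathcal{O}(<r)$ and from constant sheaves; this is essentially the analysis carried out by van der Put in \cite{vdPut82} that powers Theorem~\ref{Putdecompthm}. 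Iterating the long exact sequences and invoking the hypotheses $H^i(X,\mathcal{O}(<r)) = 0$ and $H^i(X,G) = 0$ for $i \geq 1$ kills cohomology at each level, and a $\lim^1$-vanishing argument (justified by Kiehl's Theorem~B applied to the coherent approximations) propagates the vanishing to $\mathcal{G}_0$ itself.

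The main obstacle is Step~3: identifying the graded pieces of a natural filtration of $\mathcal{G}_0$ cleanly in terms of $\mathcal{O}(<r)$ and constant sheaves, and controlling the passage to the limit of the filtration. Steps~1 and~2 are essentially formal once the ingredients already assembled in the paper (Lemma~\ref{cocycA1}, Theorem~\ref{rigQuillenSuslin}, Kiehl's Theorem~B) are in hand, while Step~3 is where the two specific vanishing hypotheses of the Proposition actually get used.
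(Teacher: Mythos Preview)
The paper does not supply its own proof of this Proposition: it is simply quoted from \cite[Corollary~3.29]{vdPut82} and used as a black box, so there is no argument in the paper to compare your proposal against.

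That said, your outline is a reasonable reconstruction of the route taken in the cited sources. Steps~1--2 together amount to Gerritzen's decomposition \cite[Satz~2]{Gerritzen77}, already recalled immediately above the Proposition, and Step~3 --- filtering $\mathcal{G}_0$ so that the associated graded is governed by the sheaves $\mathcal{O}(<r)$ and by constant sheaves --- is precisely the mechanism behind van~der~Put's Corollary~3.29. Two small cautions. First, invoking Lemma~\ref{cocycA1} in Step~1 is misleading: that lemma concerns $\mathbb{A}^1_{\mathrm{rig}}$ and concludes that units are \emph{constant} in $T$, whereas over $\mathbb{B}^1$ the $\mathcal{G}_0$-factor is genuinely nontrivial and the decomposition of $\mathcal{O}(U)\langle T\rangle^\ast$ is the content of Gerritzen's Satz~2 rather than a variant of the argument for Lemma~\ref{cocycA1}. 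Second, your Leray justification in Step~2 risks circularity, since the stalks of $R^1 p_\ast \mathcal{O}^\ast$ involve exactly the Picard groups $\operatorname{Pic}(U\times\mathbb{B}^1)$ you are trying to compute; the \v{C}ech route you mention first, using coverings pulled back from $X$, is the correct one and is how Gerritzen argues.
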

 
 Having an $\A ^1$-invariant Picard group is a property of schemes which is Zariski-local.
 This means that to prove $\A ^1$-invariance of the Picard group of a schame, it is enough to verify it on
 Zariski-open subschemes. In particular, $\A ^1$-invariance of Pic for smooth (or normal) affine varieties
 implies $\A ^1$-invariance of Pic for all smooth (or normal) algebraic varieties.
 The preceding proposition shows that the analogue for affinoid varieties does not hold:
 Whether or not the Picard group of an affinoid variety is $\B ^1$-invariant depends on
 the canonical reduction of the variety. As smoothness (or normality) of the canonical reduction
 is a property which is not local with respect to the G-topology,
 also $\B^1$-invariance of the Picard group is a property which is not local with respect to the G-topology.
   
 By Theorem~\ref{ThmClass}, a homotopy classification of vector bundles follows directly from homotopy invariance, under
 two conditions: Firstly, the Grothendieck topology that we work with needs to be completely decomposable.
 Secondly, if homotopy invariance holds for rigid analytic varieties with some property (A), then we need that
 every vector bundle over an object with property (A) has a local trivialisation by subsets that also have the property (A).
 In other words, the conditions under which homotopy invariance holds should be local on the base space.
 Therefore, $\B^1$ is not a suitable interval object for our approach.

\section{Homotopy theory for rigid analytic varieties}\label{sechotheo}
 Joseph Ayoub was the first to construct and investigate motivic homotopy theory for rigid analytic varieties~\cite{AyoubB1}.
 His homotopy category of rigid analytic varieties is Morel--Voevodsky's homotopy category 
 of a site with interval \cite{MV} where the site is the category of rigid analytic varieties
 equipped with the Grothendieck topology coming from the G-topology  
 and with the unit ball $\mathbb{B}^1$ as an interval object.
 Ayoub used stable versions of this theory to construct motives for rigid analytic varieties.
 Further results in this line were obtained by Alberto Vezzani \cite{Vezzani14a,Vezzani14b,Vezzani15}.
 
 We will use a more flexible definition
 allowing three parameters: 
 \begin{itemize}
  \item A subcategory $\mathcal{R}$ of the category of rigid analytic varieties,
  \item a Grothendieck topology $\mathcal{T}$ on $\mathcal{R}$, usually the one coming from the G-topology or the Nisnevich topology, and
  \item an interval object $I$, usually $I = \mathbb{B}^ 1$ or $I = \mathbb{A}^ 1_\mathrm{rig}$.
 \end{itemize}
 For now, we fix none of these parameters. We define these notions in the next subsection.
 
 \subsection{$\mathbb{A}^1_\mathrm{rig}$-Homotopy theory}
 A \emph{model category} is always meant in the sense of Hirschhorn \cite[7.1, 13.11]{Hirschhorn}, 
 that is, a closed Quillen model category containing all small limits and colimits and with functorial factorisation.
 For \emph{simplicial model categories} we refer to Goerss--Jardine \cite{GoerssJardine}.
 We denote by $\sSet$ the category of simplicial sets endowed with the classical model structure
 where cofibrations are levelwise injections, weak equivalences are those morphisms whose geometric
 realisation is a weak homotopy equivalence and fibrations are Kan fibrations \cite[chapter I]{GoerssJardine}.
 
 \begin{Deff}[site, cd structure, {\cite[Definition 2.1]{Voevodsky10}}]\label{defcd}\label{site}
  \begin{enumerate}
   \item A \emph{site} $(\mathcal{C}, \mathcal{T})$ is a category $\mathcal{C}$ equipped with a Grothendieck topology $\mathcal{T}$. 
       For the definition of Grothendieck topology, we refer to
       Mac Lane--Moerdijk \cite[chapter 3]{MacLaneMoerdijk94}.
   \item Let $\mathcal{C}$ be a category with an initial object $\emptyset$. A \emph{cd structure}\index{cd structure}
       on $\mathcal{C}$ is a collection $P$ of commutative squares
       \begin{equation}\label{Quadrat}   
         \begin{xy}\xymatrix{
          B \ar[r]\ar[d] & Y \ar[d]^p\\
          A \ar[r]_e &X
         }\end{xy}
       \end{equation}
       such that if $Q \in P$ and $Q'$ is isomorphic to $Q$, then $Q' \in P$.
       The Grothendieck topology $t_P$ generated by the cd structure $P$ is the coarsest topology on $\mathcal{C}$ such that
       \begin{itemize}
        \item the empty sieve covers the initial object $\emptyset$ and
        \item for every $Q \in P$ as in \eqref{Quadrat}, the sieve generated by the morphisms $p$ and $e$ is a covering sieve.
       \end{itemize}
   \item A Grothendieck topology which is generated by a cd structure is called 
       \emph{completely decomposable}\index{completely decomposable topology}.
  \end{enumerate}
 \end{Deff}
 
 \begin{Exx}\label{Exsites}
  \begin{enumerate}
   \item\label{GSitus} Let $X$ be a rigid analytic variety with G-topology $\mathcal{T}$. 
      The category $\underline{X}$ has as objects the admissible open subsets of $X$ and
      inclusions as morphisms. With admissible coverings as covering sieves, it becomes a site, also denoted by $\underline{X}$.
      
      The Grothendieck topology of $\underline{X}$ is completely decomposable if and only
      if $X$ is quasicompact.
   \item\label{ZarSitus} Let $X = \operatorname{Sp} A$ be an affinoid variety. The site $X_\mathrm{Zar}$ has as objects Zariski open
      subsets of $X$, as morphisms inclusions and as covering sieves the coverings by Zariski open subsets.
      
      The Grothendieck topology of the Zariski site $X_\mathrm{Zar}$ of an affinoid variety $X$ is
      generated by squares
      \begin{equation*}
       \begin{xy}\xymatrix{
        U \cap V \ar[r]\ar[d] & V \ar[d]^\cap \\
        U \ar[r]_\subset &X
       }\end{xy}
      \end{equation*}
      where $U \to X$ and $V \to X$ are the inclusions of Zariski open subsets of $X$ that jointly cover the affinoid variety $X$.
   \item\label{EtSitus} Let $X$ be a separated rigid analytic variety. The {\'e}tale site $X_\mathrm{et}$ consists of the following.
      \begin{itemize}
       \item The objects are {\'e}tale morphisms $Y \to X$ from separated rigid analytic varieties $Y$ to $X$.
       \item The morphisms are commutative triangles
          \begin{equation*}
            \begin{xy}\xymatrix{
             Y \ar[d]\ar[r] & X \\
             Z \ar[ru] &
            }\end{xy}
           \end{equation*}
       \item The covering sieves are families of {\'e}tale morphisms whose images form admissible coverings.
      \end{itemize}
      The Grothendieck topology of $X_\mathrm{et}$ is usually not completely decomposable.
  \end{enumerate}
 \end{Exx}

 \begin{Def}[representable interval object, {\cite[4.1.1]{AsokHoyoisWendt15}}]\index{interval object}\label{definterval}
   A \emph{(representable) interval object} on a small category $\mathcal{C}$ is a quadruple $(I, m, \iota _0 , \iota _1)$
   consisting of a representable presheaf of (simplicial) sets $I$ on $\mathcal{C}$, 
   a morphism (``multiplication'') $m\colon I \times I \to I$ and two morphisms (``endpoints'') $\iota _0 , \iota _1 \colon \ast \to I$ 
   such that the following hold:
   \begin{enumerate}
    \item For every $X \in \operatorname{Ob} ( \mathcal{C})$, the presheaf 
        $\operatorname{Hom}_{\mathcal{C}} ( \ \cdot \ ,X) \times I$ is representable.
    \item Let $p\colon I \to \ast$. Then the following morphisms $I \to I$ coincide:
       \begin{align*}
        m ( \iota _0 \times \id ) &= m ( \id \times \iota _0) = \iota _0 p \enspace \text{and}\\
        m ( \iota _1 \times \id ) &= m ( \id \times \iota _1) = \id .
       \end{align*}
    \item The morphism $\iota _0 \coprod \iota _1 \colon \ast \coprod \ast \to I$ is a monomorphism.
   \end{enumerate}
   We will sometimes drop the word ``representable'' and we will suppress the morphisms $m, \iota _0, \iota _1$ in the notation.
 \end{Def}

 \begin{Prop}\label{rudel}
  Let $\mathcal{C}$ be a small category equivalent to $\rigVar$. Let $\mathcal{R} \subset \mathcal{C}$ be a subcategory.
  Let $\mathit{sPSh} ( \mathcal{R} )$ a small category equivalent to the category of simplicial presheaves on $\mathcal{R}$.
  Let $I$ be a representable interval object and $\mathcal{T}$ a Grothendieck topology
  on $\mathcal{R}$. Then there are simplicial model structures on $\mathit{sPSh} ( \mathcal{R} )$ as follows.
  \begin{enumerate}
   \item\label{teekanne} The \emph{injective model structure} $\mathcal{M} ( \mathcal{R})$\index{$\mathcal{M} ( \mathcal{R})$} where
      \begin{itemize}
       \item the cofibrations are the pointwise monomorphisms,
       \item the weak equivalences are sectionwise weak equivalences, i.\,e., two simplicial presheaves
           $\mathcal{F}, \mathcal{G} \in \mathit{sPSh} ( \mathcal{R} )$ are weakly equivalent if and only if
           \begin{equation*}
            \mathcal{F} ( X) \simeq \mathcal{G} (X) \quad \text{in } \sSet
           \end{equation*}
           for all $X \in \operatorname{Ob} (\mathcal{R})$, and
       \item the fibrations are defined via the right lifting property.
	  \end{itemize} 
   \item\label{kaffeekanne} The model structure 
      $\mathcal{M}_{I, \mathcal{T}} ( \mathcal{R})$\index{$\mathcal{M}_{I, \mathcal{T}} ( \mathcal{R})$}  obtained by
      left Bousfield localisation of $\mathcal{M} ( \mathcal{R})$ at the set
      \begin{equation*}
       \{ I \times \mathcal{F} \rightarrow \mathcal{F} \mid \mathcal{F} \in \mathit{sPSh} ( \mathcal{R} ) \} 
       \cup \operatorname{Cov} ( \mathcal{T} )
      \end{equation*}
      where $\ast$ denotes the terminal object and where $\operatorname{Cov} ( \mathcal{T} )$ is the set of all 
      covering sieves of $\mathcal{T}$.
  \end{enumerate}
 \end{Prop}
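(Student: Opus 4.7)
The plan is to invoke the standard machinery for model structures on simplicial presheaves, originating with Jardine and systematized in Hirschhorn's book.

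For part~\ref{teekanne}, first observe that $\mathit{sPSh}(\mathcal{R})$, being (equivalent to) the category of presheaves on the small category $\mathcal{R} \times \Delta^{\mathrm{op}}$, is complete and cocomplete with all limits and colimits computed sectionwise. I would then invoke the well-known existence of the injective (global) model structure on simplicial presheaves over any small category, whose cofibrations and weak equivalences are precisely the pointwise monomorphisms and pointwise weak equivalences of simplicial sets. One can either cite Jardine's construction directly or derive the result from Jeff Smith's recognition theorem for combinatorial model categories: the generating cofibrations are pointwise boundary inclusions, a suitable set-bounded collection of pointwise horn inclusions generates the trivial cofibrations, and the small object argument provides functorial factorisations. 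The simplicial enrichment is inherited pointwise from $\sSet$: the tensor, cotensor and simplicial mapping space are defined sectionwise, so SM7 reduces to SM7 in $\sSet$.

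For part~\ref{kaffeekanne}, I would apply Hirschhorn's theorem on left Bousfield localisation \cite{Hirschhorn}. Three hypotheses require verification. First, $\mathcal{M}(\mathcal{R})$ is left proper: since cofibrations are monomorphisms and the empty presheaf is initial, every object is cofibrant, which renders left properness automatic. Second, $\mathcal{M}(\mathcal{R})$ is cellular (equivalently, combinatorial), which is standard for the injective model structure on simplicial presheaves. Third, one must localise at a \emph{set} of morphisms rather than a proper class: this holds because $\mathit{sPSh}(\mathcal{R})$ has been chosen small and $\operatorname{Cov}(\mathcal{T})$ is a set, so their union is a set. Hirschhorn's theorem then supplies the localised model structure, and preservation of the simplicial enrichment under left Bousfield localisation is part of the same result.

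The main potential obstacle is the set-theoretic one: ensuring that the collection $\{I \times \mathcal{F} \to \mathcal{F} \mid \mathcal{F} \in \mathit{sPSh}(\mathcal{R})\}$ is a genuine set and that the generating (trivial) cofibrations of the injective structure can be chosen to form a set. The proposition circumvents this by taking $\mathcal{R}$ and $\mathit{sPSh}(\mathcal{R})$ to be small categories equivalent to the ones of interest; this also makes the cellularity/combinatoriality arguments go through without subtlety. Once this foundational move is in place, the proof becomes a routine verification of the hypotheses of the cited theorems and contains no genuinely new input beyond the interval-object formalism from Definition~\ref{definterval}.
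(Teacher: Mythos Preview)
Your argument is correct and considerably more detailed than what the paper actually does: the paper's entire proof is a one-line citation of Cisinski's \emph{Th\'eories homotopiques dans les topos}~\cite{Cisinski02}, which establishes these model structures in a general topos-theoretic framework. Your route via the Jardine/Smith injective model structure together with Hirschhorn's left Bousfield localisation theorem is the one more commonly spelled out in the motivic homotopy literature (for instance in Morel--Voevodsky and Asok--Hoyois--Wendt), and it has the virtue of making the hypotheses visible rather than burying them in a reference. Cisinski's approach, by contrast, builds the localised structure more directly and avoids the two-step construction.

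Two small points. First, cellular and combinatorial are not equivalent conditions; you want combinatorial here (the injective structure on simplicial presheaves is combinatorial by Smith's theorem, whereas cellularity is less immediate), and then invoke the Smith/Barwick version of the localisation theorem rather than Hirschhorn's cellular version. Second, your observation about the set-theoretic issue is well taken: the class $\{I \times \mathcal{F} \to \mathcal{F}\}$ indexed over all presheaves is only a set under the smallness convention in the statement, but in practice one reduces to localising at the single map $I \to \ast$ (the $I$-local objects are the same by the tensor--cotensor adjunction), which together with the set $\operatorname{Cov}(\mathcal{T})$ is unproblematically a set regardless of universe conventions.
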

 \begin{proof}
  The existence of these model categories was shown by Cisinski~\cite{Cisinski02}.
 \end{proof}

 \begin{Def}[$\mathcal{H}_{I,  \mathcal{T} } ( \mathcal{R})$]\index{$\mathcal{H}_{I,  \mathcal{T} } ( \mathcal{R})$}
  The homotopy category of $\mathcal{M}_{I, \mathcal{T} } (\mathcal{R})$ is denoted by $\mathcal{H}_{I,  \mathcal{T} } ( \mathcal{R})$.
  If the Grothendieck topology $\mathcal{T}$ is generated by a cd structure $\tau$ (see Definition \ref{defcd}),
  we also denote the resulting categories by $\mathcal{M}_{I, \tau } (\mathcal{R})$, respectively by
  $\mathcal{H}_{I,  \tau } ( \mathcal{R})$.
 \end{Def}
 
 We view the category $\mathcal{R} \subset \rigVar$ 
 as a subcategory of $\mathcal{M}_{I, \mathcal{T}} ( \mathcal{R})$ via the Yoneda embedding.
 
 \begin{Def}
  Let $\mathcal{R}$ be a small category that has an initial object $\emptyset$ as in Proposition~\ref{rudel}.
  Let $I$ be a representable interval object, $\tau$ a cd structure on $\sPSh ( \mathcal{R} )$, let
  $\mathcal{M}_{I, \tau} ( \mathcal{R})$ be the model category constructed in Proposition~\ref{rudel} and
  $\mathcal{H}_{I, \tau} ( \mathcal{R})$ the resulting homotopy category.
  Let $\mathcal{E}, \mathcal{F} \in \operatorname{Ob} ( \sPSh ( \mathcal{R} ))$. The \emph{set of homotopy classes} 
  from $\mathcal{E}$ to $\mathcal{F}$ is
  \begin{equation*}
   [\mathcal{E}, \mathcal{F}]_{I, \tau} := \mathit{Map} _{\mathcal{H}_{I, \tau } ( \mathcal{R})} (\mathcal{E}, \mathcal{F}) 
   := \Hom _{\mathcal{M}_{I, \tau } ( \mathcal{R})} (\mathcal{E}, R \mathcal{F}) / \text{homotopy}
  \end{equation*}
  where $R$ is a fibrant replacement functor.
  As all objects of $\mathcal{M}_{I, \tau } ( \mathcal{R})$ are cofibrant, we do not need to replace $\mathcal{E}$
  cofibrantly.
 \end{Def}
  
 \begin{Lemma}\label{Manta}
  As in Proposition~\ref{rudel}, let $\mathcal{R}$ be a small category equivalent to a subcategory of $\rigVar$ such that
  $\mathbb{B}^1$ and $\mathbb{A}^1_\mathrm{rig} \in \mathit{sPSh} ( \mathcal{R} )$ are representable interval objects.
  Let the Grothendieck topology $\mathcal{T}$ be equal to or finer than the Grothendieck topology coming
  from the G-topology.
  Then the rigid analytic affine line $\mathbb{A}^1_\mathrm{rig}$ is contractible 
  in $\mathcal{M}_{\mathbb{B}^1 , \mathcal{T}} ( \mathcal{R})$.
 \end{Lemma}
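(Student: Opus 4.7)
The plan is to exhibit an explicit $\mathbb{B}^1$-homotopy on $\mathbb{A}^1_\mathrm{rig}$ from the identity to the constant map at $0$, namely the scalar multiplication
\begin{equation*}
 m \colon \mathbb{B}^1 \times \mathbb{A}^1_\mathrm{rig} \longrightarrow \mathbb{A}^1_\mathrm{rig}, \qquad (t,x) \longmapsto tx.
\end{equation*}
First I would check that $m$ is actually a morphism of rigid analytic varieties. Using the description $\mathbb{A}^1_\mathrm{rig} = \bigcup_i \operatorname{Sp}(k\langle \eta^{-i} S\rangle)$, this amounts to verifying that $S \mapsto T \cdot U$ defines a continuous $k$-algebra map $k\langle \eta^{-i} S\rangle \to k\langle T\rangle \hat{\otimes} k\langle \eta^{-i} U\rangle$, which is immediate because $\eta^{-i}S \mapsto T \cdot (\eta^{-i}U)$ lands in the unit ball.

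Next, by construction $m \circ (\iota_0 \times \id_{\mathbb{A}^1_\mathrm{rig}})$ is the constant map at $0$, while $m \circ (\iota_1 \times \id_{\mathbb{A}^1_\mathrm{rig}})$ is the identity on $\mathbb{A}^1_\mathrm{rig}$. The key observation is now that in $\mathcal{M}_{\mathbb{B}^1,\mathcal{T}}(\mathcal{R})$ the projection $p\colon \mathbb{B}^1 \times \mathbb{A}^1_\mathrm{rig} \to \mathbb{A}^1_\mathrm{rig}$ is a weak equivalence by the very definition of the Bousfield localisation at $\{ \mathbb{B}^1 \times \mathcal{F} \to \mathcal{F} \}$. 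The two sections $\iota_0 \times \id$ and $\iota_1 \times \id$ of $p$ are therefore also weak equivalences by the 2-out-of-3 property, and they become equal in the homotopy category $\mathcal{H}_{\mathbb{B}^1,\mathcal{T}}(\mathcal{R})$ (both being inverse to the class of $p$).

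Composing with $m$ then gives the equality of the identity and the constant map at $0$ in $\mathcal{H}_{\mathbb{B}^1,\mathcal{T}}(\mathcal{R})$, so $\mathbb{A}^1_\mathrm{rig}$ is contractible. I expect the ``main obstacle'' to be only a bookkeeping one: making sure the representability assumption on the interval objects $\mathbb{B}^1$ and $\mathbb{A}^1_\mathrm{rig}$ guarantees that $\mathbb{B}^1 \times \mathbb{A}^1_\mathrm{rig}$ is itself representable in $\mathcal{R}$ (so that $m$ lives in the site and the above argument takes place inside $\sPSh(\mathcal{R})$). The hypothesis in Definition~\ref{definterval}(a) that $\operatorname{Hom}(-,X) \times I$ be representable for all $X \in \mathcal{R}$ is exactly what is needed here. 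The role played by $\mathcal{T}$ is essentially invisible: the argument works for any Grothendieck topology, and in particular for any $\mathcal{T}$ at least as fine as the G-topology.
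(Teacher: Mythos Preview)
Your argument is correct and is genuinely different from the paper's proof. The paper writes $\mathbb{A}^1_\mathrm{rig}$ as the colimit of the balls $\operatorname{Sp}(k\langle c^{-i}\xi\rangle)$, observes that each ball is isomorphic to $\mathbb{B}^1$ and hence contractible in $\mathcal{M}_{\mathbb{B}^1}(\mathcal{R})$, and then runs a tower argument: for a $\mathbb{B}^1$-local object $Y$, the induced maps $\mathit{Map}(X\times \operatorname{Sp} k\langle c^{-(i+1)}\xi\rangle, Y)\to \mathit{Map}(X\times \operatorname{Sp} k\langle c^{-i}\xi\rangle, Y)$ are fibrations of simplicial sets (by SM7), all terms are weakly equivalent to $\mathit{Map}(X,Y)$, and Hirschhorn's \cite[Proposition~15.10.12(2)]{Hirschhorn} passes the weak equivalence to the limit. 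This is where the hypothesis $\mathcal{T}\supseteq G$ actually enters in the paper's argument: it is used so that the presheaf colimit of the balls agrees, after $\mathcal{T}$-localisation, with the representable $\mathbb{A}^1_\mathrm{rig}$.

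Your route is more direct: the scalar-multiplication map $m\colon \mathbb{B}^1\times\mathbb{A}^1_\mathrm{rig}\to\mathbb{A}^1_\mathrm{rig}$ already lives at the level of rigid varieties and gives an explicit $\mathbb{B}^1$-homotopy from the identity to the zero map. You never decompose $\mathbb{A}^1_\mathrm{rig}$ as a colimit, so the topology hypothesis plays no role --- as you note, the argument already works in $\mathcal{M}_{\mathbb{B}^1}(\mathcal{R})$ before sheafification. The paper's approach has the advantage that it simultaneously shows $X\times\mathbb{A}^1_\mathrm{rig}\to X$ is a weak equivalence for every $X$ (which is what feeds into Corollary~\ref{finerstructure}); your argument gives contractibility of $\mathbb{A}^1_\mathrm{rig}$ itself, exactly what the Lemma states, and the stronger statement can then be recovered by replacing $m$ with $m\times\id_X$.
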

 \begin{proof}
  We may assume without loss of generality that $\mathcal{T} = G$. If $\mathcal{T} \neq G$, then 
  $\mathcal{M}_{\mathbb{B}^1 , \mathcal{T}} ( \mathcal{R})$ is obtained by further localisation of 
  $\mathcal{M}_{\mathbb{B}^1 , G} ( \mathcal{R})$. Localisation preserves weak equivalences, thus if
  $\mathbb{A}^1_\mathrm{rig}$ is contractible in $\mathcal{M}_{\mathbb{B}^1 , G} ( \mathcal{R})$,
  it is still so in $\mathcal{M}_{\mathbb{B}^1 , \mathcal{T}} ( \mathcal{R})$.
  
  Let $c \in k$ with $\vert c \vert > 1$ and set 
  \begin{equation*} 
   \mathbb{A} ^1_\mathrm{rig} = \operatorname{colim} \big( \Sp (k \langle \xi \rangle ) \hookrightarrow 
   \Sp ( k \langle c^{-1} \xi \rangle ) \hookrightarrow \Sp ( k \langle c^{-2} \xi \rangle ) \hookrightarrow \dotso \big)
  \end{equation*}
  as in Example \ref{DefA1rig}. The colimit of simplicial sheaves
  coincides with the simplicial sheaf represented by the rigid analytic variety $\mathbb{A} ^1_\mathrm{rig}$. 

  We need to show that for each rigid analytic variety $X$, the map $X \times \mathbb{A} ^1_\mathrm{rig} \rightarrow X$
  is a weak equivalence in $\mathcal{M}_{\mathbb{B}^1} ( \mathcal{R} )$, i.\,e.~that the projection
  \begin{equation*}
   X \times \mathbb{A} ^1_\mathrm{rig} \rightarrow X
  \end{equation*}
  is a $\mathbb{B} ^1$-local equivalence in $\mathcal{M} ( \mathcal{R} )$.
  That is, if $Y$ is any $\mathbb{B} ^1$-local object, the induced map of mapping spaces
  \begin{equation*}
   \mathit{Map} (X,Y) \rightarrow \mathit{Map} (X \times \mathbb{A} ^1_\mathrm{rig} , Y)
  \end{equation*}
  is a weak equivalence of simplicial sets.

  Let $Y$ be a $\mathbb{B}^1$-local object. 
  The monomorphisms $\Sp ( k \langle c^{-i} \xi \rangle ) \hookrightarrow \Sp ( k \langle c^{-(i+1)} \xi \rangle )$ 
  are cofibrations in $\mathcal{M} ( \mathcal{R} )$. As $Y$ is fibrant in $\mathcal{M} ( \mathcal{R} )$, the maps
  \begin{equation*}
   \mathit{Map} (X \times \Sp k \langle c^{-(i+1)} \xi \rangle , Y) \rightarrow 
   \mathit{Map} (X \times \Sp k \langle c^{-i} \xi \rangle , Y)
  \end{equation*}
  are fibrations of simplicial sets by axiom SM7 \cite[Chapter II, Proposition~3.2]{GoerssJardine}.
  The isomorphisms
  \begin{align*}
    k \langle c^{-i} \xi \rangle &\rightarrow  k \langle \xi \rangle \\
    \xi &\mapsto c^i \xi \\
    a & \mapsto a \quad \text{for } a \in k
  \end{align*}
  give rise to isomorphisms
  \begin{equation*}
   \Sp k \langle \xi \rangle \rightarrow \Sp k \langle c^{-i} \xi \rangle
  \end{equation*}
  which in particular are weak equivalences. As the morphism
  \begin{equation*}
   \Sp k \langle \xi \rangle \rightarrow \ast
  \end{equation*}
  is a weak equivalence in $\mathcal{M} _{\mathbb{B}^1} ( \mathcal{R} )$ by definition, the two-out-of-three axiom gives that
  \begin{equation*}
   \Sp k \langle c^{-i} \xi \rangle \rightarrow \ast
  \end{equation*}
  is a weak equivalence in $\mathcal{M} _{\mathbb{B}^1} ( \mathcal{R} )$. Thus we get weak equivalences of simplicial sets
  \begin{equation*}
   f_i\colon \mathit{Map} (X,Y) \rightarrow \mathit{Map} ( X \times \Sp k \langle c^{-i} \xi \rangle , Y)
  \end{equation*}
  for all $i$.
  Similarly, the inclusions
  \begin{align*}
    k \langle c^{-(i+1)} \xi \rangle &\rightarrow  k \langle c^{-i} \xi \rangle \\
    \xi &\mapsto \xi \\
    a & \mapsto a \quad \text{for } a \in k
  \end{align*}
  give rise to cofibrations 
  \begin{equation*}
   \Sp ( k \langle c^{-i} \xi \rangle ) \rightarrow \Sp ( k \langle c^{-(i+1)} \xi \rangle )
  \end{equation*}
  in $\mathcal{M} _{\mathbb{B}^1} ( \mathcal{R} )$ and the maps
  \begin{equation*}
   j_n\colon \mathit{Map} (X \times \Sp k \langle c^{-(j+1)} \xi \rangle , Y ) \rightarrow 
   \mathit{Map} (X \times \Sp k \langle c^{-j} \xi \rangle , Y )
  \end{equation*}
  are fibrations in $\sSet$.
  We put this together to get a sequence of maps
  \begin{equation*}
   \begin{xy} \xymatrix{
    & & \mathit{Map} (X , Y) \ar[d]^{f_0} \ar[dl]^{f_1} \ar[dll]_{f_2} \\
    \dotso \ar[r] & \mathit{Map} (X \times \Sp k \langle c^{-1} \xi \rangle , Y ) \ar[r]_{j_0} 
      & \mathit{Map} (X \times \Sp k \langle \xi \rangle , Y ) .   
   } \end{xy}
  \end{equation*}
  The spaces $X \times \Sp k \langle c^{-i} \xi \rangle$ are cofibrant and 
  $Y$ is fibrant in $\mathcal{M} _{\mathbb{B}^1} ( \mathcal{R} )$, hence 
  all appearing mapping spaces are fibrant in $\sSet$~\cite[Chapter~II, Proposition~3.2]{GoerssJardine}. 
  All horizontal maps $j_n$ are fibrations
  and all vertical maps $f_n$ for $n \geq 0$ are weak equivalences by Axiom SM7 \cite[Proposition~3.2]{GoerssJardine}.
  By \cite[Proposition~15.10.12(2)]{Hirschhorn}, the limit map
  \begin{equation*}
   \operatorname{lim} f_n \colon \mathit{Map} (X,Y) \rightarrow \mathit{Map} ( X \times \mathbb{A} ^1 _\mathrm{rig} , Y )  
  \end{equation*}
  is a weak equivalence of simplicial sets. 
  Thus, the map
  \begin{equation*}
   X \times \mathbb{A} ^1_\mathrm{rig} \rightarrow X
  \end{equation*}
  is a $\mathbb{B} ^1$-local equivalence and $X$ and $X \times \mathbb{A} ^1_\mathrm{rig}$
  are isomorphic in the homotopy category $\mathcal{H} _ {\mathbb{B}^1} ( \mathcal{R} )$.
 \end{proof}
 
 \begin{Cor}\label{finerstructure}
  For each $\mathcal{R}$ as above, the model category $\mathcal{M} _{\mathbb{B} ^1} ( \mathcal{R} )$ 
  can be obtained by left Bousfield localisation
  of $\mathcal{M} _{\mathbb{A} ^1_\mathrm{rig}} ( \mathcal{R} )$ at the set 
  $\{ \mathbb{B}^ 1 \times \mathcal{F} \rightarrow \mathcal{F} \mid \mathcal{F} \in \mathit{sPSh} ( \mathcal{R} ) \}$.
 \end{Cor}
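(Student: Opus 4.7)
The plan is to exhibit both model structures as left Bousfield localisations of the common base $\mathcal{M}(\mathcal{R})$ and use the general principle that iterated Bousfield localisations compose, i.e., localising $L_S \mathcal{M}$ at $T$ is the same as localising $\mathcal{M}$ at $S \cup T$ (cf.~Hirschhorn \cite{Hirschhorn}, §3.3). Applied here, localising $\mathcal{M}_{\mathbb{A}^1_\mathrm{rig}}(\mathcal{R})$ at the set $T = \{\mathbb{B}^1 \times \mathcal{F} \to \mathcal{F} \mid \mathcal{F} \in \mathit{sPSh}(\mathcal{R})\}$ gives the left Bousfield localisation of $\mathcal{M}(\mathcal{R})$ at
\begin{equation*}
  \{\mathbb{A}^1_\mathrm{rig} \times \mathcal{F} \to \mathcal{F}\} \cup \operatorname{Cov}(\mathcal{T}) \cup \{\mathbb{B}^1 \times \mathcal{F} \to \mathcal{F}\}.
\end{equation*}
It therefore suffices to show that this model category agrees with $\mathcal{M}_{\mathbb{B}^1}(\mathcal{R})$, which is the localisation at $\{\mathbb{B}^1 \times \mathcal{F} \to \mathcal{F}\} \cup \operatorname{Cov}(\mathcal{T})$ alone.

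The first set is manifestly contained in the second, so localising further is at worst an identity on the resulting homotopy category; what remains is to check that the morphisms $\mathbb{A}^1_\mathrm{rig} \times \mathcal{F} \to \mathcal{F}$ are already weak equivalences in $\mathcal{M}_{\mathbb{B}^1}(\mathcal{R})$. By Lemma~\ref{Manta}, the projection $\mathbb{A}^1_\mathrm{rig} \to \ast$ is a weak equivalence in $\mathcal{M}_{\mathbb{B}^1}(\mathcal{R})$. Since $\mathcal{M}_{\mathbb{B}^1}(\mathcal{R})$ is a simplicial model category in which every object is cofibrant (the injective model structure has pointwise monomorphisms as cofibrations), the pushout-product/SM7 axiom implies that crossing with any presheaf $\mathcal{F}$ preserves weak equivalences between cofibrant objects, so $\mathbb{A}^1_\mathrm{rig} \times \mathcal{F} \to \ast \times \mathcal{F} = \mathcal{F}$ is a weak equivalence in $\mathcal{M}_{\mathbb{B}^1}(\mathcal{R})$.

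Consequently, the set of morphisms at which the left Bousfield localisation constructing $\mathcal{M}_{\mathbb{B}^1}(\mathcal{R})$ is performed already contains the $\mathbb{A}^1_\mathrm{rig}$-contractions up to weak equivalence, so adding them to the localising set changes neither the class of local objects nor the class of local equivalences. Hence the two resulting model structures coincide, which proves the corollary.

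The main point to be careful with is the compatibility of crossing with $\mathcal{F}$ and weak equivalences in the localised simplicial model category; this is where one invokes SM7 and the fact that all objects are cofibrant, making the verification automatic once Lemma~\ref{Manta} is in hand.
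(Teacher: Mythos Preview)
Your argument is correct and is precisely the reasoning the paper intends: the corollary is stated without proof immediately after Lemma~\ref{Manta}, and your write-up supplies the routine details the author leaves to the reader. One small remark: invoking SM7 for ``crossing with any presheaf $\mathcal{F}$'' is not quite the right hook, since SM7 concerns the simplicial tensoring rather than the cartesian product in $\mathit{sPSh}(\mathcal{R})$. A cleaner way to phrase the same step is to compare local objects directly: an object $Z$ is local for $\{I \times \mathcal{F} \to \mathcal{F}\}$ if and only if $Z(X) \simeq Z(X \times I)$ for all representable $X$ (by the exponential adjunction this condition on all $\mathcal{F}$ reduces to representables), and the proof of Lemma~\ref{Manta} shows exactly that every $\mathbb{B}^1$-local object is $\mathbb{A}^1_\mathrm{rig}$-local in this sense. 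With that adjustment your proof is complete.
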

  
\subsection{Overview of the occurring theories}
 Denote by $\mathit{qStein}$ the category of smooth quasi-Stein varieties. 
 
 The occurring Grothendieck topologies are 
 \begin{itemize}
  \item the Grothendieck topology induced by the G-topology, denoted by $G$,
  \item the completely decomposable Grothendieck topology where only finite coverings with respect to the G-topology are allowed,
      denoted by $cdG$, and
  \item the Nisnevich topology, denoted by $\mathrm{Nis}$.
 \end{itemize}
 The Grothendieck topology induced by the G-topology is finer than its completely decomposable variant. They coincide
 on quasicompact rigid analytic varieties. The Nisnevich topology is finer than the 
 completely decomposable Grothendieck topology. 
 
 For $\mathcal{R} \in \{ \rigVar , \mathit{qStein} \}$ we have the following relations
 between the homotopy categories:
 \begin{equation*}
  \begin{xy}
   \xymatrix{
    \mathcal{H}_{\mathbb{A}^1_\mathrm{rig}, G} ( \mathcal{R} ) \ar[d]^{\text{loc}} & 
        \mathcal{H}_{\mathbb{A}^1_\mathrm{rig}, cdG} ( \mathcal{R} ) \ar[l]^{\text{loc}} \\
    \mathcal{H}_{\mathbb{B}^1, G} ( \mathcal{R} ) & \mathcal{H}_{\mathbb{B}^1, cdG} ( \mathcal{R} ). \ar[l]^{\text{loc}}
   }
  \end{xy}
 \end{equation*}
 Arrows labelled ``loc'' denote localisation of the model structure.
 
\subsection{Relation to Ayoub's theory}\label{RelAyoub}
 We recall Ayoub's construction \cite[1.3]{AyoubB1}.
 Ayoub starts with the category of presheaves from $\rigVar$ into a \emph{coefficient category} $\mathfrak{M}$.
 A coefficient category is a stable model category with nice properties, 
 cf.~\cite[D\'{e}finition~4.4.23]{Ayoub07b} or \cite[D\'{e}finition~1.2.31]{AyoubB1}.
 Examples include the category of complexes of abelian groups or categories of simplicial spectra, 
 endowed with the stable projective model strucure \cite[Exemple~4.4.24]{Ayoub07b}.
 
 Denote the category of presheaves from $\rigVar$ into the coefficient category $\mathfrak{M}$ by
 $\mathit{PSh} ( \rigVar , \mathfrak{M} )$. Ayoub endows it with the \emph{projective model structure}
 where a morphism of presheaves $\mathcal{F} \to \mathcal{G}$ is
 \begin{itemize}
  \item a weak equivalence if and only if for each
       $X \in \operatorname{Ob} ( \rigVar )$, it induces a weak equivalence $\mathcal{F} (X) \simeq \mathcal{G} (X)$
       in $\mathfrak{M}$,
  \item a fibration if and only if for each
       $X \in \operatorname{Ob} ( \rigVar )$, it induces a fibration $\mathcal{F} (X) \to \mathcal{G} (X)$
       in $\mathfrak{M}$ and
  \item a cofibration if and only if it satisfies the left lifting property with respect to all trivial fibrations.
 \end{itemize}
 Now Ayoub localises this model structure with respect to the Nisnevich topology and the unit ball $\mathbb{B}^1$.
 The homotopy category of the resulting stable model category is called the 
 \emph{unstable $\mathbb{B}^1$-homotopy category} \cite[1.3.1]{AyoubB1}.
 
 Let us compare this to our constructions:
 We start with a subcategory $\mathcal{R}$ of $\rigVar$ and presheaves from $\mathcal{R}$ to simplicial sets.
 The category of simplicial sets is not a coefficient category in Ayoub's sense.
 We first endow the resulting category $\sPSh ( \mathcal{R} )$ with the injective model structure 
 and then localise with respect to an interval object $I$ and a Grothendieck topology $\mathcal{T}$.
 \begin{itemize}
  \item\label{stabinj} Let us stabilise the simplicial model category which we get with
      $\mathcal{R} = \rigVar$, $I = \mathbb{B}^ 1$ and $\mathcal{T} = \mathrm{Nis}$
      in the simplicial direction. 
  \item Let us choose the category of $S^ 1$-spectra as a coefficient category in Ayoub's construction.
 \end{itemize}
 This gives two different model structures on $\mathit{PSh} ( \rigVar , \mathfrak{M} )$.
 They are Quillen equivalent: The unstable injective and the unstable projective model categories 
 on $\mathit{PSh} ( \rigVar )$ are Quillen equivalent via the identity (every projective fibration
 is an injective fibration and every injective cofibration is a projective cofibration).
 The model categories obtained from the injective and the projective model categories by left
 Bousfield localisation at the interval and the coverings (cf.~\ref{rudel}\ref{kaffeekanne}))
 are then Quillen equivalent by a theorem by Hirschhorn \cite[Theorem~3.3.20]{Hirschhorn}. 
 Finally, the stable localised injective
 and the stable localised projective model categories are Quillen equivalent by a result by Hovey \cite[Theorem~5.5]{Hovey01}.
  
 Ayoub also has a \emph{stable $\mathbb{B}^1$-homotopy category}~\cite[1.3.3]{AyoubB1}:
 As before, he starts with $\mathit{PSh} ( \rigVar , \mathfrak{M} )$, additionally requiring $\mathfrak{M}$
 to be symmetric monoidal and unital. Endowing $\mathit{PSh} ( \rigVar , \mathfrak{M} )$
 with the projective model structure as above yields a stable model category which is again symmetric monoidal and unital.
 Localising it with respect to the Nisnevich topology and the unit ball $\mathbb{B}^1$ and stabilising with respect to
 $\mathbb{P}^1_\mathrm{rig}$ yields the stable $\mathbb{B}^1$-homotopy category. 
 It is still further away from our model categories.

\section{Representability}
 We prove that isomorphism classes of line bundles over a smooth quasi-Stein rigid analytic variety 
 are represented in the $\mathbb{A}^1_\mathrm{rig}$-homotopy category.
 
 At the beginning of the section we review the known theorems that served as models for our theorem 
 and its proof: Steenrod's homotopy classification of continuous vector bundles over paracompact Hausdorff spaces,
 Grauert's homotopy classification of holomorphic vector bundles over complex Stein spaces
 and Fabien Morel's $\mathbb{A}^ 1$-homotopy classification of algebraic vector bundles over smooth affine varieties.
 After an account on classifying spaces we finally come to our main theorem and its proof.
 
 The author's thesis devotes a section to the discussion of the role of h-principles in the proofs \cite[Section 5.2]{Sigloch16}.
 As it is but a collection of known results, we left it out in this paper.
\subsection{Classification of vector bundles: The classical results}\sectionmark{The classical results}\label{secVBclass}
 The existence of a classifying space for vector bundles is a fundamental result in the studies of topological vector bundles and K-theory:
 \begin{Thm}[Steenrod]\label{ClassTopThm}
  Let $X$ be a paracompact Hausdorff space. There is a one-to-one correspondence
  \begin{equation*}
   [X, \mathrm{Gr}_{n, \mathbb{R}}] \overset{1:1}{\longleftrightarrow} \{ \text{real vector bundles of rank $n$ over } X \} / \cong
  \end{equation*} 
  between homotopy classes of maps from $X$ into the infinite Grassmannian and isomorphism classes of real vector bundles over $X$.
 \end{Thm}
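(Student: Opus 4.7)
The plan is to set up the bijection via pullback of the tautological (``canonical'' or ``universal'') bundle $\gamma_n \to \mathrm{Gr}_{n,\mathbb{R}}$: to a continuous map $f \colon X \to \mathrm{Gr}_{n,\mathbb{R}}$ associate the isomorphism class of $f^{\ast}\gamma_n$. I would then have to prove three things: this assignment descends to homotopy classes, it is surjective, and it is injective. The role of paracompactness is to guarantee partitions of unity subordinate to any open covering, and this is what makes all three steps work.

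First, well-definedness on homotopy classes reduces to the covering homotopy theorem, i.e.\ to the statement that if $H \colon X \times [0,1] \to \mathrm{Gr}_{n,\mathbb{R}}$ is a homotopy between $f_0$ and $f_1$, then $H^{\ast}\gamma_n$ restricts at the two endpoints to isomorphic bundles. The cleanest route is to prove the more general fact that any vector bundle $E \to X \times [0,1]$ over a paracompact base is isomorphic to $\mathrm{pr}_1^{\ast}(E|_{X \times \{0\}})$, which follows by patching local trivialisations along $[0,1]$ using a locally finite refinement and a partition of unity on $X$.

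For surjectivity, given a rank $n$ bundle $E \to X$, choose a trivialising open cover $\{U_\alpha\}$, refine to a locally finite cover, and pick a subordinate partition of unity $\{\varphi_\alpha\}$. Using the trivialisations $E|_{U_\alpha} \cong U_\alpha \times \mathbb{R}^n$, the functions $\varphi_\alpha$ produce finitely-many-nonzero sections at each point of $X$, which together give a bundle map $E \to \mathbb{R}^{\oplus \alpha}$ that is fibrewise injective. Composing with the projection to the set of $n$-planes in this ambient vector space yields the desired classifying map $f \colon X \to \mathrm{Gr}_{n,\mathbb{R}}$ together with a canonical isomorphism $E \cong f^{\ast}\gamma_n$. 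The countable index case has to be handled with some care to ensure that the target is really the colimit Grassmannian; again paracompactness (via the standard trick of coarsening a locally finite cover to a countable one indexed by a partition into ``colours'') is what makes this work.

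For injectivity, suppose $f_0^{\ast}\gamma_n \cong f_1^{\ast}\gamma_n$; one wants a homotopy $H$ with $H \circ \iota_i = f_i$. Pull the isomorphism back to produce a bundle $E$ on $X \times \{0,1\}$, extend it to a bundle on $X \times [0,1]$ (using homotopy invariance from the first step together with the gluing data), and then apply surjectivity to this extended bundle to get the required homotopy into $\mathrm{Gr}_{n,\mathbb{R}}$. The step I expect to be the main obstacle, and which I would write out most carefully, is the partition-of-unity construction of the classifying map in the surjectivity step, because this is where paracompactness is genuinely used and where one has to be careful that the resulting map lands in the colimit Grassmannian rather than in an unspecified infinite product; the other two steps are then essentially formal consequences of it.
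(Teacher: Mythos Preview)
Your overall strategy matches the paper's brief sketch: both use pullback of the universal bundle for surjectivity and homotopy invariance of bundles over $X \times [0,1]$ for well-definedness. The paper only gives a two-sentence outline here (this is a classical background result, not a new contribution), so your proposal is already more detailed than what appears in the text.

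There is, however, a genuine gap in your injectivity step. Given $f_0^{\ast}\gamma_n \cong f_1^{\ast}\gamma_n$, extending the resulting bundle on $X \times \{0,1\}$ to $X \times [0,1]$ and then applying surjectivity produces \emph{some} classifying map $H \colon X \times [0,1] \to \mathrm{Gr}_{n,\mathbb{R}}$, but there is no reason that $H|_{X \times \{i\}}$ should equal $f_i$; it is merely another classifying map for the same bundle. So you have reduced injectivity to itself. The standard repair is to argue directly that any two bundle maps $E \to \gamma_n$ (equivalently, fibrewise-injective maps $E \to \mathbb{R}^\infty$) are homotopic through such maps, using the linear homotopies coming from the odd/even coordinate shifts on $\mathbb{R}^\infty$; see for instance Milnor--Stasheff or Husemoller. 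Once that is in place, injectivity follows immediately and your remaining steps go through.
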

 \begin{Def}[infinite Grassmannian]\label{DefGrn}\index{$\mathrm{Gr}_n$}\index{Grassmannian}
  \begin{enumerate}
   \item \emph{Topological:}
       Let $K \in \{ \R , \C \}$ and for $n,m \in \mathbb{N}$, $n \leq m$,
       $\operatorname{Gr}_{n,m,K} := \{ n\text{-dimensional subvectorspaces of }K^ m \}$
       the finite Grassmannians.\index{Grassmannian!finite}
       The \emph{infinite Grassmannian}\index{Grassmannian!infinite} $\operatorname{Gr}_{n,K}$ is the colimit 
       \begin{equation*}
        \operatorname{Gr}_{n,K} := \bigcup _{m \geq n} \operatorname{Gr}_{n,m,K}.
       \end{equation*}
       over the maps induced by inclusions 
       $K^m \overset{\sim}{\longrightarrow} K^m \oplus 0 \oplus \dotsb \oplus 0 \subset K^{m'}$ for $m < m'$.
       The topology given by the archimedean norm induces a topology
       on each of the $\operatorname{Gr}_{n,m}$ and the final topology on $\operatorname{Gr}_n$.
   \item \emph{Algebraic:} As an algebraic variety, the finite Grassmannian $\operatorname{Gr}_{n,m,K, \mathrm{alg}}$ 
       is defined as a quotient of $\operatorname{GL}_{m,K}$ 
       by the parabolic subgroup that stabilises the flag $0 \subset K^n \subset K^m$. 
       As such, it is smooth and projective \cite[1.8]{Humphreys75}. 
       Taking the colimit over the finite Grassmannians, we get an ind-variety,
       the algebraic infinite Grassmannian $\operatorname{Gr}_{n, \mathrm{alg}}$.
   \item \emph{Complex analytic:} Similarly, $\operatorname{Gr}_{n,m, \mathbb{C}}$ admits a structure of complex manifold, 
       cf.~\cite[4.12]{BrockertomDieck95} and we get
       $\operatorname{Gr}_{n,\mathbb{C}}$, an \emph{complex ind-manifold}.
   \item \emph{Rigid analytic:} If $K=k$ is non-archimedean, 
       the rigid analytic variety $\operatorname{Gr}_{n,m,k, \mathrm{rig}}$ is defined as
       the analytification of the algebraic variety $\operatorname{Gr}_{n,m,k, \mathrm{alg}}$.
       The colimit of the $\operatorname{Gr}_{n,m,k, \mathrm{rig}}$ is an \emph{ind-rigid analytic variety}.
       It is denoted by $\operatorname{Gr}_{n}$.
  \end{enumerate}
 \end{Def}

 Theorem~\ref{ClassTopThm} is proven as follows.
 There is a universal vector bundle $\mathcal{E}$ over the Grassmannian such that every vector bundle
 of rank $n$ over a topological space $X$ is the pullback of the universal bundle along some map $X \to \mathrm{Gr}_{n, \mathbb{R}}$.
 If $X$ is paracompact and Hausdorff,
 then any two homotopic maps $X \overset{f}{\underset{g}{\rightrightarrows}} \mathrm{Gr}_n$ give rise to isomorphic bundles
 $f^\ast \mathcal{E}, g^\ast \mathcal{E}$ because topologically, every vector bundle over $X \times [0,1]$ is extended from $X$.
 
 Similar results hold for holomorphic vector bundles over complex Stein spaces and for algebraic vector bundles 
 over smooth affine varieties.
 
 \begin{Thm}[Grauert {\cite{Grauert57b}}]\label{GrauertOka}
  Let $X$ be a complex Stein space. Then there is a bijection
  \begin{equation*}
   [X, \mathrm{Gr}_{n, \mathbb{C}}] \overset{\sim}{\longrightarrow} \{ \text{holomorphic complex vector bundles of rank $n$ over } X \} / \cong
  \end{equation*}
  between homotopy classes of holomorphic maps $X \rightarrow \mathrm{Gr}_{n, \mathbb{C}}$ and isomorphism classes
  of holomorphic vector bundles on $X$.
 \end{Thm}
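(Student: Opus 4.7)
The plan is to mimic Steenrod's topological proof (Theorem~\ref{ClassTopThm}), with the strong function-theoretic properties of complex Stein spaces (Cartan's Theorems~A and~B) playing the role of paracompactness and partitions of unity, and closing with an Oka--Grauert-style argument for homotopy invariance. There is a tautological rank-$n$ holomorphic bundle $\gamma_n$ on $\mathrm{Gr}_{n,\mathbb{C}}$, and the map in question sends $[f] \mapsto [f^{\ast}\gamma_n]$; one must check that it is well-defined on homotopy classes, surjective, and injective.

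For surjectivity, let $\mathcal{E}$ be a holomorphic vector bundle of rank $n$ on a Stein space $X$. By Cartan's Theorem~A, $\mathcal{E}$ is generated by its global sections, and the complex analytic analogue of Lemma~\ref{Papaya} lets one pick finitely many global sections $s_1, \dots, s_N$ generating every stalk. This yields a surjection $\mathcal{O}_X^N \twoheadrightarrow \mathcal{E}$, whose fibrewise kernel assigns to each $x \in X$ an $(N-n)$-dimensional subspace of $\mathbb{C}^N$, i.e.\ a holomorphic map $\varphi \colon X \to \mathrm{Gr}_{N-n, N, \mathbb{C}} \cong \mathrm{Gr}_{n, N, \mathbb{C}}$ with $\mathcal{E} \cong \varphi^{\ast}\gamma_n$; composing with the colimit inclusion lands in $\mathrm{Gr}_{n,\mathbb{C}}$. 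Different choices of generators give classifying maps that are holomorphically homotopic, once the homotopy invariance input below is in place, so the pullback descends to a well-defined, surjective map on homotopy classes.

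For injectivity the crucial input is \emph{$\mathbb{C}$-homotopy invariance}: every holomorphic vector bundle on $X \times \mathbb{C}$ is the pullback of a bundle on $X$ via the projection. Given this, a holomorphic homotopy $H \colon X \times \mathbb{C} \to \mathrm{Gr}_{n, \mathbb{C}}$ between $f$ and $g$ pulls $\gamma_n$ back to a bundle extended from $X$, so $f^{\ast}\gamma_n \cong g^{\ast}\gamma_n$. Conversely, given an isomorphism $f^{\ast}\gamma_n \cong g^{\ast}\gamma_n$, one glues the two classifying data over $X \times \{0\}$ and $X \times \{1\}$ into a single vector bundle over $X \times \mathbb{C}$, and then lifts a common frame of global sections to $X \times \mathbb{C}$ (again via Theorem~A on the Stein space $X \times \mathbb{C}$) to produce the required holomorphic classifying homotopy.

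The hard part---and the historical heart of Grauert's theorem---is $\mathbb{C}$-homotopy invariance itself. One proceeds in two steps paralleling the rigid analytic strategy outlined in the introduction: first, using that $X \times \mathbb{C}$ is Stein, one trivialises a given bundle on a cover of the form $\{ U_i \times \mathbb{C} \}$ with $\{ U_i \}$ a Stein cover of $X$ (the analogue of Corollary~\ref{A1local}); second, one shows that the resulting $\mathrm{GL}_n(\mathcal{O})$-valued transition cocycles on $(U_i \cap U_j) \times \mathbb{C}$ can be deformed through holomorphic cocycles to cocycles constant in the $\mathbb{C}$-direction (the analogue of Theorem~\ref{cocycA1}), so that the cocycle descends to $X$. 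This is the non-abelian counterpart of Cartan's Theorem~B on the Stein space $X \times \mathbb{C}$; handling the non-abelian nature of $\mathrm{GL}_n$ (as opposed to $\mathrm{GL}_1$, where $\mathbb{C}$-invariance reduces to the vanishing of $H^1$ of an additive coherent sheaf) is the technical content of the Oka--Grauert principle and is where all the genuine difficulty resides.
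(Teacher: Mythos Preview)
The paper does not prove Theorem~\ref{GrauertOka}; it is cited from Grauert, and the only remark the paper makes is that the main ingredient is Grauert's Oka principle (stated immediately afterwards). The route implicit in that remark is the standard one: Steenrod's Theorem~\ref{ClassTopThm} classifies \emph{continuous} complex vector bundles on the paracompact space underlying $X$ by topological homotopy classes $[X,\mathrm{Gr}_{n,\mathbb{C}}]$, and the Oka principle says that on a Stein space the continuous and holomorphic classifications coincide. Note in particular that, as the introduction spells out, the homotopy classes here are classical ones with interval $[0,1]$; the reformulation with $\mathbb{C}$ as interval is L\'arusson's later contribution.

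Your sketch takes a genuinely different path: you attempt a direct holomorphic argument with $\mathbb{C}$ as interval, in effect running the paper's rigid analytic strategy backwards onto the complex case. That is a legitimate programme, but two points deserve attention. First, you have silently changed intervals; to match the theorem as stated you must either invoke the Oka principle for maps (holomorphic and continuous homotopy classes into an Oka manifold agree for Stein sources) or explicitly say you are proving the L\'arusson version. Second, your injectivity step has a gap. From $f^\ast\gamma_n \cong g^\ast\gamma_n =: E$ you propose to extend $E$ trivially over $X\times\mathbb{C}$ and pick generating global sections to obtain a classifying map $H\colon X\times\mathbb{C}\to\mathrm{Gr}_n$; but nothing forces $H\vert_{X\times\{0\}}=f$ or $H\vert_{X\times\{1\}}=g$. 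You have only produced a third classifying map for $E$. The classical repair is to take the two embeddings $E\hookrightarrow\mathcal{O}_X^N$ and $E\hookrightarrow\mathcal{O}_X^M$ determined by $f$ and $g$, put them side by side inside $\mathcal{O}_X^{N+M}$, and interpolate linearly over $\mathbb{C}$ (or $[0,1]$); this yields a homotopy in $\mathrm{Gr}_{n,N+M}$ between the two given maps, after composing with the canonical inclusions. Once the full Oka principle is granted, of course, injectivity is immediate from the topological statement, which is why the paper does not dwell on it.
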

 The main ingredient in the proof is Grauert's \emph{Oka principle}\index{Oka principle}.
 \begin{Thm}[Grauert's Oka principle, {\cite[Theorem~5.3.1]{Forstneric11}}]
 Let $X$ be a complex Stein space.
  \begin{enumerate}
   \item Every continuous complex vector bundle over $X$ is continuously isomorphic to a holomorphic vector bundle.
   \item Two holomorphic vector bundles over $X$ that are continuously isomorphic are also holomorphically isomorphic. 
  \end{enumerate}
 \end{Thm}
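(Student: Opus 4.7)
My strategy would be to reformulate both assertions as existence statements for holomorphic sections of certain bundles associated to $E$ and $F$, and then to combine cohomological vanishing on the Stein space $X$ (Cartan's Theorem B in the complex setting) with an approximation argument that is the technical heart of the Oka principle.

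For part~(1), I would classify rank $n$ complex vector bundles by principal $\GL _n ( \C )$-bundles, so that a continuous bundle is encoded by a continuous $1$-cocycle valued in $\GL _n ( \C )$ with respect to a locally finite open cover $\{ V_\alpha \}$, which I may refine so that each $V_\alpha$ is Stein. The plan is to fix a Stein exhaustion $X_1 \Subset X_2 \Subset \cdots$ of $X$ by relatively compact Stein open subsets and to build inductively, on each $X_i$, a holomorphic $1$-cocycle that is continuously homotopic to the given continuous one and which coincides on $X_{i-1}$ with the cocycle constructed in the previous step, up to a small correction. The passage from step $i$ to step $i+1$ requires approximating the relevant continuous transition maps by holomorphic ones without destroying the cocycle identity; this is where Oka--Weil approximation on Stein compacta intervenes.

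For part~(2), a continuous isomorphism between two holomorphic bundles $E$ and $F$ is the same as a continuous global section of the open subbundle $\operatorname{Isom} ( E , F ) \subset \Hom ( E , F )$. Since $\Hom ( E , F )$ is a holomorphic vector bundle over the Stein space $X$, its coherent sheaf of holomorphic sections has vanishing higher cohomology (the complex analytic Theorem~B, whose rigid analytic counterpart is Theorem~\ref{ThmAB}), so holomorphic sections of $\Hom ( E , F )$ are plentiful. I would then perturb the given continuous section to a holomorphic one along the Stein exhaustion, again using Oka--Weil approximation, while exploiting the openness of the isomorphism locus inside $\Hom (E,F)$ to keep the perturbation inside $\operatorname{Isom} ( E , F )$ at every stage. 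The output is a holomorphic isomorphism $E \cong F$, homotopic through continuous bundle isomorphisms to the given one.

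The main obstacle in both parts is the inductive step: one must modify a holomorphic object (a cocycle in part~(1), a section in part~(2)) defined over $X_i$ so that, after a controlled perturbation, it extends holomorphically over $X_{i+1}$ while simultaneously (a) remaining continuously homotopic to the initial continuous datum and (b) staying inside the open locus where the desired property holds. This is precisely the content of the modern formulation of the Oka principle, and it rests on the fact that $\GL _n ( \C )$ is an Oka manifold: holomorphic maps from Stein compacta into $\GL _n ( \C )$ can be approximated by holomorphic maps on larger Stein neighbourhoods, compatibly with prescribed data. Granting this analytic input together with Cartan's Theorem~B on $X$, the remainder is a fairly standard inductive bookkeeping over the Stein exhaustion, in spirit parallel to the proof of Theorem~B itself.
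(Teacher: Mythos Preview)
The paper does not actually prove this theorem: it is stated as background material and attributed to Grauert, with the precise reference \cite[Theorem~5.3.1]{Forstneric11}. No proof or proof sketch is given in the paper; the theorem is quoted solely as motivation, alongside the Grauert--Remmert description of the Oka philosophy, before the paper moves on to the algebraic and rigid analytic analogues that are its actual concern.

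Your sketch is therefore not comparable to anything in the paper. That said, as an outline of the classical argument it is broadly on target: the reduction to cocycles valued in $\GL_n(\C)$, the inductive construction along a Stein exhaustion, the appeal to Oka--Weil approximation on Stein compacta, and the use of Theorem~B for the $\Hom$-bundle in part~(2) are all standard ingredients. One point worth tightening is the inductive step in part~(1): ``approximating the relevant continuous transition maps by holomorphic ones without destroying the cocycle identity'' is exactly the nontrivial part, and your sketch defers it to the slogan ``$\GL_n(\C)$ is an Oka manifold''. The actual work (splitting cocycles, homotopy lifting, Cartan's lemma on matrix factorisation near the boundary of a Stein compact) is substantial and is precisely what the cited reference develops over several chapters. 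But since the paper itself makes no claim to prove this and simply invokes it, there is nothing further to compare.
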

 Grauert and Remmert describe the philosophy behind the Oka principle as follows: \emph{``On a reduced
 Stein space $X$, problems which can be cohomologically formulated have only topological obstructions''} \cite[p.~145]{GrauertRemmert04}.
 
 For algebraic varieties we have the following theorem by Morel. 
 Here, an algebraic vector bundle of rank $n$ over an algebraic variety $X$
 is defined as a locally free $\mathcal{O}_X$-module of rank $n$.
 \begin{Thm}[Morel]\label{MorelThm}
  Let $X$ be a smooth affine variety over a field $k$. Then there is a natural bijection
  \begin{equation*}
   [X, \mathrm{Gr}_{n,k}]_{\mathbb{A}^ 1} \overset{\sim}{\longrightarrow} \{ \text{algebraic $k$-vector bundles of rank $n$ over } X \} / \cong
  \end{equation*}
  between $\mathbb{A}^1$-homotopy classes from $X$ to the infinite Grassmannian $\mathrm{Gr}_n$ (as an ind-variety)
  and isomorphism classes of algebraic vector bundles on $X$. 
 \end{Thm}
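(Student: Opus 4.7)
The plan is to construct the bijection explicitly via the universal rank-$n$ bundle $\mathcal{U}$ on $\mathrm{Gr}_{n,k}$ and verify that pullback of $\mathcal{U}$ defines a natural isomorphism on $\mathbb{A}^1$-homotopy classes. Given a motivic homotopy class $[f] \colon X \to \mathrm{Gr}_{n,k}$, set $\Phi([f]) := f^\ast \mathcal{U}$. Well-definedness requires that $\mathbb{A}^1$-homotopic maps yield isomorphic bundles, which reduces to \emph{$\mathbb{A}^1$-invariance of vector bundles on smooth affine varieties}: every rank-$n$ bundle on $X \times \mathbb{A}^1$ should be pulled back along the projection $X \times \mathbb{A}^1 \to X$. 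This is precisely Lindel's solution to the geometric Bass--Quillen conjecture (cited already in Section~\ref{secSerre}), and I would invoke it as a black box.

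For surjectivity I would use generators. Since $X$ is smooth affine and hence noetherian, any rank-$n$ bundle $E$ is finitely generated by global sections (a standard consequence of Serre's affineness theorem). A choice of $N \geq n$ generators yields a short exact sequence $0 \to K \to \mathcal{O}_X^N \to E \to 0$ of locally free sheaves. The cokernel data at each point is an $n$-dimensional quotient of $k^N$, and this classifies a morphism $X \to \mathrm{Gr}_{n,N} \hookrightarrow \mathrm{Gr}_{n,k}$ whose pullback of the tautological bundle recovers $E$. Any two generating sets $(s_i), (s'_i)$ can be linearly interpolated over $\mathbb{A}^1$ to produce a bundle on $X \times \mathbb{A}^1$; by $\mathbb{A}^1$-invariance this bundle is pulled back from $X$, so the two classifying maps are $\mathbb{A}^1$-homotopic.

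Injectivity is the hard part and the main obstacle. The strategy is to identify $\mathrm{Gr}_{n,k}$, up to $\mathbb{A}^1$-weak equivalence in the Nisnevich-local model structure, with the simplicial classifying space $\mathrm{BGL}_{n,k}$ obtained from the bar construction of the group scheme $\mathrm{GL}_{n,k}$. This uses Nisnevich descent for $\mathrm{GL}_n$-torsors (equivalently, vector bundles) together with the Morel--Asok--Hoyois--Wendt affine representability theorem: a simplicial presheaf satisfying Nisnevich descent and $\mathbb{A}^1$-invariance on smooth affine schemes is represented in $\mathcal{H}_{\mathbb{A}^1, \mathrm{Nis}}$ by its naive realisation. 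Given the identification, two maps $f_0, f_1 \colon X \to \mathrm{Gr}_{n,k}$ with $f_0^\ast \mathcal{U} \cong f_1^\ast \mathcal{U}$ correspond to two trivialisations of the same $\mathrm{GL}_n$-torsor, and the isomorphism between them can be refined, after a Nisnevich cover, to cocycle data compatible with an explicit $\mathbb{A}^1$-path in $\mathrm{GL}_n$. The technical heart is the descent step: one must check that the presheaf of groupoids of big vector bundles (Definition~\ref{Defstrictvect}) is already a Nisnevich stack and $\mathbb{A}^1$-invariant, which in turn rests on Lindel's theorem together with the geometry of the Grassmannian as a quotient of $\mathrm{GL}_n$-torsors.
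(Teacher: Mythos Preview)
The paper does not give its own proof of this theorem: it is stated as a result of Morel (with the general case due to Asok--Hoyois--Wendt) and only the two key ingredients are named in the surrounding text---Lindel's theorem for $\mathbb{A}^1$-invariance, and Nisnevich excision (the Brown--Gersten property). Your proposal identifies exactly these ingredients, so in substance you are aligned with what the paper invokes.

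However, your organisation has a genuine gap at the very first step. You define $\Phi([f]) := f^\ast\mathcal{U}$ for a motivic homotopy class $[f]$, but an element of $[X,\mathrm{Gr}_{n,k}]_{\mathbb{A}^1}$ is \emph{not} a priori represented by a morphism of schemes $X \to \mathrm{Gr}_{n,k}$: it is a morphism in the homotopy category after Nisnevich and $\mathbb{A}^1$-localisation, and computing it requires a fibrant replacement of the target. So ``pull back the universal bundle along $f$'' has no meaning until you have already proved that naive maps up to naive $\mathbb{A}^1$-homotopy compute the motivic homotopy set. That is precisely the content of the affine representability theorem you only invoke later for injectivity. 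In the approach the paper sketches (and carries out in the rigid analytic case in Theorem~\ref{ThmClass}), the logic runs the other way: one shows that $B\Vect^n$ satisfies excision and that $\pi_0 B\Vect^n = \vect^n$ is $\mathbb{A}^1$-invariant by Lindel, applies the abstract representability result (Proposition~\ref{PropClass}) to obtain $\vect^n(X) \cong [X, B\Vect^n]_{\mathbb{A}^1}$, and only then identifies $B\Vect^n \simeq B\mathrm{GL}_n \simeq \mathrm{Gr}_n$. Your surjectivity and injectivity paragraphs are then subsumed by this single step; the explicit generator argument and the linear interpolation are pleasant but not needed once the machinery is in place.
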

 \begin{Rem}
  Fabien Morel proved the theorem in the case that the field $k$ is infinite and perfect and $n \neq 2$ \cite{Morel12}. It was
  proven in general several years later by Asok--Hoyois--Wendt~\cite{AsokHoyoisWendt15}.
 \end{Rem}
 Homotopy invariance is given by Lindel's solution to the geometric case of the Bass--Quillen conjecture.
 The other important ingredient in the proof is an algebraic variant of Gromov's h-principle,
 called the \emph{Brown--Gersten property}\index{Brown--Gersten property} by Morel--Voevodsky
 and \emph{excision} by Asok--Hoyois--Wendt. 
 
\subsection{Representability in $\mathbb{A}^1$-homotopy theory}\label{A1hprin} 
 Asok--Hoyois--Wendt \cite{AsokHoyoisWendt15} show that only homotopy descent with respect to
 the Nisnevich topology and homotopy invariance
 are needed in order for Nisnevich excision to be satisfied (following a trick by Schlichting \cite{Schlichting15}).
 Furthermore, they show that only excision and homotopy invariance are needed to get a classifying space for vector bundles.
 This way they obtain an elegant proof of Morel's Theorem~\ref{MorelThm} which makes the technical assumptions on the base field and
 the rank of the vector bundles superfluous. The base, which had to be an infinite perfect field in Morel's proof,
 may now be any field or any ring which is ind-smooth over a Dedekind ring with perfect residue fields \cite[Theorem~5.2.3]{AsokHoyoisWendt15}. 
 Their approach even generalises to the case of principal bundles under other algebraic groups \cite{AsokHoyoisWendt15b}.
 
 The homotopy theoretic part of the proof by Asok--Hoyois--Wendt carries over to the rigid analytic situation.
 The interesting question becomes then:
 For which subcategory $\mathcal{R}$ of the category of rigid varieties, for which cd structure $\tau$ and which interval
 object $I$ can we show homotopy invariance and excision?
 Once homotopy invariance and excision are ensured, one immediately gets a classifying space in the appropriate homotopy theory 
 $\mathcal{H}_{I, \tau} ( \mathcal{R})$.
 It then remains to determine how this homotopy theory relates to the corresponding homotopy theory $\mathcal{H}_{I, \tau} ( \rigVar )$ 
 on the full category of (smooth) rigid analytic varieties. 
  
 Before coming to the theorems, we need more definitions.
 \begin{Def}[homotopy descent, {\cite[Definition 3.1.1]{AsokHoyoisWendt15}}]\index{descent!homotopy}\index{homotopy descent}
  Let $(\mathcal{C}, \mathcal{T})$ be a small site. A simplicial presheaf $\mathcal{F}$ on $\mathcal{C}$ 
  \emph{satisfies homotopy descent with respect to $\mathcal{T}$} if for every $X \in \operatorname{Ob} ( \mathcal{C} )$ 
  and every covering sieve   $\mathcal{U} \in \operatorname{Cov} (X)$ the map 
  \begin{equation*}
   \mathcal{F} (X) \longrightarrow \operatorname{holim}_{U \in \mathcal{U}} \mathcal{F} (U)
  \end{equation*}
  induced by the restrictions $\mathcal{F} (X) \to \mathcal{F} (U)$ is a weak equivalence.
 \end{Def}
 
 \begin{Remm}\label{Remdescent}
  \begin{enumerate}
   \item Homotopy sheaves satisfy homotopy descent by definition.
   \item\label{Remdesb} Corti\~{n}as--Haesemeyer--Schlichting--Weibel~\cite{CHSW08} show 
       that for a complete, bounded, regular cd structure,
       homotopy descent is equivalent to quasifibrancy. 
       They call a simplicial presheaf $\mathcal{F}$ \emph{quasifibrant} if the local injective
       fibrant replacement $\mathbb{H} ( \cdot , \mathcal{F} )$ gives rise to a weak equivalence on all 
       spaces of sections: $\mathcal{F} (U) \simeq \mathbb{H} ( U , \mathcal{F} )$ for all
       objects $U$. This is useful when one wants to compute homotopy classes. Usually, either cofibrant replacement
       or fibrant replacement is difficult to compute. A simplicial presheaf which satisfies homotopy descent
       is almost as good as fibrant because it still gives rise to the right computations.
  \end{enumerate}
 \end{Remm}
 
 If the Grothendieck topology arises from a cd structure, one can consider excision instead:
 
 \begin{Def}[excision, {\cite[Definition 3.2.1]{AsokHoyoisWendt15}}]\index{excision}\label{Defexcision}
  Let $\mathcal{C}$ be a small category with an initial object $\emptyset$ and $\tau$ a cd structure (see Definition \ref{defcd})
  on $\mathcal{C}$. A simplicial presheaf $\mathcal{F}$ on $\mathcal{C}$ \emph{satisfies $\tau$-excision} if
  \begin{enumerate}
   \item $\mathcal{F} ( \emptyset ) \simeq \ast$ in $\sSet$ and
   \item\label{excisiondist} for each distinguished square
      \begin{equation*}
       \begin{xy}
        \xymatrix{
         B \ar[r] \ar[d] & Y \ar[d] \\
         A \ar[r] & X
        }
       \end{xy}
      \end{equation*}
      in $\tau$, the square
      \begin{equation*}
       \begin{xy}
        \xymatrix{
         \mathcal{F} ( X) \ar[r] \ar[d] & \mathcal{F} (A) \ar[d] \\
         \mathcal{F} (Y) \ar[r] & \mathcal{F} ( B)
        }
       \end{xy}
      \end{equation*}
      is a homotopy pullback square in $\sSet$.
  \end{enumerate}
 \end{Def}
 This property runs under many names: Brown--Gersten property with respect to $\tau$ \cite{MV, Morel12}, 
 $\tau$-flasque \cite{Voevodsky10}, Mayer--Vietoris property with respect to $\tau$ \cite{CHSW08} 
 or $\tau$-excision \cite{AsokHoyoisWendt15}.
 We stick to the term excision, as suggested by Asok--Hoyois--Wendt.
 Excision is easier to check than homotopy descent and if the cd structure is nice enough, 
 the two notions are equivalent by Theorem~\ref{ThmcdVoe}
 below. In nice cases, excision is a weak version of fibrancy as we explained in Remark \ref{Remdescent}\ref{Remdesb}).
 
 \begin{Def}[strictly initial object {\cite[Definition 3.4.6]{Borceux94}}]
  Let $\mathcal{C}$ be a category with an initial object $\emptyset$. The object $\emptyset$ is called \emph{strictly initial}
  if every morphism $X \to \emptyset$ is an isomorphism.
 \end{Def}

 \begin{Thm}[Voevodsky, Asok--Hoyois--Wendt {\cite[Theorem~3.2.5]{AsokHoyoisWendt15}}]\label{ThmcdVoe}
  Let $\mathcal{C}$ be a small category with a strictly initial object and $\tau$ a cd structure on $\mathcal{C}$.
  Assume that
  \begin{enumerate}
   \item every square 
        \begin{equation}\label{distsquare}   
         \begin{xy}\xymatrix{
          B \ar[r]\ar[d] & Y \ar[d]^p\\
          A \ar[r]_e &X
         }\end{xy}
        \end{equation}
        in $\tau$ is cartesian,
   \item pullbacks of squares in $\tau$ exist and belong to $\tau$,
   \item for every square \eqref{distsquare} in $\tau$, the morphism $e$ is a monomorphism,
   \item for every square \eqref{distsquare}        
        in $\tau$, also the square
        \begin{equation*}   
         \begin{xy}\xymatrix{
          B \ar[r]\ar[d]_\Delta & Y \ar[d]^\Delta\\
          B \times _A B \ar[r] &Y \times _X Y
         }\end{xy}
        \end{equation*}
        is in $\tau$.
  \end{enumerate}
  Then $\mathcal{F} \in \mathit{sPSh} ( \mathcal{C})$ satisfies excision with respect to $\tau$ if and only if it 
  satisfies homotopy descent with respect to the Grothendieck topology defined by $\tau$.
 \end{Thm}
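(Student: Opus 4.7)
The plan is to recognise the four hypotheses as precisely the axioms that make $\tau$ a \emph{complete, regular} cd structure in the sense of Voevodsky~\cite{Voevodsky10}, and then to invoke Voevodsky's fundamental equivalence between $\tau$-excision and hyperdescent for the generated Grothendieck topology. Condition (b) is completeness; conditions (a), (c), (d) together are the three axioms of regularity. The strict initiality of $\emptyset$ matches the unique square $\emptyset \to \emptyset$ being distinguished in the generating set.

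For the direction descent $\Rightarrow$ excision, I would argue directly. Given a distinguished square with $e\colon A\to X$ and $p\colon Y\to X$, the sieve on $X$ generated by $\{e,p\}$ is a covering sieve of $t_\tau$, so homotopy descent yields
\begin{equation*}
 \mathcal{F}(X) \simeq \operatorname{holim}_{U\in \mathcal{U}}\mathcal{F}(U),
\end{equation*}
and this homotopy limit can be computed via the \v{C}ech nerve. Using (a) to identify iterated fibre products over $X$ with fibre products over $A$ and (c) to identify $A\times_X A \cong A$ (since $e$ is a mono, hence $Y\times_X Y\to Y$ is split by the diagonal by (d)), the \v{C}ech nerve reduces to the two-term cosimplicial object with cofaces $\mathcal{F}(A)\times\mathcal{F}(Y)\rightrightarrows \mathcal{F}(B)$, whose homotopy limit is exactly the homotopy pullback $\mathcal{F}(A)\times^h_{\mathcal{F}(B)}\mathcal{F}(Y)$. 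The empty-sieve condition for $\emptyset$, combined with strict initiality, forces $\mathcal{F}(\emptyset)\simeq\ast$.

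For the converse excision $\Rightarrow$ descent, the argument cannot be run covering-by-covering; one needs Voevodsky's structural argument. The idea is that under completeness and regularity every covering sieve of $t_\tau$ admits a refinement obtainable by iterated pullback of distinguished squares, so the class of simplicial presheaves satisfying descent for all covers coincides with the class satisfying descent for distinguished squares. More concretely, one considers a local injective fibrant replacement $R\mathcal{F}$; the map $\mathcal{F}\to R\mathcal{F}$ is an objectwise equivalence on distinguished squares (by excision), and Voevodsky shows~\cite[Theorem~5.2 and Corollary~5.10]{Voevodsky10} that completeness and regularity propagate this equivalence to all objects, hence $\mathcal{F}$ is already a homotopy sheaf.

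The main obstacle is precisely this second direction, which I would not attempt to reprove from scratch; the payoff of assuming (a)--(d) is exactly that one can cite Voevodsky's theorem. The conceptual reason it works is that regularity forces the \v{C}ech nerve of a distinguished square to be homotopically discrete beyond simplicial degree one, so descent is determined by a single square, and completeness lets one generate the whole topology from such squares.
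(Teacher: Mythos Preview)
The paper does not give its own proof of this theorem: it is stated with attribution to Voevodsky and to \cite[Theorem~3.2.5]{AsokHoyoisWendt15} and is used as a black box in the proof of Theorem~\ref{ThmClass}. So there is nothing to compare against in the paper itself.

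That said, your sketch is the right one and matches the structure of the proofs in the cited sources. You correctly identify conditions (a), (c), (d) as Voevodsky's regularity axioms and (b) as completeness, and you correctly locate the asymmetry between the two directions: descent $\Rightarrow$ excision is a direct \v{C}ech-nerve computation, while excision $\Rightarrow$ descent genuinely requires Voevodsky's induction over simple covers and is the content of \cite[Corollary~5.10]{Voevodsky10}. One small wrinkle in your descent $\Rightarrow$ excision paragraph: the identification $A\times_X A \cong A$ follows immediately from (c) alone (a monomorphism has trivial diagonal), and condition (d) is not needed there; its role is rather in the converse direction, where regularity is what makes the \v{C}ech nerve of a distinguished cover degenerate correctly so that excision propagates to arbitrary covering sieves. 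With that adjustment your outline is sound.
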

 
 \begin{Def}[simplicial resolution, {\cite[p.~88]{MV}}]\index{simplicial resolution}\index{$\operatorname{Sing}^I \mathcal{F}$}\label{singstar}
  Let $\mathcal{C}$ be a small category and $I$ a representable interval object (Definition \ref{definterval}) on $\mathcal{C}$. 
  Denote by $\Delta$ the simplex category \cite[I.1, p.~3]{GoerssJardine}.
  \begin{enumerate}
   \item The cosimplicial presheaf $I^\bullet$ is defined on objects $[n] \in \operatorname{Ob} ( \Delta )$ by
      \begin{align*}
       I^\bullet \colon \Delta &\longrightarrow  \mathit{PSh} ( \mathcal{C}), \\
                      [n] &\longmapsto I^ n.\\
      \end{align*}
      Let $f \in \operatorname{Hom}_\Delta ( [n] , [m] )$. 
      Denote by $\operatorname{pr}_j \colon I^ n \to I$ the projection to the $j$-th coordinate.
      Then $I^\bullet (f)$ is defined by
      \begin{equation*}
       \operatorname{pr}_j \circ I^\bullet (f) = \left\{ 
       \begin{split}
        &\operatorname{pr}_N && \text{if} \enspace 1 \leq N := \min \{ l \in \{ 0 , \dotsc , n \} \mid f(l) \geq j \} \leq n \\
        &I^ n \to \ast \overset{\iota _0 }{\rightarrow} I &&\text{if} \enspace \{ l \in \{ 0 , \dotsc , n \} \mid f(l) \geq j \} = \emptyset \\
        &I^ n \to \ast \overset{\iota _1 }{\rightarrow} I && \text{if} \enspace f(0) \geq j.
       \end{split} \right.
      \end{equation*}
   \item The \emph{simplicial resolution} $\operatorname{Sing}^I \mathcal{F}$ of a simplicial presheaf $\mathcal{F}$
      is the simplicial presheaf defined by
      \begin{equation*}
       \big( \operatorname{Sing}^I \mathcal{F} \big) _n = \big( \mathcal{F} ( X \times I^n ) \big) _n.
      \end{equation*}
      That is, it is the diagonal of the bisimplicial presheaf $\mathcal{F} ( \, \cdot \, \times I^\bullet )$.
  \end{enumerate}
 \end{Def}
 
 \begin{Lemma}[Properties of $\operatorname{Sing}^I$ {\cite[p.~15]{AsokHoyoisWendt15}}]\label{RemarkSing}
  \begin{enumerate}
   \item The construction of the simplicial resolution $\operatorname{Sing} ^I$ gives rise to a natural transformation 
       $\operatorname{id} \to \operatorname{Sing} ^I$ on simplicial presheaves.
       Thus, for every $\mathcal{F} \in \sPSh ( \mathcal{C} )$ there is a map
       \begin{equation*}
        \mathcal{F} \longrightarrow \operatorname{Sing}^I \mathcal{F} .
       \end{equation*}
   \item\label{RemSingb} The simplicial resolution of a simplical presheaf is always $I$-invariant.
   \item\label{RemSingc} For any $I$-invariant presheaf $\mathcal{G}$, the map $\mathcal{F} \to \operatorname{Sing}^I \mathcal{F}$
       induces a weak equivalence
       \begin{equation}
        \mathit{Map} ( \operatorname{Sing}^I \mathcal{F}, \mathcal{G} ) \simeq \mathit{Map} ( \mathcal{F}, \mathcal{G} ).
       \end{equation}
   \item\label{RemSingd} Let $\tau$ be a cd structure. If $\mathcal{F}$ satisfies $\tau$-excision and if $\pi _0 \mathcal{F}$ is $I$-invariant,
       then $\operatorname{Sing}^I \mathcal{F}$ also satisfies excision.
  \end{enumerate}
 \end{Lemma}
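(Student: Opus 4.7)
Part (a) is immediate: the unique morphism $I \to \ast$ induces projections $X \times I^n \to X$ compatibly in $n$, yielding level-wise maps $\mathcal{F}(X) \to \mathcal{F}(X \times I^n)$ that assemble, via the simplicial identities recorded in Definition~\ref{singstar}, into a map of simplicial presheaves $\mathcal{F} \to \operatorname{Sing}^I \mathcal{F}$. Part (b) is a standard interval-object argument. The projection $\operatorname{pr}\colon X \times I \to X$ and the endpoint $\iota_0 \colon X \to X \times I$ induce structural maps between $\operatorname{Sing}^I \mathcal{F}(X \times I)$ and $\operatorname{Sing}^I \mathcal{F}(X)$. One composite is an equality; for the other composite, the multiplication $m \colon I \times I \to I$, combined with $\iota_0, \iota_1$ and the axioms $m(\iota_1 \times \id) = \id$, $m(\iota_0 \times \id) = \iota_0 p$, provides an explicit simplicial homotopy to the identity on $\operatorname{Sing}^I \mathcal{F}(X \times I)$. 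Hence the structural map is a simplicial homotopy equivalence, and in particular a weak equivalence.

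Part (c) follows from (b) via a bisimplicial argument. View $\operatorname{Sing}^I \mathcal{F}$ as the diagonal of the bisimplicial presheaf $(p,n) \mapsto \mathcal{F}_p(-\times I^n)$. By $I$-invariance of $\mathcal{G}$, the evaluation $\mathcal{G}(X \times I^n) \to \mathcal{G}(X)$ is a weak equivalence for every $n$, so in the cosimplicial mapping space
\begin{equation*}
  [n] \longmapsto \mathit{Map}\bigl(\mathcal{F}(-\times I^n), \mathcal{G}\bigr) \simeq \mathit{Map}(\mathcal{F}, \mathcal{G})
\end{equation*}
every coface and codegeneracy is a weak equivalence. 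Totalising, which computes $\mathit{Map}(\operatorname{Sing}^I \mathcal{F}, \mathcal{G})$, therefore yields a space weakly equivalent to $\mathit{Map}(\mathcal{F}, \mathcal{G})$.

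Part (d) is where the real work lies. The initial-object clause is immediate: since $\emptyset$ is strictly initial, the projection $\emptyset \times I^n \to \emptyset$ is an iso, so $\operatorname{Sing}^I \mathcal{F}(\emptyset)_n = \mathcal{F}(\emptyset) \simeq \ast$, and the diagonal of a constant contractible bisimplicial set is contractible. For the distinguished square condition, the plan is to argue levelwise. For a distinguished square $(B \to A, Y \to X)$ in $\tau$, crossing with the representable $I^n$ preserves the square's status as a pullback (by the axioms of an interval object and the representability assumption on $\mathcal{R}$), and one verifies that $\tau$-distinguishedness is likewise preserved. Excision of $\mathcal{F}$ then turns each level into a homotopy pullback of simplicial sets, producing a bisimplicial diagram that is a levelwise homotopy pullback. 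The square for $\operatorname{Sing}^I \mathcal{F}$ is obtained by taking the diagonal of this diagram.

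The main obstacle is passing from levelwise homotopy pullbacks to a homotopy pullback on diagonals; this is a Bousfield--Friedlander situation and does not hold in general. One needs a $\pi_*$-Kan / stability condition on the base of the would-be fibration. This is precisely where the hypothesis that $\pi_0 \mathcal{F}$ is $I$-invariant enters: it guarantees that as $n$ varies, the component set of the base $\mathcal{F}(X \times I^n)$ does not vary, giving the connectivity required to invoke Bousfield--Friedlander and conclude that the diagonal square is a homotopy pullback. Assembling (a)--(c) with this diagonal argument completes the proof.
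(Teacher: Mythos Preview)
The paper proves this lemma entirely by citation---(b) to \cite[§2, Corollary~3.5]{MV}, (c) to \cite[§2, Corollary~3.8]{MV}, (d) to \cite[Theorem~4.2.3]{AsokHoyoisWendt15}---so your arguments for (a), (b), and (d) are essentially unpacking those references and match the intended approach.

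Your argument for (c), however, has a genuine gap. You claim that $I$-invariance of $\mathcal{G}$ gives $\mathit{Map}(\mathcal{F}(-\times I^n), \mathcal{G}) \simeq \mathit{Map}(\mathcal{F}, \mathcal{G})$. But $I$-invariance of $\mathcal{G}$ means $\mathcal{G} \simeq \mathcal{G}^{I^n}$, which by adjunction yields $\mathit{Map}(\mathcal{F} \times I^n, \mathcal{G}) \simeq \mathit{Map}(\mathcal{F}, \mathcal{G})$. This is not what you need: the presheaf $\mathcal{F}(-\times I^n)$ is the internal hom $\mathcal{F}^{I^n}$, not the product $\mathcal{F} \times I^n$, and there is no adjunction turning $\mathit{Map}(\mathcal{F}^{I^n}, \mathcal{G})$ into something controlled by $\mathcal{G}^{I^n}$. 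The Morel--Voevodsky argument runs differently: one uses the multiplication $m$ on $I$ (which your part~(c) never invokes) to show directly that $\mathcal{F} \to \mathcal{F}^{I^n}$ is an $I$-homotopy equivalence for each $n$---the retraction is evaluation at $\iota_0$, and $m$ furnishes the contracting homotopy---and then the realisation lemma gives that $\mathcal{F} \to \operatorname{Sing}^I \mathcal{F}$ is an $I$-weak equivalence, from which (c) follows for $I$-local $\mathcal{G}$.

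A minor point on (d): ``one verifies that $\tau$-distinguishedness is likewise preserved'' under $\times I^n$ is not automatic for an arbitrary cd structure. It holds once $\tau$ is stable under pullback (as in the hypotheses of Theorem~\ref{ThmcdVoe} and the Asok--Hoyois--Wendt setup), since $Q \times I^n$ is the pullback of $Q$ along the projection $X \times I^n \to X$, but you should flag this as a standing assumption rather than leave it implicit.
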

 \begin{proof}
  \begin{itemize}
   \item[\ref{RemSingb})] Morel--Voevodsky prove it for simplicial sheaves on a site with interval 
       under the general assumption that the site has enough points \cite[§2 Corollary~3.5]{MV}. 
       Neither the sheaf property nor the assumption that the site
       has enough points go into that particular proof, so we refrain from repeating it here.
   \item[\ref{RemSingc})] This is proven by Morel--Voevodsky, again without using the sheaf property or the
       assumption that the site has enough points \cite[§2 Corollary~3.8]{MV}.
   \item[\ref{RemSingd})] This is \cite[Theorem~4.2.3]{AsokHoyoisWendt15}. \qedhere
  \end{itemize} 
 \end{proof}
 
 Gromov's h-principle can be expressed as a homotopy sheaf property. The reader may therefore think 
 of excision as an $\mathbb{A}^1$-homotopy theoretic version of the h-principle. 
 If, on the other hand, the reader is more familiar with simplicial homotopy theory,
 it might be helpful to know that the Oka prinicple can be expressed as a fibrancy condition in a
 suitable model category of complex analytic spaces. This was shown by
 L\'{a}russon~\cite{Larusson03, Larusson04, Larusson05}.
 He calls manifolds (or, more generally, complex spaces) that satisfy an Oka principle \emph{Oka manifolds} (Oka spaces).
 He constructs a model category of simplicial presheaves containing the category of complex manifolds, such that
 Stein manifolds are cofibrant and Oka manifolds are fibrant. It is an interesting question to find other
 Oka manifolds (or Oka spaces), one is looking for representable simplicial presheaves that satisfy
 the Oka principle.
 The branch of complex analysis investigating this question is called Oka theory.
 We recommend Forstneri\v{c}'s book~\cite{Forstneric11} for further information about Oka theory.
 
\subsection{Classifying spaces}
 We show that the infinite Grassmannian $\operatorname{Gr}_n$ is 
 $\mathbb{A}^1_\mathrm{rig}$-homotopy equivalent to the simplicial classifying space of $\operatorname{GL}_n$. 
 This will be needed for our main theorem, Theorem~\ref{ThmClass}.
 
 \begin{Def}[Classifying space of a simplicial sheaf of groups {\cite[§4.1]{MV}}]\label{DefBGs}
  Let $\mathcal{G}$ be a sheaf of simplicial groups (or, more generally: monoids) on a site $\mathcal{C}$. 
  Hence, to each $X \in \operatorname{Ob} ( \mathcal{C})$ and each $n \in \mathbb{N}$, the sheaf
  $\mathcal{G}$ assigns a group (monoid) $G_{X,n}$. Viewing this group (monoid) as a category, it has a
  nerve $N (G_{X,n})$.
  Now the \emph{classifying space}\index{classifying space!simplicial} $B \mathcal{G}$ of $\mathcal{G}$
  is the diagonal simplicial sheaf of the bisimplicial sheaf
  \begin{equation*}
   X \longmapsto \big( n \mapsto N( G_{X,n}) \big) .
  \end{equation*}
  This means, for $X \in \operatorname{Ob} ( \mathcal{C})$,
  \begin{equation*}
   B \mathcal{G} (X) = \big( (G_{X,n} )^ n \big) _{n \in \mathbb{N}}
  \end{equation*}
  where $(G_{X,0} )^ 0 := \ast$.
 \end{Def}
 For the definitions of the nerve and of the classifying space of a topological category, we refer to Segal {\cite[§2]{Segal68}}.
 
 \begin{Def}[Universal torsor, cf.~{\cite[§4, Example~1.11]{MV}, \cite[p.~12]{Morel06}}]
  For a simplicial sheaf of groups $\mathcal{G}$, define a simplicial sheaf of groups $E \mathcal{G}$ as the diagonal
  of the bisimplicial sheaf
  \begin{equation*}
   X \longmapsto \big( (n,m) \mapsto E (G_{X,n})_m \big) .
  \end{equation*}
  The sheaf $\mathcal{G}$ is now a subsheaf of groups of $E \mathcal{G}$. For $X \in \operatorname{Ob} ( \mathcal{C} ) , n \in \mathbb{N}$
  we again have an action of $(G_{X,n} )_n$ on $E (G_{X,n} )_n$ given by 
  \begin{equation*}
   (g_0 , \dotsc , g_n ).g := (g_0, g_1 , \dotsc , g_{n-1}, g_ng ).
  \end{equation*}
  This defines an isomorphism from the quotient of sheaves $E \mathcal{G} / \mathcal{G}$ to $B \mathcal{G}$.
  The morphism 
  \begin{equation}
   E \mathcal{G} \longrightarrow B \mathcal{G}
  \end{equation} 
  is called the \emph{universal $\mathcal{G}$-bundle}\index{universal $\mathcal{G}$-bundle} 
  or \emph{universal $\mathcal{G}$-torsor}\index{universal torsor}.
 \end{Def}
 For the definition of the classifying space of a topological group, we refer to May {\cite[chapter~23, §8]{May}}.
 An explicit construction, similar to the one we use, can also be found in May's book 
 {\cite[Chapter~16, §5 and chapter~23, §8]{May} or Segal's article \cite[§3]{Segal68}}.
 
 The simplicial classifying space $BG$ in fact classifies principal $G$-bundles:
 \begin{Deff}[{\cite[pp.~127f]{MV}}]\index{classifying space!simplicial}
  \begin{enumerate}
   \item Let $\mathcal{C}$ be a site and $\mathcal{G}$ a sheaf of simplicial groups on $\mathcal{C}$.
       A right action of $\mathcal{G}$ on a simplicial sheaf $\mathcal{F}$ is called 
       \emph{(categorically) free}\index{group action!(categorically) free}
       if the morphism
       \begin{align*}
        \mathcal{F} \times \mathcal{G} &\longrightarrow \mathcal{F} \times \mathcal{F} \\
        (g,x) &\longmapsto (x.g,x)
       \end{align*}
       is a monomorphism.
   \item If $\mathcal{G}$ acts on $\mathcal{F}$ from the right, then the quotient
       $\mathcal{F} / \mathcal{G}$ is defined as the coequaliser of
       \begin{equation*}
        \mathcal{F} \times \mathcal{G} \overset{\operatorname{pr}_1}{\underset{\text{action}}{\rightrightarrows}} \mathcal{F} .
       \end{equation*}
   \item A \emph{principal $\mathcal{G}$-bundle} or \emph{$\mathcal{G}$-torsor} over $\mathcal{F}$ 
       is a morphism $p\colon \mathcal{E} \to \mathcal{F}$ together with a free
       right action of $\mathcal{G}$ on $\mathcal{E}$ over $\mathcal{F}$, such that $\mathcal{E} / \mathcal{G} \to \mathcal{F}$
       is an isomorphism.
  \end{enumerate}
 \end{Deff}
 \begin{Lemma}[{\cite[§4, Lemma~1.12]{MV}}]\label{sGLnclassifies}
  Let $\mathcal{C}$ be a site and $\mathcal{G}$ a sheaf of simplicial groups on $\mathcal{C}$.
  Let $p\colon \mathcal{E} \to \mathcal{F}$ be a principal $\mathcal{G}$-bundle. Then there are morphisms
  \begin{itemize}
   \item $\omega \colon \mathcal{F}' \to \mathcal{F}$ such that $\mathcal{F}' (U) \to \mathcal{F}(U)$ 
       is both a fibration and a weak equivalence in $\sSet$
       for all $\mathcal{U} \in \operatorname{Ob} \mathcal{C}$ and
   \item $\varphi \colon \mathcal{F}' \to B \mathcal{G}$
  \end{itemize}
  such that the pullback of the universal $\mathcal{G}$-torsor along $\varphi$ and the pullback of $p$ along $\omega$
  coincide:
  \begin{equation*}
   \begin{xy}
    \xymatrix{
     E \mathcal{G} \ar[d] & \varphi ^ \ast E \mathcal{G} = \omega ^ \ast \mathcal{E} \ar[d] \ar[r] \ar[l] & \mathcal{E} \ar[d]^p \\
     B \mathcal{G} & \mathcal{F}' \ar[l]^\varphi \ar[r]_\omega & \mathcal{F}
    }
   \end{xy}
  \end{equation*} 
 \end{Lemma}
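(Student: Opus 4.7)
The natural strategy is to mix $\mathcal{E}$ with the universal torsor $E\mathcal{G}$ via a Borel construction. Explicitly, I would set
\[
 \mathcal{F}' := (\mathcal{E} \times E\mathcal{G}) / \mathcal{G},
\]
where $\mathcal{G}$ acts diagonally on the product (on $\mathcal{E}$ by the given torsor structure and on $E\mathcal{G}$ by right translation). Projection onto the first, respectively second, factor descends to morphisms of simplicial sheaves
\[
 \omega \colon \mathcal{F}' \longrightarrow \mathcal{E}/\mathcal{G} = \mathcal{F}, \qquad
 \varphi \colon \mathcal{F}' \longrightarrow E\mathcal{G}/\mathcal{G} = B\mathcal{G},
\]
which will be the two maps in the statement.

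Next I would verify that $\omega$ is sectionwise a trivial Kan fibration. For $U \in \operatorname{Ob}\mathcal{C}$ fixed, the action of $\mathcal{G}(U)$ on $\mathcal{E}(U)$ is categorically free, and the free $\mathcal{G}(U)$-action on $E\mathcal{G}(U)$ makes the quotient map $E\mathcal{G}(U) \to B\mathcal{G}(U)$ a principal Kan fibration with contractible total space. Using a local section of the torsor $\mathcal{E}(U) \to \mathcal{F}(U)$ (which exists since the action is free in the strict sense above after restriction to the representable), the fibre of $\omega$ over a point of $\mathcal{F}(U)$ is isomorphic to $E\mathcal{G}(U)$, hence contractible; combined with the Kan fibration property of $E\mathcal{G}(U) \to B\mathcal{G}(U)$ this gives the required sectionwise trivial fibration.

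For the pullback identification, I would compute both $\varphi^\ast E\mathcal{G}$ and $\omega^\ast \mathcal{E}$ directly. On the one hand,
\[
 \varphi^\ast E\mathcal{G} = \mathcal{F}' \times_{B\mathcal{G}} E\mathcal{G} \cong \mathcal{E} \times E\mathcal{G},
\]
since $E\mathcal{G} \to B\mathcal{G}$ is the free quotient of $E\mathcal{G}$ by $\mathcal{G}$ and the diagonal action on $\mathcal{E} \times E\mathcal{G}$ restricts to the $\mathcal{G}$-action on the second factor. On the other hand, $\omega^\ast \mathcal{E}$ is the fibre product $\mathcal{F}' \times_{\mathcal{F}} \mathcal{E}$, which by a parallel argument (using the free $\mathcal{G}$-action on the first factor) is also isomorphic to $\mathcal{E} \times E\mathcal{G}$, and one checks that the two identifications agree as $\mathcal{G}$-equivariant maps.

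The main technical obstacle is to handle the quotient $(\mathcal{E} \times E\mathcal{G})/\mathcal{G}$ carefully as a sheaf-theoretic coequaliser, and in particular to justify that the sectionwise description above really computes it; this is where one invokes that the action is categorically free so that taking quotients commutes with the relevant pullbacks. Once this is set up, the contractibility of $E\mathcal{G}(U)$ and the standard fact that $E\mathcal{G}(U) \to B\mathcal{G}(U)$ is a principal Kan fibration yield both the fibration/weak equivalence property of $\omega$ and the compatibility of pullbacks in one stroke.
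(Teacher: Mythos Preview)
The paper does not give its own proof of this lemma; it is quoted verbatim from Morel--Voevodsky \cite[§4, Lemma~1.12]{MV}. Your Borel construction $\mathcal{F}' = (\mathcal{E}\times E\mathcal{G})/\mathcal{G}$ with the two projections is exactly the construction Morel--Voevodsky use, and your identification of both pullbacks with the $\mathcal{G}$-torsor $\mathcal{E}\times E\mathcal{G}\to\mathcal{F}'$ is correct.

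There is, however, a genuine gap in your verification that $\omega$ is sectionwise a trivial Kan fibration. You appeal to ``a local section of the torsor $\mathcal{E}(U)\to\mathcal{F}(U)$'', but no such section need exist: the quotient $\mathcal{F}=\mathcal{E}/\mathcal{G}$ is the \emph{sheaf} coequaliser, so $\mathcal{E}(U)\to\mathcal{F}(U)$ is in general not surjective, let alone split. The actual argument does not touch $\mathcal{E}(U)\to\mathcal{F}(U)$ at all. It uses instead that the $\mathcal{G}(U)_n$-action on $E\mathcal{G}(U)_n=\mathcal{G}(U)_n^{\,n+1}$ is \emph{set-theoretically} free in every simplicial degree (right translation on the last coordinate). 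This forces the presheaf quotient $(\mathcal{E}\times E\mathcal{G})/\mathcal{G}$ to already be a sheaf and, sectionwise, the map $\omega(U)$ to be a principal twisted product with contractible fibre $E\mathcal{G}(U)$; the Kan fibration and weak equivalence properties then follow from the classical fact that a free simplicial group action yields a principal Kan fibration. In short: freeness on $E\mathcal{G}$ is what does all the work, not any splitting of $p$.
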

 This means that in a model structure on $\mathit{sSh} ( \mathcal{C})$ where
 \begin{itemize}
  \item morphisms $\mathcal{F}' \to \mathcal{F}$ such that $\mathcal{F} ' (U) \to \mathcal{F} (U)$ is a weak equivalence
     for all $U \in \operatorname{Ob} \mathcal{C}$ are themselves weak equivalences and
  \item morphisms $\mathcal{F}' \to \mathcal{F}$ such that $\mathcal{F} ' (U) \to \mathcal{F} (U)$ is a fibration
     for all $U \in \operatorname{Ob} \mathcal{C}$ are themselves fibrations,
 \end{itemize}
 the following hold:
 \begin{enumerate}
  \item The classifying space $B \mathcal{G}$ is in fact the homotopy quotient $\ast /^h G$. By this, we mean that
     the total space $E \mathcal{G}$ of the universal $\mathcal{G}$-bundle is contractible and $E \mathcal{G} \to B \mathcal{G}$
     is a fibration.
  \item The classifying space $B \mathcal{G}$ really classifies principal $\mathcal{G}$-bundles in $\mathit{sSh} ( \mathcal{C})$.
 \end{enumerate}
 
 We want to identify the infinite Grassmannian as the classifying space of $\operatorname{GL}_n$.
 To be precise, we want to prove the rigid analytic version of the following $\mathbb{A}^1$-homotopy theoretic result.
 \begin{Prop}[{\cite[p.~138, Proposition~3.7]{MV}}]
  In the $\mathbb{A}^1$-homotopy category, we have $B\operatorname{GL}_n \cong \operatorname{Gr}_n$.
 \end{Prop}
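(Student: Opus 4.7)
The plan is to mimic the Morel--Voevodsky argument in the $\mathbb{A}^1_\mathrm{rig}$-setting, via a Stiefel-style universal bundle. Let $V_{n,m}$ denote the rigid analytic Stiefel variety, i.e., the analytification of the open subvariety of $\operatorname{Spec} k[x_{ij}]_{1 \leq i \leq n, 1 \leq j \leq m}$ cut out by the condition that the $n \times m$ matrix $(x_{ij})$ has rank $n$. Set $V_n := \operatorname{colim}_m V_{n,m}$ along the inclusions $A \mapsto (A \mid 0)$. The right $\operatorname{GL}_n$-action on $V_{n,m}$ by row operations is free with quotient $\operatorname{Gr}_{n,m}$, and on taking the colimit one obtains a principal $\operatorname{GL}_n$-bundle $V_n \to \operatorname{Gr}_n$ (this geometric statement transfers from the algebraic setting through analytification, since analytification preserves smoothness and the quotient by a free action of an affine group). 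By Lemma~\ref{sGLnclassifies}, once we know $V_n$ to be $\mathbb{A}^1_\mathrm{rig}$-contractible, $\operatorname{Gr}_n$ will represent the classifying space $B\operatorname{GL}_n$ in $\mathcal{H}_{\mathbb{A}^1_\mathrm{rig}, \tau}(\mathcal{R})$, since the homotopy quotient $V_n/^h \operatorname{GL}_n$ agrees with $V_n/\operatorname{GL}_n = \operatorname{Gr}_n$.

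The crux is therefore to exhibit an $\mathbb{A}^1_\mathrm{rig}$-contraction of $V_n$. The standard algebraic homotopy works verbatim: define
\begin{equation*}
 H \colon V_{n,m} \times \mathbb{A}^1_\mathrm{rig} \longrightarrow V_{n, m+n}, \qquad (A, t) \longmapsto (tA \mid (1-t) I_n).
\end{equation*}
The target has rank $n$ for every $t$ (at $t \neq 1$ the last $n$ columns are linearly independent, at $t=1$ the block $A$ has rank $n$), so $H$ lands in $V_{n,m+n} \subset V_n$. At $t=1$ it is the stabilisation inclusion $A \mapsto (A \mid 0)$ and at $t=0$ it is the constant map to the fixed frame $(0 \mid I_n)$. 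Passing to the colimit, $H$ assembles into an $\mathbb{A}^1_\mathrm{rig}$-homotopy between $\operatorname{id}_{V_n}$ and a constant map. Composing with the simplicial resolution $\operatorname{Sing}^{\mathbb{A}^1_\mathrm{rig}}$ (Definition~\ref{singstar}) and using Lemma~\ref{RemarkSing}\ref{RemSingb}), this exhibits $V_n$ as weakly equivalent to a point in $\mathcal{M}_{\mathbb{A}^1_\mathrm{rig}, \tau}(\mathcal{R})$.

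To conclude, one applies Lemma~\ref{sGLnclassifies} to the principal bundle $V_n \to \operatorname{Gr}_n$: there is a simplicial presheaf $\mathcal{F}'$ with sectionwise weak equivalences to both $\operatorname{Gr}_n$ and $B\operatorname{GL}_n$, whose pullback of the universal torsor is $V_n \to \operatorname{Gr}_n$. Since $V_n$ is $\mathbb{A}^1_\mathrm{rig}$-contractible, the total space of this pullback maps by a weak equivalence to the (contractible) total space $E\operatorname{GL}_n$, so the induced map $\operatorname{Gr}_n \to B\operatorname{GL}_n$ is an $\mathbb{A}^1_\mathrm{rig}$-weak equivalence.

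\textbf{Main obstacle.} The essentially set-theoretic step is innocent for algebraic varieties but requires care here: one must verify that the quotient $V_n \to \operatorname{Gr}_n$ is a principal $\operatorname{GL}_n$-bundle in the chosen Grothendieck topology $\tau$ on $\mathcal{R}$, i.e., that local trivialisations exist for admissible coverings (and not just \'etale-locally). This is where the choice of topology matters: the Pl\"ucker-like rational subsets of $\operatorname{Gr}_{n,m}$ on which the bundle trivialises algebraically must form an admissible covering in the G-topology, and the colimit over $m$ must be compatible with the colimit defining $V_n$ and $\operatorname{Gr}_n$. Getting the bookkeeping right on how $\operatorname{colim}_m$ interacts with $B\operatorname{GL}_n$ and with fibrant replacement in $\mathcal{M}_{\mathbb{A}^1_\mathrm{rig}, \tau}(\mathcal{R})$ is the main technical issue; the geometric homotopy itself is transparent.
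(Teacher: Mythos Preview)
Your proposal is correct and follows essentially the same route as the paper: the paper (following Arndt/Morel) constructs the Stiefel ind-variety $\operatorname{Fr}_n$, exhibits the identical explicit $\mathbb{A}^1_\mathrm{rig}$-homotopy $(t,A)\mapsto (tA\mid (1-t)I_n)$ to show each stabilisation map is nullhomotopic, and then identifies $\operatorname{Fr}_n/\operatorname{GL}_n$ with $\operatorname{Gr}_n$. The only cosmetic differences are that the paper builds $\operatorname{Fr}_{n,m}$ explicitly by gluing the affine charts $M_J$ (where a fixed $n\times n$ minor is invertible) rather than invoking analytification of the algebraic Stiefel variety wholesale, and uses a left $\operatorname{GL}_n$-action rather than your right one; your ``main obstacle'' about local triviality in the G-topology is precisely what this explicit chart description handles.
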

 A proof was written down by Peter Arndt. It generalises the proof for $n = 1$ by Naumann--Spitzweck--{\O}stv{\ae}r
 \cite[Corollary~2.5]{NaumannSpitzweckOstvaer09} and works also in the setting of rigid analytic varieties.
 One writes $\operatorname{Gr}_n$ as a quotient of the Stiefel variety by a free $\mathrm{GL}_n$-action
 and shows that the Stiefel variety is contractible. This approach is originally due to Morel, 
 cf.~\cite[Example~2.1.8, Lemma~4.2.5]{Morel06}.
 \begin{Prop}[Arndt]\label{GrBGL}\index{Grassmannian}
  We have $B\operatorname{GL}_n \cong \operatorname{Gr}_{n}$ in the $\mathbb{A}^1_\mathrm{rig}$-homotopy category.
  Consequently, $B\operatorname{GL}_n \cong \operatorname{Gr}_{n}$ in the $\mathbb{B}^1$-homotopy category.
 \end{Prop}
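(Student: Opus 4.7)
The plan is the one indicated in the excerpt: realize $\operatorname{Gr}_n$ as the strict quotient $V_n / \operatorname{GL}_n$ of an ind-rigid analytic Stiefel variety $V_n$ by a free $\operatorname{GL}_n$-action, and prove that $V_n$ is $\mathbb{A}^1_\mathrm{rig}$-contractible. By the general principle that the strict quotient of a contractible simplicial sheaf by a free group action is a model for the homotopy quotient, made precise here by Lemma~\ref{sGLnclassifies}, this identifies $\operatorname{Gr}_n$ with $B\operatorname{GL}_n$ in $\mathcal{H}_{\mathbb{A}^1_\mathrm{rig}, \mathcal{T}}(\mathcal{R})$.

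First I set up the Stiefel variety. For each $m \geq n$, let $V_{n,m}$ be the analytification of the open subvariety of $m \times n$ matrices of rank $n$ (the complement of the common vanishing locus of the $n \times n$ minors). Right multiplication by $\operatorname{GL}_n$ is a free action whose strict quotient is precisely $\operatorname{Gr}_{n,m}$ by the standard presentation as a homogeneous space. Passing to the colimit along the standard inclusions $V_{n,m} \hookrightarrow V_{n,m+1}$ (padding with a zero in the new coordinate) yields a principal $\operatorname{GL}_n$-torsor $V_n \to \operatorname{Gr}_n$ in $\sPSh(\mathcal{R})$.

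The main obstacle is the $\mathbb{A}^1_\mathrm{rig}$-contractibility of $V_n$. I exhibit an explicit contraction by concatenating two linear homotopies, both polynomial in the interval parameter and hence defined on the representable presheaf $\mathbb{A}^1_\mathrm{rig}$. Let $s\colon k^{\infty} \to k^{\infty}$ be the shift by $n$ positions, inducing a morphism $s\colon V_n \to V_n$ of ind-varieties. Consider
\begin{equation*}
 H_1(v, t) := (1-t)v + t s(v), \qquad H_2(v, t) := (1-t)s(v) + t e,
\end{equation*}
where $e = (e_1, \ldots, e_n)$ is the standard $n$-frame occupying the first $n$ coordinates. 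For every $t$ the endomorphism $(1-t)\operatorname{id} + ts$ of $k^{\infty}$ is injective (lower-triangular with diagonal entries $1-t$ when $t \neq 1$, and equal to the injective shift $s$ when $t = 1$), so $H_1(v,t)$ remains an $n$-frame; the analogous statement for $H_2$ uses that $s(v_i)$ vanishes in the first $n$ coordinates where $e_i$ is supported, so $H_2(v,t)$ has an invertible $n \times n$ block in positions $1, \ldots, n$ when $t \neq 0$ and coincides with the frame $s(v)$ when $t = 0$. Concatenating $H_1$ and $H_2$ produces an $\mathbb{A}^1_\mathrm{rig}$-homotopy from $\operatorname{id}_{V_n}$ to the constant map at $e$, so $V_n \simeq \ast$ in $\mathcal{H}_{\mathbb{A}^1_\mathrm{rig}, \mathcal{T}}(\mathcal{R})$.

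Finally, the $\mathbb{B}^1$-homotopy theoretic statement follows immediately from Corollary~\ref{finerstructure}: $\mathcal{M}_{\mathbb{B}^1}(\mathcal{R})$ is obtained from $\mathcal{M}_{\mathbb{A}^1_\mathrm{rig}}(\mathcal{R})$ by a further left Bousfield localization, which preserves weak equivalences, so the equivalence $\operatorname{Gr}_n \simeq B\operatorname{GL}_n$ persists in $\mathcal{H}_{\mathbb{B}^1}(\mathcal{R})$.
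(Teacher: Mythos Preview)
Your proof is correct and follows essentially the same strategy as the paper: write $\operatorname{Gr}_n$ as the quotient of a Stiefel ind-variety by a free $\operatorname{GL}_n$-action, show the Stiefel variety is $\mathbb{A}^1_\mathrm{rig}$-contractible via explicit polynomial homotopies, and deduce the $\mathbb{B}^1$-statement by further localisation. The only difference is cosmetic: the paper uses a single homotopy $H(t,x) = (t x \mid (1-t) I_n)$ to show that each transition map $\iota_m\colon \operatorname{Fr}_{n,m} \to \operatorname{Fr}_{n,m+n}$ is null-homotopic and then invokes the fact that a colimit along null-homotopic maps is contractible, whereas you contract the colimit directly with two homotopies --- your $H_2$ is essentially the paper's $H$ (with the blocks reordered because you shift at the beginning rather than pad at the end), and your extra $H_1$ replaces the colimit argument. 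One small remark: your justification that $H_1$ lands in $V_n$ is phrased as a pointwise dichotomy ``$t \neq 1$'' versus ``$t = 1$'', which should really be the Zariski cover $\{1-t \text{ a unit}\} \cup \{t \text{ a unit}\}$ of $\mathbb{A}^1_\mathrm{rig}$ (exactly the device the paper uses for its homotopy); with that adjustment the argument is complete.
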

 \begin{proof} 
  Let $\mathcal{R}$ be a subcategory of the category of smooth rigid analytic varieties containing
  the affinoids of good reduction. 
  We write $\operatorname{Gr}_n$ as a quotient of the \emph{Stiefel space}\index{Stiefel space} 
  $\operatorname{Fr}_n$ by a free $\mathrm{GL}_n$-action
  and show that $\operatorname{Fr}_n$ is contractible in $\mathcal{M} _{\mathbb{B} ^1} ( \mathcal{R} )$ 
  and $\mathcal{M} _{\mathbb{A} ^1_\mathrm{rig}} ( \mathcal{R} )$. 
    
  By \cite[Proposition~I.2.13]{Lam06}, we know that for each smooth $k$-algebra $A$ we have
  $\operatorname{GL}_n (A) \cong \operatorname{GL}_n (k) \otimes _k A$,
  hence the sheaf $\operatorname{GL}_n$ is representable
  by $\operatorname{GL}_n (k)$.
  
  Let $\operatorname{Gr}_{n,m}$ be a finite Grassmannian.
  We define the space $\operatorname{Fr}_{n,m}$ of $n$-frames in $\mathbb{A}^ m_\mathrm{rig}$, building it up from pieces $M_J$.
  We think of the space of $n$-frames as the space of $n \times m$-matrices of rank $n$.
  
  During this proof we denote
  \begin{equation*}
   \mathcal{O} ( \mathbb{A}^l _\mathrm{rig} ) =: k \langle \langle T_1 , \dotsc , T_l \rangle \rangle .
  \end{equation*}
  
  For each subset $J = \{ j_1, \dotsc , j_n \} \subset \{ 1 , \dotsc , m \}$ of order $n$ we define a rigid analytic space $M_J$ as
  \begin{equation*}
   M_J = \bigg( \operatorname{Spec} \big( k [T_{11} , \dotsc , T_{1m} , T_{21}, \dotsc , T_{2m} , \dotsc , T_{nm} , T] / 
   ( (\operatorname{det} ((T_{ij})_J)T)-1) \big) \bigg)^\mathrm{an} 
  \end{equation*}
  where $(T_{ij})_J$ is defined as the $n \times n$ minor of 
  $(T_{ij})_{\substack{i= 1, \dotsc , n \\  j= 1 , \dotsc , m}}$ given by $J$, i.\,e.,
  \begin{equation*}
   (T_{ij})_J := (T_{ij}) _{\substack{i= 1, \dotsc , n \\  j = j_1 , \dotsc , j_n}} .
  \end{equation*}
  The spaces $M_J$, the index set $J$ running through all  
  subsets of $\{ 1 , \dotsc , m \}$ with $n$ elements,
  are supposed to cover the space of $n$-frames that we want to define.
  We think of $M_J$ as the space of $n \times m$-matrices such that the $n \times n$-minors specified by $J$
  are of rank $n$. 
  For a rigorous construction, we need to define the ``intersections'' of $M_{J_1} , \dotsc , M_{J_r}$.
  For $r = 2$, we define
  \begin{equation*}
   M_{J, J'} = \bigg( \operatorname{Spec} \bigg( k [T_{11} , \dotsc , T_{nm} ,T,T'] / 
   \big(  (\operatorname{det} ((T_{ij})_J)T)-1 , (\operatorname{det} ((T_{ij})_J)T')-1  \big) \bigg) \bigg) ^\mathrm{an}
  \end{equation*}
  which we think of as the $n \times m$-matrices such that both the $n \times n$-minors specified by $J$
  and the $n \times n$-minors specified by $J'$ are of rank $n$. 
  Higher ``intersections'' $M_{J_1 , \dotsc , J_r}$ are defined analogously.
  We have open immersions $M_{J,J'} \to M_J$, given on the level of rings by
  \begin{align*}
   T_{ij} &\mapsto T_{ij} \\ 
   T &\mapsto T,
  \end{align*}
  and $M_{J,J'} \to M_{J'}$, given on the level of rings by
  \begin{align*}
   T_{ij} &\mapsto T_{ij} \\ 
   T &\mapsto T'
  \end{align*}
  and analogously $M_{J,J',J''} \to M_{J,J'}$, $M_{J,J',J''} \to M_{J,J''}$, $M_{J,J',J''} \to M_{J',J''}$
  and so on.
  Altogether, they fit into a diagram $D$:
  \begin{equation*}
   \begin{xy}
    \xymatrix{
    & M_{J_1J_2J_2} \ar[r] \ar[dr] \ar[ddr] \ar@{<.}[l]\ar@{<.}[ld]\ar@{<.}[ldd] & M_{J_1J_2} \ar[r] \ar[dr] \ar@{<.}[ldd] & M_{J_1} \\
    & M_{J_1J_2J_4} \ar[ur] \ar[r]                                               & M_{J_1J_3} \ar[ur] \ar[dr] \ar@{<.}[ld] & M_{J_2} \\
    &                                               & M_{J_2J_3} \ar[r] \ar[ur] \ar@{<.}[dl] \ar@{}[d]|{\vdots} & M_{J_3} \ar@{}[d]|{\vdots} \\
    &                                                            & \vdots                                  & \vdots 
    }
   \end{xy}
  \end{equation*}
  Now we define $\operatorname{Fr}_{n,m} := \operatorname{colim} D$ as the ``union'' of all the spaces $M_J$.
  
  The group $\operatorname{GL}_n$ operates on $\operatorname{Fr}_{n,m}$ as follows.
  The action of $\operatorname{GL}_n$ on the space of $n \times m$-matrices by multiplication from the left
  defines an action of $\operatorname{GL}_n$ on $\mathbb{A}^{nm}_\mathrm{rig}$. 
  It leaves the ideals $( (\operatorname{det} ((T_{ij})_J)T)-1)$
  invariant, hence it restricts to each of the $M_J$. As the morphisms $M_{J_1 , \dotsc , J_r} \to M_{J_1 , \dotsc , J_{r-1}}$
  are $\operatorname{GL}_n$-equivariant, we get an action of $\operatorname{GL}_n$ on $\operatorname{Fr}_{n,m}$.
  This action is free by the ``full rank'' condition.
  
  There are $\operatorname{GL}_n$-equivariant embeddings $\operatorname{Fr}_{n,m} \to \operatorname{Fr}_{n,m+n}$
  given by
  \begin{align*}
   \iota _m\colon \mathbb{A}^{nm}_\mathrm{rig} &\longmapsto \mathbb{A}^{nm}_\mathrm{rig} \times \mathbb{A}^{nn}_\mathrm{rig} \\
   x &\longmapsto (x,0).
  \end{align*}
  As $\iota _m$ ``does not change the rank of a matrix'', which means it leaves the ideals 
  \mbox{$( (\operatorname{det} ((T_{ij})_J)T)-1)$}
  invariant, it restricts to $\operatorname{Fr}_{n,m} \to \operatorname{Fr}_{n,m+n}$. Equivariance under the action
  by $\operatorname{GL}_n$ is immediate.
  Fix $m \bmod n$ and define
  \begin{equation*}
   \operatorname{Fr} _n := 
   \operatorname{colim} ( \operatorname{Fr}_{n,m} \to \operatorname{Fr}_{n,m+n} \to \operatorname{Fr}_{n,m+2n} \to \dotsb ).
  \end{equation*}
  The infinite space of $n$-frames $\operatorname{Fr}_n$ depends on the choice of $m \bmod n$, but this does not affect the statement.
  As the embeddings $\operatorname{Fr}_{n,m} \to \operatorname{Fr}_{n,m+n}$ are $\operatorname{GL}_n$-equivariant,
  the colimit $\operatorname{Fr}_n$ inherits the free $\operatorname{GL}_n$-action.
  
  The only statement left to show is that $\operatorname{Fr}_n$ is contractible.
  We show that each embedding $\iota _m \colon \operatorname{Fr}_{n,m} \to \operatorname{Fr}_{n,m+n}$ is 
  $\mathbb{A}^1_\mathrm{rig}$-homotopic to a constant map. 
  Then, as the colimit over a chain of constant maps is contractible, we are done.
  
  An explicit $\mathbb{A}^1_\mathrm{rig}$-homotopy from $\iota _m$ to a constant map can be written down as follows:
  \begin{equation*}
   H \colon \mathbb{A}^1_\mathrm{rig} \times \operatorname{Fr}_{n,m} \longrightarrow \operatorname{Fr}_{n,m+n} 
  \end{equation*}
  If $k$ is algebraically closed, $H$ is given on matrices by
  \begin{equation*}
   H \colon (t,(x_{ij})_{ij}) \longmapsto \left( t \cdot (x_{ij})_{ij} \left\vert (1-t) \cdot \left( 
   \begin{smallmatrix}1 && 0 \\ & \ddots & \\ 0 && 1 \end{smallmatrix} \right. \right) \right).
  \end{equation*}
  In general, it is given on the $M_J$ (and similarly on the $M_{J,J'}$ and so forth) by
  \begin{align*}
   k \langle\langle T_{11} , \dotsc , T_{n(m+n)} , T \rangle \rangle / 
   ( \operatorname{det}_J ) &\longrightarrow k \langle \langle T_{11} , \dotsc , T_{nm} , T' \rangle \rangle \\
   T_{ij} &\longmapsto T' T_{ij} \qquad \ \text{if } j = 1, \dotsc , m, \\
   T_{i  m+j} & \longmapsto  0 \qquad \qquad \text{if } i \neq j,\\
   T_{i  m+i} & \longmapsto(1-T') T_{i m+i} . 
  \end{align*}
  Here, $\operatorname{det}_J := ((T_{ij})_J)T)-1$.
  We need to check that $H$ really ends up in $\operatorname{Fr}_{n,m+n}$. The reason is that, locally, one of $t$ and $1-t$
  is always a unit, hence one of  $t \cdot (x_{ij})_{ij}$ and 
  $(1-t) \cdot \left(  \begin{smallmatrix}1 && 0 \\ & \ddots & \\ 0 && 1 \end{smallmatrix} \right)$
  is of full rank, hence 
  $\left( t \cdot (x_{ij})_{ij} \left\vert (1-t) \cdot \left( 
  \begin{smallmatrix}
   1 && 0 \\ & \ddots & \\ 0 && 1 
  \end{smallmatrix} \right. \right) \right)$
  is of full rank.
  
  The map $H$ defines an $\mathbb{A}^1_\mathrm{rig}$-homotopy between $\operatorname{id}_{\operatorname{Fr}_n}$ 
  and $\operatorname{Fr}_n \to \ast$.
  Consequently, the Stiefel variety $\operatorname{Fr}_n$ is contractible in 
  $\mathcal{M} _{\mathbb{A} ^1_\mathrm{rig}} ( \rigVar )$ and hence
  \begin{equation*}
   \operatorname{Fr}_n / \operatorname{GL}_n \simeq B \operatorname{GL}_n
  \end{equation*}
  in $\mathcal{M} _{\mathbb{A} ^1_\mathrm{rig}} ( \rigVar )$.
  It is clear that $\operatorname{Fr}_n / \operatorname{GL}_n = \operatorname{Gr}_n$, hence 
  \begin{equation*}
   \operatorname{Gr}_n \simeq B \operatorname{GL}_n
  \end{equation*}
  in $\mathcal{M} _{\mathbb{A} ^1_\mathrm{rig}} ( \rigVar )$.
  Therefore the same holds for the sheaves represented by $\operatorname{Gr}_n$, respectively by $B \operatorname{GL}_n$
  and still so after restriction to $\mathcal{R}$:
  \begin{equation*}
   \operatorname{Gr}_n (X) \simeq B \operatorname{GL}_n (X)
  \end{equation*}
  for all $X \in \mathcal{R}$. Hence
  \begin{equation*}
   \operatorname{Gr}_n \simeq B \operatorname{GL}_n
  \end{equation*}
  in $\mathcal{M} _{\mathbb{A} ^1_\mathrm{rig}} ( \mathcal{R})$.
  As weak equivalences in $\mathcal{M} _{\mathbb{A} ^1_\mathrm{rig}} ( \mathcal{R})$ remain weak equivalences after further
  localisation of the model structure, we also get 
  $\operatorname{Gr}_n \simeq B \operatorname{GL}_n$ in $\mathcal{M} _{\mathbb{B} ^1} ( \mathcal{R})$.
 \end{proof}
 
 \begin{Rem}
  The ``spaces'' $\operatorname{Fr}_{n,m}$, $\operatorname{Fr}_n$, $\operatorname{Gr}_{n,m}$ and
  $\operatorname{Gr}_n$ are in fact presheaves that may not be representable on $\mathcal{R}$.
  The spaces $\operatorname{Fr}_{n,m}$ and $\operatorname{Gr}_{n,m}$ are at least smooth rigid analytic varieties,
  the finite Grassmannians $\operatorname{Gr}_{n,m}$ are even quasicompact. The infinite spaces
  $\operatorname{Fr}_n$ and $\operatorname{Gr}_n$ are ind-rigid analytic varieties.
  They are thus ``spaces'' in some sense, albeit not necessarily objects of $\mathcal{R}$.
 \end{Rem}
 
\subsection{Homotopy invariance implies a classification of vector bundles}\sectionmark{Hoinvariance implies classification}\label{secClass}
 The following proposition is \cite[Theorem~5.1.3]{AsokHoyoisWendt15} transferred into our setting.
 \begin{Prop}\label{PropClass}
  Let $\mathcal{R}$ be a subcategory of the category of rigid analytic varieties. Assume that $\mathcal{R}$ is small
  and has a strictly initial object $\emptyset$. Let $I$
  be a representable interval object on $\mathcal{R}$ and $\tau$ a
  cd structure on $\mathcal{R}$.
  Let $\mathcal{F} \in \sPSh ( \mathcal{R})$ be a simplicial presheaf satisfying $\tau$-excision and assume that the associated
  presheaf of pointed sets $\pi _0 \mathcal{F}$ is $I$-invariant on $\mathcal{R}$. Let $R_\tau$ be a fibrant replacement
  functor in $\mathcal{M}_\tau ( \mathcal{R})$. Then $R_\tau \operatorname{Sing}^I \mathcal{F}$ is fibrant in 
  $\mathcal{M}_{I,\tau} ( \mathcal{R})$ and for all $U \in \operatorname{Ob} ( \mathcal{R})$ the canonical map
  \begin{equation*}
   \pi _0 \mathcal{F} (U) \longrightarrow [U, \mathcal{F} ]_{I, \tau}
  \end{equation*}
  is a bijection of pointed sets.
 \end{Prop}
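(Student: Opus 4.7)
The plan is to follow Asok--Hoyois--Wendt's strategy from \cite[Theorem~5.1.3]{AsokHoyoisWendt15} verbatim: build an explicit fibrant model of $\mathcal{F}$ in $\mathcal{M}_{I, \tau}(\mathcal{R})$ by first applying $\operatorname{Sing}^I$ to force $I$-invariance and then applying a fibrant replacement functor for the un-localised $\tau$-local model structure. The two conceptual inputs are that $\operatorname{Sing}^I$ preserves $\tau$-excision in the presence of $I$-invariance of $\pi_0 \mathcal{F}$, and that $\tau$-excision is equivalent to homotopy descent, so that the $\tau$-fibrant replacement does not perturb the sectionwise homotopy type.

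First I would verify that $\operatorname{Sing}^I \mathcal{F}$ satisfies $\tau$-excision: this is a direct application of Lemma~\ref{RemarkSing}\ref{RemSingd}) using the two hypotheses on $\mathcal{F}$. Assuming the cd structure $\tau$ satisfies the hypotheses of Theorem~\ref{ThmcdVoe} (as will be the case in the intended applications), excision is equivalent to homotopy descent, so $\operatorname{Sing}^I \mathcal{F}$ is quasifibrant in the sense of Remark~\ref{Remdescent}\ref{Remdesb}). Consequently, for every $U \in \operatorname{Ob}(\mathcal{R})$ the natural map
\begin{equation*}
 \operatorname{Sing}^I \mathcal{F} (U) \longrightarrow R_\tau \operatorname{Sing}^I \mathcal{F} (U)
\end{equation*}
is a weak equivalence of simplicial sets.

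Next I would show that $R_\tau \operatorname{Sing}^I \mathcal{F}$ is fibrant in $\mathcal{M}_{I, \tau}(\mathcal{R})$. It is $\tau$-fibrant by construction, so only $I$-locality remains. Since $\operatorname{Sing}^I \mathcal{F}$ is $I$-invariant by Lemma~\ref{RemarkSing}\ref{RemSingb}) and sectionwise weak equivalences preserve $I$-invariance (by two-out-of-three applied to the projection $\mathcal{G}(V) \to \mathcal{G}(V \times I)$), the replacement $R_\tau \operatorname{Sing}^I \mathcal{F}$ is again $I$-invariant, hence $I$-local. The composite $\mathcal{F} \to \operatorname{Sing}^I \mathcal{F} \to R_\tau \operatorname{Sing}^I \mathcal{F}$ is thus a fibrant replacement of $\mathcal{F}$ in $\mathcal{M}_{I, \tau}(\mathcal{R})$.

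Finally I would compute $[U, \mathcal{F}]_{I, \tau}$. Since every object of $\mathcal{M}_{I, \tau}(\mathcal{R})$ is cofibrant, the homotopy classes are given by
\begin{equation*}
 [U, \mathcal{F}]_{I, \tau} = \pi_0 \bigl( R_\tau \operatorname{Sing}^I \mathcal{F} (U) \bigr) \cong \pi_0 \bigl( \operatorname{Sing}^I \mathcal{F} (U) \bigr),
\end{equation*}
where the second isomorphism is the quasifibrancy from the first step. The canonical map $\mathcal{F}(U) \to \operatorname{Sing}^I \mathcal{F}(U)$ induces a map $\pi_0 \mathcal{F}(U) \to \pi_0 \operatorname{Sing}^I \mathcal{F}(U)$ whose standard description identifies the target with the quotient of $\pi_0 \mathcal{F}(U)$ under the $I$-homotopy relation; $I$-invariance of $\pi_0 \mathcal{F}$ collapses this relation to equality, yielding a bijection of pointed sets. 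The main subtlety I expect is verifying the cd-structure hypotheses of Theorem~\ref{ThmcdVoe} in each intended application, since they are not explicit in the statement of the proposition, and keeping careful track of the zig-zag identifying the natural map in the statement with the composite of equivalences constructed above.
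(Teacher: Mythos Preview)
Your proposal is correct and follows essentially the same route as the paper's proof: apply Lemma~\ref{RemarkSing}\ref{RemSingd}) to get $\tau$-excision for $\operatorname{Sing}^I\mathcal{F}$, use quasifibrancy to identify sections before and after $R_\tau$, deduce $I$-locality from $I$-invariance, and read off the bijection on $\pi_0$. Your write-up is in fact more careful than the paper's, which compresses the quasifibrancy step into the single unexplained line ``$R_\tau\operatorname{Sing}^I\mathcal{F}(U)=\operatorname{Sing}^I\mathcal{F}(U)$'' and marks the final $\pi_0$-identification simply ``by assumption''; you also rightly flag that the hypotheses of Theorem~\ref{ThmcdVoe} are being used implicitly without appearing in the statement of the proposition.
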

 \begin{proof} 
  The proof is the same as in \cite[Theorem~5.1.3]{AsokHoyoisWendt15}, only easier:
  Since $\operatorname{Sing}^I \mathcal{F}$ is $I$-invariant by Lemma~\ref{RemarkSing}\ref{RemSingb}) and since
  $R_\tau \operatorname{Sing}^I \mathcal{F} (U) = \operatorname{Sing}^I \mathcal{F}(U)$,
  also $R_\tau \operatorname{Sing}^I \mathcal{F}$ is $I$-invariant. Being fibrant in $\mathcal{M}_\tau ( \mathcal{R})$
  and $I$-invariant, it is $I$-local in $\mathcal{M}_\tau ( \mathcal{R})$,
  hence fibrant in $\mathcal{M}_{I,\tau} ( \mathcal{R})$. 
  Therefore we get for each $U \in \operatorname{Ob} ( \mathcal{R})$ a bijection of pointed sets
  \begin{equation*}
   \pi _0 \operatorname{Sing}^I \mathcal{F} (U) \overset{\star}{\cong}
   \pi _0 R_\tau \operatorname{Sing}^I \mathcal{F} (U).
  \end{equation*}
  As, by assumption,
  \begin{equation*}
   \pi _0 \mathcal{F} (U) \overset{\diamond}{\cong} \pi _0 \operatorname{Sing}^I \mathcal{F} (U),
  \end{equation*}
  we get the following identifications:
  \begin{align*}
   \pi _0 \mathcal{F} (U) &\overset{\diamond}{\cong} \pi _0 \operatorname{Sing}^I \mathcal{F} (U) \overset{\star}{\cong}
   \pi _0 R_\tau \operatorname{Sing}^I \mathcal{F} (U) \cong \pi _0 \operatorname{Hom} (U, R_\tau \operatorname{Sing}^I \mathcal{F})\\
   &= \pi _0 \mathit{Map}_{I, \tau} (U, \mathcal{F}) = [U, \mathcal{F} ]_{I,\tau}.\qedhere
  \end{align*}
 \end{proof}
  
 The next theorem is the rigid analytic version of \cite[Theorem~5.2.3]{AsokHoyoisWendt15}. It basically states that
 if homotopy invariance holds, then there is a homotopy classification of vector bundles.
 \begin{Thm}\label{ThmClass}
  Let $\mathcal{R}$ be a subcategory of the category of rigid analytic varieties. Assume that $\mathcal{R}$ is small
  and has a strictly initial object $\emptyset$. 
  Let $I \in \{ \mathbb{B}^1 , \mathbb{A}^1_\mathrm{rig} \}$ be a representable interval object on $\mathcal{R}$. 
  For some $n \in \mathbb{N}$, assume that $\vect ^n$ satisfies $I$-invariance on $\mathcal{R}$.
  Let $\tau _{cdG}$ be the cd structure which consists of finite admissible coverings with respect to the G-topology. 
  Assume that 
  \begin{itemize}
   \item $\tau = \tau _{cdG}$ or
   \item $\tau$ is a cd structure satisfying the assumptions of Theorem~\ref{ThmcdVoe}, $\tau$ is finer than $\tau _{cdG}$
       and $\vect ^n$ satisfies excision with respect to $\tau$.
  \end{itemize}
  Then the infinite Grassmannian classifies $\vect ^n$ in the $(I, \tau )$-homotopy category of $\mathcal{R}$:
  For each $X \in \operatorname{Ob} ( \mathcal{R} )$ there is a natural bijection
      \begin{equation*}
       \vect ^n(X) \cong [ X, \mathrm{Gr}_n ]_{I, \tau}.
      \end{equation*}
 \end{Thm}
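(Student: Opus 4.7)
The plan is to follow the strategy of Asok--Hoyois--Wendt \cite[Theorem~5.2.3]{AsokHoyoisWendt15}, replacing the set-valued functor $\vect^n$ by a simplicial presheaf which both satisfies descent and has the correct $\pi_0$. Concretely, I will work with the presheaf of groupoids $\mathcal{P}^n$ of big $\GL_n$-principal bundles from Definition~\ref{Defstrictvect}, composed with the nerve functor to land in $\sPSh(\mathcal{R})$. Since isomorphism classes of rank~$n$ vector bundles correspond bijectively to isomorphism classes of $\GL_n$-torsors via the frame bundle construction, one has $\pi_0 \mathcal{P}^n \cong \vect^n$ as presheaves of pointed sets on $\mathcal{R}$. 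The use of \emph{big} bundles instead of ordinary bundles ensures that pullback is strictly functorial, so that $\mathcal{P}^n$ really is a simplicial presheaf rather than a pseudofunctor.

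The next step is to verify that $\mathcal{P}^n$ satisfies $\tau$-excision. In the case $\tau = \tau_{cdG}$, every distinguished square is a finite admissible G-covering, and descent for principal $\GL_n$-bundles along such coverings is classical: a torsor is determined by its restrictions to the members of the covering together with a cocycle on the intersection, and one verifies by hand that the square
\begin{equation*}
\begin{xy}\xymatrix{
\mathcal{P}^n(X) \ar[r]\ar[d] & \mathcal{P}^n(A) \ar[d] \\
\mathcal{P}^n(Y) \ar[r] & \mathcal{P}^n(B)
}\end{xy}
\end{equation*}
is a homotopy pullback of groupoids (the empty covering case handled by the strictly initial object). In the case of a finer cd structure $\tau$ satisfying the assumptions of Theorem~\ref{ThmcdVoe}, excision for $\mathcal{P}^n$ is exactly the hypothesis made on $\vect^n$ upgraded to the groupoid level, and Theorem~\ref{ThmcdVoe} together with the fact that $\mathcal{P}^n$ is already a sheaf of groupoids for the G-topology lets one conclude. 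By the hypothesis of $I$-invariance of $\vect^n$, the presheaf $\pi_0 \mathcal{P}^n$ is $I$-invariant on $\mathcal{R}$.

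With these two properties in place, Proposition~\ref{PropClass} applies to $\mathcal{F} = \mathcal{P}^n$ and yields, for each $X \in \operatorname{Ob}(\mathcal{R})$, a natural bijection of pointed sets
\begin{equation*}
\vect^n(X) \;=\; \pi_0 \mathcal{P}^n(X) \;\overset{\sim}{\longrightarrow}\; [X, \mathcal{P}^n]_{I, \tau}.
\end{equation*}
It remains to identify the target with $[X, \operatorname{Gr}_n]_{I, \tau}$. Lemma~\ref{sGLnclassifies} gives a zig-zag of weak equivalences between $\mathcal{P}^n$ and the simplicial classifying space $B\GL_n$ in $\mathcal{M}_\tau(\mathcal{R})$, since both classify principal $\GL_n$-bundles (in the local sense) and both have contractible total space in the simplicial direction. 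Finally, Proposition~\ref{GrBGL} identifies $B\GL_n$ with $\operatorname{Gr}_n$ in $\mathcal{H}_{\mathbb{A}^1_\mathrm{rig}}(\mathcal{R})$, and hence also after further localisation in $\mathcal{H}_{I, \tau}(\mathcal{R})$ for either choice of interval. Composing the three bijections
\begin{equation*}
\vect^n(X) \;\cong\; [X, \mathcal{P}^n]_{I,\tau} \;\cong\; [X, B\GL_n]_{I,\tau} \;\cong\; [X, \operatorname{Gr}_n]_{I,\tau}
\end{equation*}
proves the theorem.

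The main obstacle I expect is the excision step for $\mathcal{P}^n$: one has to be careful that the gluing data for torsors assemble into a genuine homotopy pullback of \emph{groupoids} rather than merely a bijection on isomorphism classes, and that the transition from big bundles to ordinary bundles (which is used implicitly when identifying $\pi_0 \mathcal{P}^n$ with $\vect^n$) does not spoil this. Once descent is in hand, the remaining identifications with $B\GL_n$ and $\operatorname{Gr}_n$ are formal consequences of Lemma~\ref{sGLnclassifies} and Proposition~\ref{GrBGL}.
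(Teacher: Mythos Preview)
Your proposal is essentially correct and follows the same architecture as the paper's proof: apply Proposition~\ref{PropClass} to the nerve of the groupoid of (big) bundles, then identify the result with $B\GL_n$ and finally with $\operatorname{Gr}_n$ via Proposition~\ref{GrBGL}. The paper works with $B\Vect^n$ rather than $\mathcal{P}^n$, but as you note these are equivalent groupoids, so this is cosmetic.

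The one place where the paper is sharper than your sketch is precisely the point you flag as the main obstacle. You write that one ``verifies by hand'' that the square of groupoids is a homotopy pullback; the paper does not do this by hand but invokes Quillen's Theorem~B in the form of Barwick--Kan's Theorem~$B_1$. Applied to the zigzag $\Vect^n(U) \to \Vect^n(U\times_X V) \leftarrow \Vect^n(V)$ (condition $B_1$ being automatic for groupoids), this identifies the homotopy pullback with the comma category $f^\ast \Vect^n(U) \downarrow g^\ast \Vect^n(V)$, which is visibly the groupoid of descent data and hence equivalent to $\Vect^n(X)$ by Bosch--G\"ortz (for $\tau_{cdG}$) or the assumed excision (for finer $\tau$). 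This is the missing ingredient that turns your descent intuition into an honest homotopy-cartesian statement; once you plug it in, your argument is the paper's.
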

 \begin{proof}
  Let $\Vect ^ n$ be the functor ``big vector bundles'' as defined in \ref{Defstrictvect}.
  By assumption, $\vect ^n$ satisfies excision with respect to $\tau$.
  If $\tau = \tau _{cdG}$, then $\vect ^n$ satisfies descent, hence also excision,
  with respect to $\tau$ by Bosch--G\"{o}rtz~\cite[Theorem~3.1]{BoschGoertz98}.
  
  We claim that the assignment
  $U \mapsto B \Vect ^n (U)$ satisfies $\tau$-excision.
  Let
  \begin{equation*}
   \begin{xy}
   \xymatrix{
    U \times _X V \ar[r] \ar[d] & U \ar[d]^f \\
    V \ar[r]_g & X
   }
   \end{xy}
  \end{equation*}
  be a $\tau$-distinguished square in $\mathcal{R}$.
  Apply Quillen's Theorem~B in the version \cite[Theorem~$B_1$ (3.5)]{BarwickKan11} to the zigzag of groupoids
  \begin{equation*}
   \Vect ^n (U) \overset{f^\ast}{\longrightarrow} \Vect ^n (U \times _X V) \overset{g^\ast}{\longleftarrow} \Vect ^n (V).
  \end{equation*}
  Barwick--Kan's condition $B_1$ is trivially fulfilled for groupoids. Hence 
  \begin{equation}\label{Madrid}
   \begin{xy}
    \xymatrix{
     f^\ast \Vect ^n (U) \downarrow g ^\ast \Vect ^n (V) \ar[d]\ar[r] & \Vect ^n (U) \ar[d] \\
     \Vect ^n (V) \ar[r] & \Vect ^n (U \times _X V)
    }
   \end{xy}
  \end{equation}
  is homotopy cartesian by \cite[Theorem~$B_1$ (3.5)]{BarwickKan11}.
  Here, $f^\ast \Vect ^n (U) \downarrow g ^\ast \Vect ^n (V)$ is the comma category whose objects are
  isomorphisms $f^\ast \Vect ^n (U) \overset{\sim}{\longrightarrow} g^\ast \Vect ^n (V)$ and
  whose morphisms are commutative diagrams.
  The square \eqref{Madrid} is by definition homotopy cartesian if and only if it is so after taking nerves.
  If $\tau = \tau _{cdG}$, then $f$ and $g$ are the inclusions of $U$, respectively of $V$, into $X$ and
  $f^\ast = \operatorname{res}^U_{UV}$ and $g^\ast = \operatorname{res}^V_{UV}$ are the restrictions of vector bundles on $U$,
  respectively $V$, to vector bundles on $U \times _X V = U \cap V$. In this case there are isomorphisms
  \begin{equation*}
   \operatorname{res}^U_{UV} \Vect ^n (U) \downarrow \operatorname{res}^V_{UV} \Vect ^n (V) \overset{\star}{\cong} 
   \Vect ^n (U) \times _{\Vect ^n (U \cap V)} \Vect ^n (V) \overset{\diamond}{\cong} \Vect ^n (X)
  \end{equation*}
  by definition. If $\tau \neq \tau _{cdG}$ then $\star$ remains an isomorphism and, by excision, 
  $\diamond$ is at least a weak equivalence (after taking nerves).
  Putting everything together, the diagram
  \begin{equation*}
   \begin{xy}
    \xymatrix{
     B \Vect ^n (X) \ar[r] \ar[d] & B \Vect ^n (U) \ar[d] \\
     B \Vect ^n (V) \ar[r] & B \Vect ^n (U \times _X V)
    }
   \end{xy}
  \end{equation*}
  is homotopy cartesian, proving the claim.
  
  By assumption, $\vect ^n$ is $I$-invariant on $\mathcal{R}$, hence $\pi _0 B \Vect ^n$ is $I$-invariant on $\mathcal{R}$.
  Applying Proposition~\ref{PropClass} to the simplicial presheaf $B \Vect ^n$ gives
  \begin{equation*}
   \vect ^ n (X) = [ X , B \Vect ^ n ]_{I, \tau}
  \end{equation*}
  for all $X \in \operatorname{Ob} (\mathcal{R})$.  
  
  Let now $\mathcal{P}^n$ be the functor ``big $\operatorname{GL}_n$-principal bundles'' defined in \ref{Defstrictvect}.
  By Lemma~\ref{sGLnclassifies}, the simplicial classifying space $B \operatorname{GL}_n$ really classifies 
  $\operatorname{GL}_n$-principal bundles on $\mathcal{R}$. Hence, for all $X \in \mathcal{R}$, we have
  \begin{equation*}
   B \operatorname{GL}_n (X) \simeq B \mathcal{P}^ n.
  \end{equation*}
  Furthermore, $\Vect ^ n (X)$ and $\mathcal{P}^ n(X)$ are equivalent as topological groupoids for each $X \in \mathcal{R}$.
  In particular, we get that
  \begin{equation*}
   B \mathcal{P}^ n (X) \simeq B \Vect ^ n(X).
  \end{equation*}
  As vector bundles are $\tau _{cdG}$-locally trivial by assumption and thus $\tau$-locally
  trivial, this gives a $\tau$-local weak equivalence  
  \begin{equation*}
   B \operatorname{GL}_n \simeq B \Vect ^ n.
  \end{equation*}

  By Proposition~\ref{GrBGL}, $B \operatorname{GL}_n \simeq \operatorname{Gr}_n$ in $\mathcal{M}_{I, \tau } ( \mathcal{R})$.
  Hence, we get for all $X \in \operatorname{Ob} (\mathcal{R})$ that
  \begin{equation*}
   \vect ^n(X) \cong [ X, \mathrm{Gr}_n ]_{I, \tau}. \qedhere
  \end{equation*}
 \end{proof}
  
 With $\mathbb{A}^1_\mathrm{rig}$ as an interval, we get the following classification theorem:
 \begin{Thm}\label{ThmClassLines}
  Let $\mathcal{R}$ be the category of smooth $k$-rigid analytic quasi-Stein varieties.
  Let $\tau _{cdG}$ be the cd structure generated by
  squares
  \begin{equation*}
   \begin{xy}\xymatrix{
    U \cap V \ar[r]\ar[d] & V \ar[d]^\cap \\
    U \ar[r]_\subset &X
   }\end{xy}
  \end{equation*}
  where $U, V, X \in \operatorname{Ob} ( \mathcal{R} )$ and $\{ U \to X , V \to X \}$ is an admissible covering of $X$.
  Then the assumptions of Theorem~\ref{ThmClass} are satisfied for $n = 1$ and $I = \mathbb{A}^1_\mathrm{rig}$. 
  Consequently there is a natural bijection
  \begin{equation*}
   \operatorname{Pic} (X) \cong [ X, \mathbb{P}^\infty _\mathrm{rig} ]_{ \mathbb{A}^1_\mathrm{rig}, \tau _{cdG}}.
  \end{equation*}
 \end{Thm}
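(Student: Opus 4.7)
The plan is to verify each hypothesis of Theorem~\ref{ThmClass} for the choice $\mathcal{R}=$ smooth quasi-Stein varieties, $n=1$, $I=\mathbb{A}^1_\mathrm{rig}$, and $\tau=\tau_{cdG}$, and then to deduce the stated bijection after identifying $\operatorname{Gr}_1$ with $\mathbb{P}^\infty_\mathrm{rig}$. Since we are invoking the easy branch $\tau=\tau_{cdG}$ of the hypotheses of Theorem~\ref{ThmClass}, we do not have to check excision or the conditions of Theorem~\ref{ThmcdVoe}; everything reduces to checking the ``structural'' data on $\mathcal{R}$ together with $\mathbb{A}^1_\mathrm{rig}$-invariance of the Picard group and finite local triviality of line bundles.

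For the structural conditions, the empty rigid analytic variety is a strictly initial object in $\mathcal{R}$, and smallness is harmless since we may replace $\mathcal{R}$ by a small equivalent subcategory. To see that $\mathbb{A}^1_\mathrm{rig}$ is a representable interval object on $\mathcal{R}$ in the sense of Definition~\ref{definterval}, note that $\mathbb{A}^1_\mathrm{rig}$ is smooth and quasi-Stein by Example~\ref{DefA1rig}, and the fibre product of two smooth quasi-Stein $k$-varieties is again smooth and quasi-Stein (exhaust both sides by the given affinoid chains and take pairwise products). Hence $X\times \mathbb{A}^1_\mathrm{rig}\in\operatorname{Ob}(\mathcal{R})$ for every $X\in\operatorname{Ob}(\mathcal{R})$, so that the presheaf $\operatorname{Hom}(\,\cdot\,,X)\times \mathbb{A}^1_\mathrm{rig}$ is representable; the multiplication and endpoint maps are induced by the multiplication on $\mathbb{A}^1_\mathrm{rig}$ and by the $k$-rational points $0$ and $1$, and all three are analytic.

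The remaining two conditions are precisely the main analytic inputs already established earlier in the paper. First, $\mathbb{A}^1_\mathrm{rig}$-invariance of $\vect^1$ on $\mathcal{R}$ is Proposition~\ref{rigA1inv}: for any smooth $k$-rigid analytic variety $X$, the projection $X\times \mathbb{A}^1_\mathrm{rig}\to X$ induces an isomorphism $\operatorname{Pic}(X)\cong\operatorname{Pic}(X\times\mathbb{A}^1_\mathrm{rig})$, and smooth quasi-Stein varieties are in particular smooth. Second, finite local triviality of line bundles on objects of $\mathcal{R}$ with respect to the G-topology, which is what the final step of the proof of Theorem~\ref{ThmClass} uses to pass from big $\operatorname{GL}_1$-principal bundles to a $\tau_{cdG}$-local weak equivalence $B\operatorname{GL}_1\simeq B\Vect^1$, is provided by Theorem~\ref{SteinfinG} (Ben's theorem). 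Theorem~\ref{ThmClass} then yields a natural bijection $\operatorname{Pic}(X)=\vect^1(X)\cong[X,\operatorname{Gr}_1]_{\mathbb{A}^1_\mathrm{rig},\tau_{cdG}}$, and under Definition~\ref{DefGrn} the infinite Grassmannian $\operatorname{Gr}_1$ is exactly $\mathbb{P}^\infty_\mathrm{rig}$, giving the displayed statement.

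There is no genuine obstacle left at this stage; the heavy lifting was done upstream. If anything needs care, it is the verification that products of smooth quasi-Stein varieties stay in $\mathcal{R}$ (so that $\mathbb{A}^1_\mathrm{rig}$ really is a representable interval object \emph{on $\mathcal{R}$}, not merely on $\rigVar$), and the observation that Proposition~\ref{rigA1inv}, which is stated on all smooth rigid analytic varieties, transports to the required $I$-invariance condition in the sense used by Theorem~\ref{ThmClass}. Everything else is a direct appeal to the named theorems.
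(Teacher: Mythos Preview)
Your proposal is correct and follows essentially the same approach as the paper: both verify that $\mathbb{A}^1_\mathrm{rig}$ is a representable interval object on $\mathcal{R}$, invoke Proposition~\ref{rigA1inv} for $I$-invariance and Theorem~\ref{SteinfinG} for finite local triviality, apply Theorem~\ref{ThmClass} in the $\tau=\tau_{cdG}$ branch, and identify $\operatorname{Gr}_1$ with $\mathbb{P}^\infty_\mathrm{rig}$. Your version is in fact slightly more explicit about the structural hypotheses (strict initiality, closure of $\mathcal{R}$ under products with $\mathbb{A}^1_\mathrm{rig}$) than the paper's own proof.
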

 \begin{proof}
  The object $\mathbb{A}^1_\mathrm{rig}$ is a representable interval object on the category of 
  smooth rigid analytic quasi-Stein varieties.
  Vector bundles satisfy descent with respect to the G-topology by Bosch--G\"{o}rtz~\cite[Theorem~3.1]{BoschGoertz98},
  hence they also satisfy homotopy descent with respect to the G-topology.
  Every vector bundle on a quasi-Stein variety has a finite local trivialisation by Ben's Theorem~\ref{SteinfinG}.
  Homotopy invariance holds by Proposition~\ref{rigA1inv}. Hence, by Theorem~\ref{ThmClass},
  \begin{equation*}
   \operatorname{Pic} (X) \cong [ X, \mathrm{Gr}_1 ]_{I, \tau}.
  \end{equation*}
  As $\mathrm{Gr}_1 = \operatorname{P}^\infty _\mathrm{rig}$, we are finished.
 \end{proof}
 
 On the other hand, choosing $\mathbb{B}^ 1$ as an interval object, homotopy invariance only holds over smooth
 affinoid varieties of sufficiently good reduction. The resulting data do not satisfy the assumptions of Theorem~\ref{ThmClass},
 as the following examples show.
 \begin{Exx}\label{Exxclass}
  \begin{enumerate}
   \item Let $n = 1, I = \mathbb{B}^1$ and $k$ algebraically closed of residue characteristic zero.
      Let $\mathcal{R}$ be the category of smooth affinoid varieties of semistable canonical reduction. 
      The interval object $\mathbb{B}^1$ is representable: If $X \in \operatorname{Ob} ( \mathcal{R})$ is a smooth
      affinoid variety of semistable canonical reduction, then $X \times \mathbb{B}^1$ also has these properties.
      Let $\tau = \mathrm{Zar}$ be the cd structure generated by squares
      \begin{equation*}
       \begin{xy}\xymatrix{
        U \cap V \ar[r]\ar[d] & V \ar[d]^\cap \\
        U \ar[r]_\subset &X
       }\end{xy}
      \end{equation*}
      where $X \in \operatorname{Ob}$ and the maps $U \to X$ and $V \to X$ are inclusions of Zariski open subsets.
      We saw in Corollary~\ref{vbequiv} that on an affinoid variety, every vector bundle with respect to the G-topology
      is isomorphic to a vector bundle with respect to the Zariski topology. As every Zariski vector bundle is in particular
      a G-vector bundle, the notions of Zariski vector bundle and G-vector bundle coincide on affinoid varieties.
      By Theorem~\ref{Putdecompthm}, $\operatorname{Pic}$ is $\mathbb{B}^1$-invariant on smooth affinoid varieties 
      of semistable canonical reduction. But unfortunately it does not seem likely that every vector bundle over
      a smooth affinoid variety of good reduction admits a finite trivialisation by affinoids that
      are also of good reduction. (We certainly cannot achieve this by refining coverings: 
      Antoine Ducros pointed out to us that if $U$ is an affinoid of bad reduction then every finite admissible affinoid
      covering of $U$ will contain an affinoid which is again of bad reduction because
      the Shilov boundary of the Berkovich space associated to $U$ also needs to be covered.)
      Let now $X$ be a smooth affinoid variety of good reduction and $\mathcal{E}$ a line bundle over $X$ which
      does not have a finite trivialisation by smooth affinoids of good reduction.
      Denote by $\underline{X}$ the site of admissible subsets of $X$ and admissible coverings.
      The restriction of $\operatorname{Pic}$ to $\underline{X} \cap \mathcal{R}$ does not allow
      to trivialise the line bundle $\mathcal{E}$.
   \item Analogously, let $n = 1, I = \mathbb{B}^1, \tau = \mathrm{Zar}$ and $k$ discretely valued.
      Let $\mathcal{R}$ be the category of smooth affinoid varieties of the form $\operatorname{Sp} A$ with $\Vert A \Vert = \vert k \vert$ 
      and with semistable canonical reduction. The interval object $\mathbb{B}^1$ is representable in $\mathcal{R}$:
      As above, if $\operatorname{Sp} (A) \in \operatorname{Ob} ( \mathcal{R})$ is smooth, affinoid and of semistable canonical reduction, 
      then $\operatorname{Sp} (A) \times \mathbb{B}^1 = \operatorname{Sp} (A \langle T \rangle )$ shares these properties. 
      As $\Vert A \langle T \rangle \Vert = \Vert A \Vert = \vert k \vert$, the interval object $\mathbb{B}^1$ is representable.
      The Picard group is $\mathbb{B}^1$-invariant on smooth affinoid varieties 
      of semistable canonical reduction by Theorem~\ref{ThmGerritzenPic}. But again a vector bundle over
      a smooth affinoid variety of good reduction might have no finite trivialisation by sets that
      are also of good reduction.
  \end{enumerate}
 \end{Exx}

\providecommand{\bysame}{\leavevmode\hbox to3em{\hrulefill}\thinspace}
\providecommand{\MR}{\relax\ifhmode\unskip\space\fi MR }
\providecommand{\MRhref}[2]{%
  \href{http://www.ams.org/mathscinet-getitem?mr=#1}{#2}
}
\providecommand{\href}[2]{#2}

\end{document}